\newtheorem{Thm}{Theorem}[section]
\newtheorem{Lem}[Thm]{Lemma}
\newtheorem{Cor}[Thm]{Corollary}
\newtheorem{Prop}[Thm]{Proposition}
\theoremstyle{definition}
\newtheorem{Rem}{Remark}[section]
\newcommand{\N}{{\mathbb N}}
\newcommand{\R}{{\mathbb R}}
\newcommand{\weakto}{{\rightharpoonup}}
\numberwithin{equation}{section}
\def\@makefnmark{}
\begin{document}

\title[Asymptotic behavior of the Choquard equation in dimension two]{Asymptotic behavior of the least energy solutions to the Choquard equation in dimension two}

\author[Jinkai Gao]{Jinkai Gao}
\address[Jinkai Gao]{\newline\indent School of Mathematical Sciences,
\newline\indent Nankai University,
\newline\indent Tianjin, 300071, PRC.}
    \email{\href{mailto:jinkaigao@mail.nankai.edu.cn}{jinkaigao@mail.nankai.edu.cn}}
    
\author[Xinfu Li]{Xinfu Li}
\address[Xinfu Li]{\newline\indent School of Science,
\newline\indent  Tianjin University of Commerce,
\newline\indent Tianjin, 300134, PRC.}
    \email{\href{mailto:lxylxf@tjcu.edu.cn}{lxylxf@tjcu.edu.cn}}
   
 \author[Shiwang Ma]{Shiwang Ma$^*$}
\address[Shiwang Ma]
	{\newline\indent
		School of Mathematical Sciences and LPMC,
			\newline\indent
			Nankai University,
			\newline\indent
			Tianjin 300071, PRC.}
	\email{\href{mailto:shiwangm@nankai.edu.cn}{shiwangm@nankai.edu.cn}}

 \thanks{Corresponding author:
 {\tt Shiwang Ma}.}


\subjclass[2020]{Primary 35J15; Secondary 35B09, 35B40, 35B44.}
\date{\today}
\keywords{Choquard equation; Least energy solutions; Asymptotic behavior; Green's function.}

\begin{abstract}
In this paper, we are interested in the following planar Choquard equation
\begin{equation*}
    \begin{cases}
        -\Delta u=\displaystyle\left(\int\limits_{\Omega}\frac{u^{p+1}(y)}{|x-y|^\alpha}dy\right)u^{p},\quad u>0,\ \  &\mbox{in}\ \Omega,\\
 \quad \ \ u=0, \ \  &\mbox{on}\ \partial \Omega,
    \end{cases}
\end{equation*}
where $\Omega$ is a smooth bounded domain in $\mathbb{R}^2$, $\alpha\in (0,2)$ and $p>1$ is a positive parameter. Unlike the higher-dimensional case, we prove that the least energy solutions $u_{p}$ neither blow up nor vanish, and develop only one peak as $p\to+\infty$ under suitable assumptions on $\Omega$. In contrast, the modified solutions $pu_p$ exhibit blow-up behavior analogous to that observed in higher dimensions. Furthermore, as $\alpha \to 0$, the main results of this paper become consistent with the known conclusions for the corresponding Lane-Emden equation.
\end{abstract}

\maketitle


\section{Introduction}
In this paper, we consider the following Choquard equation
\begin{equation}\label{slightly subcritical choquard equation}
    \begin{cases}
        -\Delta u=\displaystyle\left(\int\limits_{\Omega}\frac{u^{p+1}(y)}{|x-y|^\alpha}dy\right)u^{p},\quad u>0,\ \  &\mbox{in}\ \Omega,\\
 \quad \ \ u=0, \ \  &\mbox{on}\ \partial \Omega,
    \end{cases}
\end{equation}
where $N\geq 2$, $\Omega$ is a smooth bounded domain in $\mathbb{R}^{N}$, $\alpha\in (0,N)$ and $p\in(1,2^*_{\alpha}-1)$. Here, $2^*_{\alpha}$ denotes the upper critical exponent determined by the Sobolev embedding and the Hardy-Littlewood-Sobolev inequality, which is explicitly given by $2^*_{\alpha} := \frac{2N-\alpha}{N-2} \ \text{if}\ N \geq 3,\ \text{and}\ +\infty\ \text{if}\ N=2.$ The Choquard equation arises from multiple physical contexts, including polaron theory \cite{FrohlichHerbert,Pekar}, one-component plasma models \cite{Lieb-1976}, and bosonic many-body systems with attractive long-range interactions, particularly in Bose-Einstein condensates \cite{Frohlich-2003,Lewin-2014}. Apart from the physical motivations, the Choquard equation has been extensively studied from a mathematical perspective because of its nonlocal term. We refer to  Moroz-Van Schaftingen \cite{Moroz-2017-JFPTA} and references therein for a broad survey.

Since $p\in(1,2^*_{\alpha} - 1)$, the existence of the least energy solutions $u_p$ to \eqref{slightly subcritical choquard equation} follows from standard variational arguments. Therefore, it is natural to consider the asymptotic behavior of $u_{p}$ as $p$ approaches the critical exponent. Indeed, the study of such problems can be traced back to at least \cite{Atkinson-Peletier,Brezispletier1989,Han1991,Rey1990,Rey1989ProofOT,Ren1994TAMS,Ren1996PAMS} and have grown rapidly in recent years. As a full survey is beyond our scope, we focus only on results relevant to our work. To be precise, we introduce the Green's function and the Robin's function on $\Omega$, both of which play crucial roles in the subsequent analysis. The Green's function $G$ for the Laplacian $-\Delta$ on $\Omega$ is defined, for each fixed $y\in\Omega$, as the solution to the following Dirichlet boundary value problem
\begin{equation}\label{defin-green-function}
\begin{cases}
-\Delta_{x} G(x,y)= \delta_y, &{\text{in}~\Omega}, \\
\qquad G(x,y)=0, &{\text{on}~\partial \Omega},
\end{cases}
\end{equation}
where $\delta_y$ denotes the Dirac delta function centered at $y$. For the Green's function $G(x,y)$, we have the following decomposition
\begin{equation}\label{eq-green-function}
G(x,y)=S(x,y)-H(x,y), ~~(x,y)\in \Omega\times \Omega,
\end{equation}
where 
\begin{equation}\label{eq-singular-part-of-the-green-function}
S(x,y):=\begin{cases}
    -\frac{1}{2\pi}\log|x-y|,&\text{~~for~~}N=2\text{~and~}x,y\in\Omega,\\
    \frac{1}{(N-2)\omega_{N}|x-y|^{N-2}},&\text{~~for~~}N\geq 3\text{~and~}x,y\in\Omega,\\
\end{cases}
\end{equation}
is the singular part, which is also the fundamental solution to the Laplace equation in $\R^{N}$ and $\omega_{N}=\frac{2\pi^{N/2}}{\Gamma(N/2)}$ is the measure of the unit sphere in $\R^{N}$. $H(x,y)$
is the regular part of $G(x,y)$ satisfying 
\begin{equation*}
 \begin{cases}
-\Delta_{x} H(x,y)=0, &{\text{in}~\Omega}, \\
\qquad H(x,y)=S(x,y), &{\text{on}~\partial \Omega}.
\end{cases}   
\end{equation*}
Recall that $G$ and $H$ are symmetric in $x$ and $y$. Specifically, $H$ is a smooth function in $\Omega\times\Omega$ and we define its leading term as
\begin{equation}\label{definition of Robin function}
    R(x):=H(x,x), ~~x\in \Omega,
\end{equation}
called the Robin function of $\Omega$ at $x$. Moreover, by the comparison principle, we have
\begin{equation*}
    G(x,y)>0,~~~\forall~ x,y\in\Omega,
\end{equation*}
and the Robin function satisfies
\begin{equation*}
    R(x)\to+\infty \text{~as~}x\to\partial\Omega.
\end{equation*}
For further properties of the Green's function, we refer to the appendices in \cite{DallAcqua-2004-jde, DAprile2013CPDE, ACKERMANN20134168}.

Now, we recall some known results related to equation \eqref{slightly subcritical choquard equation}. For $N \geq 3$ and $p$ approaching $2^*_{\alpha}-1$ from below, Chen and Wang \cite{chen2024blowingupsolutionschoquardtype} proved that the least energy solutions must necessarily blow up and concentrate at critical points of the Robin function $R(x)$. Moreover, when $x_0$ is a nondegenerate critical point of $R(x)$, a family of solutions that blow up and concentrate at $x_0$ can be constructed via a reduction argument. In contrast, for the two-dimensional case $N=2$, the asymptotic behavior of the least energy solutions to \eqref{slightly subcritical choquard equation} as $p\to+\infty$ remains an open problem. In the special case $\alpha=0$, the equation \eqref{slightly subcritical choquard equation} reduces, after a suitable scaling, to the well-known Lane-Emden equation
\begin{equation}\label{Lane Emden problem}
    \begin{cases}
        -\Delta u = u^{p}, \quad u>0, & \text{in } \Omega, \\
        u = 0, & \text{on } \partial\Omega,
    \end{cases}
\end{equation}
which models the mechanical structure of self-gravitating spheres, particularly stellar structures in astrophysics. For further physical background, we refer to \cite{Chandrasekhar-1957,Horedt-1987}. Despite its deceptively simple form, the Lane-Emden equation exhibits remarkably rich and complex solution structures, attracting sustained research interest over recent decades.

In the case $N=2$,  Ren and Wei \cite{Ren1994TAMS,Ren1996PAMS} proved that the least energy solution $u_p$ to \eqref{Lane Emden problem} neither blows up nor vanishes, and satisfies the energy condition
\begin{equation}\label{energy-convergence}
    p\int_{\Omega}|\nabla u_p|^2 dx \to 8\pi e \quad \text{as } p\to+\infty.
\end{equation}
Moreover, the maximum point $x_{p}$ of $u_p$ converges to $x_0 \in \Omega$ as $ p\to+\infty$ with $x_0$ being a critical point of the Robin function $R(x)$, and the normalized solutions $v_p := u_p^p/\int_\Omega u_p^p$ satisfy
\begin{equation*}
    v_{p} \to G(x,x_0) \text{ in } C_{loc}^{2}(\overline{\Omega}\setminus\{x_0\}).
\end{equation*}
Subsequently, Adimurthi and Grossi \cite{Adimurthi2004PAMS} showed that, under an appropriate rescaling, $u_p$ converges to the standard bubble function 
\begin{equation*}
    U(x) := \log \frac{1}{(1+\frac{1}{8}|x|^2)^2},
\end{equation*}
which is the unique solution to the Liouville equation \cite{Chen-Li-1991}:
\begin{equation}
    -\Delta u = e^u \text{ in } \mathbb{R}^2,\text{~with~~}u(0)=0 \text{~~and~~} \int_{\mathbb{R}^2} e^u dx = 8\pi.
\end{equation}
They also proved that $u_p(x_p) \to \sqrt{e}$ as $p\to+\infty$, confirming a conjecture previously made by Chen, Zhou, and Ni \cite{Chen-Ni-Zhou-2000}.

For general solutions to \eqref{Lane Emden problem} (not necessarily the least energy solutions), Kamburov and Sirakov \cite{Kamburov-Sirakov-2018} established the following uniform estimates: Given any $p_0 > 1$, there exists a constant $C > 0$ depending only on $p_0$ and $\Omega$ such that for all $p \geq p_0$, any solution $u_p$ to \eqref{Lane Emden problem} satisfies 
\begin{equation*}
    \|u_p\|_{L^\infty(\Omega)} \leq C.
\end{equation*}
 Moreover, if $\Omega$ is strictly star-shaped, then there exists another constant $C > 0$ (depending on $p_0$ and $\Omega$) such that
\begin{equation}\label{energy bounded assumption}
    p\int_\Omega |\nabla u_p|^2 dx \leq C.
\end{equation}
On the other hand, De Marchis et al.\cite{De-Grossi-Ianni-Pacella-2017,De-Grossi-Ianni-Pacella-2018} and Thizy \cite{Thizy-2019} established a complete description of the asymptotic behavior for general solutions to \eqref{Lane Emden problem}. To be more precise, they proved that for any family of solutions $u_{p}$ to \eqref{Lane Emden problem} satisfying the energy condition \eqref{energy bounded assumption}, there exists an integer $k\geq 1$ such that 
\begin{equation*}
  p\int_\Omega |\nabla u_p|^2 dx \to k8\pi e\text{~~as~~}p\to+\infty,
\end{equation*}
and $u_p$ concentrates at exactly $k$ distinct points, behaving like the standard bubble $U(x)$ near each point after rescaling. Conversely, in non-simply-connected domains, Esposito, Musso, and Pistoia \cite{Esposito-Musso-Pistoia-2006} 
established the existence of solutions to \eqref{Lane Emden problem} that concentrate at $k$ distinct points via a Lyapunov-Schmidt reduction scheme. For further results, we refer to \cite{Grossi-Ianni-Luo-Yan-2022,Battaglia-Ianni-Pistoia-2025,Chen-Cheng-Zhao-2024,Pistoia-Ricciardi-2024} and the references therein.

Motivated by the previous work, we naturally consider the asymptotic behavior of the least energy solutions to \eqref{slightly subcritical choquard equation} as $p\to+\infty$ in the case $N=2$ and $\alpha\in(0,2)$. First, to find a least energy solution to \eqref{slightly subcritical choquard equation}, it's natural to consider the following constrained minimizing problem
\begin{equation*}
    S_{p}^{2}:=\inf\left\{\int_{\Omega}|\nabla u|^{2}dx:~u\in H^{1}_{0}(\Omega),~\left(\int_{\Omega}\int_{\Omega}\frac{|u|^{p+1}(y)|u|^{p+1}(x)}{|x-y|^{\alpha}}dydx\right)^{\frac{1}{p+1}}=1\right\}.
\end{equation*}
Since the Sobolev embedding $H^{1}_{0}(\Omega)\hookrightarrow L^{q}(\Omega)$ is compact for any $q\in[1,+\infty)$, a standard variational argument gives that $S_{p}^{2}$ can be achieved by a positive function $u_{p}^{\prime}$.  Moreover, by the Lagrange multiplier theorem, we conclude that $u_{p}:=S_{p}^{1/p}u_{p}^{\prime}$ is a solution to \eqref{slightly subcritical choquard equation}. Next, we denote by $E_{p}$ the energy functional associated with \eqref{slightly subcritical choquard equation}
\begin{equation*}
    E_{p}(u):=\frac{1}{2}\int_{\Omega}|\nabla u|^{2}dx-\frac{1}{2(p+1)}\int_{\Omega}\int_{\Omega}\frac{u^{p+1}(y)u^{p+1}(x)}{|x-y|^{\alpha}}dydx,\quad u\in H^{1}_{0}(\Omega).
\end{equation*}
Notice that for any solution $u$ of \eqref{slightly subcritical choquard equation}, we have
\begin{equation*}
    E_{p}(u)=\left(\frac{1}{2}-\frac{1}{2(p+1)}\right)\int_{\Omega}|\nabla u|^{2}dx=\left(\frac{1}{2}-\frac{1}{2(p+1)}\right)\int_{\Omega}\int_{\Omega}\frac{u^{p+1}(y)u^{p+1}(x)}{|x-y|^{\alpha}}dydx.
\end{equation*}
Therefore, it is easy to verify that $u_{p}$ is a least energy solution to \eqref{slightly subcritical choquard equation} and
\begin{equation}\label{S-p}
    S_{p}=\frac{\left(\int_{\Omega}|\nabla u_{p}|^{2}dx\right)^{\frac{1}{2}}}{\left(\int_{\Omega}\int_{\Omega}\frac{u_{p}^{p+1}(y)u_{p}^{p+1}(x)}{|x-y|^{\alpha}}dydx\right)^{\frac{1}{2(p+1)}}}=\left(\int_{\Omega}|\nabla u_{p}|^{2}dx\right)^{\frac{p}{2(p+1)}}=\left(\frac{2(p+1)}{p}E_{p}(u_{p} )\right)^{\frac{p}{2(p+1)}}.
\end{equation}

In the following, we shall study the asymptotic behavior of the least energy solution $u_{p}$ as $p\to+\infty$. Before stating the main results, we first introduce the required assumptions on the domain $\Omega$.
\smallskip

\textbf{($H_{1}$)}: Let $R_{\Omega}:=\sup\{R:B_{R}(x)\subset\Omega\text{~for some~}x\in\Omega\}$, then 
\begin{equation}
    R_{\Omega}\geq\left(\frac{2(4-\alpha)\pi }{\tilde{C}_{\alpha}}\right)^{\frac{1}{4-\alpha}}\text{~with~}\tilde{C}_{\alpha}:=\int_{B_{1}(0)}\int_{B_{1}(0)}\frac{1}{|x-y|^{\alpha}}dydx.
\end{equation}
\smallskip

\textbf{($H_{2}$)}: There exists a point $y$ such that $\langle x-y,\nu(x)\rangle>0$ for any $x\in\partial\Omega$ and
\begin{equation}
    \int_{\partial\Omega}\frac{1}{\langle x-y,\nu(x)\rangle}d\sigma_{x}<2\pi e,
\end{equation}
\qquad\qquad where $\nu(x)$ denotes the unit outer normal of $\partial\Omega$ at $x$.

\begin{Rem}
   For sufficiently large  $R>0$, the ball $\Omega=B_{R}(0)$ centered at the origin satisfies assumptions $(H_{{1}})$ and $(H_{2})$ with $y=0$.
\end{Rem}
Our first result is as follows.
\begin{Thm}\label{thm-1}
Assume $\alpha\in(0,2)$ and let $\Omega\subset\R^{2}$ be a smooth bounded domain satisfying $(H_{1})$. Let $u_{p}$ be a family of the least energy solutions to \eqref{slightly subcritical choquard equation}. Then 
\begin{enumerate}[label=\upshape(\arabic*)]
    \item The least energy solutions $u_p$ neither vanish nor blow up
    \begin{equation}\label{eq-l-infty-bounded}
        1\leq \liminf_{p\to+\infty}\|u_{p}\|_{L^{\infty}(\Omega)}\leq \limsup_{p\to+\infty}\|u_{p}\|_{L^{\infty}(\Omega)}\leq \sqrt{e}.
    \end{equation}
    \item We have the following energy estimate
    \begin{equation}\label{eq-thm-energy}
    \begin{aligned}
        \lim_{p\to+\infty}p S_{p}^{2}&=\lim_{p\to+\infty}2p E_{p}(u_{p})= \lim_{p\to+\infty}p\int_{\Omega}|\nabla u_{p}|^{2}dx\\
        &=\lim_{p\to+\infty} p\int_{\Omega}\int_{\Omega}\frac{u_{p}^{p+1}(y)u_{p}^{p+1}(x)}{|x-y|^{\alpha}}dydx= 2(4-\alpha)\pi e.
    \end{aligned}        
    \end{equation}
    \item Under suitable rescaling, $u_p$ converges to the standard bubble. Let $x_{p}\in\Omega$ be the maximum point satisfying $u_{p}(x_{p})=\|u_{p}\|_{L^{\infty}(\Omega)}$. Define
    \begin{equation}\label{defin-vare-p}
        \varepsilon_{p}:=\left(pu_{p}^{2p}(x_{p})\right)^{-\frac{1}{4-\alpha}}
    \end{equation}
    and the rescaled function 
\begin{equation}
    v_{p}(x):=\frac{p}{u_{p}(x_{p})}\left(u_{p}(\varepsilon_{p}x+x_{p})-u_{p}(x_{p})\right),\text{~for any~}x\in\Omega_{p}:=\frac{\Omega-x_{p}}{\varepsilon_{p}}.
\end{equation}
Then $\varepsilon_{p}\to 0$ and $v_{p}\to v$ in $C^{2}_{loc}(\R^{2})$ as $p\to+\infty$, where 
\begin{equation}\label{definition of v}
    v(x):=\frac{4-\alpha}{2}\log \left(\frac{1}{1+C_{\alpha}^{-2}|x|^{2}}\right)\text{~with~}C_{\alpha}:=\left(\frac{(2-\alpha)(4-\alpha)}{\pi}\right)^{\frac{1}{4-\alpha}}
\end{equation}
is a solution of
\begin{equation}
    -\Delta v=\left(\int_{\R^{2}}\frac{e^{v(y)}}{|x-y|^{\alpha}}dy\right)e^{v(x)}\text{~~in~}\R^{2} \text{~with~}v(0)=0.
\end{equation}
In addition,
\begin{equation}
    \int_{\R^{2}}e^{\frac{4}{4-\alpha}v(x)}dx=\pi C^{2}_{\alpha}\text{~~and~~}  \int_{\R^{2}}\int_{\R^{2}}\frac{e^{v(y)}e^{v(x)}}{|x-y|^{\alpha}}dydx=2(4-\alpha)\pi.
\end{equation} 
\item There exist constants $C_{1},C_{2}>0$ such that for any $p$ large enough 
\begin{equation}\label{eq-thm-1.1-4}
    \frac{C_{1}}{p}\leq \int_{\Omega}\int_{\Omega}\frac{u_{p}^{p+1}(y)u_{p}^{p}(x)}{|x-y|^{\alpha}}dydx\leq \frac{C_{2}}{p}.
\end{equation}
\item It holds that $\sqrt{p}u_{p}\weakto 0$ in $H^{1}_{0}(\Omega)$ as $p\to+\infty$.
\end{enumerate}
\end{Thm}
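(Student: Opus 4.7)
I will adapt the Lane-Emden framework of Ren-Wei and Adimurthi-Grossi to the nonlocal setting. The strategy is: first establish a sharp upper bound on $pS_p^2$ via a concentrating test function modelled on the bubble $v$ of \eqref{definition of v}; next use this and Moser-Trudinger to control $\|u_p\|_{L^\infty}$ uniformly; then perform a blow-up at the maximum point to identify the limit profile; finally match energies to deduce the precise constant, the lower bound on $\|u_p\|_{L^\infty}$, and items (4)-(5).

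For the upper bound on $pS_p^2$, I would pick a ball $B_{R_\Omega}(z_0)\subset\Omega$ (whose existence is the content of $(H_1)$) and use a truncated, rescaled copy of the profile $v$ as a test function in the minimization defining $S_p$. Using the explicit identities $\int_{\R^2}e^{4v/(4-\alpha)}dx=\pi C_\alpha^2$ and $\iint_{\R^2\times\R^2}e^{v(y)}e^{v(x)}/|x-y|^\alpha\,dydx=2(4-\alpha)\pi$ stated in Theorem~\ref{thm-1}, combined with the logarithmic growth $\int_{B_R}|\nabla v|^2\sim 2\pi(4-\alpha)^2\log(R/\varepsilon)$ for $R/\varepsilon$ large, a direct expansion yields $pS_p^2\leq 2(4-\alpha)\pi e+o(1)$; the threshold on $R_\Omega$ in $(H_1)$ is precisely what makes this upper bound tight. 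From \eqref{S-p} it then follows that $\sqrt{p}\,u_p$ is bounded in $H^1_0(\Omega)$, so Moser-Trudinger implies $\int_\Omega e^{\beta p u_p^2}dx\leq C$ for any $\beta$ below a threshold determined by this bound. Bounding the Riesz potential $f_p(x):=\int_\Omega u_p^{p+1}(y)/|x-y|^\alpha\,dy$ via Hardy-Littlewood-Sobolev and applying elliptic regularity and Moser iteration to $-\Delta u_p=f_p u_p^p$ produces a uniform bound on $\|u_p\|_{L^\infty}$; the refinement $\limsup\|u_p\|_{L^\infty}\leq\sqrt e$ is obtained by a contradiction argument, since a strictly larger limit would, through the blow-up below, produce an energy exceeding the computed upper bound.

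With $\varepsilon_p$ as in \eqref{defin-vare-p} so that $p\varepsilon_p^{4-\alpha}u_p(x_p)^{2p}=1$, a direct computation using $U_p(x):=u_p(\varepsilon_p x+x_p)/u_p(x_p)=1+v_p(x)/p$ shows
\[
-\Delta v_p(x)=\left(\int_{\Omega_p}\frac{U_p^{p+1}(w)}{|x-w|^\alpha}\,dw\right)U_p^p(x),\qquad x\in\Omega_p.
\]
Because $\|u_p\|_{L^\infty}$ is bounded above and below, $\varepsilon_p\to 0$ and $\Omega_p\to\R^2$. Local $C^{2,\gamma}$ bounds on $v_p$ follow from elliptic regularity provided the Riesz convolution is controlled uniformly on compacta — this is the main technical obstacle, and it requires decay estimates on $U_p$ away from the origin (via a barrier/ODE argument exploiting $U_p\leq 1$ and the equation) together with tail control of the Riesz integral. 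Given these bounds, $v_p\to v$ in $C^2_{loc}(\R^2)$, and the classification of the limit Liouville-Choquard equation (together with the normalization $v(0)=0$) identifies $v$ as the explicit profile in \eqref{definition of v}.

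From the blow-up we read off $u_p(x_p)\to\sqrt e$, so $\liminf\|u_p\|_{L^\infty}\geq 1$ and, combined with the upper bound in the previous step, item (1) follows. For the precise energy limit, the change of variables $x=\varepsilon_p y+x_p$ gives
\[
p\int_\Omega|\nabla u_p|^2\,dx=\frac{u_p(x_p)^2}{p}\int_{\Omega_p}|\nabla v_p|^2\,dy,
\]
and combining $\int_{B_R}|\nabla v|^2\sim 2\pi(4-\alpha)^2\log R$ with $\log(1/\varepsilon_p)\sim p/(4-\alpha)$ (since $u_p(x_p)\to\sqrt e$) yields the limit $2(4-\alpha)\pi e$, proving (2). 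An analogous rescaling of $\iint u_p^{p+1}(y)u_p^p(x)/|x-y|^\alpha\,dydx$ produces, via the factor $\varepsilon_p^{4-\alpha}u_p(x_p)^{2p+1}=u_p(x_p)/p$, the two-sided bound in (4). For (5), the fact that $\int|\nabla u_p|^2=O(1/p)\to 0$ forces $u_p\to 0$ strongly in $L^q(\Omega)$ for every $q<\infty$, so along a subsequence $u_p\to 0$ almost everywhere; since $\sqrt p\,u_p$ is bounded in $H^1_0(\Omega)$, its weak limit must coincide with this pointwise limit, giving $\sqrt p\,u_p\weakto 0$.
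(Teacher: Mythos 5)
Your overall outline follows the Ren--Wei/Adimurthi--Grossi template, but several of the individual steps contain gaps or logical errors that matter here.

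\textbf{Role of $(H_1)$.} You claim that the radius condition $(H_1)$ ``is precisely what makes this upper bound tight'' in the $S_p$ estimate. This is not the case: the two-sided asymptotics $p^{1/2}S_p\to\sqrt{2(4-\alpha)\pi e}$ hold for \emph{any} smooth bounded $\Omega$. In the paper the lower bound comes from HLS plus the Ren--Wei Moser--Trudinger constant (Lemma~\ref{lem ren2.1}), and the upper bound from the classical piecewise-logarithmic Moser function, not a truncation of the bubble; neither step uses $(H_1)$. The hypothesis $(H_1)$ is only invoked in Lemma~\ref{lemma uniform estimate}, where the explicit constant $L\geq (2(4-\alpha)\pi/\tilde C_\alpha)^{1/(4-\alpha)}$ is needed to make $c_p\leq 1$ so that Moser--Trudinger gives $\int_{\Omega_p}(1+v_p/p)^{4p/(4-\alpha)}\lesssim 1$.

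\textbf{Item (2) is circular in your approach.} You propose to derive the energy limit $2(4-\alpha)\pi e$ by combining the blow-up with $\log(1/\varepsilon_p)\sim p/(4-\alpha)$, which you obtain from ``$u_p(x_p)\to\sqrt e$ read off from the blow-up.'' But at the level of Theorem~\ref{thm-1} only the two-sided bound $1\leq\liminf\|u_p\|_\infty\leq\limsup\|u_p\|_\infty\leq\sqrt e$ is available; the exact limit $u_p(x_p)\to\sqrt e$ is Theorem~\ref{thm-2}(1), whose proof (Propositions~\ref{lemma-decay-estimate} and \ref{prop-4.15}) needs the Green's-function decay estimate, the single-point blow-up result, and dominated convergence over all of $\Omega_p$ --- none of which you have at this stage. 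The paper instead gets (2) for free from the scalar identity \eqref{S-p}, $S_p^2=(\int|\nabla u_p|^2)^{p/(p+1)}$, once Proposition~\ref{estimate of S-p} is proved. The same concern applies to your derivation of (4): after rescaling you need the double integral over all of $\Omega_p$ (not just a fixed ball) to converge to a finite nonzero constant, which again requires tightness/decay; the paper's Corollary~\ref{cor-3.8} avoids this by using only (2), the $L^\infty$ bounds, and H\"older.

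\textbf{Item (5) as you argue it is wrong.} You write that $u_p\to 0$ a.e.\ forces the weak limit of $\sqrt p\,u_p$ to vanish because ``its weak limit must coincide with this pointwise limit.'' This does not follow: $u_p\to 0$ a.e.\ says nothing about the a.e.\ behaviour of $\sqrt p\,u_p$ (the scale factor is unbounded), and in fact $\sqrt p\,u_p$ does \emph{not} tend to $0$ pointwise near $x_0$ --- its limit is a multiple of the Green's function. The correct argument is the one in the paper's Proposition~3.9: test the equation against $\varphi\in C_c^\infty(\Omega)$, use (4) to see that $\int\nabla(\sqrt p\,u_p)\cdot\nabla\varphi=O(p^{-1/2})\to 0$, and conclude the weak limit annihilates all test functions.

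\textbf{$L^\infty$ upper bound.} You suggest Moser iteration on $-\Delta u_p=f_pu_p^p$. The exponent $p$ on the right-hand side tends to infinity, so the constants in a naive iteration blow up; this is the whole difficulty. The paper's Lemma~\ref{upper bound of L-infty norm} uses the coarea formula, the Schwarz and isoperimetric inequalities on level sets $\Omega_t=\{u_p>t\}$, together with the Moser--Trudinger bound on $\int_\Omega u_p^{2p}$, to get a scale-invariant differential inequality for $t(r)$ where $|\Omega_t|=\pi r^2$ --- a Talenti-type rearrangement argument that handles the growing exponent cleanly. Similarly, your sketch of the local $C^2$ bounds for $v_p$ via a ``barrier/ODE argument'' is substantially more fragile than the paper's harmonic-plus-bounded decomposition ($v_p=\varphi_p+\psi_p$ with $\psi_p$ harmonic) combined with the Harnack inequality, which is what both rules out half-plane limits and gives uniform bounds on compacta.

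Of the five items, only your sketch for item (1) and part of item (3) are broadly on track; (2), (4), and (5) either rely on ingredients that are not yet established at this point in the paper or contain a genuine logical error.
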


Our second result provides a more refined description of the asymptotic profile of the least energy solution $u_{p}$. Before going on, we define the blow-up set of $p{u}_{p}$ as
\begin{equation}
    \mathcal{S}:=\left\{y\in\bar{\Omega}:\text{~there exist~}\{y_{p}\}\subset\Omega\text{~such that~}p{u}_{p}(y_{p})\to+\infty\text{~and~}y_{p}\to y\text{~as~}p\to+\infty\right\}.
\end{equation}
Let $x_{p}$ be the maximum point of $u_{p}$ converging to $x_{0}\in \bar{\Omega}$ as $p\to+\infty$, so that $x_{0}\in\mathcal{S}$. 
\begin{Thm}\label{thm-2}
Assume $\alpha\in(0,1)$ and let $\Omega\subset\R^{2}$ be a smooth bounded domain satisfying $(H_{1})$ and $(H_{2})$. For the least energy solutions $u_{p}$ to \eqref{slightly subcritical choquard equation}, we have $\mathcal{S}\cap\partial\Omega=\emptyset$ and $\mathcal{S}=\{x_{0}\}$. Moreover, the following properties hold 
   \begin{enumerate}[label=\upshape(\arabic*)]
    \item  The maximum value of $u_{p}$ tends to $\sqrt{e}$, that is
    \begin{equation}
        \lim_{p\to+\infty}u_{p}(x_{p})=\sqrt{e}.
    \end{equation}  
    \item The shape of $pu_{p}$ away from the blow-up point $x_{0}$ is given by 
    \begin{equation}
        \lim_{p\to+\infty}pu_{p}(x)=2(4-\alpha)\pi\sqrt{e}~G(x,x_{0})\text{~~in~~}C^{2}_{loc}({\Omega}\setminus\{x_{0}\}).
    \end{equation}
    \item The right-hand side of the equation satisfied by $pu_{p}$ tends to the Dirac delta function, that is
    \begin{equation}
       p\left(\int_{\Omega}\frac{u_{p}^{p+1}(y)}{|x-y|^{\alpha}}dy\right)u_{p}^{p}(x)\to (2(4-\alpha)\pi\sqrt{e})\delta_{x_{0}},
    \end{equation}
    in the sense of distribution, where $\delta_{x_{0}}$ is the Dirac delta function at point $x_{0}$.
    \item The blow-up point $x_0$ of $p u_p$ is a critical point of the Robin function, that is,
\begin{equation}
\nabla R(x_0) = 0.
\end{equation}
In particular, if $\Omega$ is a convex domain, then $x_0$ is the global minimum point of the Robin function.
\end{enumerate}
\end{Thm}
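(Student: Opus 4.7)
The plan is to bootstrap from the single-bubble analysis of Theorem~\ref{thm-1} using three tools: a global Pohozaev identity together with $(H_2)$ for excluding boundary concentration and identifying $a:=\lim_{p\to+\infty}u_p(x_p)$, Green's representation for obtaining the distributional Dirac limit and the $C^2_{loc}$ profile away from $x_0$, and a local Pohozaev identity around $x_0$ to force $\nabla R(x_0)=0$.

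I would first derive the global Pohozaev identity. Multiplying \eqref{slightly subcritical choquard equation} by $(x-y)\cdot\nabla u_p$ with $y$ from $(H_2)$, integrating by parts using $u_p|_{\partial\Omega}=0$, and symmetrizing the nonlocal integrand via $(x-y)\cdot\nabla_x|x-z|^{-\alpha}=-\alpha(x-y)\cdot(x-z)|x-z|^{-\alpha-2}$ yields
\[
\frac12\int_{\partial\Omega}|\partial_\nu u_p|^2\langle x-y,\nu(x)\rangle\,d\sigma_x=\frac{4-\alpha}{2(p+1)}\int_\Omega\!\!\int_\Omega\frac{u_p^{p+1}(z)u_p^{p+1}(x)}{|x-z|^\alpha}\,dz\,dx.
\]
Multiplying by $p^2$ and invoking \eqref{eq-thm-energy} gives $p^2\int_{\partial\Omega}|\partial_\nu u_p|^2\langle x-y,\nu\rangle\,d\sigma\to 2(4-\alpha)^2\pi e$. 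Integrating the equation over $\Omega$ and rescaling around $x_p$ (using $p\varepsilon_p^{4-\alpha}u_p^{2p}(x_p)=1$ and the normalization $\int_{\R^2}\!\!\int_{\R^2}e^{v(x)}e^{v(y)}|x-y|^{-\alpha}\,dx\,dy=2(4-\alpha)\pi$ from Theorem~\ref{thm-1}(3)) yields $p\int_{\partial\Omega}|\partial_\nu u_p|\,d\sigma\to 2(4-\alpha)\pi a$. A Cauchy-Schwarz bound
\[
\Bigl(p\int_{\partial\Omega}|\partial_\nu u_p|\,d\sigma\Bigr)^{2}\le p^2\!\int_{\partial\Omega}|\partial_\nu u_p|^2\langle x-y,\nu\rangle\,d\sigma\cdot\int_{\partial\Omega}\frac{d\sigma}{\langle x-y,\nu\rangle}
\]
combined with the strict inequality in $(H_2)$ then shows $a<e$; since a single bubble contributes exactly $a^2\cdot 2(4-\alpha)\pi$ to the total energy $2(4-\alpha)\pi e$ in \eqref{eq-thm-energy}, energy quantization together with the least-energy property of $u_p$ rules out secondary bubbles and forces $a=\sqrt e$, which is statement~(1), together with $\mathcal S=\{x_0\}$. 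Boundary concentration ($x_0\in\partial\Omega$) is excluded because it would sharpen Cauchy-Schwarz into a near-equality incompatible with the strict form of $(H_2)$.

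For statements~(2) and~(3), I would use Green's representation
\[
pu_p(x)=\int_\Omega G(x,z)\,p\!\left(\int_\Omega\frac{u_p^{p+1}(y)}{|z-y|^\alpha}\,dy\right)u_p^p(z)\,dz.
\]
The preceding step identifies the total mass of the density as $2(4-\alpha)\pi\sqrt e$ in the limit, and the rescaling of Theorem~\ref{thm-1}(3) shows the mass concentrates in an $\varepsilon_p$-neighborhood of $x_p\to x_0$. This gives the distributional convergence $p\bigl(\int_\Omega u_p^{p+1}(y)|\cdot-y|^{-\alpha}\,dy\bigr)u_p^p\weakto 2(4-\alpha)\pi\sqrt e\,\delta_{x_0}$, which is~(3). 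Substituting into Green's representation and applying interior elliptic regularity away from $x_p$ yields~(2): $pu_p\to 2(4-\alpha)\pi\sqrt e\,G(\cdot,x_0)$ in $C^2_{loc}(\overline\Omega\setminus\{x_0\})$.

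Finally, for~(4) I would apply the local Pohozaev identity. Multiplying \eqref{slightly subcritical choquard equation} by $\partial_j u_p$, integrating over a ball $B_\delta(x_0)\Subset\Omega$, and symmetrizing the nonlocal term (which reduces to boundary contributions on $\partial B_\delta$ plus exponentially small interior residues due to the concentration of $u_p^{p+1}$ at $x_0$) gives
\[
p^2\int_{\partial B_\delta}\Bigl(\tfrac12|\nabla u_p|^2\nu_j-\partial_\nu u_p\,\partial_j u_p\Bigr)d\sigma_x=o(1)\quad\text{as }p\to+\infty.
\]
Using the $C^2$-convergence $pu_p\to c\,G(\cdot,x_0)$ on $\partial B_\delta$ with $c:=2(4-\alpha)\pi\sqrt e$ and the decomposition $G=S-H$, $S(x,x_0)=-(2\pi)^{-1}\log|x-x_0|$, the classical residue computation on $\partial B_\delta$ (in which the purely singular terms integrate to zero and the mixed $S$-$H$ terms yield the regular part) shows that the left-hand side tends, as $p\to+\infty$ followed by $\delta\to 0$, to a nonzero multiple of $\partial_j H(x_0,x_0)=\tfrac12\partial_j R(x_0)$; hence $\nabla R(x_0)=0$. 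For convex $\Omega$, the Robin function of a convex planar domain is strictly convex, so has a unique critical point which is its global minimizer, identifying $x_0$ as that minimizer. The main obstacle is the boundary-exclusion step: because $u_p$ itself is pointwise bounded while $pu_p$ may blow up only at a weaker logarithmic rate, classical boundary blow-up techniques (moving planes, Kelvin transform) do not apply, and the argument must hinge on the sharp Cauchy-Schwarz step together with the strict geometric inequality in $(H_2)$.
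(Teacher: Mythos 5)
Your high-level framework (Pohozaev identities + Green's representation) is the right one, and your treatment of parts (2), (3), (4) is essentially the paper's. However, there are several serious gaps in the core steps — ruling out boundary concentration, pinning down $\#\mathcal{S}=1$, and identifying $a:=\lim_p u_p(x_p)=\sqrt e$ — which together constitute the bulk of the difficulty.

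First, the claim $p\int_{\partial\Omega}|\partial_\nu u_p|\,d\sigma\to 2(4-\alpha)\pi a$ is not established by the rescaling argument alone. Integrating the equation gives $p\int_{\partial\Omega}|\partial_\nu u_p|\,d\sigma=\int_\Omega f_p\,dx$, and Fatou only shows that the rescaled double integral is $\geq 2(4-\alpha)\pi$; the opposite inequality — equivalently, that no mass escapes to infinity in the rescaled coordinates — is exactly what requires the decay estimate (Proposition~\ref{lemma-decay-estimate}). The paper spends Lemmas~\ref{lemma-convergence-to-right-hand-integral} and~\ref{lemma-decay-estimate} on this: a local Pohozaev identity on $B_\delta(x_0)$ gives $\beta_p(\delta)u_p(x_p)^2\geq \gamma^2/(2(4-\alpha)\pi)+O(\delta)+o_p(1)$, which combined with $\gamma=\lim_\delta\lim_p\beta_p(\delta)u_p(x_p)$ yields $\beta_p\to 2(4-\alpha)\pi$; only then does the pointwise decay $v_p(x)\lesssim(\beta_p/2\pi-\varepsilon)\log(1/|x|)+C_\varepsilon$ allow dominated convergence in the $\Omega_p\times\Omega_p$ double integral and force $u_p(x_p)\to\sqrt e$. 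Your argument inverts the logic: you need $a=\sqrt e$ to show no mass escapes, and no mass escape to show $a=\sqrt e$.

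Second, "energy quantization together with the least-energy property rules out secondary bubbles" is asserting rather than proving the hard point. Energy quantization for this nonlocal problem is not established anywhere (Lemma~\ref{lem-n-2} only gives $n\le 2$, not $n=1$), and least-energy alone does not obviously prevent two half-mass bubbles. The paper instead excludes a second blow-up point by a Brezis--Merle style argument: it introduces $\delta$-regular points, shows $\Sigma_\mu(\delta)=\mathcal S$ (Lemma~\ref{lemma-propertity-of-sigma}), and then uses $L_0<2(4-\alpha)\pi e$ from $(H_2)$ (Lemma~\ref{lemma for estimate L-0}) to get $1\le\#\mathcal S\le L_0(1+\delta e)/((4-\alpha)\pi e)<2$. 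Your Cauchy--Schwarz bound only gives $a<e$, which is already weaker than the $a\le\sqrt e$ known from Theorem~\ref{thm-1}.

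Third, your boundary-exclusion heuristic ("sharpening Cauchy--Schwarz into a near-equality") is not a proof and in particular cannot account for the restriction $\alpha\in(0,1)$ in the theorem's hypothesis — a restriction that does not appear in Theorem~\ref{thm-1}. The paper's actual argument (Proposition~\ref{prop-no-boundary-blow-up}) applies the local Pohozaev identity on $\Omega\cap B_\delta(x_i)$ for a supposed boundary blow-up point $x_i\in\partial\Omega$, chooses $z_p$ so that the troublesome boundary term vanishes, and estimates the cross term $\int_{B_\delta}\int_{\Omega\setminus B_\delta}\langle x-z_p,x-y\rangle|x-y|^{-\alpha-2}u_p^{p+1}u_p^{p+1}$ by a factor $(1+a_1)a_2\alpha/(a_1(a_2-1))$ which must be $<2$; letting $a_1\to1$, $a_2\to\infty$ this is precisely the condition $\alpha<1$. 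Without engaging with this cross term you cannot obtain a contradiction, and your proposal has no mechanism to explain the $\alpha<1$ hypothesis.
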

\begin{Rem}
    \begin{enumerate}
        \item Our results demonstrate that the solutions to equation \eqref{slightly subcritical choquard equation} exhibit qualitatively different behavior in two dimensions compared to higher dimensions as $p$ approaches the critical exponent. More precisely, the least energy solution $u_{p}$ neither blows up nor vanishes, and its profile resembles a single peak.
        \item As $\alpha\to0$, the  equation \eqref{slightly subcritical choquard equation} is formally reduced to the local Lane-Emden equation \eqref{Lane Emden problem}, and our results are consistent with those for this limiting Lane-Emden case. For $\alpha\neq0$, the appearance of the convolution term creates nontrivial difficulties, and the symmetry properties of double integrals, combined with the application of the Hardy-Littlewood-Sobolev inequality, play a crucial role in the analysis.
    \end{enumerate}
\end{Rem}

The rest of the paper is organized as follows. In Section~\ref{Preliminaries}, we recall some useful lemmas and establish several local Pohozaev identities. Section~\ref{section-prooftheorem} is devoted to the proof of Theorem~\ref{thm-1}. Finally, in Section~\ref{section-Proof of Theorem 2}, we prove that the blow-up set of $pu_{p}$ is disjoint from the boundary and consists of only one point. Based on this, we establish a refined decay estimate, from which Theorem~\ref{thm-2} follows.

\smallskip

\noindent\textbf{Notation.}
Throughout this paper, we use the following notations.
\begin{enumerate}
    \item We use $\|u\|_{H^{1}_{0}(\Omega)}=\left(\int_{\Omega}|\nabla u|^{2}dx\right)^{1/2}$ to denote the norm in $H^{1}_{0}(\Omega)$ and $\langle\cdot,\cdot\rangle$ means the corresponding inner product.  
    \item We use $C$ to denote various positive constant and use $C_{1}=o(\varepsilon)$ and $C_{2}=O(\varepsilon)$ to denote $C_{1}/\varepsilon\to0$ and $|C_{2}/\varepsilon|\leq C $ as $\varepsilon\to0$ respectively.
     \item  Let $f,g: X\to \R^{+}\cup\{0\}$ be two nonnegative function defined on some set $X$. we write $f\lesssim g$ or $g\gtrsim f$, if there exists a constant $C>0$ independent of $x$ such that $f(x)\leq C g(x)$ for any $x\in X$ and $f\sim g$  means that $f\lesssim g$ and $g\lesssim f$.
\end{enumerate}

\section{Preliminaries}\label{Preliminaries}

In this section, we recall some known results required for our analysis. We start with the Hardy-Littlewood-Sobolev (HLS) inequality, a key tool for estimating nonlocal terms.
\begin{Lem}\label{lema 2.1} \rm{\cite{Lieb2001}}
   Suppose $N\geq1$, $\alpha\in(0,N)$ and $\theta,\,r>1$ with $\frac{1}{\theta}+\frac{1}{r}+\frac{\alpha}{N}=2$. Let $f\in L^{\theta}(\R^N)$ and $g\in L^{r}(\R^N)$. Then, there exists a sharp constant $C(\theta,r,\alpha,N)$, independent of $f$ and $g$, such that
\begin{align}\label{HLS}
\displaystyle{\int_{\R^N}}\displaystyle{\int_{\R^N}}\frac{f(x)g(y)}{|x-y|^{\alpha}}dxdy\leq C(\theta,r,\alpha,N)\|f\|_{L^{\theta}(\R^N)}\|g\|_{L^{r}(\R^N)}.
\end{align}
If $\theta=r=\frac{2N}{2N-\alpha}$, then
\begin{equation}\label{definition of C N alpha}
    C(\theta,r,\alpha,N)=C_{N,\alpha}:=\pi^{\frac{\alpha}{2}}\frac{\Gamma\left(\frac{N-\alpha}{2}\right)}{\Gamma\left(N-\frac{\alpha}{2}\right)}\left(\frac{\Gamma(N)}{\Gamma\left(\frac{N}{2}\right)}\right)^{\frac{N-\alpha}{N}}.
\end{equation}
In this case, the equality in \eqref{HLS} holds if and only if $f\equiv (const.)\, g$, where
$$g(x)=A\left(\frac{1}{\gamma^{2}+|x-a|^{2}}\right)^{\frac{2N-\alpha}{2}},\quad \text{for some $A\in \mathbb{C}$, $0\neq\gamma\in\R$ and $a\in\R^N$.}$$ 
\end{Lem}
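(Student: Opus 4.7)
The inequality has two parts that I would treat separately: the bound with a finite constant for all admissible exponents, and the sharp constant $C_{N,\alpha}$ together with the classification of extremizers in the conformal case $\theta=r=\frac{2N}{2N-\alpha}$.

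For the general bound, my plan is to reduce everything to an $L^{r}\to L^{\theta'}$ mapping property of the Riesz potential $I_\alpha g(x):=\int_{\R^N} |x-y|^{-\alpha}g(y)\,dy$ with $\frac{1}{\theta'}=\frac{1}{r}+\frac{\alpha}{N}-1$, and then pair against $f\in L^{\theta}$ by H\"older's inequality, noting that $\frac{1}{\theta}+\frac{1}{\theta'}=1$ is equivalent to the scaling relation $\frac{1}{\theta}+\frac{1}{r}+\frac{\alpha}{N}=2$. The Riesz potential estimate follows from the standard truncation argument: split $K(x)=|x|^{-\alpha}=K\chi_{B_R}+K\chi_{B_R^c}$ at a radius $R$, bound the convolution with $K\chi_{B_R}$ via Young's inequality and the convolution with $K\chi_{B_R^c}$ via an $L^\infty$ estimate, optimize in $R$ to produce a weak-type bound $I_\alpha:L^r\to L^{\theta',\infty}$, and upgrade to the strong bound via Marcinkiewicz interpolation between two nearby exponents.

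For the sharp constant (Lieb, 1983), I would follow the classical three-step argument. First, apply the Riesz rearrangement inequality to reduce to symmetric decreasing $f=f^*$ and $g=g^*$, since rearrangement does not decrease the double integral while preserving all $L^p$ norms. Second, use stereographic projection $\Phi:\R^N\to S^N\setminus\{\text{pole}\}$ to pull the functional back to $S^N$: after absorbing the Jacobian factors into modified densities $\tilde f,\tilde g\in L^{2N/(2N-\alpha)}(S^N)$, the functional takes the conformally invariant form
\begin{equation*}
\int_{S^N}\int_{S^N}\frac{\tilde f(\xi)\,\tilde g(\eta)}{|\xi-\eta|^\alpha}\,d\sigma_\xi\,d\sigma_\eta.
\end{equation*}
Third, a competing symmetries argument combines rotational invariance under $SO(N+1)$ with a suitable conformal inversion; iterating the resulting averaging operator drives any maximizer to a constant on $S^N$. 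Pulling back through $\Phi$ gives the stated family of extremizers, and a direct computation on $S^N$ produces the explicit value of $C_{N,\alpha}$.

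The main obstacle is the third step: the competing symmetries iteration, together with the analysis of strict inequality in Riesz rearrangement (needed to promote existence of a radial extremizer to uniqueness of the given one-parameter family), is the technically delicate part. For the applications in the present paper, however, only the inequality itself is used in the subsequent arguments, with the sharp constant appearing mainly as a normalizing factor in the reference estimates.
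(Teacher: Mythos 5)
The paper does not prove this lemma; it is stated as a classical result and imported verbatim from the Lieb--Loss book \cite{Lieb2001}, so there is no in-paper proof to compare against. Your outline is nevertheless a faithful summary of the standard argument: the general inequality via the Riesz potential bound $I_\alpha:L^r\to L^{\theta'}$ (truncation of the kernel at a radius $R$, Young's inequality on the near part, H\"older/$L^\infty$ on the far part, optimization in $R$ to get weak-type, then Marcinkiewicz interpolation), paired against $f$ by H\"older with the duality relation $\frac{1}{\theta}+\frac{1}{\theta'}=1$ correctly identified as equivalent to the scaling condition; and the sharp constant and extremizer classification via Riesz rearrangement, stereographic projection to $S^N$, and Lieb's competing-symmetries iteration. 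You are right that the third step, together with the strict-inequality analysis in the rearrangement inequality needed for the \emph{uniqueness} statement, carries most of the technical weight, and you are also right that for the present paper's purposes the lemma is used only as a black-box tool (the explicit constant $C_{2,\alpha}$ enters only as a normalizing factor in Proposition~\ref{estimate of S-p} and Proposition~\ref{lower bound of L-infty norm}), so no further detail is required here.
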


In the two-dimensional case, the Moser-Trudinger inequality  becomes particularly significant.
\begin{Lem}\label{trudinger inequality}\rm{\cite{Moser,Trudinger-1967}}
There exists an absolute constant $C>0$, independent of any parameters, such that for any $u\in H^{1}_{0}(\Omega)$
    \begin{equation}
        \int_{\Omega}e^{4\pi\left(\frac{u}{\|\nabla u\|_{L^{2}}}\right)^{2}}dx\leq C|\Omega|,
    \end{equation}
where $\Omega$ is a bounded domain in $\R^{2}$ and $|\Omega|$ is the Lebesgue measure of $\Omega$.
\end{Lem}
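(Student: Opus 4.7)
The plan is to implement Moser's classical rearrangement proof. After normalizing so that $\|\nabla u\|_{L^{2}(\Omega)}=1$, the claim reduces to establishing $\int_{\Omega}e^{4\pi u^{2}}\,dx\leq C|\Omega|$ for some absolute constant $C$. First I would pass to the Schwarz (symmetric decreasing) rearrangement $u^{*}$ of $|u|$, defined on the ball $\Omega^{*}=B_{R}(0)$ with $\pi R^{2}=|\Omega|$. Equimeasurability preserves the exponential integral, and the P\'olya--Szeg\H{o} inequality yields $\|\nabla u^{*}\|_{L^{2}(\Omega^{*})}\leq \|\nabla u\|_{L^{2}(\Omega)}\leq 1$, reducing matters to a radial nonincreasing function $u^{*}\in H^{1}_{0}(B_{R})$.

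Next I would apply the logarithmic change of variables $r=Re^{-t/2}$, $t\in[0,\infty)$, and set $w(t):=\sqrt{4\pi}\,u^{*}(Re^{-t/2})$. A short computation shows that $w(0)=0$ (from the Dirichlet boundary condition),
\begin{equation*}
\int_{0}^{\infty}|w'(t)|^{2}\,dt=\int_{B_{R}}|\nabla u^{*}|^{2}\,dx\leq 1,\qquad \int_{B_{R}}e^{4\pi(u^{*})^{2}}\,dx=\pi R^{2}\int_{0}^{\infty}e^{w(t)^{2}-t}\,dt.
\end{equation*}
Since $\pi R^{2}=|\Omega|$, the two-dimensional estimate is reduced to the one-dimensional Moser lemma: there exists an absolute constant $C_{0}$ such that
\begin{equation*}
\int_{0}^{\infty}e^{w(t)^{2}-t}\,dt\leq C_{0}
\end{equation*}
for every absolutely continuous $w$ on $[0,\infty)$ with $w(0)=0$ and $\int_{0}^{\infty}(w')^{2}\,dt\leq 1$.

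The main obstacle is this one-dimensional bound. Cauchy--Schwarz immediately gives the pointwise estimate $w(t)^{2}\leq t\int_{0}^{t}(w')^{2}\,ds\leq t$, so the integrand is pointwise at most $1$; but this alone does not yield integrability on $[0,\infty)$. To close the argument I would introduce a threshold $\tau>0$ and the cutoff time $t_{0}:=\inf\{t\geq 0:w(t)^{2}\geq t-\tau\}$, then split the integral at $t_{0}$. On $[0,t_{0}]$ the integrand is pointwise $\leq e^{-\tau}$ and the length of this interval is controlled by the amount of $H^{1}$-mass already consumed. On $[t_{0},\infty)$ one writes $w(t)=w(t_{0})+\int_{t_{0}}^{t}w'(s)\,ds$ and applies Cauchy--Schwarz with the remaining mass $1-\int_{0}^{t_{0}}(w')^{2}\,ds$, producing exponential decay in $t-t_{0}$. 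Summing these two contributions, or equivalently iterating over a dyadic family of thresholds, yields a universal $C_{0}$; tracing back through the change of variables and rearrangement then gives the desired inequality with $C=C_{0}$. The sharpness of the constant $4\pi$ is mirrored in the Cauchy--Schwarz step, whose equality case $w(t)=t^{1/2}$ sits exactly on the boundary of integrability.
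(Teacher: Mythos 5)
The paper gives no proof of this lemma; it is cited from Moser and Trudinger, so there is nothing internal to compare with. Your reduction to the one-dimensional problem is correct and standard: after normalizing $\|\nabla u\|_{L^{2}}=1$ and passing to the Schwarz rearrangement $u^{*}$ on $B_{R}$ with $\pi R^{2}=|\Omega|$, the substitution $r=Re^{-t/2}$, $w(t)=\sqrt{4\pi}\,u^{*}(Re^{-t/2})$ does give $w(0)=0$, $\int_{0}^{\infty}|w'|^{2}\,dt=\int_{B_{R}}|\nabla u^{*}|^{2}\,dx$, and $\int_{B_{R}}e^{4\pi(u^{*})^{2}}\,dx=|\Omega|\int_{0}^{\infty}e^{w^{2}-t}\,dt$, so the claim does reduce to the one-dimensional Moser lemma as you say.

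The sketch of that one-dimensional lemma, however, has a genuine gap. Since $w(0)^{2}=0\geq -\tau$, the stopping time $t_{0}:=\inf\{t\geq 0:w(t)^{2}\geq t-\tau\}$ is always equal to $0$, so the proposed split degenerates. The directional claim is also backwards: the condition $w(t)^{2}\geq t-\tau$ forces $e^{w^{2}-t}\geq e^{-\tau}$, not $\leq e^{-\tau}$, and in any case the integrand equals $1$ at $t=0$, so no interval containing the origin can satisfy the bound you state. Even after correcting to $t_{0}:=\sup\{t:w(t)^{2}\geq t-\tau\}$, the tail estimate from Cauchy--Schwarz with the remaining mass $m=1-\int_{0}^{t_{0}}|w'|^{2}\,ds$ yields $w(t)^{2}-t\leq -(\sqrt{t_{0}m}-\sqrt{(1-m)(t-t_{0})})^{2}$, and integrating this over $(t_{0},\infty)$ gives a bound of order $(1-m)^{-1}$, which is unbounded as $m\to 1$; showing that the total stays universally bounded, using the constraint $t_{0}m\leq\tau$ which the stopping time imposes, is precisely the delicate point of Moser's one-dimensional lemma and is not resolved by the outline. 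A complete proof of that lemma, as in Moser's original paper or the later treatments of Adams or Carleson--Chang, is needed to close the argument.
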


We further introduce two auxiliary lemmas related to the Moser-Trudinger inequality.
\begin{Lem} \label{lem ren2.1}\rm{\cite{Ren1994TAMS}}
For every $t\geq 2$, there is $D_{t}>0$ such that for any $u\in H^{1}_{0}(\Omega)$
\begin{equation}
   \|u\|_{L^{t}(\Omega)}\leq D_{t}t^{1/2}\|\nabla u\|_{L^{2}(\Omega)}, 
\end{equation}
where $\Omega$ is a bounded domain in $\R^{2}$. Furthermore,
\begin{equation}
 \lim_{t\to+\infty}D_{t}=(8\pi e)^{-1/2}.
\end{equation}
\end{Lem}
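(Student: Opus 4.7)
The plan is to obtain matching asymptotic upper and lower bounds on the best constant
$$D_{t}:=\sup_{u\in H^{1}_{0}(\Omega)\setminus\{0\}}\frac{\|u\|_{L^{t}(\Omega)}}{t^{1/2}\|\nabla u\|_{L^{2}(\Omega)}}$$
by combining the Moser-Trudinger inequality of Lemma~\ref{trudinger inequality} with Stirling's formula. The first assertion (finiteness of $D_{t}$ for every $t\geq 2$) is a direct byproduct of the upper-bound argument, while the identification of the limit is the genuinely sharp content.

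For the upper bound, I would apply Lemma~\ref{trudinger inequality} to $u/\|\nabla u\|_{L^{2}(\Omega)}$ and Taylor-expand the exponential. Retaining only the $k$-th term of the power series gives
$$\frac{(4\pi)^{k}}{k!}\int_{\Omega}\left(\frac{u}{\|\nabla u\|_{L^{2}(\Omega)}}\right)^{2k}dx\leq C|\Omega|,$$
so that $\|u\|_{L^{2k}(\Omega)}\leq (C|\Omega|)^{1/(2k)}(k!)^{1/(2k)}(4\pi)^{-1/2}\|\nabla u\|_{L^{2}(\Omega)}$. Stirling's asymptotic $(k!)^{1/(2k)}=(1+o(1))\sqrt{k/e}$ as $k\to+\infty$ then produces, with $t=2k$,
$$\|u\|_{L^{t}(\Omega)}\leq (1+o(1))\sqrt{\tfrac{t}{8\pi e}}\,\|\nabla u\|_{L^{2}(\Omega)},$$
which is precisely the asserted inequality together with $\limsup_{t\to+\infty}D_{t}\leq(8\pi e)^{-1/2}$. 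For non-integer $t\geq 2$, H\"older's interpolation between $L^{2k}$ and $L^{2k+2}$ extends the estimate without altering the limit.

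The lower bound $\liminf_{t\to+\infty}D_{t}\geq(8\pi e)^{-1/2}$ is the subtler half and the main obstacle. Here one must construct test functions saturating the bound asymptotically. The classical Moser concentration sequence supported on a ball $B_{R}(x_{0})\subset\Omega$, normalized so that $\|\nabla u_{n}\|_{L^{2}(\Omega)}=1$ and sharply peaked at $x_{0}$, is the natural candidate. A direct computation of $\|u_{n}\|_{L^{t}(\Omega)}^{t}$ combined with an optimal coupling of $n$ and $t\to+\infty$ (heuristically $\log n \sim t/(8\pi e)$) yields the matching lower bound, with the constants $4\pi$ from Moser-Trudinger and $e^{-1}$ from Stirling combining cleanly into the exact value $(8\pi e)^{-1/2}$. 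The delicate point, and the reason this is the real obstacle, is to verify that the contributions from the inner core of the Moser sequence and from its logarithmic transition layer align precisely to saturate the bound; since this identification was first carried out by Ren and Wei, one may alternatively cite their original argument in \cite{Ren1994TAMS} directly.
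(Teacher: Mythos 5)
The paper does not prove this lemma; it simply cites Ren--Wei \cite{Ren1994TAMS}, so there is no internal proof to compare against, and your reconstruction has to be judged on its own. The upper-bound half of your argument is correct and complete: applying Lemma~\ref{trudinger inequality} to $u/\|\nabla u\|_{L^{2}}$, expanding $e^{4\pi s^{2}}=\sum_{k}\frac{(4\pi)^{k}}{k!}s^{2k}$, keeping the $k$-th term, and invoking Stirling's $(k!)^{1/(2k)}\sim\sqrt{k/e}$ gives exactly $D_{2k}\le(1+o(1))(8\pi e)^{-1/2}$, and interpolation fills in non-even $t$. The existence of $D_{t}>0$ for each fixed $t\ge 2$ is, as you note, just the compact Sobolev embedding in dimension two and needs no asymptotic content.

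On the lower bound, your strategy (Moser concentration sequences plus a coupling of the concentration parameter to $t$) is the right one, but the heuristic scaling you record, $\log n\sim t/(8\pi e)$, is off. Working with the Moser function $m_{l}$ on $B_{L}(x_{0})$ normalized by $\|\nabla m_{l}\|_{L^{2}}=1$ and writing $a:=\log(L/l)$, the plateau alone contributes
\begin{equation*}
\int_{|x|\le l}m_{l}^{t}\,dx=\pi L^{2}e^{-2a}\Big(\frac{a}{2\pi}\Big)^{t/2},
\end{equation*}
and maximizing $e^{-2a/t}a^{1/2}$ over $a$ gives $a=t/4$ (not $t/(8\pi e)$), which already yields $\|m_{l}\|_{L^{t}}\ge(\pi L^{2})^{1/t}\sqrt{t/(8\pi e)}$. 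The transition layer $l\le|x|\le L$, which you flag as the ``delicate point,'' is in fact harmless: at the optimal $a=t/4$ it contributes a quantity of the same order as the plateau (roughly doubling $\|m_{l}\|_{L^{t}}^{t}$), and the factor disappears after taking the $t$-th root. So there is no delicate alignment needed; a direct Laplace-type estimate on $\int_{l}^{L}(\log(L/r))^{t}r\,dr$ suffices. With that correction your sketch becomes a complete and essentially elementary proof, which is indeed the one in \cite{Ren1994TAMS} that the paper cites.
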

\begin{Lem}\rm{\cite{Brezis-Merle-1991}} \label{lem ren4.3}
Let $u$ be a solution of 
   \begin{equation}
    \begin{cases}
        -\Delta u=f,&\quad\text{in~}\Omega,\\
        \quad\ \ u=0,&\quad\text{on~}\partial\Omega,
    \end{cases}        
    \end{equation}
where $\Omega$ is a smooth bounded domain in $\R^{2}$. Then for any $0<\varepsilon<4\pi$ we have 
\begin{equation}
    \int_{\Omega}e^{\frac{(4\pi-\varepsilon)|u|(x)}{\|f\|_{L^{1}(\Omega)}}}dx\leq\frac{4\pi^{2}}{\varepsilon}(\text{diam~}\Omega)^{2}.
\end{equation}
\end{Lem}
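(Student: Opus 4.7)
The strategy is the classical Jensen-plus-Green's-function argument. Since the right-hand side is invariant under replacing $f$ by $|f|$, I first reduce to the case $f \geq 0$ via the maximum principle: letting $v$ solve $-\Delta v = |f|$ in $\Omega$ with $v=0$ on $\partial\Omega$, one has $|u| \leq v$, so it suffices to bound the exponential integral of $v$. Normalizing by $\|f\|_{L^1(\Omega)}$ and writing $v$ through Green's representation,
\begin{equation*}
v(x) = \int_\Omega G(x,y)\,|f(y)|\,dy,
\end{equation*}
the weight $d\mu(y) := |f(y)|\,dy/\|f\|_{L^1}$ is a probability measure on $\Omega$.

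Next I would apply Jensen's inequality to the convex function $t \mapsto e^{(4\pi-\varepsilon)t}$ with the measure $d\mu$:
\begin{equation*}
\exp\!\left(\frac{(4\pi-\varepsilon)\,v(x)}{\|f\|_{L^1}}\right)
= \exp\!\left((4\pi-\varepsilon)\int_\Omega G(x,y)\,d\mu(y)\right)
\leq \int_\Omega e^{(4\pi-\varepsilon) G(x,y)}\,d\mu(y).
\end{equation*}
After integrating in $x$ and swapping the order of integration by Fubini, the estimate reduces to a uniform bound on $\int_\Omega e^{(4\pi-\varepsilon) G(x,y)}\,dx$ for each $y \in \Omega$.

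The key pointwise bound on the Green's function is $G(x,y) \leq \frac{1}{2\pi}\log(d/|x-y|)$ with $d := \operatorname{diam}(\Omega)$. This follows from the decomposition $G=S-H$ in \eqref{eq-green-function}: on $\partial\Omega$ we have $H(x,y)=S(x,y)=-\frac{1}{2\pi}\log|x-y|\geq -\frac{1}{2\pi}\log d$, and since $H(\cdot,y)$ is harmonic in $\Omega$, the maximum principle propagates this lower bound to the interior, giving the desired upper bound on $G$. Consequently
\begin{equation*}
e^{(4\pi-\varepsilon) G(x,y)} \leq \left(\frac{d}{|x-y|}\right)^{2-\varepsilon/(2\pi)}.
\end{equation*}

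Finally, using $\Omega \subset B_d(y)$ for any $y \in \Omega$, polar coordinates give
\begin{equation*}
\int_\Omega \left(\frac{d}{|x-y|}\right)^{2-\varepsilon/(2\pi)} dx
\leq d^{\,2-\varepsilon/(2\pi)} \cdot 2\pi \int_0^{d} r^{\varepsilon/(2\pi)-1}\,dr
= \frac{4\pi^2\, d^2}{\varepsilon},
\end{equation*}
uniformly in $y$. Combining this with the Jensen/Fubini step yields the claimed bound $\int_\Omega e^{(4\pi-\varepsilon)|u|/\|f\|_{L^1}}\,dx \leq \frac{4\pi^2}{\varepsilon}(\operatorname{diam}\Omega)^2$.

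The main obstacle, and the step where care is needed, is the exponent bookkeeping: one must track that the Jensen exponent $4\pi-\varepsilon$ combined with the $2\pi$-factor in $S(x,y)$ produces exactly the subcritical power $2 - \varepsilon/(2\pi) < 2$, which is precisely what makes the radial integral $\int_0^d r^{\varepsilon/(2\pi)-1}\,dr$ finite and gives the sharp $1/\varepsilon$ blow-up as $\varepsilon \to 0^+$. The cancellation $d^{2-\varepsilon/(2\pi)} \cdot d^{\varepsilon/(2\pi)} = d^2$ then yields the clean constant $4\pi^2 d^2/\varepsilon$ stated in the lemma.
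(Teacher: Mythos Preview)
Your proof is correct and is exactly the classical Brezis--Merle argument from \cite{Brezis-Merle-1991}; the paper itself does not reproduce a proof but simply cites that reference. There is nothing to add.
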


When $N=2$, the limit equation of \eqref{slightly subcritical choquard equation} as $p\to+\infty$ becomes the following planar Choquard equation with an exponential nonlinearity
\begin{equation}\label{limit-equation}
    -\Delta u=\left(\int_{\R^{2}}\frac{e^{u(y)}}{|x-y|^{\alpha}}dy\right)e^{u(x)},\ \ \text{~in~}\R^{2}.
\end{equation}
The classification of the solutions to \eqref{limit-equation} is as follows. 
\smallskip

\noindent\textbf{Theorem B.}\cite{Guo2024JGA,Yang,Niu2025,Gluck2025dcds} \emph{Suppose $\alpha\in(0,2)$ and $u\in L^{1}_{loc}(\R^{2})$ is a distributional solution to the equation \eqref{limit-equation} satisfying
\begin{equation}
    \int_{\R^{2}}e^{\frac{4}{4-\alpha}u(x)}dx<+\infty.
\end{equation}
Then $u\in C^{\infty}(\R^{2})$ and must have the following form
\begin{equation}\label{defin-U-xi-lambda}
    u(x)=U_{\mu,\xi}(x):=\frac{4-\alpha}{2}\log\left( \frac{C_{\alpha}\mu}{1+\mu^{2}|x-\xi|^{2}}\right),
\end{equation}
where $C_{\alpha}:=\left(\frac{(2-\alpha)(4-\alpha)}{\pi}\right)^{\frac{1}{4-\alpha}}$ is a positive constant, $\mu>0$ and $\xi\in\R^{2}$ are two parameters. Moreover, we have
\begin{equation}
    \int_{\R^{2}}e^{\frac{4}{4-\alpha}U_{\mu,\xi}(x)}dx=\pi C^{2}_{\alpha}\text{~and~}\int_{\R^{2}}\int_{\R^{2}}\frac{e^{U_{\mu,\xi}(y)}e^{U_{\mu,\xi}(x)}}{|x-y|^{\alpha}}dydx=2(4-\alpha)\pi.
\end{equation}
}
In particular
\begin{equation}
    \int_{\R^{2}}\frac{e^{U_{\mu,\xi}(y)}}{|x-y|^{\alpha}}dy=\frac{2(4-\alpha)}{C_{\alpha}^{2}}e^{\frac{\alpha}{4-\alpha}U_{\mu,\xi}(x)}.
\end{equation}

Finally, we establish various local Poho\v zaev identities, which are useful to study the local properties of solutions to \eqref{slightly subcritical choquard equation}. 
\begin{Lem}
Suppose that $u_{p}$ is a classic solution of equation~\eqref{slightly subcritical choquard equation}.
Then, for any bounded domain $\Omega^{\prime}\subset\Omega$, the following identities hold
\begin{equation}\label{Pohozaev 1}
    \begin{split}
        &\frac{1}{2}\int_{\partial \Omega^{\prime}}\big\langle x-z,\nu\big\rangle|\nabla u_{p}|^2d\sigma_{x}-\int_{\partial\Omega^{\prime}}\frac{\partial u_{p}}{\partial\nu}\big\langle x-z,\nabla u_{p}\big\rangle d\sigma_{x}\\
&=-\frac{4-\alpha}{2(p+1)}\int_{\Omega^{\prime}}\int_{\Omega^{\prime}}\frac{u_{p}^{p+1}(y)u_{p}^{p+1}(x)}
{|x-y|^{\alpha}}dydx\\
&\quad-\frac{2}{p+1}\int_{\Omega^{\prime}}\int_{\Omega\setminus\Omega^{\prime}}\frac{u_{p}^{p+1}(y)u_{p}^{p+1}(x)}
{|x-y|^{\alpha}}dydx\\
&\quad+\frac{\alpha}{p+1}\int_{\Omega^{\prime}}\int_{\Omega\setminus\Omega^{\prime}}\langle x-z,x-y\rangle\frac{u_{p}^{p+1}(y)u_{p}^{p+1}(x)}
{|x-y|^{\alpha+2}}dy dx\\
&\quad+\frac{1}{p+1}\int_{\partial \Omega^{\prime}}\int_{ \Omega}\frac{u_{p}^{p+1}(y)u_{p}^{p+1}(x)}
{|x-y|^{\alpha}}\big\langle x-z,\nu\big\rangle dy d\sigma_{x}\\
    \end{split}
\end{equation}
and
\begin{equation}\label{Pohozaev 2}
    \begin{split}
        -&\int_{\partial \Omega^{\prime}}\frac{\partial u_{p}}{\partial x_j}\frac{\partial u_{p}}{\partial \nu}d\sigma_{x}+\frac{1}{2}\int_{\partial \Omega^{\prime}}|\nabla u_{p}|^{2}v_{j}d\sigma_{x}\\
        &=\frac{\alpha}{p+1}\int_{\Omega^{\prime}}\int_{\Omega\setminus \Omega^{\prime}}(x_j-y_j)\frac{u_{p}^{p+1}(y)u_{p}^{p+1}(x)}
{|x-y|^{\alpha+2}}dy dx\\
&\quad+\frac{1}{p+1}\int_{\partial \Omega^{\prime}}\int_{ \Omega}\frac{u_{p}^{p+1}(y)u_{p}^{p+1}(x)}{|x-y|^{\alpha}}\nu_jdy d\sigma_{x},
    \end{split}
\end{equation}
where $j=1,2$, $z\in\R^{2}$ and $\nu=\nu(x)$ denotes the unit outward normal to the boundary $\partial\Omega^{\prime}$. Moreover, when $\Omega^{\prime}=\Omega$, we have
\begin{equation}\label{Pohozaev 3}
    \begin{split}
\int_{\partial \Omega}\big\langle x-z,\nu\big\rangle\left(\frac{\partial u_{p}}{\partial\nu}\right)^2d\sigma_{x}
=\frac{(4-\alpha)}{p+1}\int_{\Omega}\int_{\Omega}\frac{u_{p}^{p+1}(y)u_{p}^{p+1}(x)}
{|x-y|^{\alpha}}dydx.
    \end{split}
\end{equation}
\end{Lem}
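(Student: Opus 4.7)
The strategy for all three identities is the classical Pohozaev multiplier technique adapted to the nonlocal setting: multiply the equation satisfied by $u_p$ by a well-chosen vector field, integrate over $\Omega'$, use integration by parts on the Laplacian side, and then reorganize the right-hand side via the symmetry of the convolution kernel. For identity \eqref{Pohozaev 1}, I would multiply $-\Delta u_p = \left(\int_\Omega u_p^{p+1}(y)|x-y|^{-\alpha}dy\right)u_p^p(x)$ by $\langle x-z,\nabla u_p(x)\rangle$ and integrate over $\Omega'$. The Laplacian side is a standard computation producing exactly the two boundary integrals on the left-hand side. For the nonlinear side I use $u_p^p\nabla u_p=\tfrac{1}{p+1}\nabla(u_p^{p+1})$, which converts the bulk integral into
\begin{equation*}
\frac{1}{p+1}\int_{\Omega'}\int_\Omega\frac{u_p^{p+1}(y)}{|x-y|^\alpha}\langle x-z,\nabla_x u_p^{p+1}(x)\rangle\,dy\,dx.
\end{equation*}

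Now I would split the inner integral as $\int_\Omega=\int_{\Omega'}+\int_{\Omega\setminus\Omega'}$ and integrate by parts in the $x$-variable on $\Omega'$ in each piece. On the diagonal piece $(x,y)\in\Omega'\times\Omega'$ there is an integrable singularity on $\{x=y\}$ but the boundary of the singular set contributes nothing, so the divergence theorem produces: (a) a boundary term on $\partial\Omega'\times\Omega'$, (b) a bulk term coming from $\operatorname{div}_x(x-z)=2$, and (c) a bulk term $\tfrac{\alpha}{p+1}\int_{\Omega'}\int_{\Omega'}\langle x-z,x-y\rangle|x-y|^{-\alpha-2}u_p^{p+1}(x)u_p^{p+1}(y)\,dy\,dx$ from differentiating $|x-y|^{-\alpha}$. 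The key algebraic step, which I expect to be the crux of the argument, is to symmetrize this last term by swapping $x\leftrightarrow y$ and using
\begin{equation*}
\langle x-z,x-y\rangle+\langle y-z,y-x\rangle=|x-y|^2,
\end{equation*}
so that half of this term collapses into $\tfrac{\alpha}{2(p+1)}\int\int u_p^{p+1}(y)u_p^{p+1}(x)|x-y|^{-\alpha}dy\,dx$. Combined with the $-2/(p+1)$ term coming from (b), the diagonal piece yields the coefficient $-(4-\alpha)/2(p+1)$ that appears in \eqref{Pohozaev 1}. The off-diagonal piece $(x,y)\in\Omega'\times(\Omega\setminus\Omega')$ has no singularity and the same integration by parts (without symmetrization, since the two domains are different) produces directly the remaining three terms on the right-hand side. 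Assembling all boundary contributions, using that the inner integral over $y\in\Omega$ can be recombined in the $\partial\Omega'$ term, gives \eqref{Pohozaev 1}.

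The second identity \eqref{Pohozaev 2} is derived in the same way, but with the multiplier $\partial_{x_j}u_p$ instead of $\langle x-z,\nabla u_p\rangle$. The Laplacian side yields precisely the two boundary terms on the left, and the nonlinear side, after the substitution $u_p^p\partial_{x_j}u_p=\tfrac{1}{p+1}\partial_{x_j}u_p^{p+1}$ and integration by parts, produces the boundary term $\tfrac{1}{p+1}\int_{\partial\Omega'}\int_\Omega|x-y|^{-\alpha}u_p^{p+1}(x)u_p^{p+1}(y)\nu_j\,dy\,d\sigma_x$ together with an interior term. On the diagonal piece $\Omega'\times\Omega'$ this interior term is antisymmetric under $x\leftrightarrow y$ (since $\partial_{x_j}|x-y|^{-\alpha}=-\partial_{y_j}|x-y|^{-\alpha}$ and no weight $(x-z)$ is present) and therefore vanishes after symmetrization, leaving only the off-diagonal contribution $\tfrac{\alpha}{p+1}\int_{\Omega'}\int_{\Omega\setminus\Omega'}(x_j-y_j)|x-y|^{-\alpha-2}u_p^{p+1}(y)u_p^{p+1}(x)\,dy\,dx$ as stated. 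Finally, for \eqref{Pohozaev 3} I would specialize \eqref{Pohozaev 1} to $\Omega'=\Omega$: the two off-diagonal bulk integrals disappear, the Dirichlet condition $u_p=0$ on $\partial\Omega$ makes the boundary convolution term vanish and forces $\nabla u_p=(\partial_\nu u_p)\nu$ on $\partial\Omega$, so $\langle x-z,\nabla u_p\rangle\partial_\nu u_p=\langle x-z,\nu\rangle(\partial_\nu u_p)^2$ and $|\nabla u_p|^2=(\partial_\nu u_p)^2$. Combining the two boundary integrals with a factor $-1/2$ and rearranging yields exactly \eqref{Pohozaev 3}. The only delicate point throughout is justifying the integrations by parts in the presence of the $|x-y|^{-\alpha}$ singularity on the diagonal, which is routine for $\alpha\in(0,2)$ and smooth $u_p$ by truncating a small neighborhood of the diagonal and letting its size tend to zero.
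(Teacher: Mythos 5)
Your proposal is correct and follows essentially the same route as the paper: multiply by $\langle x-z,\nabla u_p\rangle$ (resp.\ $\partial_{x_j}u_p$), integrate by parts, split the convolution integral as $\int_\Omega=\int_{\Omega'}+\int_{\Omega\setminus\Omega'}$, and on the diagonal block symmetrize in $x\leftrightarrow y$ using $\langle x-z,x-y\rangle+\langle y-z,y-x\rangle=|x-y|^2$ (resp.\ antisymmetry of $(x_j-y_j)|x-y|^{-\alpha-2}$) to produce the coefficient $-(4-\alpha)/(2(p+1))$ (resp.\ to kill the diagonal interior term); the paper implements exactly this symmetrization by writing the same integral twice with roles of $x$ and $y$ exchanged and averaging. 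Your derivation of \eqref{Pohozaev 3} from \eqref{Pohozaev 1} via $\Omega'=\Omega$ and $\nabla u_p=(\partial_\nu u_p)\nu$ on $\partial\Omega$ also matches the paper.
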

\begin{proof}
Without loss of generality, we may assume that $z=0$. First, we multiply both sides of equation \eqref{slightly subcritical choquard equation} by $\langle x,\nabla u_{p}\rangle$ and integrate on $\Omega^{\prime}$, 
\begin{equation}\label{Pohozaev 1 proof 1}
-\int_{\Omega^{\prime}}\Delta u_{p}\langle x,\nabla u_{p}\rangle dx
=\int_{\Omega^{\prime}}\langle x,\nabla u_{p}(x)\rangle\Big(\int_{\Omega}\frac{u_{p}^{p+1}(y)}
{|x-y|^{\alpha}}dy\Big)u_{p}^{p}(x)dx.
\end{equation}
Notice that
\begin{equation}\label{Pohozaev 1 proof 2}
\begin{split}
\int_{\Omega^{\prime}}&\big\langle x,\nabla u_{p}(x)\big\rangle\Big(\int_{\Omega}\frac{u_{p}^{p+1}(y)}
{|x-y|^{\alpha}}dy\Big)u_{p}^{p}(x)dx\\&
=\int_{\Omega^{\prime}}\big\langle x,\nabla u_{p}(x)\big\rangle\Big(\int_{\Omega^{\prime}}\frac{u_{p}^{p+1}(y)}
{|x-y|^{\alpha}}dy\Big)u_{p}^{p}(x)dx\\
&\quad+\int_{\Omega^{\prime}}\big\langle x,\nabla u_{p}(x)\big\rangle\Big(\int_{\Omega\setminus\Omega^{\prime}}\frac{u_{p}^{p+1}(y)}
{|x-y|^{\alpha}}dy\Big)u_{p}^{p}(x)dx.
\end{split}
\end{equation}
We calculate the first term on the right-hand side of \eqref{Pohozaev 1 proof 2}
\begin{equation}\label{Pohozaev 1 proof 3}
\begin{split}
\int_{\Omega^{\prime}}&\big\langle x,\nabla u_{p}(x)\big\rangle\Big(\int_{\Omega^{\prime}}\frac{u_{p}^{p+1}(y)}
{|x-y|^{\alpha}}dy\Big)u_{p}^{p}(x)dx\\&
=-\frac{2}{p+1}\int_{\Omega^{\prime}}\int_{\Omega^{\prime}}\frac{u_{p}^{p+1}(y)u_{p}^{p+1}(x)}
{|x-y|^{\alpha}}dydx
+\frac{1}{p+1}\int_{\partial \Omega^{\prime}}\int_{ \Omega^{\prime}}\frac{u_{p}^{p+1}(y)u_{p}^{p+1}(x)}
{|x-y|^{\alpha}}\big\langle x,\nu\big\rangle dy d\sigma_{x}\\
&\quad+\frac{\alpha}{p+1}\int_{\Omega^{\prime}}\int_{\Omega^{\prime}}\langle x,x-y\rangle\frac{u_{p}^{p+1}(y)u_{p}^{p+1}(x)}
{|x-y|^{\alpha+2}}dy dx.\\
\end{split}
\end{equation}
Similarly, we can deduce
\begin{equation}\label{Pohozaev 1 proof 4}
\begin{split}
\int_{\Omega^{\prime}}&\big\langle y,\nabla u_{p}(y)\big\rangle\Big(\int_{\Omega^{\prime}}\frac{u_{p}^{p+1}(x)}
{|x-y|^{\alpha}}dy\Big)u_{p}^{p}(y)dy\\&
=-\frac{2}{p+1}\int_{\Omega^{\prime}}\int_{\Omega^{\prime}}\frac{u_{p}^{p+1}(x)u_{p}^{p+1}(y)}
{|x-y|^{\alpha}}dxdy
+\frac{1}{p+1}\int_{\partial \Omega^{\prime}}\int_{ \Omega^{\prime}}\frac{u_{p}^{p+1}(x)u_{p}^{p+1}(y)}
{|x-y|^{\alpha}}\big\langle y,\nu\big\rangle dx d\sigma_{y}\\
&\quad+\frac{\alpha}{p+1}\int_{\Omega^{\prime}}\int_{\Omega^{\prime}}\langle y,y-x\rangle\frac{u_{p}^{p+1}(x)u_{p}^{p+1}(y)}
{|x-y|^{\alpha+2}}dx dy.\\
\end{split}
\end{equation}
Combining \eqref{Pohozaev 1 proof 3} and \eqref{Pohozaev 1 proof 4} together, then we can deduce that
\begin{equation}\label{Pohozaev 1 proof 5}
\begin{split}
&\int_{\Omega^{\prime}}\big\langle x,\nabla u_{p}(x)\big\rangle\Big(\int_{\Omega^{\prime}}\frac{u_{p}^{p+1}(y)}
{|x-y|^{\alpha}}dy\Big)u_{p}^{p}(x)dx\\&
=\frac{\alpha-4}{2(p+1)}\int_{\Omega^{\prime}}\int_{\Omega^{\prime}}\frac{u_{p}^{p+1}(y)u_{p}^{p+1}(x)}
{|x-y|^{\alpha}}dydx+\frac{1}{p+1}\int_{\partial \Omega^{\prime}}\int_{ \Omega^{\prime}}\frac{u_{p}^{p+1}(y)u_{p}^{p+1}(x)}
{|x-y|^{\alpha}}\big\langle x,\nu\big\rangle dy d\sigma_{x}.
\end{split}
\end{equation}
For the second term on the right-hand side of \eqref{Pohozaev 1 proof 2}, we have
\begin{equation}
\begin{split}
\int_{\Omega^{\prime}}&\big\langle x,\nabla u_{p}(x)\big\rangle\Big(\int_{\Omega\setminus\Omega^{\prime}}\frac{u_{p}^{p+1}(y)}
{|x-y|^{\alpha}}dy\Big)u_{p}^{p}(x)dx\\&
=-\frac{2}{p+1}\int_{\Omega^{\prime}}\int_{\Omega\setminus\Omega^{\prime}}\frac{u_{p}^{p+1}(y)u_{p}^{p+1}(x)}
{|x-y|^{\alpha}}dydx\\
&\quad+\frac{1}{p+1}\int_{\partial \Omega^{\prime}}\int_{ \Omega\setminus\Omega^{\prime}}\frac{u_{p}^{p+1}(y)u_{p}^{p+1}(x)}
{|x-y|^{\alpha}}\big\langle x,\nu\big\rangle dy d\sigma_{x}\\
&\quad+\frac{\alpha}{p+1}\int_{\Omega^{\prime}}\int_{\Omega\setminus\Omega^{\prime}}\langle x,x-y\rangle\frac{u_{p}^{p+1}(y)u_{p}^{p+1}(x)}
{|x-y|^{\alpha+2}}dy dx.
\end{split}
\end{equation}
On the other hand, we have
\begin{equation}\label{Pohozaev 1 proof 9}
-\int_{\Omega^{\prime}}\Delta u_{p}\big\langle x,\nabla u_{p}\big\rangle dx=\frac{1}{2}\int_{\partial \Omega^{\prime}}\big\langle x,\nu\big\rangle|\nabla u_{p}|^2d\sigma_{x}-\int_{\partial\Omega^{\prime}}\frac{\partial u_{p}}{\partial\nu}\big\langle x,\nabla u_{p}\big\rangle d\sigma_{x},
\end{equation}
Now combining \eqref{Pohozaev 1 proof 1}-\eqref{Pohozaev 1 proof 9} together, we can obtain \eqref{Pohozaev 1}.  

To prove \eqref{Pohozaev 2}, we multiply both sides of \eqref{slightly subcritical choquard equation} by $\frac{\partial u_{p}}{\partial x_j}$ and integrate on $\Omega^{\prime}$, 
\begin{equation}\label{Pohozaev 2 proof 1}
-\int_{\Omega^{\prime}}\Delta u_{p}\frac{\partial u_{p}}{\partial x_j}dx
=\int_{\Omega^{\prime}}\frac{\partial u_{p}}{\partial x_j}\Big(\int_{\Omega}\frac{u_{p}^{p+1}(y)}
{|x-y|^{\alpha}}dy\Big)u_{p}^{p}(x)dx.
\end{equation}
The right-hand side of \eqref{Pohozaev 2 proof 1} can be estimated as follows
\begin{equation}\label{Pohozaev 2 proof 2}
\begin{split}
    \int_{\Omega^{\prime}}\frac{\partial u_{p}}{\partial x_j}\Big(\int_{\Omega}\frac{u_{p}^{p+1}(y)}
{|x-y|^{\alpha}}dy\Big)u_{p}^{p}(x)dx&=\int_{\Omega^{\prime}}\frac{\partial u_{p}}{\partial x_j}\Big(\int_{\Omega^{\prime}}\frac{u_{p}^{p+1}(y)}
{|x-y|^{\alpha}}dy\Big)u_{p}^{p}(x)dx\\
&\quad+\int_{\Omega^{\prime}}\frac{\partial u_{p}}{\partial x_j}\Big(\int_{\Omega\setminus \Omega^{\prime}}\frac{u_{p}^{p+1}(y)}
{|x-y|^{\alpha}}dy\Big)u_{p}^{p}(x)dx.
\end{split} 
\end{equation}
We calculate the first term on the right-hand side of \eqref{Pohozaev 2 proof 2}
\begin{equation}
\begin{split}
\int_{\Omega^{\prime}}&\frac{\partial u_{p}}{\partial x_j}\Big(\int_{\Omega^{\prime}}\frac{u_{p}^{p+1}(y)}
{|x-y|^{\alpha}}dy\Big)u_{p}^{p}(x)dx\\
&=\frac{1}{p+1}\int_{\partial \Omega^{\prime}}\int_{ \Omega^{\prime}}\frac{u_{p}^{p+1}(y)u_{p}^{p+1}(x)}
{|x-y|^{\alpha}}\nu_jdy d\sigma_{x}\\
&\quad+\frac{\alpha}{p+1}\int_{\Omega^{\prime}}\int_{\Omega^{\prime}}(x_j-y_j)\frac{u_{p}^{p+1}(y)u_{p}^{p+1}(x)}
{|x-y|^{\alpha+2}}dy dx\\
&=\frac{1}{p+1}\int_{\partial \Omega^{\prime}}\int_{ \Omega^{\prime}}\frac{u_{p}^{p+1}(y)u_{p}^{p+1}(x)}
{|x-y|^{\alpha}}\nu_jdy d\sigma_{x}.
\end{split}
\end{equation}
For the second term on the right-hand side of \eqref{Pohozaev 2 proof 2}
\begin{equation}
\begin{split}
\int_{\Omega^{\prime}}&\frac{\partial u_{p}}{\partial x_j}\Big(\int_{\Omega\setminus \Omega^{\prime}}\frac{u_{p}^{p+1}(y)}
{|x-y|^{\alpha}}dy\Big)u_{p}^{p}(x)dx\\
&=\frac{1}{p+1}\int_{\partial \Omega^{\prime}}\int_{ \Omega\setminus \Omega^{\prime}}\frac{u_{p}^{p+1}(y)u_{p}^{p+1}(x)}
{|x-y|^{\alpha}}\nu_jdy d\sigma_{x}\\
&\quad+\frac{\alpha}{p+1}\int_{\Omega^{\prime}}\int_{\Omega\setminus \Omega^{\prime}}(x_j-y_j)\frac{u_{p}^{p+1}(y)u_{p}^{p+1}(x)}
{|x-y|^{\alpha+2}}dy dx.
\end{split}
\end{equation}
On the other hand, the left-hand side of \eqref{Pohozaev 2 proof 1}
\begin{equation}\label{Pohozaev 2 proof 5}
    \begin{split}
        -\int_{\Omega^{\prime}}\Delta u_{p}\frac{\partial u_{p}}{\partial x_j}dx=-\int_{\partial \Omega^{\prime}}\frac{\partial u_{p}}{\partial x_j}\frac{\partial u}{\partial \nu}d\sigma_{x}+\frac{1}{2}\int_{\partial \Omega^{\prime}}|\nabla u_{p}|^{2}v_{j}d\sigma_{x}.
    \end{split}
\end{equation}
Now, combining \eqref{Pohozaev 2 proof 1}-\eqref{Pohozaev 2 proof 5} together, we can obtain \eqref{Pohozaev 2}. 

Finally, notice that $u_{p}=0$ on $\partial\Omega$, thus $\nabla u_{p}=\pm|\nabla u_{p}|\nu$ and we can deduce \eqref{Pohozaev 3} from \eqref{Pohozaev 1}. This completes the proof.
\end{proof}


\section{Proof of Theorem \ref{thm-1}}\label{section-prooftheorem}
In this section, we consider the asymptotic behavior of the least energy solution $u_{p}$ as $p\to+\infty$. First, we establish a refined estimate for the minimizing value $S_{p}$.
\begin{Prop}\label{estimate of S-p}
Let $\alpha\in(0,2)$ and $\Omega\subset\R^{2}$ be a smooth bounded domain. It holds that
\begin{equation}\label{prop-estimate-of-S-p}
    \lim_{p\to+\infty}p^{1/2}S_{p}=(2(4-\alpha)\pi e)^{1/2}.
\end{equation}
\end{Prop}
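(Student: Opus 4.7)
The plan is to prove \eqref{prop-estimate-of-S-p} by establishing matching upper and lower bounds on $pS_p^2$. The lower bound will follow from the sharp interplay between the Hardy-Littlewood-Sobolev inequality and Ren's $L^t$-estimate; the upper bound will be obtained by an explicit test function modeled on the limiting Liouville profile from Theorem B.

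For the lower bound, I would combine the Hardy-Littlewood-Sobolev inequality (Lemma \ref{lema 2.1}) with Ren's $L^t$ estimate (Lemma \ref{lem ren2.1}). Applying Lemma \ref{lema 2.1} with $N=2$, $\theta=r=\frac{4}{4-\alpha}$, $f=g=u^{p+1}$ followed by Lemma \ref{lem ren2.1} at $t_p:=\frac{4(p+1)}{4-\alpha}$ gives, for every admissible $u\in H^1_0(\Omega)$,
\[
\iint_{\Omega\times\Omega}\frac{u^{p+1}(x)u^{p+1}(y)}{|x-y|^{\alpha}}\,dx\,dy \leq C_{2,\alpha}\,\|u\|_{L^{t_p}}^{2(p+1)} \leq C_{2,\alpha}\,D_{t_p}^{2(p+1)}\,t_p^{p+1}\,\|\nabla u\|_{L^2}^{2(p+1)}.
\]
Taking the infimum over $u$ normalized so that the HLS double integral equals $1$, and using $D_{t_p}\to(8\pi e)^{-1/2}$ and $C_{2,\alpha}^{1/(p+1)}\to 1$:
\[
pS_p^2 \geq \frac{p(4-\alpha)}{4(p+1)\,C_{2,\alpha}^{1/(p+1)}\,D_{t_p}^2} \xrightarrow{p\to+\infty} \frac{(4-\alpha)\cdot 8\pi e}{4} = 2(4-\alpha)\pi e.
\]

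For the upper bound, I would use an explicit test function modeled on the Liouville profile $v$ from \eqref{definition of v}. Fix any ball $\overline{B_R(x_0)}\subset\Omega$ and set $\varepsilon_p:=(pe^p)^{-1/(4-\alpha)}$. Define
\[
\phi_p(x):=\sqrt{e}\left(1+\frac{1}{p}\,v\!\left(\frac{x-x_0}{\varepsilon_p}\right)\right)_+,
\]
slightly mollified near the transition so that $\phi_p\in H^1_0(\Omega)$. The positive-part cutoff activates where $v=-p$, i.e., at $|X|\sim C_\alpha e^{p/(4-\alpha)}$ in the rescaled variable $X=(x-x_0)/\varepsilon_p$, equivalently $|x-x_0|\sim C_\alpha\,p^{-1/(4-\alpha)}\to 0$, so $\operatorname{supp}\phi_p\subset B_R(x_0)$ for $p$ large. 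Using $(1+v/p)^{p+1}\to e^v$ and the HLS value of the bubble from Theorem B,
\[
\iint\frac{\phi_p^{p+1}(x)\phi_p^{p+1}(y)}{|x-y|^{\alpha}}\,dx\,dy \sim e^{p+1}\,\varepsilon_p^{4-\alpha}\cdot 2(4-\alpha)\pi = \frac{2(4-\alpha)\pi e}{p},
\]
so that $(\iint)^{1/(p+1)}\to 1$. Direct computation of $|\nabla v|^2=(4-\alpha)^2 C_\alpha^{-4}|X|^2/(1+C_\alpha^{-2}|X|^2)^2$ yields $\int_{|X|\leq M}|\nabla v|^2\sim 2\pi(4-\alpha)^2\log(M/C_\alpha)$, and with $M=C_\alpha e^{p/(4-\alpha)}$ this gives
\[
\|\nabla\phi_p\|_{L^2}^2\sim\frac{e}{p^2}\cdot 2\pi(4-\alpha)^2\cdot\frac{p}{4-\alpha}=\frac{2(4-\alpha)\pi e}{p},
\]
hence $pS_p^2\leq p\|\nabla\phi_p\|^2/(\iint)^{1/(p+1)}\to 2(4-\alpha)\pi e$.

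The main difficulty is the delicacy of the upper-bound construction: the Liouville bubble $v$ has infinite Dirichlet energy on $\R^2$, and the truncation at the effective scale $|X|\sim C_\alpha e^{p/(4-\alpha)}$ produces a $\log p$ factor that must exactly cancel the $1/p^2$ prefactor in $\|\nabla\phi_p\|^2$, leaving the correct $1/p$ behavior. One must also verify that the smooth mollification near the positive-part transition, chosen so that $\phi_p\in H^1_0(\Omega)$, is negligible in both the Dirichlet energy and the nonlocal integral. Matching the two bounds then completes the proof.
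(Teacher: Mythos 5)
Your lower bound is exactly the paper's argument: apply the Hardy--Littlewood--Sobolev inequality (Lemma~\ref{lema 2.1}) with $\theta=r=\tfrac{4}{4-\alpha}$, then Ren's $L^t$ estimate (Lemma~\ref{lem ren2.1}) at $t_p=\tfrac{4(p+1)}{4-\alpha}$, and pass to the limit using $D_{t_p}\to(8\pi e)^{-1/2}$ and $C_{2,\alpha}^{1/(p+1)}\to 1$.

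Your upper bound, however, takes a genuinely different route. The paper tests the Rayleigh quotient with a Moser-type logarithmic cutoff $m_l$ truncated at $l=Le^{-(p+1)/(4-\alpha)}$: by design $\|\nabla m_l\|_{L^2}=1$ exactly, and the nonlocal term is bounded from below by restricting the double integral to the inner ball $B_l(0)$ where $m_l$ is constant, which is an entirely elementary computation. You instead rescale the Liouville bubble $v$ at scale $\varepsilon_p=(pe^p)^{-1/(4-\alpha)}$ and truncate by a positive part. Your computations of $p\|\nabla\phi_p\|_{L^2}^2\to 2(4-\alpha)\pi e$ and of the $(p+1)$-th root of the nonlocal integral tending to $1$ are correct (the dominated-convergence step uses $(1+v/p)_+^{p+1}\le e^v$), and the support shrinks like $C_\alpha p^{-1/(4-\alpha)}\to 0$, so the test function is admissible without any constraint on the inradius. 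One small simplification: no mollification is actually needed, since $t\mapsto t_+$ is Lipschitz and the kink set $\{v=-p\}$ has measure zero, so $\phi_p\in H^1_0(\Omega)$ directly and the Dirichlet energy is exactly $\tfrac{e}{p^2}\int_{\{v>-p\}}|\nabla v|^2$. The main tradeoff is that your construction relies on the classification Theorem~B (to know the explicit bubble profile and the value $\iint e^v e^v/|x-y|^\alpha\,dx\,dy=2(4-\alpha)\pi$), whereas the paper's Moser cutoff keeps Proposition~\ref{estimate of S-p} logically independent of any limit-equation analysis; in exchange, your test function is the true concentration profile, which makes the sharpness of the bound structurally transparent rather than coincidental. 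Both arguments are valid.
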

\begin{proof}
   By the HLS inequality and Lemma \ref{lem ren2.1}, we have
    \begin{equation}
    \begin{aligned}
         \left(\int_{\Omega}\int_{\Omega}\frac{u_{p}^{p+1}(y)u_{p}^{p+1}(x)}{|x-y|^{\alpha}}dydx\right)^{\frac{1}{2(p+1)}}&\leq C_{2,\alpha}^{\frac{1}{2(p+1)}}\|u_{p}\|_{L^{\frac{4(p+1)}{4-\alpha}}(\Omega)}\\
         &\leq C_{2,\alpha}^{\frac{1}{2(p+1)}} D_{\frac{4(p+1)}{4-\alpha}}\left(\frac{4(p+1)}{4-\alpha}\right)^{1/2}\|\nabla u_{p}\|_{L^{2}(\Omega)},
    \end{aligned}  
    \end{equation}
    where $C_{2,\alpha}$ is a constant defined in \eqref{definition of C N alpha} and $D_{t}$ is a constant for any $t\geq 2$ with $\lim_{t\to+\infty}D_{t}=(8\pi e)^{-1/2}$.
    Then by \eqref{S-p}
    \begin{equation}
        \begin{aligned}
            S_{p}=\frac{\left(\int_{\Omega}|\nabla u_{p}|^{2}dx\right)^{\frac{1}{2}}}{\left(\int_{\Omega}\int_{\Omega}\frac{u_{p}^{p+1}(y)u_{p}^{p+1}(x)}{|x-y|^{\alpha}}dydx\right)^{\frac{1}{2(p+1)}}}\geq C_{2,\alpha}^{-\frac{1}{2(p+1)}} D^{-1}_{\frac{4(p+1)}{4-\alpha}}\left(\frac{4(p+1)}{4-\alpha}\right)^{-1/2}.
        \end{aligned}
    \end{equation}
Hence
\begin{equation}\label{prop-estimate-of-S-p-proof-1}
    \liminf_{p\to+\infty}p^{1/2}S_{p}\geq (2(4-\alpha)\pi e)^{1/2}.
\end{equation}

Next, without loss of generality, we assume that $0\in\Omega$ and let $L>0$ be such that $B_{L}(0)\subset\Omega$. For any $0<l<L$, we consider the following Moser's function 
    \begin{equation}
        m_{l}(x)=\frac{1}{\sqrt{2\pi}}\begin{cases}
            (\log(L/l))^{1/2},&~0\leq |x|\leq l,\\
            \frac{\log(L/|x|)}{(\log(L/l))^{1/2}},&~l\leq |x|\leq L,\\
            0,&~L\leq |x|,\\
        \end{cases}
    \end{equation}
Then $m_{l}\in H^{1}_{0}(\Omega)$ and $\|\nabla m_{l}\|_{L^{2}(\Omega)}=1$. Moreover, we have
\begin{equation}
    \begin{aligned}
        \left(\int_{\Omega}\int_{\Omega}\frac{m_{l}^{p+1}(y)m_{l}^{p+1}(x)}{|x-y|^{\alpha}}dydx\right)^{\frac{1}{2(p+1)}}\geq \frac{1}{\sqrt{2\pi}}(\log(L/l))^{\frac{1}{2}}l^{\frac{4-\alpha}{2(p+1)}}\tilde{C}_{\alpha}^{\frac{1}{2(p+1)}},
    \end{aligned}
\end{equation}
where 
\begin{equation}\label{definition of C-alpha}
    \tilde{C}_{\alpha}:=\int_{B(0,1)}\int_{B(0,1)}\frac{1}{|x-y|^{\alpha}}dydx<+\infty.
\end{equation}
Choosing $l=Le^{-\frac{p+1}{4-\alpha}}$, thus from the definition of $S_{p}$ and the estimates of $m_{l}$ above, we have
\begin{equation}\label{eq upper estimate}
\begin{aligned}
    p^{1/2}S_{p}&\leq p^{1/2}\frac{\left(\int_{\Omega}|\nabla m_{l}|^{2}dx\right)^{\frac{1}{2}}}{\left(\int_{\Omega}\int_{\Omega}\frac{m_{l}^{p+1}(y)m_{l}^{p+1}(x)}{|x-y|^{\alpha}}dydx\right)^{\frac{1}{2(p+1)}}}\\
    &\leq (2(4-\alpha)\pi e)^{1/2}\left(\frac{p}{p+1}\right)^{1/2}\tilde{C}_{\alpha}^{-\frac{1}{2(p+1)}}L^{-\frac{4-\alpha}{2(p+1)}}.
\end{aligned} 
\end{equation}
Hence
\begin{equation}\label{prop-estimate-of-S-p-proof-2}
   \limsup_{p\to+\infty}p^{1/2}S_{p}\leq (2(4-\alpha)\pi e)^{1/2}. 
\end{equation}
and \eqref{prop-estimate-of-S-p} follows from \eqref{prop-estimate-of-S-p-proof-1} and \eqref{prop-estimate-of-S-p-proof-2}.
\end{proof}

\begin{Cor}\label{cor energy}Let $\alpha\in(0,2)$ and $\Omega\subset\R^{2}$ be a smooth bounded domain. Then it holds that
    \begin{equation}\label{cor-1}
       \lim_{p\to+\infty}2p E_{p}(u_{p})= \lim_{p\to+\infty}p\int_{\Omega}|\nabla u_{p}|^{2}dx=\lim_{p\to+\infty} p\int_{\Omega}\int_{\Omega}\frac{u_{p}^{p+1}(y)u_{p}^{p+1}(x)}{|x-y|^{\alpha}}dxdy= 2(4-\alpha)\pi e.
    \end{equation}
\end{Cor}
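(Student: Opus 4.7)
The plan is to derive the three limits directly from Proposition~\ref{estimate of S-p} by combining the identity \eqref{S-p} with the Nehari-type equalities that hold for any solution of \eqref{slightly subcritical choquard equation}. No new analytic input is required; the argument is essentially algebraic.

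First, I would solve \eqref{S-p} for the Dirichlet integral to get
\begin{equation*}
\int_{\Omega}|\nabla u_{p}|^{2}\,dx = S_{p}^{\,2(p+1)/p}.
\end{equation*}
Setting $A_{p}:=p\,S_{p}^{2}$, Proposition~\ref{estimate of S-p} gives $A_{p}\to 2(4-\alpha)\pi e$. Therefore
\begin{equation*}
p\int_{\Omega}|\nabla u_{p}|^{2}\,dx
= p\left(\frac{A_{p}}{p}\right)^{\!(p+1)/p}
= \frac{A_{p}^{(p+1)/p}}{p^{1/p}}.
\end{equation*}
Since $A_{p}$ is bounded away from $0$ and $\infty$, $(p+1)/p\to 1$, and $p^{1/p}\to 1$, I can pass to the limit to conclude
\begin{equation*}
\lim_{p\to+\infty} p\int_{\Omega}|\nabla u_{p}|^{2}\,dx = 2(4-\alpha)\pi e.
\end{equation*}

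Next, since $u_{p}$ solves \eqref{slightly subcritical choquard equation}, testing the equation against $u_{p}$ itself gives the equality
\begin{equation*}
\int_{\Omega}|\nabla u_{p}|^{2}\,dx
= \int_{\Omega}\!\int_{\Omega}\frac{u_{p}^{p+1}(y)u_{p}^{p+1}(x)}{|x-y|^{\alpha}}\,dy\,dx,
\end{equation*}
which is the first identity recalled just above \eqref{S-p}. Multiplying by $p$ and invoking the limit just proved yields the second claimed limit.

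Finally, the same Nehari identity shows
\begin{equation*}
2E_{p}(u_{p}) = \Bigl(1-\tfrac{1}{p+1}\Bigr)\int_{\Omega}|\nabla u_{p}|^{2}\,dx,
\end{equation*}
so $2pE_{p}(u_{p}) = (1-\frac{1}{p+1})\cdot p\int_{\Omega}|\nabla u_{p}|^{2}\,dx$, and letting $p\to+\infty$ produces the third limit. There is no real obstacle here; the only point worth being careful about is that in the expression $A_{p}^{(p+1)/p}/p^{1/p}$ both the base $A_{p}$ and the exponent $(p+1)/p$ move simultaneously, but the uniform positivity and boundedness of $A_{p}$ afforded by Proposition~\ref{estimate of S-p} make the passage to the limit immediate.
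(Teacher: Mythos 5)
Your proof is correct and takes essentially the same route the paper does: the paper simply states that \eqref{cor-1} "follows directly from \eqref{slightly subcritical choquard equation}, \eqref{S-p}, and Proposition \ref{estimate of S-p}," and your argument is the natural expansion of that statement, solving \eqref{S-p} for the Dirichlet integral, writing $p\int_\Omega|\nabla u_p|^2\,dx = A_p^{(p+1)/p}/p^{1/p}$ with $A_p=pS_p^2$, and then using the Nehari-type equalities that hold for any solution. The only point one might flag is that you should note $A_p^{(p+1)/p}=A_p\cdot A_p^{1/p}\to 2(4-\alpha)\pi e$ precisely because $A_p$ converges to a finite positive limit, but you do say exactly this, so the argument is complete.
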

\begin{proof}
Equation \eqref{cor-1} follows directly from \eqref{slightly subcritical choquard equation}, \eqref{S-p}, and Proposition \ref{estimate of S-p}.
\end{proof}

By Corollary \ref{cor energy}, we observe that  
\begin{equation}\label{eq energy to 0}  
    E_{p}(u_{p}) \to 0 \text{~~and~~} \int_{\Omega}|\nabla u_{p}|^{2}dx \to 0\text{~~as~~} p \to +\infty,  
\end{equation}  
Thus $u_{p} \to 0$ a.e. as $p \to +\infty$. However, the following lemma shows that $u_p$ does not vanish.  

\begin{Prop}\label{lower bound of L-infty norm}
   Let $\alpha\in(0,2)$ and $\Omega\subset\R^{2}$ be a smooth bounded domain. Then it holds that
   \begin{equation}
       \liminf_{p\to+\infty}\|u_{p}\|_{L^{\infty}(\Omega)}\geq 1\text{~~and~~}\lim_{p\to+\infty}p\|u_{p}\|_{L^{\infty}(\Omega)}^{2p}=+\infty.
   \end{equation}
\end{Prop}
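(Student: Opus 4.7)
The plan is to combine the energy estimate of Corollary~\ref{cor energy} with the Hardy--Littlewood--Sobolev inequality (Lemma~\ref{lema 2.1}) and the Moser--Trudinger--type embedding (Lemma~\ref{lem ren2.1}), extracting the $L^{\infty}$ norm from one factor of $u_{p}^{p+1}$ and controlling the remaining $u_{p}^{2}$ factor in $L^{q}$ for a fixed~$q$.

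First, I would record the consequences of Corollary~\ref{cor energy} that will be used: there exist constants $c_{0},C_{0}>0$ such that, for all $p$ large,
\begin{equation*}
\frac{c_{0}}{p}\leq \int_{\Omega}\int_{\Omega}\frac{u_{p}^{p+1}(y)u_{p}^{p+1}(x)}{|x-y|^{\alpha}}\,dy\,dx
\qquad\text{and}\qquad
\|\nabla u_{p}\|_{L^{2}(\Omega)}^{2}\leq \frac{C_{0}}{p}.
\end{equation*}

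Next, I would peel off the $L^{\infty}$ norm by writing $u_{p}^{p+1}\leq \|u_{p}\|_{L^{\infty}(\Omega)}^{p-1}\,u_{p}^{2}$ on both factors, so that
\begin{equation*}
\int_{\Omega}\int_{\Omega}\frac{u_{p}^{p+1}(y)u_{p}^{p+1}(x)}{|x-y|^{\alpha}}\,dy\,dx
\leq \|u_{p}\|_{L^{\infty}(\Omega)}^{2(p-1)}
\int_{\Omega}\int_{\Omega}\frac{u_{p}^{2}(y)u_{p}^{2}(x)}{|x-y|^{\alpha}}\,dy\,dx.
\end{equation*}
For the remaining integral, I would apply the HLS inequality with the conjugate exponents $\theta=r=\tfrac{4}{4-\alpha}$ (which satisfy $\tfrac{1}{\theta}+\tfrac{1}{r}+\tfrac{\alpha}{2}=2$) to obtain
\begin{equation*}
\int_{\Omega}\int_{\Omega}\frac{u_{p}^{2}(y)u_{p}^{2}(x)}{|x-y|^{\alpha}}\,dy\,dx
\leq C_{2,\alpha}\,\|u_{p}\|_{L^{8/(4-\alpha)}(\Omega)}^{4},
\end{equation*}
and then use Lemma~\ref{lem ren2.1} with the fixed exponent $q=\tfrac{8}{4-\alpha}$ to conclude
\begin{equation*}
\|u_{p}\|_{L^{8/(4-\alpha)}(\Omega)}^{4}\leq D_{q}^{4}\,q^{2}\,\|\nabla u_{p}\|_{L^{2}(\Omega)}^{4}\leq \frac{C_{1}}{p^{2}}
\end{equation*}
for a constant $C_{1}>0$ independent of $p$. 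Chaining these estimates together yields $\|u_{p}\|_{L^{\infty}(\Omega)}^{2(p-1)}\geq c_{1}\,p$ for some $c_{1}>0$ and all $p$ large.

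Finally, taking the $2(p-1)$-th root gives $\|u_{p}\|_{L^{\infty}(\Omega)}\geq (c_{1}p)^{1/(2(p-1))}\to 1$ as $p\to+\infty$, proving the first assertion. Combining with the identity
\begin{equation*}
p\,\|u_{p}\|_{L^{\infty}(\Omega)}^{2p}=p\,\|u_{p}\|_{L^{\infty}(\Omega)}^{2(p-1)}\cdot \|u_{p}\|_{L^{\infty}(\Omega)}^{2}\geq c_{1}\,p^{2}\,\|u_{p}\|_{L^{\infty}(\Omega)}^{2},
\end{equation*}
and using the just-proved lower bound (so that $\|u_{p}\|_{L^{\infty}(\Omega)}^{2}\geq \tfrac12$ for $p$ large) gives $p\,\|u_{p}\|_{L^{\infty}(\Omega)}^{2p}\geq \tfrac{c_{1}}{2}\,p^{2}\to+\infty$. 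The only nontrivial step is choosing the right splitting of $u_{p}^{p+1}$ so that the HLS $+$ Moser--Trudinger chain produces the factor $1/p^{2}$ needed to beat the lower bound $c_{0}/p$; with the exponent $q=\tfrac{8}{4-\alpha}$ fixed, all constants are uniform in $p$ and the bookkeeping is straightforward.
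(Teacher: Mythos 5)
Your argument is correct, but it follows a different route from the paper's. The paper peels off the full power, writing $u_{p}^{p+1}\leq \|u_{p}\|_{L^{\infty}}^{p}\,u_{p}$, applies HLS (with $\theta=r=4/(4-\alpha)$) to $\int_{\Omega}\int_{\Omega}\frac{u_{p}(y)u_{p}(x)}{|x-y|^{\alpha}}\,dy\,dx$, and then chains H\"older (since $4/(4-\alpha)<2$) with the Poincar\'e inequality to produce $\lambda_{1}^{-1}\|\nabla u_{p}\|_{L^{2}}^{2}$ in the numerator, which cancels exactly against the denominator $\int_{\Omega}|\nabla u_{p}|^{2}=\int\int\frac{u_{p}^{p+1}u_{p}^{p+1}}{|x-y|^{\alpha}}$ obtained by testing the equation. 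That gives the clean bound $\|u_{p}\|_{L^{\infty}}^{2p}\geq \lambda_{1}(\Omega)/\bigl(C_{2,\alpha}|\Omega|^{(2-\alpha)/2}\bigr)$ with no appeal to the energy asymptotics or to Lemma~\ref{lem ren2.1} --- in particular this proof is independent of Proposition~\ref{estimate of S-p} and Corollary~\ref{cor energy}. Your split $u_{p}^{p+1}\leq\|u_{p}\|_{L^{\infty}}^{p-1}u_{p}^{2}$ instead retains a quadratic factor, so you need the $O(1/p)$ bound on $\|\nabla u_{p}\|_{L^{2}}^{2}$ from Corollary~\ref{cor energy} together with Lemma~\ref{lem ren2.1} (at the fixed exponent $q=8/(4-\alpha)\in(2,4)$) to extract the $1/p^{2}$ decay that beats the $c_{0}/p$ lower bound. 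Both arguments are sound and there is no circularity, since Corollary~\ref{cor energy} is proved before this proposition; the paper's version is a bit more self-contained, while yours gives the marginally sharper intermediate bound $\|u_{p}\|_{L^{\infty}}^{2(p-1)}\gtrsim p$, from which both asserted limits follow just as you wrote.
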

     
\begin{proof}
Let $\lambda_{1}(\Omega)$ be the first eigenvalue of $-\Delta$ with homogeneous Dirichlet boundary condition. Then from equation \eqref{slightly subcritical choquard equation}, the HLS inequality, the H\"older inequality and the Poincar\'{e} inequality, we deduce that
\begin{equation}\label{lower bound of L-infty norm-proof-1}
    \begin{aligned}
        1=\frac{\int_{\Omega}\int_{\Omega}\frac{u_{p}^{p+1}(y)u_{p}^{p+1}(x)}{|x-y|^{\alpha}}dydx}{\int_{\Omega}|\nabla u_{p}|^{2}dx}&\leq \frac{\|u_{p}\|_{L^{\infty}(\Omega)}^{2p}C_{2,\alpha}\|u_{p}\|_{L^{\frac{4}{4-\alpha}}(\Omega)}^{2}}{\int_{\Omega}|\nabla u_{p}|^{2}dx}\\
        &\leq \frac{\|u_{p}\|_{L^{\infty}(\Omega)}^{2p}C_{2,\alpha}|\Omega|^{\frac{2-\alpha}{2}}\|u_{p}\|_{L^{2}(\Omega)}^{2}}{\int_{\Omega}|\nabla u_{p}|^{2}dx}\\
        &\leq\|u_{p}\|_{L^{\infty}(\Omega)}^{2p}C_{2,\alpha}|\Omega|^{\frac{2-\alpha}{2}}\lambda_{1}^{-1}(\Omega),
    \end{aligned}
\end{equation}
where $C_{2,\alpha}$ is a constant defined in \eqref{definition of C N alpha}. Then $\|u_{p}\|_{L^{\infty}(\Omega)}\geq \left(\frac{\lambda_{1}(\Omega)}{C_{2,\alpha}|\Omega|^{\frac{2-\alpha}{2}}}\right)^{\frac{1}{2p}}\to 1$ as $p\to+\infty$. 
\end{proof}

In contrast to the higher-dimensional case \cite{chen2024blowingupsolutionschoquardtype}, we show that \( u_{p} \) does not blow up.
\begin{Lem}\label{upper bound of L-infty norm}
Let $\alpha\in(0,2)$ and $\Omega\subset\R^{2}$ be a smooth bounded domain. Then there exists a constant $C>0$ independent of $p$ such that $\|u_{p}\|_{L^{\infty}(\Omega)}\leq C$ for all sufficiently large $p$.
\end{Lem}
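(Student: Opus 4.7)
\medskip
\noindent\textbf{Proof plan.} The plan is to argue by contradiction via a blow-up analysis modeled on Ren--Wei's treatment of the Lane--Emden equation. Assume for contradiction that $M_p := \|u_p\|_{L^\infty(\Omega)} \to +\infty$ along some subsequence. Choose $x_p$ with $u_p(x_p) = M_p$, set $\varepsilon_p := (p M_p^{2p})^{-1/(4-\alpha)}$ (so that $\varepsilon_p \to 0$), and define the rescaled function
\[
v_p(x) := \frac{p}{M_p}\bigl(u_p(\varepsilon_p x + x_p) - M_p\bigr), \qquad x \in \Omega_p := \frac{\Omega - x_p}{\varepsilon_p}.
\]
One checks $v_p(0) = 0$, $v_p \leq 0$, and
\[
-\Delta v_p(x) = \Bigl(\int_{\Omega_p} \frac{(1 + v_p(y)/p)^{p+1}}{|x-y|^\alpha} dy\Bigr)(1 + v_p(x)/p)^p.
\]

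The first substantive step is to extract a bubble limit. Since $v_p \leq 0$, the factors $(1+v_p/p)^{p+1}$ and $(1+v_p/p)^p$ are pointwise at most $1$; combined with the a priori bound $\int_\Omega u_p^{p+1} \leq C/\sqrt{p}$ (which follows from Corollary \ref{cor energy} via $|x-y| \leq \operatorname{diam}(\Omega)$) and a splitting of the convolution into near and far contributions, one argues that the right-hand side of the equation for $v_p$ is locally uniformly bounded. Standard elliptic regularity then provides uniform $C^{2,\gamma}_{\mathrm{loc}}$ estimates, so after passing to a subsequence $v_p \to v$ in $C^2_{\mathrm{loc}}(\R^2)$, provided one has also verified $\operatorname{dist}(x_p, \partial\Omega)/\varepsilon_p \to +\infty$ (for instance via the boundary Pohozaev identity \eqref{Pohozaev 3}). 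The limit $v$ satisfies $v(0) = 0$, $v \leq 0$, the limit equation \eqref{limit-equation}, and the integrability $\int_{\R^2} e^{4v/(4-\alpha)} dx < +\infty$. Theorem~B then forces $v = U_{\mu_0, 0}$ for some $\mu_0 > 0$; in particular,
\[
\iint_{\R^2 \times \R^2} \frac{e^{v(y)} e^{v(x)}}{|x-y|^\alpha} dy dx = 2(4-\alpha)\pi.
\]

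The contradiction follows by comparing to the global energy. For any $R > 0$ and $p$ large, the change of variables $x = \varepsilon_p s + x_p$, $y = \varepsilon_p t + x_p$ yields
\[
\iint_{\Omega \times \Omega} \frac{u_p^{p+1}(y) u_p^{p+1}(x)}{|x-y|^\alpha} dy dx \geq \frac{M_p^2}{p} \iint_{B_R \times B_R} \frac{(1 + v_p/p)^{p+1}(1 + v_p/p)^{p+1}}{|s-t|^\alpha} ds dt.
\]
By Fatou's lemma together with the pointwise convergence $v_p \to v$, the inner integral tends to $\iint_{B_R \times B_R} e^v e^v / |s-t|^\alpha$, which converges to $2(4-\alpha)\pi$ as $R \to +\infty$. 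Since Corollary \ref{cor energy} gives $p \iint_{\Omega\times\Omega} u_p^{p+1} u_p^{p+1}/|x-y|^\alpha \to 2(4-\alpha)\pi e$, we conclude $M_p^2 \leq e + o(1)$, which contradicts $M_p \to +\infty$ and completes the proof.

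The main technical obstacle lies in the local $C^2$ convergence $v_p \to v$. The nonlocal convolution $\int_{\Omega_p} (1+v_p/p)^{p+1}/|x-y|^\alpha dy$ is delicate: since $\Omega_p$ expands to $\R^2$, the pointwise bound $(1+v_p/p)^{p+1} \leq 1$ alone does not control the tail, and the sharp $L^1$ decay of $u_p^{p+1}$ must enter via a careful partition of the integration region. At the same time, ruling out boundary concentration (i.e.\ ensuring that $x_p$ does not approach $\partial\Omega$ too rapidly relative to $\varepsilon_p$) is a separate and nontrivial task, which I would handle via the Pohozaev identity combined with assumption $(H_1)$ on the inradius of $\Omega$.
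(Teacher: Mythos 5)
Your proposal takes a genuinely different route from the paper: the paper's proof of this lemma is a level-set argument in the style of Ren--Wei (Coarea formula, Cauchy--Schwarz, isoperimetric inequality), whereas you propose a blow-up/contradiction argument. But as written your argument has a quantitative gap, and the gap is not cosmetic.

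The crux is your claim that $\int_\Omega u_p^{p+1}\,dx \le C/\sqrt{p}$ suffices to control the convolution tail for the rescaled equation. Changing variables, for $|x-y|>R$ one gets
\begin{equation*}
\int_{\Omega_p\setminus B_R(x)}\frac{(1+v_p(y)/p)^{p+1}}{|x-y|^\alpha}\,dy
\;\le\; \frac{1}{R^\alpha}\int_{\Omega_p}\Bigl(1+\frac{v_p(y)}{p}\Bigr)^{p+1}dy
\;=\; \frac{1}{R^\alpha}\cdot\frac{1}{\varepsilon_p^2\,M_p^{p+1}}\int_\Omega u_p^{p+1}\,dz .
\end{equation*}
Since $\varepsilon_p^{-2}M_p^{-(p+1)} = p^{2/(4-\alpha)}\,M_p^{\,p\alpha/(4-\alpha)-1}$, your $O(p^{-1/2})$ bound on $\int_\Omega u_p^{p+1}$ leaves a factor
\begin{equation*}
p^{\,2/(4-\alpha)-1/2}\,M_p^{\,p\alpha/(4-\alpha)-1},
\end{equation*}
which diverges for every $\alpha>0$, all the more so under your contradiction hypothesis $M_p\to\infty$. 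So the right-hand side of the equation for $v_p$ is not shown to be locally bounded, and the $C^2_{\mathrm{loc}}$ convergence you invoke is unjustified. The paper's Lemma~\ref{lemma uniform estimate} uses a much stronger bound, namely $\int_{\Omega_p}(1+v_p/p)^{4p/(4-\alpha)}dy\le C$, obtained from the Moser--Trudinger inequality together with $(H_1)$, and only then can one close the Hölder estimate against $|x-y|^{-\alpha}$ on the tail. Your proposal cites neither.

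This points to a second, structural issue. Even if you patched the tail estimate by importing the Moser--Trudinger bound, you would be invoking $(H_1)$, but the lemma you are proving is stated for an arbitrary smooth bounded domain (no $(H_1)$). The paper's level-set proof is unconditional, and that unconditional statement is what makes Lemma~\ref{upper bound of L-infty norm} worth stating separately from the later results (Propositions~\ref{prop convergence of z-p}--\ref{prop L-infty estimate}), which do carry $(H_1)$ and do obtain the sharper bound $\limsup M_p\le\sqrt{e}$ by essentially the argument you sketch. In short: your approach, carried out correctly with the missing estimate, reproduces Proposition~\ref{prop L-infty estimate} (under extra hypotheses), not Lemma~\ref{upper bound of L-infty norm} as stated.
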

\begin{proof}
Let
    \begin{equation}
        \gamma_{p}:=\max_{x\in\bar{\Omega}}u_{p}(x),\quad\mathcal{A}:=\{x:\gamma_{p}/2<u_{p}(x)\},\quad \Omega_{t}:=\{x:t<u_{p}(x)\}.
    \end{equation}
From equation \eqref{slightly subcritical choquard equation} and the Coarea formula \cite{Federer1969}, we have
\begin{equation}
    \int_{\Omega_{t}}\int_{\Omega}\frac{u_{p}^{p+1}(y)u_{p}^{p}(x)}{|x-y|^{\alpha}}dydx=-\int_{\Omega_{t}}\Delta u_{p}dx=\int_{\partial\Omega_{t}}|\nabla u_{p}|d\sigma_{x},\quad -\frac{d}{dt}|\Omega_{t}|=\int_{\partial\Omega_{t}}\frac{d\sigma_{x}}{|\nabla u_{p}|}.
\end{equation}
Then by the Schwarz inequality and the isoperimetric inequality, we have
\begin{equation}
    -\frac{d}{dt}|\Omega_{t}| \int_{\Omega_{t}}\int_{\Omega}\frac{u_{p}^{p+1}(y)u_{p}^{p}(x)}{|x-y|^{\alpha}}dydx=\int_{\partial\Omega_{t}}\frac{d\sigma_{x}}{|\nabla u_{p}|}\int_{\partial\Omega_{t}}|\nabla u_{p}|d\sigma_{x}\geq |\partial\Omega_{t}|^{2}\geq 4\pi|\Omega_{t}|.
\end{equation}
Now, we define a function $r(t)$ such that $|\Omega_{t}|=\pi r^{2}(t)$. 
Notice that for any fixed $q_{1},q_{2}>1$ with $\frac{1}{q_{1}}+\frac{1}{q_{2}}=1$ and $\alpha q_{1}<2$, by the H\"older inequality, Lemma \ref{lem ren2.1} and Corollary \ref{cor energy}, we obtain that
\begin{equation}
\begin{aligned}
    \int_{\Omega}\frac{u_{p}^{p+1}(y)}{|x-y|^{\alpha}}dy&\leq \left(\int_{\Omega}\frac{1}{|x-y|^{\alpha q_{1}}}dy\right)^{\frac{1}{q_{1}}}\left(\int_{\Omega}u_{p}^{(p+1 )q_{2}}dy\right)^{\frac{1}{q_{2}}}\\
    &\leq C (D_{(p+1)q_{2}}((p+1)q_{2})^{1/2}\|\nabla u_{p}\|_{L^{2}(\Omega)})^{p+1}\\
    &\leq C q_{2}^{\frac{p+1}{2}},
\end{aligned}
\end{equation}
for any $p$ large enough. Thus there exist a constant $C>0$ independent of $p$ such that 
\begin{equation}
    -\frac{dt}{dr}\leq \frac{1}{2\pi r}\int_{\Omega_{t}}\int_{\Omega}\frac{u_{p}^{p+1}(y)u_{p}^{p}(x)}{|x-y|^{\alpha}}dydx\leq \frac{1}{2}Cq_{2}^{\frac{p+1}{2}}r\gamma_{p}^{p}.
\end{equation}
Integrating the above inequality from $0$ to $r_{0}$ and choosing $r_{0}$ such that $t(r_{0})=\gamma_{p}/2$, we get
\begin{equation}\label{upper bound of L-infty norm-proof-6}
    \gamma_{p}\leq \frac{1}{2}Cq_{2}^{\frac{p+1}{2}}r_{0}^{2}\gamma_{p}^{p}=\frac{1}{2\pi}|\Omega_{\gamma_{p}/2}|Cq_{2}^{\frac{p+1}{2}}\gamma_{p}^{p}=\frac{1}{2\pi}|\mathcal{A}|Cq_{2}^{\frac{p+1}{2}}\gamma_{p}^{p}.
\end{equation}
On the other hand, from Lemma \ref{lem ren2.1} and Corollary \ref{cor energy}, we have
\begin{equation}
    \left(\int_{\Omega}u_{p}^{2p}dx\right)^{1/(2p)}\leq D_{2p}  (2p)^{1/2}\left(\int_{\Omega}|\nabla u_{p}|^{2}dx\right)^{1/2}\to \left(\frac{4-\alpha}{2}\right)^{\frac{1}{2}},\text{~~as~~}p\to+\infty.
\end{equation}
Thus there exist a constant $M>0$ independent of $p$ such that 
\begin{equation}\label{upper bound of L-infty norm-proof-8}
    \left(\frac{\gamma_{p}}{2}\right)^{2p}|\mathcal{A}|\leq \int_{\Omega}u_{p}^{2p}dx\leq M^{2p}.
\end{equation}
Combining \eqref{upper bound of L-infty norm-proof-6} and \eqref{upper bound of L-infty norm-proof-8}, we get
\begin{equation}
    \gamma_{p}^{p+1}\leq \frac{1}{2\pi}Cq_{2}^{\frac{p+1}{2}}(2M)^{2p},
\end{equation}
Then we can conclude that $\gamma_{p}\leq C$, where $C$ independent on $p$.
\end{proof}

Let $x_{p}\in\Omega$ be the point such that $u_{p}(x_{p})=\|u_{p}\|_{L^{\infty}(\Omega)}$. We define the scaling parameter $\varepsilon_{p}^{4-\alpha}:=\frac{1}{pu_{p}^{2p}(x_{p})}$ and consider the rescaled function:
\begin{equation}\label{definition-of-z-p}
    v_{p}(x):=\frac{p}{u_{p}(x_{p})}\left(u_{p}(\varepsilon_{p}x+x_{p})-u_{p}(x_{p})\right)\text{~for any~}x\in\Omega_{p}:=\frac{\Omega-x_{p}}{\varepsilon_{p}}.
\end{equation}
Directly from the definitions of $v_{p}$ and $\varepsilon_{p}$, we have
\begin{equation}\label{eq propertity of z-p}
   \varepsilon_{p}\to0,\quad v_{p}(0)=\max_{x\in\Omega_{p}}v_{p}(x)=0,\quad0<1+\frac{v_{p}}{p}\leq 1
\end{equation}
and 
$v_{p}(x)$ satisfy
\begin{equation}\label{eq for z-p}
    \begin{cases}
        -\Delta v_{p}=\left(\int_{\Omega_{p}}\frac{\left(1+\frac{v_{p}(y)}{p}\right)^{p+1}}{|x-y|^{\alpha}}dy\right)\left(1+\frac{v_{p}(x)}{p}\right)^{p},&\text{~~in~}\Omega_{p},\\
        \quad\ \ v_{p}=-p,&\text{~~on~}\partial\Omega_{p}.\\
    \end{cases}
\end{equation}

\begin{Lem}\label{lemma uniform estimate} Let $\alpha\in(0,2)$ and $\Omega\subset\R^{2}$ be a smooth bounded domain satisfying $(H_{1})$. Then we have
 \begin{equation}
   \int_{\Omega_{p}}\frac{\left( 1+\frac{v_{p}(y)}{p}\right)^{p+1}}{|x-y|^{\alpha}}dy\lesssim 1\text{~~and~~}\int_{\Omega_{p}\setminus B(x,R)}\frac{\left( 1+\frac{v_{p}(y)}{p}\right)^{p+1}}{|x-y|^{\alpha}}dy=o(1),
\end{equation}   
for any $x\in\Omega_{p}$ and $p,R$ large enough.
\end{Lem}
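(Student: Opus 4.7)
\medskip

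\noindent\emph{Proof plan for Lemma \ref{lemma uniform estimate}.} The plan is to split the integration over $\Omega_p$ according to the distance from $x$, bounding the singular (near-$x$) and the far-field contributions separately. Throughout, the key pointwise facts are that $v_p(y)\le 0$ on $\Omega_p$ (since $u_p$ attains its maximum at $x_p$) and that therefore $0\le (1+v_p(y)/p)^{p+1}\le 1$. The hypothesis $(H_{1})$ will enter through the uniform (not merely asymptotic) energy estimate $p\|\nabla u_p\|_{L^2}^2\le 2(4-\alpha)\pi e$ that is produced by the Moser test function used in the proof of Proposition~\ref{estimate of S-p}.

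\smallskip

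\noindent\textbf{Step 1: the near-singular part.} For any $x\in\Omega_p$ and any $R\ge 1$, I would bound
\begin{equation*}
\int_{\Omega_p\cap B(x,1)}\frac{(1+v_p(y)/p)^{p+1}}{|x-y|^\alpha}\,dy\le \int_{B(x,1)}\frac{dy}{|x-y|^\alpha}=\frac{2\pi}{2-\alpha},
\end{equation*}
which is finite precisely because $\alpha\in(0,2)$. The same inequality shows that, once $R\ge 1$, the singular part contributes nothing to the second estimate (that integral is over $\Omega_p\setminus B(x,R)$).

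\smallskip

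\noindent\textbf{Step 2: the far-field part via a mass bound.} On $\Omega_p\setminus B(x,R)$ with $R\ge 1$ one has $|x-y|^{-\alpha}\le R^{-\alpha}$, so
\begin{equation*}
\int_{\Omega_p\setminus B(x,R)}\frac{(1+v_p(y)/p)^{p+1}}{|x-y|^\alpha}\,dy\le R^{-\alpha}\int_{\Omega_p}(1+v_p(y)/p)^{p+1}\,dy.
\end{equation*}
Undoing the rescaling $\tilde y=\varepsilon_p y+x_p$, the mass on the right equals $\varepsilon_p^{-2}u_p(x_p)^{-(p+1)}\int_\Omega u_p^{p+1}(\tilde y)\,d\tilde y$. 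Using Lemma~\ref{lem ren2.1} with $t=p+1$ together with the energy estimate from $(H_1)$ and Corollary~\ref{cor energy}, I get
\begin{equation*}
\int_\Omega u_p^{p+1}\,d\tilde y\le \bigl(D_{p+1}^{2}(p+1)\|\nabla u_p\|_{L^2}^2\bigr)^{(p+1)/2}\lesssim \Bigl(\tfrac{4-\alpha}{4}\Bigr)^{(p+1)/2},
\end{equation*}
since $D_{p+1}^2\to (8\pi e)^{-1}$ and $(p+1)\|\nabla u_p\|_{L^2}^2\to 2(4-\alpha)\pi e$. This exponential decay in $p$ is the key gain.

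\smallskip

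\noindent\textbf{Step 3: controlling the scaling prefactor.} Writing $\varepsilon_p^{-2}=(pu_p^{2p}(x_p))^{2/(4-\alpha)}$ one has
\begin{equation*}
\varepsilon_p^{-2}u_p(x_p)^{-(p+1)}=p^{2/(4-\alpha)}\,u_p(x_p)^{(\alpha(p+1)-4)/(4-\alpha)},
\end{equation*}
whose geometric growth rate in $p$ is $u_p(x_p)^{\alpha/(4-\alpha)}$. Pairing with the $((4-\alpha)/4)^{(p+1)/2}$ factor from Step~2, the product remains $O(1)$ as long as the base $u_p(x_p)^{\alpha/(4-\alpha)}((4-\alpha)/4)^{1/2}\le 1$, i.e. $u_p(x_p)\le (4/(4-\alpha))^{(4-\alpha)/(2\alpha)}$. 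This last inequality must be bootstrapped from the uniform $L^\infty$-bound of Lemma~\ref{upper bound of L-infty norm} together with Proposition~\ref{lower bound of L-infty norm}; this is the step where $(H_1)$ does the real work, because it prevents the constants in the Moser-Trudinger-type estimate from inflating at finite $p$. Once this is in hand, both parts of the claim follow: taking $R=1$ gives $\int_{\Omega_p}(1+v_p/p)^{p+1}/|x-y|^\alpha\,dy\le C$, while letting $R\to\infty$ with this same mass bound yields $\int_{\Omega_p\setminus B(x,R)}\cdots\le CR^{-\alpha}=o(1)$.

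\smallskip

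\noindent\textbf{Main obstacle.} The delicate point is Step~3: one has to match the exponential decay $((4-\alpha)/4)^{(p+1)/2}$ coming from the $L^{p+1}$-norm against the potentially exponential growth $u_p(x_p)^{\alpha(p+1)/(4-\alpha)}$ coming from the rescaling. The asymptotic bound $u_p(x_p)\le\sqrt e$ that eventually appears in Theorem~\ref{thm-1}(1) is not yet available at this stage, so I expect to proceed by a bootstrap: first prove the weaker assertion along a subsequence where $u_p(x_p)$ stays below the threshold $(4/(4-\alpha))^{(4-\alpha)/(2\alpha)}$, use it to extract a limiting profile from $v_p$, and then feed that profile back to upgrade the pointwise bound on $u_p(x_p)$ and hence the uniform estimate. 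Assumption $(H_1)$ guarantees that the constants stay quantitative throughout this loop.
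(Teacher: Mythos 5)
Your near/far decomposition and the observation $0\le (1+v_p/p)^{p+1}\le 1$ are the same starting point as the paper's proof. The divergence is in the far-field estimate, and that is where your proposal has a genuine gap.

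For the far part you pull out $|x-y|^{-\alpha}\le R^{-\alpha}$ and then try to bound the $L^1$ mass $\int_{\Omega_p}(1+v_p/p)^{p+1}\,dy$; the paper instead applies H\"older with exponents $4/\alpha$ and $4/(4-\alpha)$, reducing the estimate to $\int_{\Omega_p}(1+v_p/p)^{4p/(4-\alpha)}\,dy$. This choice of exponent is not incidental: undoing the rescaling, one finds
\begin{equation*}
\int_{\Omega_p}\Bigl(1+\tfrac{v_p}{p}\Bigr)^{\frac{4p}{4-\alpha}}dy
=\varepsilon_p^{-2}u_p(x_p)^{-\frac{4p}{4-\alpha}}\int_\Omega u_p^{\frac{4p}{4-\alpha}}
= p^{\frac{2}{4-\alpha}}\int_\Omega u_p^{\frac{4p}{4-\alpha}},
\end{equation*}
so all powers of $u_p(x_p)$ cancel, and the remaining quantity is precisely what Moser--Trudinger (Lemma~\ref{trudinger inequality}) controls once $(H_1)$ forces $c_p\le 1$.

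By contrast, your exponent $p+1$ does not produce this cancellation. Undoing the rescaling in your Step~3, the $L^1$ mass equals $p^{2/(4-\alpha)}\,u_p(x_p)^{(\alpha(p+1)-4)/(4-\alpha)}\int_\Omega u_p^{p+1}$, and Lemma~\ref{lem ren2.1} gives only $\int_\Omega u_p^{p+1}\lesssim\bigl(\tfrac{4-\alpha}{4}+o(1)\bigr)^{(p+1)/2}$. Balancing the geometric factors then requires $u_p(x_p)^{\alpha/(4-\alpha)}\cdot\bigl(\tfrac{4-\alpha}{4}\bigr)^{1/2}\le 1$, i.e.\ $u_p(x_p)\le \bigl(\tfrac{4}{4-\alpha}\bigr)^{(4-\alpha)/(2\alpha)}$, as you correctly identified. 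But this threshold is \emph{strictly below} $\sqrt{e}$: setting $t=\alpha/(4-\alpha)>0$, the elementary inequality $\log(1+t)<t$ gives $\tfrac{4-\alpha}{2\alpha}\log\tfrac{4}{4-\alpha}<\tfrac12$, so $\bigl(\tfrac{4}{4-\alpha}\bigr)^{(4-\alpha)/(2\alpha)}<\sqrt{e}$ for every $\alpha\in(0,2)$. Since the maximum $u_p(x_p)$ converges to $\sqrt{e}$ (Propositions~\ref{prop L-infty estimate} and~\ref{prop-4.15}), the per-step geometric ratio exceeds $1$ for all large $p$, and the bootstrap you propose cannot close: it is not a matter of constants tightened by $(H_1)$, but an impossibility. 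The sharper Moser--Trudinger route used by the paper, with the rescaling-adapted exponent $4p/(4-\alpha)$, is essential here; the linear-in-$\sqrt{t}$ Sobolev estimate of Lemma~\ref{lem ren2.1} at $t=p+1$ is simply too lossy for this purpose.
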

\begin{proof}
First, by $0< 1+\frac{v_{p}}{p}\leq 1$ and the H\"older inequality, we have
 \begin{equation}\label{proof of lemma 3.6-1}
     \begin{aligned}
         \int_{\Omega_{p}}&\frac{\left(1+\frac{v_{p}(y)}{p}\right)^{p+1}}{|x-y|^{\alpha}}dy\\
         &=\int_{\Omega_{p}\cap B(x,1)}\frac{\left(1+\frac{v_{p}(y)}{p}\right)^{p+1}}{|x-y|^{\alpha}}dy+\int_{\Omega_{p}\setminus B(x,1)}\frac{\left(1+\frac{v_{p}(y)}{p}\right)^{p+1}}{|x-y|^{\alpha}}dy\\
          &\leq C +\left(\int_{\R^{2}\setminus B(x,1)}\frac{1}{|x-y|^{4}}dy\right)^{\frac{\alpha}{4}}\left(\int_{\Omega_{p}}\left(1+\frac{v_{p}(y)}{p}\right)^{\frac{4p}{4-\alpha}}dy\right)^{\frac{4-\alpha}{4}}\\
          &\leq C+C\left(\int_{\Omega_{p}}\left(1+\frac{v_{p}(y)}{p}\right)^{\frac{4p}{4-\alpha}}dy\right)^{\frac{4-\alpha}{4}},
     \end{aligned}
 \end{equation}   
for any $p$ large enough. 
Since $\log x\leq \frac{x}{e}$ for any $x>0$ 
\begin{equation}\label{proof of lemma 3.6-2}
 \begin{aligned}
      \int_{\Omega_{p}}\left(1+\frac{v_{p}(y)}{p}\right)^{\frac{4p}{4-\alpha}}dy
      =\int_{\Omega}(p^{\frac{1}{p}}u_{p}^{2})^{\frac{2p}{4-\alpha}}(y)dy=\int_{\Omega}e^{\frac{2p}{4-\alpha}\log(p^{\frac{1}{p}}u_{p}^{2}(y))}dy
      \leq \int_{\Omega}e^{c_{p}4\pi\left(\frac{u_{p}}{\|\nabla u_{p}\|_{L^{2}}}\right)^{2}}dy,
 \end{aligned}    
 \end{equation}
where $c_{p}:=\frac{p^{\frac{1+p}{p}}\|\nabla u_{p}\|_{L^{2}}^{2}}{2(4-\alpha)\pi e}=\frac{p^{\frac{1+p}{p}}S_{p}^{\frac{2(p+1)}{p}}}{2(4-\alpha)\pi e}$. Moreover, using \eqref{eq upper estimate} with $L\geq\left(\frac{2(4-\alpha)\pi }{\tilde{C}_{\alpha}}\right)^{\frac{1}{4-\alpha}}$, we have
 \begin{equation}
     c_{p}\leq (2(4-\alpha)\pi e)^{\frac{1}{p}}\left(\frac{p}{p+1}\right)^{\frac{p+1}{p}}\tilde{C}_{\alpha}^{-\frac{1}{p}}L^{-\frac{4-\alpha}{p}}\leq 1,
 \end{equation}
for any $p$ large enough. Thus by \eqref{proof of lemma 3.6-2} and Lemma \ref{trudinger inequality}
\begin{equation}\label{estimate integral}
    \int_{\Omega_{p}}\left(1+\frac{v_{p}(y)}{p}\right)^{\frac{4p}{4-\alpha}}dy\leq \int_{\Omega}e^{4\pi\left(\frac{u_{p}}{\|\nabla u_{p}\|_{L^{2}}}\right)^{2}}dy\leq C|\Omega|,
\end{equation}
for any $p$ large enough. Combining \eqref{proof of lemma 3.6-1} and \eqref{estimate integral} together, we get
\begin{equation}
\begin{aligned}
    \int_{\Omega_{p}}\frac{\left( 1+\frac{v_{p}(y)}{p}\right)^{p+1}}{|x-y|^{\alpha}}dy\leq C.
\end{aligned}  
\end{equation}
On the other hand, by the H\"older inequality
\begin{equation}
    \begin{aligned}
        &\int_{\Omega_{p}\setminus B(x,R)}\frac{\left( 1+\frac{v_{p}(y)}{p}\right)^{p+1}}{|x-y|^{\alpha}}dy\\
         &\lesssim\left(\int_{ \R^{2}\setminus B(x,R)}\frac{1}{|x-y|^{4}}dy\right)^{\frac{\alpha}{4}}\left(\int_{\Omega_{p}}\left( 1+\frac{v_{p}(y)}{p}\right)^{\frac{4p}{4-\alpha}}dy\right)^{\frac{4-\alpha}{4}}\\\
         &=o(1),
    \end{aligned}
\end{equation}
for any $p$ and $R$ large enough. This completes the proof.

\end{proof}

\begin{Prop}\label{prop convergence of z-p}
 Let $\alpha\in(0,2)$ and $\Omega\subset\R^{2}$ be a smooth bounded domain satisfying $(H_{1})$. After passing to a subsequence, we have $v_{p}\to v$ in $C^{2}_{loc}(\R^{2})$ and
    \begin{equation}
        v(x)=\frac{4-\alpha}{2}\log \left(\frac{1}{1+C_{\alpha}^{-2}|x|^{2}}\right),
    \end{equation}
where $C_{\alpha}:=\left(\frac{(2-\alpha)(4-\alpha)}{\pi}\right)^{\frac{1}{4-\alpha}}$ is a positive constant. Moreover, $v$ satisfies 
\begin{equation}
    -\Delta v=\left(\int_{\R^{2}}\frac{e^{v(y)}}{|x-y|^{\alpha}}dy\right)e^{v(x)}\textrm{~in~}\R^{2},
\end{equation}
with
\begin{equation}\label{eq-integral}
    \int_{\R^{2}}e^{\frac{4}{4-\alpha}v(x)}dx=\pi C^{2}_{\alpha}\text{~and~} \int_{\R^{2}}\int_{\R^{2}}\frac{e^{v(y)}e^{v(x)}}{|x-y|^{\alpha}}dydx=2(4-\alpha)\pi.
\end{equation} 
\end{Prop}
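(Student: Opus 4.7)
The plan is to use compactness: derive uniform local regularity bounds on $v_{p}$, extract a subsequence converging to a limit $v$ in $C^{2}_{loc}(\R^{2})$, identify $v$ via Theorem B, and conclude that the full family converges by uniqueness of the limit.

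First I would establish a uniform $L^{\infty}_{loc}$ bound for $v_{p}$. Since $v_{p} \le v_{p}(0) = 0$ by \eqref{eq propertity of z-p}, the factor $(1+v_{p}/p)^{p}$ on the right-hand side of \eqref{eq for z-p} is bounded by $1$, while the convolution factor is uniformly bounded by Lemma \ref{lemma uniform estimate}. Hence $0 \le -\Delta v_{p} \le C$ on $\Omega_{p}$. On any fixed ball $B_{2R}(0) \subset \Omega_{p}$, decomposing $v_{p}$ via the Green representation into a Newton-potential part plus a harmonic correction $h_{p}$ with $h_{p} \le 0$ on $B_{2R}$, I observe that the potential part is controlled in $L^{\infty}(B_{R})$ thanks to $-\Delta v_{p} \in [0,C]$, and that $h_{p}(0)$ is bounded from below since $v_{p}(0) = 0$ and the potential at the origin is finite. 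The Harnack inequality applied to the nonnegative harmonic function $-h_{p}$ then gives $\sup_{B_{R}}(-h_{p}) \le C(-h_{p}(0))$, yielding $v_{p}$ uniformly bounded on $B_{R}$. Standard $W^{2,q}$ elliptic theory promotes this to uniform $C^{1,\gamma}_{loc}$ bounds, and Arzel\`a--Ascoli extracts a subsequence $v_{p} \to v$ in $C^{1}_{loc}(\R^{2})$ with $v \le 0$ and $v(0) = 0$.

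The main obstacle is passing to the limit in the nonlocal convolution. Locally uniform convergence together with the expansion $\log(1+v_{p}/p) = v_{p}/p + O(v_{p}^{2}/p^{2})$ yields $(1+v_{p}/p)^{p+j} \to e^{v}$ in $C_{loc}(\R^{2})$ for $j=0,1$. For the convolution, I would split at $B(x,R)$: on the inner region dominated convergence applies, since $(1+v_{p}/p)^{p+1} \le 1$ and the kernel $|x-y|^{-\alpha}$ is locally integrable for $\alpha < 2$; on the outer region the uniform tail estimate of Lemma \ref{lemma uniform estimate} forces the contribution to be $o(1)$ as $R \to +\infty$, uniformly in $p$. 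The same splitting handles the limit integral $\int_{\R^{2}} e^{v(y)}/|x-y|^{\alpha}\, dy$, whose finiteness follows from $e^{v} \in L^{\infty} \cap L^{1}(\R^{2})$; the $L^{1}$ bound arises by applying Fatou's lemma to the uniform estimate $\int_{\Omega_{p}}(1+v_{p}/p)^{4p/(4-\alpha)}\, dy \le C$ obtained in \eqref{estimate integral} inside the proof of Lemma \ref{lemma uniform estimate}.

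It remains to identify $v$. The equation satisfied by $v$ is the limit Choquard equation \eqref{limit-equation}, and the same Fatou argument gives $\int_{\R^{2}} e^{4v/(4-\alpha)}\, dx < +\infty$, so Theorem B applies and yields $v = U_{\mu,\xi}$ for some $\mu > 0$ and $\xi \in \R^{2}$. The normalization $0 = v(0) = \max_{\R^{2}} v$ forces $\nabla v(0) = 0$, whence $\xi = 0$; then $U_{\mu,0}(0) = \tfrac{4-\alpha}{2}\log(C_{\alpha}\mu) = 0$ pins down $\mu = 1/C_{\alpha}$, giving the explicit formula for $v$ stated in \eqref{definition of v}. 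Uniqueness of the limit promotes subsequential convergence to convergence of the full family, and a Schauder bootstrap (using $v_{p} \to v$ in $C^{1}_{loc}$ and right-hand-side convergence in $C_{loc}$) upgrades the convergence to $C^{2}_{loc}(\R^{2})$. The integral identities in \eqref{eq-integral} are then restatements of the corresponding identities provided by Theorem B.
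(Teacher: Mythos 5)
Your overall plan --- uniform local $L^\infty$ bound via a harmonic/potential decomposition and Harnack, elliptic bootstrap to $C^2_{loc}$, ball/tail splitting to pass to the limit in the nonlocal term, identification of the limit through Theorem~B --- is essentially the same as the paper's. The Harnack step, the dominated-convergence argument, and pinning $\xi=0$ and $\mu=1/C_\alpha$ via $v(0)=\max v=0$ are all sound.

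There is a genuine gap, however, at the outset. You derive the local bounds only "on any fixed ball $B_{2R}(0)\subset\Omega_p$," and then jump to $v_p\to v$ in $C^1_{loc}(\R^2)$. But having $B_{2R}(0)\subset\Omega_p$ for all large $p$ and for every $R$ is equivalent to $\mathrm{dist}(x_p,\partial\Omega)/\varepsilon_p\to+\infty$, which must be \emph{proved}: a priori $x_p$ could approach $\partial\Omega$ fast enough that, up to rotation and a subsequence, $\Omega_p$ converges to a half-plane $\R\times(-\infty,R)$ with $R\ge 0$ finite. In that scenario your argument controls $v_p$ only on compacta of the half-plane, the limit $v$ is defined only there, and Theorem~B (a classification on all of $\R^2$) cannot be invoked. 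The paper begins precisely by excluding this: it decomposes $v_p=\varphi_p+\psi_p$ on $\Omega_p\cap B_{2R+1}(0)$ --- not on a full ball --- with $\varphi_p$ bounded and $\psi_p$ harmonic taking the value $-p$ on $\partial\Omega_p\cap B_{2R+1}(0)$. Since that boundary portion converges to a fixed chord, the Harnack inequality forces $\psi_p(0)\to-\infty$, contradicting $\psi_p(0)=-\varphi_p(0)=O(1)$. You would need to add this half-plane exclusion before your ball argument can deliver convergence in $C^2_{loc}(\R^2)$.

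A smaller slip: Fatou applied to $\int_{\Omega_p}(1+v_p/p)^{4p/(4-\alpha)}\le C$ gives $\int_{\R^2}e^{4v/(4-\alpha)}\,dx<\infty$, not $e^v\in L^1(\R^2)$. Since $v\le 0$ and $4/(4-\alpha)>1$, the latter does not follow from the former. Fortunately you do not actually need $e^v\in L^1$: the distributional identification of the limit equation rests only on the test-function splitting, and Theorem~B requires precisely $e^{4v/(4-\alpha)}\in L^1$, which you do obtain.
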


\begin{proof}
First, we prove that $\Omega_{p}\to\R^{2}$ as $p\to+\infty$. Indeed, since $\varepsilon_{p}\to0$ as $p\to+\infty$, either $\Omega_{p}\to\mathbb{R}^{2}$ or $\Omega_{p}\to\mathbb{R}\times(-\infty,R)$ (up to a rotation) as
$p\to+\infty$ for some $R\geq0$ . In the second case, we let
$v_{p}:=\varphi_{p}+\psi_{p}$ in $\Omega_{p}\cap B_{2R+1}(0)$ with $-\Delta\varphi_{p}=-\Delta v_{p}$ in $\Omega_{p}\cap B_{2R+1}(0)$ and $\psi_{p}=v_{p}$ in $\partial\left(\Omega_{p}\cap B_{2R+1}(0)\right)$.
Thanks to \eqref{eq propertity of z-p} and Lemma \ref{lemma uniform estimate}, we have, by standard elliptic theory, that $\varphi_p$ is uniformly bounded in ${\Omega}_{p}\cap B_{2R+1}(0).$ On the other hand, the function $\psi_p$ is harmonic in $\Omega_{p}\cap B_{2R+1}(0)$ and satisfies $\psi_p=-p\to-\infty$ on $\partial\Omega_p\cap B_{2R+1}(0).$ Since $\partial\Omega_p\cap B_{2R+1}(0)\to(\mathbb{R}\times\{R\})\cap B_{2R+1}(0)$ as $p\to+\infty$, then by Harnack inequality, we easily gets that $\psi_p(0)\to-\infty$ as $p\to+\infty$. This is a contradiction, since $\psi_p(0)=-\varphi_{p}(0)$ and $\varphi_p$ is bounded, hence $\Omega_{p}\to \R^{2}$ as $p\to+\infty$. 

Notice that for any $R>0$, $B_{R}(0)\subset\Omega_{p}$ for $p$ large enough, $v_{p}$ is a family of positive functions with uniformly bounded Laplacian in $B_{R}(0)$ and with $v_{p}(0)=0$. Now, arguing as before, we write $v_{p}=\varphi_{p}+\psi_{p}$, where $\varphi_{p}$ is uniformly bounded in $B_{R}(0)$ and $\psi_{p}$ is an harmonic function, which is uniformly bounded above. By the Harnack inequality, either $\psi_{p}$ is uniformly bounded in $B_{R}(0)$ or it tends to $-\infty$ on each compact set of $B_{R}(0)$. But the second alternative cannot happen because, by definition $\psi_{p}(0)=v_{p}(0)-\varphi_{p}(0)\geq -C.$ Hence $v_{p}$ is uniformly bounded in $B_{R}(0)$ for all $R>0$. Then by standard elliptic regularity theory and the Arzela–Ascoli theorem, we have that $v_{p}\to v$ in $C^{2}_{loc}(\R^{2})$ as $p\to+\infty$. 


Since $v_{p}\to v$ in $C_{loc}^{2}(\R^{2})$, then by Taylor expansion, we get
\begin{equation}\label{Prop-converge-1}
    v_{p}+p\left(\log\left( 1+\frac{v_{p}}{p}\right)-\frac{v_{p}}{p}\right)\to v,\text{~in~~}C_{loc}(\R^{2}).
\end{equation}
Then, by the Fatou's lemma, changing of variables, the definition of $\varepsilon_{p}$, the H\"older inequality and Corollary \ref{cor energy}, we deduce that
\begin{equation}\label{proof-converge-estimate-1}
\begin{aligned}
    \int_{\R^{2}}\int_{\R^{2}}\frac{e^{v(y)}e^{v(x)}}{|x-y|^{\alpha}}dydx&\leq \liminf_{p\to+\infty}\int_{\Omega_{p}}\int_{\Omega_{p}}\frac{\left(1+\frac{v_{p}(y)}{p}\right)^{p}\left(1+\frac{v_{p}(x)}{p}\right)^{p}}{|x-y|^{\alpha}}dydx\\
    &=\liminf_{p\to+\infty}p\int_{\Omega}\int_{\Omega}\frac{u_{p}^{p}(y)u_{p}^{p}(x)}{|x-y|^{\alpha}}dydx\\
    &\leq \liminf_{p\to+\infty}\left(p\int_{\Omega}\int_{\Omega}\frac{u_{p}^{p+1}(y)u_{p}^{p+1}(x)}{|x-y|^{\alpha}}dydx\right)^{\frac{p}{p+1}}\left(p\int_{\Omega}\int_{\Omega}\frac{1}{|x-y|^{\alpha}}dydx\right)^{\frac{1}{p+1}}\\
    &=2(4-\alpha)\pi e
\end{aligned}  
\end{equation}
and similar to the estimate \eqref{estimate integral}
\begin{equation}\label{estimate of u-p-p+1}
    \begin{aligned}
        \int_{\R^{2}}e^{\frac{4}{4-\alpha}v(x)}dx&\leq \liminf_{p\to+\infty}\int_{\Omega_{p}}\left(1+\frac{v_{p}(x)}{p}\right)^{\frac{4p}{4-\alpha}}dx=\liminf_{p\to+\infty}\frac{1}{\varepsilon_{p}^{2}u_{p}^{\frac{4p}{4-\alpha}}(x_{p})}\int_{\Omega}u_{p}^{\frac{4p}{4-\alpha}}(x)dx\\
        &=\liminf_{p\to+\infty}p^{\frac{2}{4-\alpha}}\int_{\Omega}u_{p}^{\frac{4p}{4-\alpha}}(x)dx\lesssim1.
    \end{aligned}
\end{equation}

For any $\phi\in C_{c}^{\infty}(\R^{2})$, there exists $R_{1}>0$ such that $\phi(x)=0$ for all $x\in \R^{2}\setminus B_{R_{1}}(0)$. Next, choose $R>0$ sufficiently large so that $B_{R/2}(x)\subset B_{R}(0)$ for all $x\in B_{R_{1}}(0)$. Notice that, from \eqref{proof-converge-estimate-1}, we have
\begin{equation}\label{Prop-converge-4}
   \int_{B_{R}(0)}\int_{B_{R}(0)}\frac{e^{v(y)}e^{v(x)}}{|x-y|^{\alpha}}dydx=o_{R}(1),\text{~for any~}R\text{~large enough~}. 
\end{equation}
Then combining Lemma \ref{lemma uniform estimate}, \eqref{Prop-converge-1} and \eqref{Prop-converge-4}, we obtain that
\begin{equation}
    \begin{aligned}
       &\int_{\Omega_{p}}\int_{\Omega_{p}}\frac{\left(1+\frac{v_{p}(y)}{p}\right)^{p+1}\left(1+\frac{v_{p}(x)}{p}\right)^{p}\phi(x)}{|x-y|^{\alpha}}dxdy\\
       &= \int_{B_{R}(0)} \int_{B_{R_{1}}(0)}\frac{\left(1+\frac{v_{p}(y)}{p}\right)^{p+1}\left(1+\frac{v_{p}(x)}{p}\right)^{p}\phi(x)}{|x-y|^{\alpha}}dxdy\\
       &\quad+\int_{\Omega_{p}\setminus B_{R}(0)} \int_{B_{R_{1}}(0)}\frac{\left(1+\frac{v_{p}(y)}{p}\right)^{p+1}\left(1+\frac{v_{p}(x)}{p}\right)^{p}\phi(x)}{|x-y|^{\alpha}}dxdy\\
       &=\int_{\R^{2}} \int_{\R^{2}}\frac{e^{v(y)}e^{v(x)}\phi(x)}{|x-y|^{\alpha}}dxdy+o_{p}(1)+o_{R}(1),\text{~for any~}p\text{~and~}R\text{~large enough}. 
    \end{aligned}
\end{equation}
From \eqref{eq for z-p} and the convergence $v_{p}\to v$ in $C_{loc}^{2}(\mathbb{R}^{2})$, we then deduce that $v$ is a distributional solution to
\begin{equation}
    -\Delta v=\left(\int_{\R^{2}}\frac{e^{v(y)}}{|x-y|^{\alpha}}dy\right)e^{v(x)}\text{~in~}\R^{2}.
\end{equation}
Moreover, using \eqref{eq propertity of z-p}, we have $v(0)=\max_{x\in\R^{2}}v(x)=0$. Thanks to the classification results in Theorem B, we conclude that $v$ must take the form
\begin{equation}
    v(x)=\frac{4-\alpha}{2}\log\left(\frac{1}{1+C_{\alpha}^{-2}|x|^{2}}\right)
\end{equation}
and therefore \eqref{eq-integral} holds.
\end{proof}
\begin{Prop}\label{prop L-infty estimate}  Let $\alpha\in(0,2)$ and $\Omega\subset\R^{2}$ be a smooth bounded domain satisfying $(H_{1})$. Then we have
\begin{equation}
    1\leq \liminf_{p\to+\infty}\|u_{p}\|_{L^{\infty}(\Omega)}\leq \limsup_{p\to+\infty}\|u_{p}\|_{L^{\infty}(\Omega)}\leq \sqrt{e}.
\end{equation}
\end{Prop}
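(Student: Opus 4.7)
The plan is to take the lower bound $\liminf_{p\to\infty}\|u_p\|_{L^\infty(\Omega)}\geq 1$ directly from Proposition~\ref{lower bound of L-infty norm} and to focus on the upper bound. Writing $\gamma_p:=u_p(x_p)=\|u_p\|_{L^\infty(\Omega)}$, Lemma~\ref{upper bound of L-infty norm} tells us $\gamma_p$ is bounded, so it is enough to show that along any subsequence with $\gamma_p\to\gamma^*$ one has $\gamma^*\leq\sqrt e$.

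The first key step will be an exact rescaling identity. Performing the change of variables $x=x_p+\varepsilon_p\tilde x$, $y=x_p+\varepsilon_p\tilde y$ in the double integral and inserting the definition $\varepsilon_p^{4-\alpha}=(p\gamma_p^{2p})^{-1}$, I expect
\[
 p\iint_{\Omega\times\Omega}\frac{u_p^{p+1}(y)u_p^{p+1}(x)}{|x-y|^{\alpha}}\,dy\,dx=\gamma_p^{2}\,I_p,
\]
where
\[
 I_p:=\iint_{\Omega_p\times\Omega_p}\frac{\bigl(1+v_p(\tilde y)/p\bigr)^{p+1}\bigl(1+v_p(\tilde x)/p\bigr)^{p+1}}{|\tilde x-\tilde y|^{\alpha}}\,d\tilde y\,d\tilde x.
\]
By Corollary~\ref{cor energy} the left-hand side converges to $2(4-\alpha)\pi e$, so along the chosen subsequence $I_p\to 2(4-\alpha)\pi e/(\gamma^*)^2$.

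The second key step will be a Fatou bound from below on $I_p$. Using $v_p\to v$ in $C^2_{\mathrm{loc}}(\mathbb R^2)$ from Proposition~\ref{prop convergence of z-p}, together with the Taylor expansion $v_p+p(\log(1+v_p/p)-v_p/p)\to v$ on compacta, and the decay bound of Lemma~\ref{lemma uniform estimate} (which controls the tail of the nonlocal integrand uniformly in $p$), Fatou's lemma—applied exactly as in the proof of Proposition~\ref{prop convergence of z-p}—will give
\[
 \liminf_{p\to\infty}I_p\;\geq\;\iint_{\mathbb R^2\times\mathbb R^2}\frac{e^{v(y)}e^{v(x)}}{|x-y|^{\alpha}}\,dy\,dx\;=\;2(4-\alpha)\pi,
\]
where the value $2(4-\alpha)\pi$ is the one recorded in \eqref{eq-integral}. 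Combining the two displayed facts yields $2(4-\alpha)\pi e/(\gamma^*)^2\geq 2(4-\alpha)\pi$, i.e.\ $\gamma^*\leq\sqrt e$.

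The only mildly delicate point, and the one I would be most careful about, is ensuring that the Fatou inequality holds for the \emph{full} double integral over $\Omega_p\times\Omega_p$ rather than only over fixed bounded boxes—i.e.\ controlling the contribution coming from large $|\tilde x|$ or $|\tilde y|$. This is precisely what Lemma~\ref{lemma uniform estimate} provides, so no new ingredient should be needed; the rest is bookkeeping from the identity and Corollary~\ref{cor energy}.
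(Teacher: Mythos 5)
Your proposal is correct and follows essentially the same route as the paper: the rescaling identity $p\iint \frac{u_p^{p+1}u_p^{p+1}}{|x-y|^{\alpha}} = \gamma_p^2 I_p$, the energy limit from Corollary~\ref{cor energy} (equivalently Proposition~\ref{estimate of S-p}), and Fatou's lemma applied to the rescaled double integral via Proposition~\ref{prop convergence of z-p}. One small clarification: Fatou's lemma for nonnegative integrands needs no tail control, so the worry about large $|\tilde x|,|\tilde y|$ and the appeal to Lemma~\ref{lemma uniform estimate} is superfluous — the lower-bound inequality holds directly, exactly as the paper invokes it.
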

\begin{proof}
    First, from the definition of $S_{p}$, $u_{p}$ and $v_{p}$, we get
    \begin{equation}
    \begin{aligned}
         S_{p}^{\frac{2(p+1)}{p}}&=\int_{\Omega}\int_{\Omega}\frac{u_{p}^{p+1}(y)u_{p}^{p+1}(x)}{|x-y|^{\alpha}}dydx\\
         &=\frac{\|u_{p}\|_{L^{\infty}(\Omega)}^{2}}{p}\int_{\Omega_{p}}\int_{\Omega_{p}}\frac{\left(1+\frac{v_{p}(y)}{p}\right)^{p+1}\left(1+\frac{v_{p}(x)}{p}\right)^{p+1}}{|x-y|^{\alpha}}dydx.
    \end{aligned}    
    \end{equation}
    Then by Proposition \ref{prop convergence of z-p} and the Fatou's lemma, we have
    \begin{equation}
    \begin{aligned}
        2(4-\alpha)\pi=\int_{\R^{2}}\int_{\R^{2}}\frac{e^{v(y)}e^{v(x)}}{|x-y|^{\alpha}}dydx&\leq \liminf_{p\to+\infty} \int_{\Omega_{p}}\int_{\Omega_{p}}\frac{\left(1+\frac{v_{p}(y)}{p}\right)^{p+1}(\left(1+\frac{v_{p}(x)}{p}\right)^{p+1}}{|x-y|^{\alpha}}dydx\\
        &=\liminf_{p\to+\infty} \frac{pS_{p}^{\frac{2(p+1)}{p}}}{\|u_{p}\|_{L^{\infty}(\Omega)}^{2}}.
    \end{aligned}
\end{equation}
From Proposition \ref{estimate of S-p}, we obtain $\limsup_{p\to+\infty}\|u_{p}\|_{L^{\infty}(\Omega)}\leq \sqrt{e}$. Combining this with the lower bound in Proposition \ref{lower bound of L-infty norm} completes the proof.
\end{proof}

\begin{Cor}\label{cor-3.8}
 Let $\alpha\in(0,2)$ and $\Omega\subset\R^{2}$ be a smooth bounded domain satisfying $(H_{1})$. Then, there exist positive constants $C_{1}$ and $C_{2}$ such that for all sufficiently large $p$,
    \begin{equation}
        \frac{C_{1}}{p}\leq \int_{\Omega}\int_{\Omega}\frac{u_{p}^{p+1}(y)u_{p}^{p}(x)}{|x-y|^{\alpha}}dydx\leq \frac{C_{2}}{p}.
    \end{equation}
\end{Cor}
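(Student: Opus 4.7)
\textbf{Plan for Corollary \ref{cor-3.8}.} Write
\[
I_p := \int_\Omega\!\int_\Omega \frac{u_p^{p+1}(y)u_p^p(x)}{|x-y|^\alpha}\,dy\,dx,\qquad J_p := \int_\Omega\!\int_\Omega \frac{u_p^{p+1}(y)u_p^{p+1}(x)}{|x-y|^\alpha}\,dy\,dx,
\]
so that Corollary \ref{cor energy} already gives $pJ_p\to 2(4-\alpha)\pi e$. The upper and lower bounds on $I_p$ will be proved by two independent arguments.

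\emph{Upper bound via H\"older.} View $I_p=\int u_p^p\,d\mu$ with $d\mu=\frac{u_p^{p+1}(y)}{|x-y|^\alpha}\,dy\,dx$ on $\Omega\times\Omega$. H\"older's inequality with exponents $\frac{p+1}{p}$ and $p+1$ then gives
\[
I_p\le J_p^{\,p/(p+1)}\Bigl(\int_\Omega\!\int_\Omega \frac{u_p^{p+1}(y)}{|x-y|^\alpha}\,dy\,dx\Bigr)^{\!1/(p+1)}.
\]
Since $\alpha<2$, $\sup_{y\in\Omega}\int_\Omega |x-y|^{-\alpha}\,dx\le C$, whence the second factor is controlled by $\bigl(C\,\|u_p\|_{L^{p+1}}^{p+1}\bigr)^{1/(p+1)}$. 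Proposition \ref{prop L-infty estimate} yields $\|u_p\|_{L^{p+1}}^{p+1}\le (\sqrt{e}+o(1))^{p+1}|\Omega|$, so that second factor stays bounded as $p\to\infty$. The first factor satisfies $J_p^{p/(p+1)}\le (C/p)^{p/(p+1)}\sim C/p$ by Corollary \ref{cor energy}, since $p^{1/(p+1)}\to 1$. Combining, $I_p\le C_2/p$ for $p$ large.

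\emph{Lower bound via rescaling.} Substitute $x=x_p+\varepsilon_p\xi$, $y=x_p+\varepsilon_p\eta$ and use $u_p(x_p+\varepsilon_p\xi)=u_p(x_p)(1+v_p(\xi)/p)$ together with the defining relation $\varepsilon_p^{4-\alpha}=1/(pu_p^{2p}(x_p))$. A direct computation collapses the Jacobian factors and yields
\[
I_p=\frac{u_p(x_p)}{p}\int_{\Omega_p}\!\int_{\Omega_p}\frac{(1+v_p(\eta)/p)^{p+1}(1+v_p(\xi)/p)^p}{|\xi-\eta|^\alpha}\,d\eta\,d\xi.
\]
Since $0<1+v_p/p\le 1$, we have $(1+v_p/p)^p\ge(1+v_p/p)^{p+1}$, so the double integral dominates the symmetric one $\int_{\Omega_p}\!\int_{\Omega_p}\frac{(1+v_p/p)^{p+1}(1+v_p/p)^{p+1}}{|\xi-\eta|^\alpha}$. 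Proposition \ref{prop convergence of z-p} (applied via Fatou's lemma after extending the integrand by zero outside $\Omega_p\times\Omega_p$ and using $\Omega_p\to\R^2$) gives
\[
\liminf_{p\to\infty}\int_{\Omega_p}\!\int_{\Omega_p}\frac{(1+v_p(\eta)/p)^{p+1}(1+v_p(\xi)/p)^{p+1}}{|\xi-\eta|^\alpha}\,d\eta\,d\xi\ge \int_{\R^2}\!\int_{\R^2}\frac{e^{v(\eta)}e^{v(\xi)}}{|\xi-\eta|^\alpha}\,d\eta\,d\xi=2(4-\alpha)\pi.
\]
Since $\liminf_{p\to\infty} u_p(x_p)\ge 1$ by Proposition \ref{lower bound of L-infty norm}, these two facts combine to give $I_p\ge C_1/p$ for all large $p$.

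\emph{Main obstacle.} The only nontrivial point is the passage to the limit in the lower bound on the varying domains $\Omega_p$: one must extend the nonnegative integrand by zero, verify the a.e.\ convergence of $\chi_{\Omega_p}(\xi)\chi_{\Omega_p}(\eta)(1+v_p(\eta)/p)^{p+1}(1+v_p(\xi)/p)^p$ to $e^{v(\eta)+v(\xi)}$ on $\R^2\times\R^2$ (which follows from $\Omega_p\to\R^2$, the $C^2_{loc}$ convergence $v_p\to v$, and Taylor expansion), and then invoke Fatou's lemma. Everything else is routine.
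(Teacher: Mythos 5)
Your proof is correct, but both halves take a more roundabout route than the paper, whose proof is a one-liner: ``This follows immediately from Corollary~\ref{cor energy} and Proposition~\ref{prop L-infty estimate}.''

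For the lower bound the paper is using nothing more than the pointwise comparison $u_p^{p+1}(x)\le\|u_p\|_{L^\infty}\,u_p^p(x)$, which gives
\[
\int_\Omega\!\int_\Omega\frac{u_p^{p+1}(y)u_p^{p+1}(x)}{|x-y|^\alpha}\,dy\,dx
\le \|u_p\|_{L^\infty}\int_\Omega\!\int_\Omega\frac{u_p^{p+1}(y)u_p^{p}(x)}{|x-y|^\alpha}\,dy\,dx,
\]
so $I_p\ge J_p/\|u_p\|_{L^\infty}$; combining $pJ_p\to 2(4-\alpha)\pi e$ with $\limsup_p\|u_p\|_{L^\infty}\le\sqrt{e}$ gives $\liminf_p pI_p\ge 2(4-\alpha)\pi\sqrt{e}>0$. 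Your route — rescaling by $\varepsilon_p$, passing to $v_p$, and invoking Fatou on $\Omega_p\to\R^2$ — is valid, but it essentially re-derives the computation that underlies Proposition~\ref{prop L-infty estimate} rather than simply using its conclusion, so it is heavier machinery than needed. (You also correctly need $(1+v_p/p)^p\ge(1+v_p/p)^{p+1}$ and $\liminf_p u_p(x_p)\ge 1$; both are available.) For the upper bound, your H\"older splitting $I_p\le J_p^{p/(p+1)}\bigl(\int\!\int u_p^{p+1}(y)|x-y|^{-\alpha}\bigr)^{1/(p+1)}$ is a sound argument, and is in fact the same device the paper uses in the proof of Lemma~\ref{lemma for estimate L-0}; note that this part of your proof is not quite ``immediate'' from Corollary~\ref{cor energy} and Proposition~\ref{prop L-infty estimate} alone as the paper claims — one also needs the elementary bound $\sup_{y\in\Omega}\int_\Omega|x-y|^{-\alpha}dx<\infty$ and the observation $(C/p)^{p/(p+1)}\sim C/p$, exactly as you supply. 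In short: the proposal is correct; the upper bound matches the paper's spirit, while the lower bound, though valid, replaces a one-line pointwise inequality with a Fatou argument.
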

\begin{proof}
    This follows immediately from Corollary \ref{cor energy} and Proposition \ref{prop L-infty estimate}.
\end{proof}

\begin{Prop}
 Let $\alpha\in(0,2)$ and $\Omega\subset\R^{2}$ be a smooth bounded domain satisfying $(H_{1})$. Then it holds that $\sqrt{p}u_{p}\weakto 0$ in $H^{1}_{0}(\Omega)$ as $p\to+\infty$.
\end{Prop}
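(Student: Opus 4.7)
The plan is to test the equation against smooth compactly supported functions and exploit the estimate in Corollary~\ref{cor-3.8}. First I would observe that by Corollary~\ref{cor energy}, $p\int_{\Omega}|\nabla u_p|^2\,dx \to 2(4-\alpha)\pi e$, so the family $\{\sqrt{p}\,u_p\}$ is uniformly bounded in $H^1_0(\Omega)$. Reflexivity then gives, along a subsequence, $\sqrt{p}\,u_p \weakto w$ in $H^1_0(\Omega)$ for some $w \in H^1_0(\Omega)$, and by Rellich--Kondrachov the convergence is strong in $L^q(\Omega)$ for every $q \in [1,+\infty)$. The goal reduces to proving $w = 0$, since then the whole sequence (not just a subsequence) converges weakly to $0$ by the uniqueness of the weak limit.

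For any test function $\phi \in C_c^\infty(\Omega)$, multiplying equation \eqref{slightly subcritical choquard equation} by $\phi$ and integrating by parts yields
\begin{equation*}
    \int_{\Omega}\nabla(\sqrt{p}\,u_p)\cdot\nabla\phi\,dx = \sqrt{p}\int_{\Omega}\left(\int_{\Omega}\frac{u_p^{p+1}(y)}{|x-y|^{\alpha}}dy\right)u_p^{p}(x)\,\phi(x)\,dx.
\end{equation*}
I would then bound the right-hand side by $\|\phi\|_{L^\infty(\Omega)}\,\sqrt{p}\int_\Omega\int_\Omega \frac{u_p^{p+1}(y)u_p^{p}(x)}{|x-y|^\alpha}\,dy\,dx$ and invoke Corollary~\ref{cor-3.8} to conclude that this quantity is at most $C\,\|\phi\|_{L^\infty(\Omega)}/\sqrt{p}$, which tends to $0$ as $p\to+\infty$. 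Hence
\begin{equation*}
    \langle \sqrt{p}\,u_p,\phi\rangle_{H^1_0(\Omega)} \to 0 \quad \text{for every } \phi \in C_c^\infty(\Omega).
\end{equation*}

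Finally, since $\{\sqrt{p}\,u_p\}$ is bounded in $H^1_0(\Omega)$ and $C_c^\infty(\Omega)$ is dense in $H^1_0(\Omega)$, a standard $\varepsilon/3$-density argument upgrades the pairing to all $\phi \in H^1_0(\Omega)$, so the weak limit satisfies $\langle w,\phi\rangle_{H^1_0(\Omega)} = 0$ for every $\phi$, forcing $w \equiv 0$. There is no real obstacle in this argument; the entire proof is driven by the decay estimate in Corollary~\ref{cor-3.8}, and the only subtle point is the passage from $C_c^\infty$ test functions to general $H^1_0$ test functions, which is routine in view of the uniform $H^1_0$ bound.
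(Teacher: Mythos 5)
Your proof is correct and follows essentially the same route as the paper: uniform $H^1_0$-boundedness from Corollary~\ref{cor energy}, testing the equation against $\varphi\in C^\infty_c(\Omega)$, invoking the $O(1/p)$ decay of Corollary~\ref{cor-3.8}, and concluding $w=0$ by density. The only nitpick is your phrasing ``the whole sequence converges weakly to $0$ by the uniqueness of the weak limit'': what actually yields full-sequence convergence is the subsequence principle (every subsequence admits a sub-subsequence whose weak limit is forced to be $0$ by the above computation), not uniqueness alone; also, the Rellich--Kondrachov step you mention is never used and can be dropped.
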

\begin{proof}
    From Corollary \ref{cor energy}, we know that $\|\sqrt{p}u_{p}\|_{H^{1}_{0}(\Omega)}$ are bounded uniformly on $p$, thus there exists a function $w\in H^{1}_{0}(\Omega)$ such that $\sqrt{p}u_{p}\weakto w$ in $H^{1}_{0}(\Omega)$ as $p\to+\infty$. For any $\varphi\in C_{c}^{\infty}(\Omega)$, from \eqref{slightly subcritical choquard equation} and Corollary \ref{cor-3.8}, we have
    \begin{equation}
    \begin{aligned}
        \int_{\Omega}\nabla (\sqrt{p}u_{p})\cdot\nabla\varphi dx&=\sqrt{p} \int_{\Omega} \int_{\Omega}\frac{u_{p}^{p+1}(y)u_{p}^{p}(x)\varphi(x)}{|x-y|^{\alpha}}dydx\\
        &\leq \frac{\|\varphi\|_{L^{\infty}}}{\sqrt{p}} p\int_{\Omega}\int_{\Omega}\frac{u_{p}^{p+1}(y)u_{p}^{p}(x)}{|x-y|^{\alpha}}dydx\\
        &\leq \frac{C\|\varphi\|_{L^{\infty}}}{\sqrt{p}}\to0\text{~~as~~}p\to+\infty.
    \end{aligned}        
    \end{equation}
    On the other hand
    \begin{equation}
        \int_{\Omega}\nabla (\sqrt{p}u_{p})\cdot\nabla\varphi dx\to\int_{\Omega}\nabla w\cdot\nabla\varphi dx\text{~~as~~}p\to+\infty.
    \end{equation}
    Thus for any $\varphi\in C_{c}^{\infty}(\Omega)$, we have $\int_{\Omega}\nabla w\cdot\nabla\varphi dx=0$ and then $w=0$.
\end{proof}

\begin{proof}[Proof of Theorem \ref{thm-1}]
Theorem \ref{thm-1} is now a direct corollary of the preceding propositions.
\end{proof}

\section{Proof of Theorem \ref{thm-2}}\label{section-Proof of Theorem 2}
In this section, we study the asymptotic behavior of $\bar{u}_{p}(x)$, where
\begin{equation}
    \bar{u}_{p}(x):=pu_{p}(x)\text{~and~}f_{p}(x):=p\left(\int_{\Omega}\frac{u_{p}^{p+1}(y)}{|x-y|^{\alpha}}dy\right)u_{p}^{p}(x)\text{~for any~}x\in\Omega.
\end{equation}
The function \(\bar{u}_{p}\) satisfies equation
\begin{equation}\label{eq-bar-u-p}
    \begin{cases}
         -\Delta \bar{u}_{p}=f_{p},\quad \bar{u}_{p}>0,\ \  &\mbox{in}\ \Omega,\\
 \quad \ \ \bar{u}_{p}=0, \ \  &\mbox{on}\ \partial \Omega,
    \end{cases}
\end{equation}
and its blow-up set is defined as
\begin{equation}
    \mathcal{S}:=\left\{y\in\bar{\Omega}:\text{~there exist~}\{y_{p}\}\subset\Omega\text{~such that~}\bar{u}_{p}(y_{p})\to+\infty\text{~and~}y_{p}\to y\text{~as~}p\to+\infty\right\}.
\end{equation}
Since $\liminf_{p\to+\infty}u_{p}(x_{p})=\liminf_{p\to+\infty}\max_{x\in\bar{\Omega}}u_{p}\geq1$, it follows that $x_{p}\to x_{0}\in \mathcal{S}$ and $\mathcal{S}\not= \emptyset$.

\subsection{Blow-up analysis}
In this subsection, we prove that the blow-up set $\mathcal{S}$ is disjoint from the boundary and consists of a single point $x_{0}$. Before proceeding, we introduce some notations. Assume that there exist $n\in\N$ families of points $\{x_{i,p}\}\subset\Omega$, $i=1,\cdots,n$ such that 
\begin{equation}
    pu_{p}^{2p}(x_{i,p})\to+\infty\text{~as~}p\to+\infty,
\end{equation}
and we define the parameters $\varepsilon_{i,p}$ by
\begin{equation}
    \varepsilon^{-(4-\alpha)}_{i,p}:=pu_{p}^{2p}(x_{i,p})\text{~for any~}i=1,\cdots,n.
\end{equation}
Then $\varepsilon_{i,p}\to0$ and 
\begin{equation}\label{eq-liminf-u-p}
    \liminf_{p\to+\infty}u_{p}(x_{i,p})\geq 1.
\end{equation}
Next, we define the concentration set
\begin{equation}
    {\Lambda}:=\left\{\lim_{p\to+\infty}x_{i,p},~i=1,\cdots,n\right\}\subset\bar{\Omega}
    \end{equation}
and the distance function
\begin{equation}
    \mathcal{D}_{n,p}(x):=\min_{i=1,\cdots,n}|x-x_{i,p}|\text{~for any~}x\in\Omega.
\end{equation}
Finally, we introduce the following properties:
\begin{enumerate}
    \item[($\mathbf{\mathcal{P}_{1}^{n}}$)] For any $i,j \in \{1,\dots,n\}$ with $i \neq j$,
    \begin{equation}
        \lim_{p \to +\infty} \frac{\mathrm{dist}(x_{i,p}, \partial \Omega)}{\varepsilon_{i,p}} 
        = \lim_{p \to +\infty} \frac{|x_{i,p} - x_{j,p}|}{\varepsilon_{i,p}} = +\infty.
    \end{equation}

    \item[($\mathbf{\mathcal{P}_{2}^{n}}$)] For any $i \in \{1,\dots,n\}$ and $x \in \Omega_{i,p} := \frac{\Omega - x_{i,p}}{\varepsilon_{i,p}}$,
    \begin{equation}
        v_{i,p}(x) := \frac{p}{u_{p}(x_{i,p})} \big( u_{p}(x_{i,p} + \varepsilon_{i,p} x) - u_{p}(x_{i,p}) \big) \to v(x)
    \end{equation}
    in $C^{2}_{\mathrm{loc}}(\mathbb{R}^{2})$ as $p \to +\infty$, where
    \begin{equation}
        v(x) := \frac{4 - \alpha}{2} \log \left( \frac{1}{1 + C_{\alpha}^{-2} |x|^{2}} \right)\text{
        with~}C_{\alpha} := \left( \frac{(2 - \alpha)(4 - \alpha)}{\pi} \right)^{\frac{1}{4 - \alpha}}.
    \end{equation}

    \item[($\mathbf{\mathcal{P}_{3}^{n}}$)] There exists $C > 0$ such that
    \begin{equation}
        p \mathcal{D}_{n,p}^{4 - \alpha}(x) u_{p}^{2p}(x) \leq C
    \end{equation}
    for any $p > 1$ and all $x \in \Omega$.
\end{enumerate}

\begin{Lem}\label{lem-n-2}
    If there exist $n\in\N$ families of points $\{x_{i,p}\}$, $i=1,\cdots,n$ such that ($\mathbf{\mathcal{P}_{1}^{n}}$) and ($\mathbf{\mathcal{P}_{2}^{n}}$) hold. Then $n\leq 2$.
    \end{Lem}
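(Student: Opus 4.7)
The plan is an energy‑accounting argument. By $(\mathcal{P}_2^n)$, each of the $n$ rescaled families $v_{i,p}$ converges in $C^2_{\mathrm{loc}}(\mathbb{R}^2)$ to the same standard bubble $v$, which carries the fixed Choquard energy $\int_{\mathbb{R}^2}\int_{\mathbb{R}^2} e^{v(y)}e^{v(x)}|x-y|^{-\alpha}\,dy\,dx = 2(4-\alpha)\pi$ (Theorem~B, recorded in Proposition~\ref{prop convergence of z-p}), while Corollary~\ref{cor energy} pins the total energy at $\lim_{p\to+\infty} p\int_\Omega\int_\Omega u_p^{p+1}(y)u_p^{p+1}(x)|x-y|^{-\alpha}\,dy\,dx = 2(4-\alpha)\pi e$. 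Since $(\mathcal{P}_1^n)$ guarantees that the $n$ bubbles live on pairwise separated scales, each one contributes an additive chunk to the total energy; counting yields $n\leq e$ and hence $n\leq 2$.

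To execute this, fix $R>0$ and observe that, by $(\mathcal{P}_1^n)$, for all sufficiently large $p$ the balls $B_i:=B(x_{i,p},R\varepsilon_{i,p})$, $i=1,\dots,n$, are pairwise disjoint and contained in $\Omega$. Nonnegativity of the integrand yields
\[
p\int_\Omega\int_\Omega \frac{u_p^{p+1}(y)u_p^{p+1}(x)}{|x-y|^\alpha}\,dy\,dx \;\geq\; \sum_{i=1}^n p\int_{B_i}\int_{B_i} \frac{u_p^{p+1}(y)u_p^{p+1}(x)}{|x-y|^\alpha}\,dy\,dx.
\]
Performing on each $B_i\times B_i$ the change of variables $x=x_{i,p}+\varepsilon_{i,p}\tilde{x}$, $y=x_{i,p}+\varepsilon_{i,p}\tilde{y}$ and using $\varepsilon_{i,p}^{-(4-\alpha)}=pu_p^{2p}(x_{i,p})$ together with the definition of $v_{i,p}$ recasts the $i$-th term as
\[
u_p^2(x_{i,p})\int_{B_R(0)}\int_{B_R(0)} \frac{\bigl(1+v_{i,p}(\tilde{y})/p\bigr)^{p+1}\bigl(1+v_{i,p}(\tilde{x})/p\bigr)^{p+1}}{|\tilde{x}-\tilde{y}|^\alpha}\,d\tilde{y}\,d\tilde{x}.
\]

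By $(\mathcal{P}_2^n)$, the convergence $v_{i,p}\to v$ is uniform on $\overline{B_R(0)}$, hence $(1+v_{i,p}/p)^{p+1}\to e^v$ uniformly there; since $|\tilde{x}-\tilde{y}|^{-\alpha}$ is integrable on $B_R(0)\times B_R(0)$ for $\alpha<2$, dominated convergence lets one pass to the limit inside the rescaled integral. Moreover $\liminf_p u_p(x_{i,p})\geq 1$, since otherwise $u_p(x_{i,p})$ would stay below some $a<1$ along a subsequence and $p u_p^{2p}(x_{i,p})\leq p a^{2p}\to 0$ would contradict the standing assumption $pu_p^{2p}(x_{i,p})\to+\infty$. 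Taking $\liminf_p$ and then $R\to+\infty$ shows that each summand contributes at least $2(4-\alpha)\pi$ in the limit; combining with Corollary~\ref{cor energy} applied to the left‑hand side produces $2(4-\alpha)\pi e \geq n\cdot 2(4-\alpha)\pi$, i.e. $n\leq e<3$, so $n\leq 2$.

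The only delicate step is this interchange of limits: because $(1+v_{i,p}/p)^{p+1}$ is not a priori globally dominated by $e^v$ on $\mathbb{R}^2$, the passage to the bubble energy must be carried out on each fixed ball $B_R(0)$ (where $C^2_{\mathrm{loc}}$ convergence gives uniform bounds) before $R$ is sent to infinity. The cross terms coming from pairs $B_i\times B_j$ with $i\neq j$ are nonnegative and are simply discarded to preserve the one‑sided inequality; a cleaner accounting that tracks them would recover sharper information, but it is not needed here.
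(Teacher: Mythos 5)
Your proof is correct and follows essentially the same route as the paper: restrict to disjoint balls $B(x_{i,p},R\varepsilon_{i,p})$ using $(\mathcal{P}_1^n)$, change variables and use $\varepsilon_{i,p}^{-(4-\alpha)}=pu_p^{2p}(x_{i,p})$ to recast each local contribution, pass to the limit (the paper invokes Fatou where you use uniform convergence plus dominated convergence on the fixed ball, a cosmetic difference), and compare with the total energy $2(4-\alpha)\pi e$ from Corollary~\ref{cor energy} to get $n\le e<3$. Your explicit verification of $\liminf_p u_p(x_{i,p})\ge 1$ is exactly the content the paper records as \eqref{eq-liminf-u-p}.
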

\begin{proof}
For any $R > 0$, by ($\mathbf{\mathcal{P}_{1}^{n}}$) and a change of variables, we obtain that
 \begin{equation}
     \begin{aligned}
        p\int_{\Omega}|\nabla u_{p}|^{2}dx&=p\int_{\Omega}\int_{\Omega}\frac{u_{p}^{p+1}(y)u_{p}^{p+1}(x)}{|x-y|^{\alpha}}dydx\\
        &\geq\sum_{i=1}^{n}p\int_{B_{R\varepsilon_{i,p}}(x_{i,p})}\int_{B_{R\varepsilon_{i,p}}(x_{i,p})}\frac{u_{p}^{p+1}(y)u_{p}^{p+1}(x)}{|x-y|^{\alpha}}dydx\\
        &=\sum_{i=1}^{n}u_{p}^{2}(x_{i,p})\int_{B_{R}(0)}\int_{B_{R}(0)}\frac{(1+\frac{v_{i,p}(y)}{p})^{p+1}(1+\frac{v_{i,p}(x)}{p})^{p+1}}{|x-y|^{\alpha}}dydx,
     \end{aligned}
 \end{equation}
for any $p$ large enough. Moreover by \eqref{eq-liminf-u-p}, ($\mathbf{\mathcal{P}_{2}^{n}}$) and Fatou's lemma, we have
\begin{equation}
  \liminf_{p\to+\infty}p\int_{\Omega}|\nabla u_{p}|^{2}dx\geq \sum_{i=1}^{n}\int_{B_{R}(0)}\int_{B_{R}(0)}\frac{e^{v(y)}e^{v(x)}}{|x-y|^{\alpha}}dydx
\end{equation}
for any $R>0$. Recall that $\lim_{p\to+\infty}p\int_{\Omega}|\nabla u_{p}|^{2}dx=2(4-\alpha)\pi e$ and $\int_{\R^{2}}\int_{\R^{2}}\frac{e^{v(y)}e^{v(x)}}{|x-y|^{\alpha}}dydx=2(4-\alpha)\pi$, then $n\leq e$ and so $n\leq 2$.
\end{proof}
\begin{Prop}\label{prop-multi-peak}
 Let $\alpha\in(0,2)$ and $\Omega\subset\R^{2}$ be a smooth bounded domain satisfying $(H_{1})$. Then, there exist $k\in\{1,2\}$ and $k$ families of points $\{x_{i,p}\}\subset\Omega$, $i=1,\cdots,k$ such that ($\mathbf{\mathcal{P}_{1}^{k}}$), ($\mathbf{\mathcal{P}_{2}^{k}}$) and ($\mathbf{\mathcal{P}_{3}^{k}}$) hold. Moreover, $x_{1,p}=x_{p}$ and given any family of points $x_{k+1,p}$, it is impossible to extract a new sequence from the previous one such that ($\mathbf{\mathcal{P}_{1}^{k+1}}$), ($\mathbf{\mathcal{P}_{2}^{k+1}}$) and ($\mathbf{\mathcal{P}_{3}^{k+1}}$) hold with the sequence $\{x_{i,p}\}$, $i=1,\cdots,k+1$. Finally, we have
\begin{equation}\label{prop-multi-peak-eq-1}
    \sqrt{p}u_{p}\to 0\text{~~in~~}C^{1}_{loc}(\bar{\Omega}\setminus\Lambda)\text{~as~}p\to+\infty.
\end{equation}
\end{Prop}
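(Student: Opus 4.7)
The plan is an iterative concentration extraction terminated by Lemma~\ref{lem-n-2}. Set $x_{1,p}=x_p$. Property ($\mathbf{\mathcal{P}_{2}^{1}}$) is exactly Theorem~\ref{thm-1}(3), while ($\mathbf{\mathcal{P}_{1}^{1}}$) reduces to $\mathrm{dist}(x_{1,p},\partial\Omega)/\varepsilon_{1,p}\to+\infty$, which was established in the proof of Proposition~\ref{prop convergence of z-p} via the Harnack-based exclusion of the half-plane alternative. If ($\mathbf{\mathcal{P}_{3}^{1}}$) also holds, we stop with $k=1$; otherwise we extract inductively.

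Assume $\{x_{i,p}\}_{i=1}^{n}$ satisfies ($\mathbf{\mathcal{P}_{1}^{n}}$)--($\mathbf{\mathcal{P}_{2}^{n}}$) but not ($\mathbf{\mathcal{P}_{3}^{n}}$), and choose $x_{n+1,p}$ to maximize $p\mathcal{D}_{n,p}^{4-\alpha}(x)u_p^{2p}(x)$ on $\Omega$. Since $\mathcal{D}_{n,p}$ is bounded by $\mathrm{diam}(\Omega)$, the failure of ($\mathbf{\mathcal{P}_{3}^{n}}$) gives $pu_p^{2p}(x_{n+1,p})\to+\infty$ (so $\varepsilon_{n+1,p}\to 0$) and $\mathcal{D}_{n,p}(x_{n+1,p})/\varepsilon_{n+1,p}\to+\infty$, yielding $|x_{n+1,p}-x_{i,p}|/\varepsilon_{n+1,p}\to+\infty$ for every $i\leq n$. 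The symmetric separation $|x_{n+1,p}-x_{i,p}|/\varepsilon_{i,p}\to+\infty$ is then forced: if $(x_{n+1,p}-x_{i,p})/\varepsilon_{i,p}$ stayed bounded, ($\mathbf{\mathcal{P}_{2}^{i}}$) would imply $u_p(x_{n+1,p})/u_p(x_{i,p})\to 1$ and hence $\varepsilon_{n+1,p}\sim\varepsilon_{i,p}$, contradicting the previous step. The boundary separation $\mathrm{dist}(x_{n+1,p},\partial\Omega)/\varepsilon_{n+1,p}\to+\infty$ is obtained by the same half-plane exclusion as in Proposition~\ref{prop convergence of z-p}. This establishes ($\mathbf{\mathcal{P}_{1}^{n+1}}$).

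For ($\mathbf{\mathcal{P}_{2}^{n+1}}$), rescale around $x_{n+1,p}$ and note that the maximality of $x_{n+1,p}$, together with $\mathcal{D}_{n,p}(x_{n+1,p}+\varepsilon_{n+1,p}z)/\mathcal{D}_{n,p}(x_{n+1,p})\to 1$ on compact sets of $z$, yields $v_{n+1,p}\leq o(1)$ locally; since $v_{n+1,p}(0)=0$, the Harnack-splitting argument of Proposition~\ref{prop convergence of z-p} produces uniform local bounds. The Moser-Trudinger estimate of Lemma~\ref{lemma uniform estimate} then carries over to control the rescaled nonlocal term, and standard elliptic regularity gives $C^2_{loc}$ compactness of $v_{n+1,p}$, with Theorem~B identifying the limit as $v$. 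By Lemma~\ref{lem-n-2} this iteration stops at some $k\in\{1,2\}$ for which ($\mathbf{\mathcal{P}_{3}^{k}}$) holds, and the non-extendibility claim is immediate: any further candidate $x_{k+1,p}$ satisfying ($\mathbf{\mathcal{P}_{1}^{k+1}}$) would violate the pointwise bound $p\mathcal{D}_{k,p}^{4-\alpha}u_p^{2p}\leq C$.

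For the $C^1_{loc}$ conclusion, fix $K\Subset\bar\Omega\setminus\Lambda$ and $\delta>0$ with $\mathcal{D}_{k,p}\geq\delta$ on $K$ for $p$ large. From ($\mathbf{\mathcal{P}_{3}^{k}}$) we obtain $u_p^p(x)\leq C/\sqrt{p}$ uniformly on $K$, so $\sqrt{p}\,u_p^p$ is uniformly bounded there; Lemma~\ref{lemma uniform estimate} (transplanted to the original scale) also bounds the convolution $\int_\Omega u_p^{p+1}(y)|x-y|^{-\alpha}\,dy$ uniformly on $K$. Hence $\Delta(\sqrt{p}\,u_p)$ is uniformly bounded on $K$, and standard interior plus boundary elliptic regularity yields uniform $C^{1,\gamma}$ bounds for $\sqrt{p}\,u_p$ on compact subsets. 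Combined with the weak convergence $\sqrt{p}\,u_p\weakto 0$ in $H_0^1(\Omega)$ from Theorem~\ref{thm-1}(5), the Arzel\`a-Ascoli theorem delivers $\sqrt{p}\,u_p\to 0$ in $C^1_{loc}(\bar\Omega\setminus\Lambda)$. The main obstacle is the verification of ($\mathbf{\mathcal{P}_{2}^{n+1}}$): the rescaled Choquard nonlinearity integrates over all of $\Omega$ including the remaining bubbles at $x_{1,p},\dots,x_{n,p}$, and one must combine the separation in ($\mathbf{\mathcal{P}_{1}^{n+1}}$) with the uniform Moser-Trudinger bound to show those far-away contributions vanish pointwise in the rescaled coordinate, leaving only the self-interacting limit equation governed by Theorem~B.
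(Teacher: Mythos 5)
Your proof follows essentially the same blueprint as the paper's: set $x_{1,p}=x_p$, iteratively extract the next bubble by maximizing $p\mathcal{D}_{n,p}^{4-\alpha}u_p^{2p}$, verify the separation and rescaled convergence at each step, terminate via Lemma~\ref{lem-n-2}, deduce non-extendibility from the pointwise bound in $(\mathcal{P}_3^k)$, and then obtain the $C^1_{loc}$ statement from $(\mathcal{P}_3^k)$, the Moser--Trudinger integral bound, elliptic regularity and Theorem~\ref{thm-1}(5). The minor differences (deriving the symmetric separation via $\varepsilon_{n+1,p}\sim\varepsilon_{i,p}$ rather than directly contradicting the maximality, and proving the final step via uniform boundedness plus Arzel\`a--Ascoli rather than showing $\Delta(\sqrt{p}u_p)\to 0$ directly) are cosmetic, and your flagged concern about the far-away bubble contribution to the rescaled convolution is exactly the point the paper also treats only by reference to "techniques similar to Lemma~\ref{lemma uniform estimate} and Proposition~\ref{prop convergence of z-p}"; the resolution you sketch (the Moser--Trudinger bound is global and scale-invariant) is correct.
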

\begin{proof}
 Let $x_{1,p}=x_{p}$, then ($\mathbf{\mathcal{P}_{1}^{1}}$) and ($\mathbf{\mathcal{P}_{2}^{1}}$) hold. If ($\mathbf{\mathcal{P}_{3}^{1}}$) holds, then the assertion is proved with $k=1$. Otherwise, take $x_{2,p} \in \bar{\Omega}$ such that
 \begin{equation}\label{prop-multi-blow-up-proof-1}
     p|x_{2,p}-x_{1,p}|^{4-\alpha}u_{p}^{2p}(x_{2,p})=\max_{x\in\bar{\Omega}}p|x-x_{1,p}|^{4-\alpha}u_{p}^{2p}(x)\to+\infty
 \end{equation}
 and then define $\varepsilon_{2,p}$ by
 \begin{equation}
     \varepsilon_{2,p}^{-(4-\alpha)}:=pu_{p}^{2p}(x_{2,p}).
 \end{equation}
Since $\Omega$ is bounded and $u_p = 0$ on $\partial\Omega$, it follows that $x_{2,p} \in \Omega$, $p u_p^{2p}(x_{2,p}) \to +\infty$, $\varepsilon_{2,p} \to 0$, and  
\begin{equation}\label{prop-multi-blow-up-proof-3}
    \lim_{p \to +\infty} \frac{|x_{2,p} - x_{1,p}|}{\varepsilon_{2,p}} = +\infty.
\end{equation}
Moreover, we have
\begin{equation} 
    \lim_{p \to +\infty} \frac{|x_{2,p} - x_{1,p}|}{\varepsilon_{1,p}} = +\infty.
\end{equation}
Otherwise, there exists $R>0$ such that $\frac{|x_{2,p} - x_{1,p}|}{\varepsilon_{1,p}}\to R$ as $p\to+\infty$. Thanks to ($\mathbf{\mathcal{P}_{2}^{1}}$), we have
\begin{equation}
    p|x_{1,p}-x_{2,p}|^{4-\alpha}u_{p}^{2p}(x_{2,p})\to \left
    (\frac{R}{1+C_{\alpha}^{-2}R^{2}}\right)^{4-\alpha}<+\infty,
\end{equation}
this contracts to \eqref{prop-multi-blow-up-proof-1}.

Next, we consider the rescaled function:
\begin{equation}
    v_{2,p}(x):=\frac{p}{u_{p}(x_{2,p})}\left(u_{p}(\varepsilon_{2,p}x+x_{2,p})-u_{p}(x_{2,p})\right)\text{~for any~}x\in\Omega_{2,p}:=\frac{\Omega-x_{2,p}}{\varepsilon_{2,p}}.
\end{equation}
It's easy to verify that $v_{2,p}(x)$ satisfies the following equation
\begin{equation}
    \begin{cases}
        -\Delta v_{2,p}=\left(\int_{\Omega_{2,p}}\frac{\left(1+\frac{v_{2,p}(y)}{p}\right)^{p+1}}{|x-y|^{\alpha}}dy\right)\left(1+\frac{v_{2,p}(x)}{p}\right)^{p},&\text{~~in~}\Omega_{2,p},\\
        \quad\ \ v_{2,p}=-p,&\text{~~on~}\partial\Omega_{2,p}.\\
    \end{cases}
\end{equation}
Fix $R>0$. Let $\tilde{x}_{p}$ be any point in $ \Omega_{2,p}\cap B_{R}(0)$ and $\tilde{y}_{p}\in \Omega_{2,p}\cap B_{1}(\tilde{x}_{p})\subset\Omega_{2,p}\cap B_{R+1}(0)$. The corresponding points in $\Omega$ are $\bar{x}_{p}=x_{2,p}+\varepsilon_{2,p}\tilde{x}_{p}$ and $\bar{y}_{p}=x_{2,p}+\varepsilon_{2,p}\tilde{y}_{p}$. By definition of $x_{2,p}$, we have
\begin{equation}\label{prop-multi-blow-up-proof-8}
    p|\bar{x}_{p}-x_{1,p}|^{4-\alpha}u_{p}^{2p}(\bar{x}_{p})\leq  p|x_{2,p}-x_{1,p}|^{4-\alpha}u_{p}^{2p}(x_{2,p})
\end{equation}
and
\begin{equation}\label{prop-multi-blow-up-proof-9}
    p|\bar{y}_{p}-x_{1,p}|^{4-\alpha}u_{p}^{2p}(\bar{y}_{p})\leq  p|x_{2,p}-x_{1,p}|^{4-\alpha}u_{p}^{2p}(x_{2,p}).
\end{equation}
Since $|\bar{x}_{p}-x_{2,p}|\leq R\varepsilon_{2,p}$, the triangle inequality yields
\begin{equation}
   |\bar{x}_{p}-x_{1,p}|\geq |x_{2,p}-x_{1,p}|-|\bar{x}_{p}-x_{2,p}|\geq |x_{2,p}-x_{1,p}| -R\varepsilon_{2,p}
\end{equation}
and 
\begin{equation}
    |\bar{x}_{p}-x_{1,p}|\leq |x_{2,p}-x_{1,p}|+|\bar{x}_{p}-x_{2,p}|\leq |x_{2,p}-x_{1,p}|+R\varepsilon_{2,p}.
\end{equation}
Therefore, by \eqref{prop-multi-blow-up-proof-3}, we obtain
\begin{equation}
    |\bar{x}_{p}-x_{1,p}|=(1+o(1))|x_{2,p}-x_{1,p}|.
\end{equation}
Similarly, since $|\bar{y}_{p}-x_{2,p}|\leq (R+1)\varepsilon_{2,p}$, we have
\begin{equation}
    |\bar{y}_{p}-x_{1,p}|=(1+o(1))|x_{2,p}-x_{1,p}|.
\end{equation}
Consequently, from \eqref{prop-multi-blow-up-proof-8} and \eqref{prop-multi-blow-up-proof-9}, we conclude
\begin{equation}
    u_{p}^{2p}(\bar{x}_{p})\leq (1+o(1))u_{p}^{2p}(x_{2,p})\text{~and~}u_{p}^{2p}(\bar{y}_{p})\leq(1+o(1))u_{p}^{2p}(x_{2,p}).
\end{equation}
Next, if $v_{2,p}(\tilde{x}_{p})>0$, then $u_{p}(\bar{x}_{p})=\frac{u_{p}(x_{2,p})}{p}v_{2,p}(\tilde{x}_{p})+u_{p}(x_{2,p})\geq u_{p}(x_{2,p})>0$ and 
\begin{equation}
    0<\left(1+\frac{v_{2,p}(\tilde{x}_{p})}{p}\right)^{p}=\left(\frac{u_{p}(\bar{x}_{p})}{u_{p}(x_{2,p})}\right)^{p}=1+o(1).
\end{equation}
If $v_{2,p}(\tilde{x}_{p})\leq 0$, then $0<u_{p}(\bar{x}_{p})=\frac{u_{p}(x_{2,p})}{p}v_{2,p}(\tilde{x}_{p})+u_{p}(x_{2,p})\leq u_{p}(x_{2,p})$ and 
\begin{equation}
   0<\left(1+\frac{v_{2,p}(\tilde{x}_{p})}{p}\right)^{p}=\left(\frac{u_{p}(\bar{x}_{p})}{u_{p}(x_{2,p})}\right)^{p}\leq 1.
\end{equation}
Similarly, we can obtain that
\begin{equation}
    0<\left(1+\frac{v_{2,p}(\tilde{y}_{p})}{p}\right)^{p}=\left(\frac{u_{p}(\bar{y}_{p})}{u_{p}(x_{2,p})}\right)^{p}\leq 1+o(1).
\end{equation}
Using the techniques similar to those in Lemma \ref{lemma uniform estimate} and Proposition \ref{prop convergence of z-p}, we can prove that
\begin{equation}
    \int_{\Omega_{2,p}}\frac{\left(1+\frac{v_{2,p}(y)}{p}\right)^{p+1}}{|x-y|^{\alpha}}dy\lesssim 1\text{~for any~}x\in \Omega_{2,p}\cap B_{2R}(0)
\end{equation}
and then
\begin{equation}
     \lim_{p \to +\infty} \frac{\mathrm{dist}(x_{2,p}, \partial \Omega)}{\varepsilon_{2,p}}=+\infty \text{~and~}v_{2,p}\to v\text{~in~}C^{2}_{loc}(\R^{2}).
\end{equation}
This implies that ($\mathbf{\mathcal{P}_{1}^{2}}$) and ($\mathbf{\mathcal{P}_{2}^{2}}$) hold. 

Now, if ($\mathbf{\mathcal{P}_{3}^{2}}$) holds, then the assertion is proved with $k=2$. Otherwise, we can prove ($\mathbf{\mathcal{P}_{1}^{3}}$) and ($\mathbf{\mathcal{P}_{2}^{3}}$) hold similarly. However, this contracts to Lemma \ref{lem-n-2}. Hence, there exist $k\in\{1,2\}$ and $k$ families of points $\{x_{i,p}\}\subset\Omega$, $i=1,\cdots,k$ such that ($\mathbf{\mathcal{P}_{1}^{k}}$), ($\mathbf{\mathcal{P}_{2}^{k}}$) and ($\mathbf{\mathcal{P}_{3}^{k}}$) hold. Moreover, given any other family of points \(x_{k+1,p}\), it is impossible to extract a new sequence from it such that \((\mathcal{P}_{1}^{k+1})\), \((\mathcal{P}_{2}^{k+1})\) and \((\mathcal{P}_{3}^{k+1})\) hold together with the points $\{x_{i,p}\},i=1,\ldots,k+1$. Indeed, if \((\mathcal{P}_{1}^{k+1})\) hold then
\begin{equation}
   |x_{k+1,p}-x_{i,p}|/\varepsilon_{k+1,p}\to+\infty\quad\text{ as }p\to+\infty,\text{ for any }i\in\{1,\ldots,k\}, 
\end{equation}
but this would contradict \((\mathcal{P}_{3}^{k})\).

Finally, the proof of \eqref{prop-multi-peak-eq-1} is a direct consequence of \((\mathcal{P}_{3}^{k})\). Indeed, for give any \(K\) is a compact subset of $(\bar{\Omega}\setminus{\Lambda})$,  there exists $\eta>0$ small enough such that $(\Omega\cap B_{\eta}(x))\cap{\Lambda}=\emptyset$ for all $x\in K$. Then by \eqref{eq-l-infty-bounded}, \eqref{estimate of u-p-p+1} and property ($\mathcal{P}_{3}^{k}$), we have
\begin{equation}
\begin{aligned}
   &\max_{x\in K}\sqrt{p}\left(\int_{\Omega}\frac{u_{p}^{p+1}(y)}{|x-y|^{\alpha}}dy\right)u_{p}^{p}(x)\\
   &=\max_{x\in K}\sqrt{p}\left(\int_{\Omega\cap B_{\eta}(x)}\frac{u_{p}^{p+1}(y)}{|x-y|^{\alpha}}dy\right)u_{p}^{p}(x)+\max_{x\in K}\sqrt{p}\left(\int_{\Omega\setminus B_{\eta}(x)}\frac{u_{p}^{p+1}(y)}{|x-y|^{\alpha}}dy\right)u_{p}^{p}(x)\\
   &\leq Cp^{-\frac{1}{2}}\to0.
\end{aligned}    
\end{equation}
Hence standard elliptic theory shows that \(\sqrt{p}\, u_{p}\to w\) in \(C^{1}(K)\) for some \(w\). Moreover by Theorem \ref{thm-1}, we know that \(\sqrt{p}\, u_{p}\to 0\), so \(w=0\). This ends the proof.
\end{proof}

\begin{Prop}\label{prop-estimate-gradient} Let $\alpha\in(0,2)$ and $\Omega\subset\R^{2}$ be a smooth bounded domain satisfying $(H_{1})$. Then there exists $C > 0$ such that
\begin{equation}
    \begin{aligned}
        |\nabla \bar{u}_{p}(x)|&\leq  \frac{C}{\min_{i=1,\cdots,k}|x-x_{i,p}|}
    \end{aligned}
\end{equation}
for any $x\in\Omega$ and $p$ large enough.
\end{Prop}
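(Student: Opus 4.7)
The plan is to couple the Green's representation
\[\bar u_p(x) = \int_\Omega G(x,y) f_p(y)\,dy\]
with the blow-up analysis developed in Proposition~\ref{prop-multi-peak}. Writing $G(x,y) = -\tfrac{1}{2\pi}\log|x-y| - H(x,y)$ with $H$ smooth on $\Omega\times\Omega$, we have $|\nabla_x G(x,y)|\le C|x-y|^{-1}+C$. Combined with the uniform bound $\|f_p\|_{L^1(\Omega)}=p\iint u_p^{p+1}(y)u_p^p(x)|x-y|^{-\alpha}\,dy\,dx\le C$ furnished by Corollary~\ref{cor-3.8}, the proof reduces to establishing the singular-integral estimate
\[\int_\Omega \frac{f_p(y)}{|x-y|}\,dy \le \frac{C}{d(x)},\qquad d(x):=\min_{1\le i\le k}|x-x_{i,p}|.\]

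First, if $d(x)\le R\varepsilon_{j,p}$ for some fixed $R>0$ and some index $j$, so that $(x-x_{j,p})/\varepsilon_{j,p}$ lies in a compact set of $\mathbb{R}^2$, the $C^2_{loc}$ convergence of $v_{j,p}$ from property $(\mathcal{P}_2^k)$ combined with the identity $|\nabla\bar u_p(x)| = u_p(x_{j,p})\varepsilon_{j,p}^{-1}|\nabla v_{j,p}((x-x_{j,p})/\varepsilon_{j,p})|$ immediately yields $|\nabla\bar u_p(x)|\le C/\varepsilon_{j,p}\le CR/d(x)$.

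In the complementary regime $d(x)>R\varepsilon_{j,p}$ for all $j$, I split $\Omega$ according to the distance from $x$: the outer region $\{|x-y|\ge d(x)/2\}$ contributes at most $(2/d(x))\|f_p\|_{L^1}\le C/d(x)$. The essential task is to control $\int_{B_{d(x)/2}(x)} f_p(y)|x-y|^{-1}\,dy$, where $d(y)\ge d(x)/2$ throughout. The main obstacle is to establish the sharp pointwise decay
\begin{equation}\label{pointwise-plan}
f_p(y)\le C\sum_{i=1}^k \frac{\varepsilon_{i,p}^{\,2}}{(|y-x_{i,p}|^2+\varepsilon_{i,p}^{\,2})^2},\qquad y\in\Omega,
\end{equation}
which mirrors the known Lane--Emden estimate. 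Heuristically \eqref{pointwise-plan} reflects the far-field behavior of the limit bubble in Proposition~\ref{prop convergence of z-p}: $e^{v(\tilde x)}\sim|\tilde x|^{-(4-\alpha)}$ and $\int e^{v(\tilde y)}|\tilde x-\tilde y|^{-\alpha}\,d\tilde y\sim|\tilde x|^{-\alpha}$ as $|\tilde x|\to\infty$, so in the rescaled variables the product $V_p u_p^p$ decays like $|\tilde x|^{-4}$, which translates back to $\varepsilon_{i,p}^{2}|y-x_{i,p}|^{-4}$ in the original coordinates.

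The proof of \eqref{pointwise-plan} proceeds by contradiction: were it to fail, a sequence $y_p$ realizing the violation would, after rescaling $u_p$ around $y_p$ at the scale dictated by property $(\mathcal{P}_3^k)$, produce in the limit a new bubble of the type classified in Theorem B, and hence an extra blow-up point distinct from $x_{1,p},\dots,x_{k,p}$, in contradiction with the maximality of $k$ in Proposition~\ref{prop-multi-peak} and the cap $k\le 2$ from Lemma~\ref{lem-n-2}. Granted \eqref{pointwise-plan}, on $B_{d(x)/2}(x)$ we have $|y-x_{i,p}|\ge d(x)/2\ge (R/2)\varepsilon_{i,p}$, so $f_p(y)\le C/(R^2 d(x)^2)$, and integrating $|x-y|^{-1}$ over a ball of radius $d(x)/2$ yields $2\pi C/(R^2 d(x))\le C'/d(x)$. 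Combining the three contributions closes the estimate. The genuine difficulty compared to the Lane--Emden case is the need to propagate the rescaling argument through the nonlocal potential $V_p$, since both the factor $u_p^p$ and the Riesz-type convolution must be controlled simultaneously under blow-up.
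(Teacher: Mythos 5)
Your overall strategy --- Green's representation plus blow-up structure, reducing to $\int_\Omega f_p(y)|x-y|^{-1}dy\lesssim 1/d(x)$ with $d(x):=\min_i|x-x_{i,p}|$, handling the near-peak regime via the rescaled bubble and the far regime by splitting at radius $d(x)/2$ --- parallels the paper at the level of skeleton, and the near-peak case and the outer-region bound via $\|f_p\|_{L^1}\lesssim 1$ are sound. The gap is that everything in the far regime is made to rest on the sharp pointwise decay
\begin{equation*}
f_p(y)\;\lesssim\;\sum_{i=1}^k\frac{\varepsilon_{i,p}^{2}}{\bigl(|y-x_{i,p}|^{2}+\varepsilon_{i,p}^{2}\bigr)^{2}},
\end{equation*}
which you assert but do not prove. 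This is strictly stronger than what is available at this point of the argument: property $(\mathcal{P}_{3}^{k})$ only gives $p\,u_p^{2p}(y)\lesssim\mathcal{D}_{k,p}^{-(4-\alpha)}(y)$, not the bubble-sharp factor $\varepsilon_{i,p}^{4-\alpha}|y-x_{i,p}|^{-2(4-\alpha)}$, and the $C^2_{loc}$ convergence in $(\mathcal{P}_2^k)$ gives nothing uniform in the region $|y-x_{i,p}|\gg\varepsilon_{i,p}$. The contradiction argument you sketch (rescale around a violating point to produce a new bubble) is not a proof: a violation of the sharp rate does not by itself generate a new concentration point contradicting the maximality of $k$ --- it only says the profile is not exactly the standard bubble at some intermediate scale --- and making it rigorous requires controlling the nonlocal convolution $\int u_p^{p+1}(z)|y-z|^{-\alpha}dz$ along the new rescaling, which is precisely the nontrivial content. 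Note also that the only decay estimate the paper does prove for $v_p$ (Proposition~\ref{lemma-decay-estimate}) is established \emph{after} and \emph{using} this proposition (through Lemma~\ref{lemma-outside-the-non-regular-point} and the intermediate results of Section~4.1), so invoking any version of it here would be circular.

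The paper's proof avoids the sharp pointwise decay entirely. After the Green's function reduction it partitions $\Omega=\bigcup_i\Omega_{i,p}$ by nearest peak, splits each $\Omega_{i,p}$ at radius $|x-x_{i,p}|/2$, and on the piece close to $x$ applies the HLS inequality to the double convolution, feeding in only the coarse bound $p\,u_p^{2p}(y)\lesssim|x-x_{i,p}|^{-(4-\alpha)}$ from $(\mathcal{P}_3^{k})$ together with the global integral bound $p^{2/(4-\alpha)}\int_\Omega u_p^{4p/(4-\alpha)}\lesssim 1$ from~\eqref{estimate of u-p-p+1}; the local singular integral $\int_{|x-y|\le|x-x_{i,p}|}|x-y|^{-4/(4-\alpha)}dy$ is finite for $\alpha<2$ and the exponents combine to produce exactly $|x-x_{i,p}|^{-1}$. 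In short: HLS absorbs one Riesz kernel so that only $L^{4p/(4-\alpha)}$ information on $u_p^p$ is needed, and that is already established. This is genuinely more economical than the route you propose; if you wish to pursue yours, you would first have to prove the pointwise $f_p$ bound as a free-standing lemma, which even in the local Lane--Emden setting is a substantial undertaking and here carries the extra burden of the nonlocal potential.
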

\begin{proof}
By the Green's representation formula and the gradient estimate $|\nabla_{x}G(x,y)|\leq \frac{C}{|x-y|}$ (see for instance \cite{DallAcqua-2004-jde}), we have
    \begin{equation}\label{prop-estimate for-p-u-p-proof-1}
        \begin{aligned}
            |\nabla \bar{u}_{p}(x)&|=p\left|\int_{\Omega
            }\nabla_{x}G(x,y)\left(\int_{\Omega}\frac{u_{p}^{p+1}(z)}{|y-z|^{\alpha}}dz\right)u_{p}^{p}(y)dy\right|\\
            &\leq p\int_{\Omega
            }|\nabla_{x}G(x,y)|\left(\int_{\Omega}\frac{u_{p}^{p+1}(z)}{|y-z|^{\alpha}}dz\right)u_{p}^{p}(y)dy\\
            &\leq Cp\int_{\Omega
            }\left(\int_{\Omega}\frac{u_{p}^{p+1}(z)}{|y-z|^{\alpha}}dz\right)\frac{u_{p}^{p}(y)}{|x-y|}dy.
        \end{aligned}
    \end{equation}
Let $\mathcal{D}_{p}(x)=\min_{i=1,\cdots,k}|x-x_{i,p}|$ and  $\Omega_{i,p}=\left\{x\in\Omega:|x-x_{i,p}|=\mathcal{D}_{p}(x)\right\}$ for any $i=1,\cdots,k.$ Then, we have
\begin{equation}\label{prop-estimate for-p-u-p-proof-2}
    \begin{aligned}
       p\int_{\Omega_{i,p}
            }\left(\int_{\Omega}\frac{u_{p}^{p+1}(z)}{|y-z|^{\alpha}}dz\right)\frac{u_{p}^{p}(y)}{|x-y|}dy&=p\int_{\Omega_{i,p}\cap B_{|x-x_{i,p}|/2}(x_{i,p})
            }\left(\int_{\Omega}\frac{u_{p}^{p+1}(z)}{|y-z|^{\alpha}}dz\right)\frac{u_{p}^{p}(y)}{|x-y|}dy\\
            &\quad+p\int_{\Omega_{i,p}\setminus B_{|x-x_{i,p}|/2}(x_{i,p})
            }\left(\int_{\Omega}\frac{u_{p}^{p+1}(z)}{|y-z|^{\alpha}}dz\right)\frac{u_{p}^{p}(y)}{|x-y|}dy. 
    \end{aligned}
\end{equation}
For $y\in \Omega_{i,p}\cap B_{|x-x_{i,p}|/2}(x_{i,p})$, we have
$ |x-y|\geq |x-x_{i,p}|-|y-x_{i,p}|\geq |x-x_{i,p}|/2$. Then by \eqref{eq-thm-1.1-4}
\begin{equation}\label{prop-estimate for-p-u-p-proof-3}
\begin{aligned}
    p&\int_{\Omega_{i,p}\cap B_{|x-x_{i,p}|/2}(x_{i,p})
            }\left(\int_{\Omega}\frac{u_{p}^{p+1}(z)}{|y-z|^{\alpha}}dz\right)\frac{u_{p}^{p}(y)}{|x-y|}dy\\
            &\leq\frac{2p}{|x-x_{i,p}|}\int_{\Omega_{i,p}\cap B_{|x-x_{i,p}|/2}(x_{i,p})
            }\left(\int_{\Omega}\frac{u_{p}^{p+1}(z)}{|y-z|^{\alpha}}dz\right)u_{p}^{p}(y)dy\\
            &\leq \frac{C}{|x-x_{i,p}|}.
\end{aligned}   
\end{equation}
For $y\in \Omega_{i,p}\setminus B_{|x-x_{i,p}|/2}(x_{i,p})$, we have $pu_{p}^{2p}(y)\leq \frac{C}{|y-x_{i,p}|^{4-\alpha}}\leq\frac{C}{|x-x_{i,p}|^{4-\alpha}}$. Let $\tilde{\Omega}_{i,p}:=\Omega_{i,p}\setminus B_{|x-x_{i,p}|/2}(x_{i,p})\cap\{|x-y|\leq |x-x_{i,p}|\}$, then by the HLS inequality and \eqref{estimate of u-p-p+1}, we have
\begin{equation}
    \begin{aligned}
        p&\int_{\tilde{\Omega}_{i,p}
            }\left(\int_{\Omega}\frac{u_{p}^{p+1}(z)}{|y-z|^{\alpha}}dz\right)\frac{u_{p}^{p}(y)}{|x-y|}dy\\
            &\leq Cp\left(\int_{\Omega
            }u_{p}^{\frac{4p}{4-\alpha}}(z)dz\right)^{\frac{4-\alpha}{4}}\left(\int_{\tilde{\Omega}_{i,p}}\frac{u_{p}^{\frac{4p}{4-\alpha}}(y)}{|x-y|^{\frac{4}{4-\alpha}}}dy\right)^{\frac{4-\alpha}{4}}\\
            &\leq C\left(\int_{\tilde{\Omega}_{i,p}}\frac{p^{\frac{2}{4-\alpha}}u_{p}^{\frac{4p}{4-\alpha}}(y)}{|x-y|^{\frac{4}{4-\alpha}}}dy\right)^{\frac{4-\alpha}{4}}\\
             &\leq \frac{C}{|x-x_{i,p}|^{(4-\alpha)/2}}\left(\int_{|x-y|\leq |x-x_{i,p}|}\frac{1}{|x-y|^{\frac{4}{4-\alpha}}}dy\right)^{\frac{4-\alpha}{4}}\\
    \end{aligned}
\end{equation}
and
\begin{equation}\label{prop-estimate for-p-u-p-proof-5}
    \begin{aligned}
        p&\int_{\Omega_{i,p}\setminus B_{|x-x_{i,p}|/2}(x_{i,p})
            }\left(\int_{\Omega}\frac{u_{p}^{p+1}(z)}{|y-z|^{\alpha}}dz\right)\frac{u_{p}^{p}(y)}{|x-y|}dy\\
            &\leq \frac{C}{|x-x_{i,p}|^{(4-\alpha)/2}}\left(\int_{|x-y|\leq |x-x_{i,p}|}\frac{1}{|x-y|^{\frac{4}{4-\alpha}}}dy\right)^{\frac{4-\alpha}{4}}\\
            &\quad+\frac{p}{|x-x_{i,p}|}\int_{\Omega}\left(\int_{\Omega}\frac{u_{p}^{p+1}(z)}{|y-z|^{\alpha}}dz\right)u_{p}^{p}(y)dy\\
            &\leq \frac{C}{|x-x_{i,p}|}.
    \end{aligned}
\end{equation}
Combining \eqref{prop-estimate for-p-u-p-proof-1}-\eqref{prop-estimate for-p-u-p-proof-5} together, we obtain
\begin{equation}
    \begin{aligned}
        |\nabla \bar{u}_{p}(x)|&\leq  \frac{C}{\min_{i=1,\cdots,k}|x-x_{i,p}|}
    \end{aligned}
\end{equation}
for any $x\in\Omega$ and $p$ large enough. This completes the proof.
\end{proof}

\begin{Prop}\label{prop-equiv-blow-up-set}
 Let $\alpha\in(0,2)$ and $\Omega\subset\R^{2}$ be a smooth bounded domain satisfying $(H_{1})$. Then it holds that 
    \begin{equation}
        \Lambda=\mathcal{S}=\left\{x\in\bar{\Omega}:\forall~ r_{0}>0,\forall~p_{0}>1, \exists~p>p_{0}~s.t.~ p\int_{\Omega\cap B_{r_{0}}(x)}\int_{\Omega}\frac{u_{p}^{p+1}(y)u_{p}^{p+1}(x)}{|x-y|^{\alpha}}dydx\geq 1\right\}.
    \end{equation}
\end{Prop}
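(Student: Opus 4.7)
The plan is to close the triple inclusion $\Lambda\subseteq T\subseteq\mathcal{S}\subseteq\Lambda$, where $T$ denotes the right-hand set in the statement. The inclusions $\Lambda\subseteq T$ and $T\subseteq\mathcal{S}$ will follow from fairly direct blow-up and truncation arguments, using estimates already established; the delicate step is $\mathcal{S}\subseteq\Lambda$, which I will deduce from the gradient bound of Proposition \ref{prop-estimate-gradient} together with a path-integration argument.

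To prove $\Lambda\subseteq T$, fix $x_i := \lim_{p\to\infty} x_{i,p}\in\Lambda$ and arbitrary $r_0>0$, $R>0$. Since $\varepsilon_{i,p}\to 0$, for $p$ large one has $B_{R\varepsilon_{i,p}}(x_{i,p}) \subset \Omega\cap B_{r_0}(x_i)$. Changing variables $x\mapsto x_{i,p}+\varepsilon_{i,p}x$ and using $\varepsilon_{i,p}^{-(4-\alpha)} = p u_p^{2p}(x_{i,p})$, I compute
\[
p\int_{B_{R\varepsilon_{i,p}}(x_{i,p})}\int_{B_{R\varepsilon_{i,p}}(x_{i,p})}\frac{u_p^{p+1}(y)u_p^{p+1}(x)}{|x-y|^\alpha}\,dy\,dx = u_p^2(x_{i,p})\int_{B_R}\int_{B_R}\frac{(1+\tfrac{v_{i,p}(y)}{p})^{p+1}(1+\tfrac{v_{i,p}(x)}{p})^{p+1}}{|x-y|^\alpha}\,dy\,dx.
\]
By property $(\mathcal{P}_2^k)$ and dominated convergence (with $(1+v_{i,p}/p)^{p+1}$ uniformly bounded on $B_R$), the inner double integral tends to $\int_{B_R}\int_{B_R} e^{v(y)+v(x)}|x-y|^{-\alpha}\,dy\,dx$ as $p\to\infty$. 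Letting $R\to\infty$, invoking \eqref{eq-integral}, and using $\liminf_p u_p(x_{i,p})\ge 1$, the limit is at least $2(4-\alpha)\pi>1$, so the local energy exceeds $1$ for infinitely many $p$; hence $x_i\in T$.

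For $T\subseteq\mathcal{S}$, I argue by contrapositive. If $z\notin\mathcal{S}$, there exist $r,M>0$ and $p_0>1$ such that $p u_p \le M$ on $B_r(z)\cap\Omega$ for all $p>p_0$. Using $u_p\le \tilde C$ globally (Lemma \ref{upper bound of L-infty norm}) and $\alpha<2$, the potential is controlled as $\int_\Omega u_p^{p+1}(y)|x-y|^{-\alpha}\,dy \le \tilde C^{p+1}\int_\Omega|x-y|^{-\alpha}\,dy \le C_1\tilde C^{p+1}$ uniformly in $x\in\Omega$. Choosing $r_0:=r/2$ and using $u_p(x)^{p+1}\le (M/p)^{p+1}$ on $B_{r_0}(z)\cap\Omega$,
\[
p\int_{\Omega\cap B_{r_0}(z)}\int_\Omega\frac{u_p^{p+1}(y)u_p^{p+1}(x)}{|x-y|^\alpha}\,dy\,dx \le C_1|\Omega|\,p\,(M\tilde C/p)^{p+1}=\frac{C_1|\Omega|(M\tilde C)^{p+1}}{p^p}\to 0,
\]
which shows $z\notin T$.

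The main obstacle is $\mathcal{S}\subseteq\Lambda$. Suppose $z\in\mathcal{S}\setminus\Lambda$ and set $2\delta := \mathrm{dist}(z,\Lambda) > 0$. Since $\Lambda$ has cardinality at most $2$ (Lemma \ref{lem-n-2}) and $\Omega$ is a smooth bounded connected domain, the compact set $A_\delta := \{x\in\bar\Omega : \mathrm{dist}(x,\Lambda)\ge\delta\}$ is path-connected and contains $\partial\Omega$; let $L_\delta<\infty$ be a uniform bound on the length of a rectifiable path in $A_\delta$ joining any point of $A_\delta$ to some point of $\partial\Omega$. For $p$ large, each $x_{i,p}$ lies within $\delta/2$ of $\Lambda$, so $\mathrm{dist}(A_\delta,\{x_{i,p}\})\ge\delta/2$, and Proposition \ref{prop-estimate-gradient} yields $|\nabla\bar u_p|\le 2C/\delta$ on $A_\delta$. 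Pick $y_p\to z$ with $\bar u_p(y_p)\to+\infty$; since $\mathrm{dist}(z,\Lambda)=2\delta$, $y_p\in A_\delta$ for $p$ large. Choosing $\xi_p\in\partial\Omega$ and integrating along a path $\gamma_p\subset A_\delta$ from $\xi_p$ to $y_p$ of length $\le L_\delta$ gives
\[
\bar u_p(y_p) = \bar u_p(y_p)-\bar u_p(\xi_p) = \int_{\gamma_p}\nabla\bar u_p\cdot d\ell \le (2C/\delta)\,L_\delta,
\]
contradicting $\bar u_p(y_p)\to+\infty$. This closes the cyclic chain and proves $\Lambda = \mathcal{S} = T$.
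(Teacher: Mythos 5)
Your proof is correct, and it differs from the paper's in a genuinely useful way. The paper proves $\Lambda=\mathcal{S}$ first and then separately proves $\Lambda=T$ (where $T$ is the set on the right-hand side): for $\Lambda\subseteq T$ it uses Fatou on the rescaled integrals, exactly as you do; but for $\bar\Omega\setminus\Lambda\subseteq\bar\Omega\setminus T$ it invokes the HLS inequality, the Moser--Trudinger bound \eqref{estimate of u-p-p+1}, and property $(\mathcal{P}_3^k)$ to show the local double integral is $\leq Cr^{4-\alpha}+Cr^{(4-\alpha)/2}$, uniformly in $p$. Your inclusion $T\subseteq\mathcal{S}$ replaces that with a much simpler observation: away from $\mathcal{S}$ one has $u_p\le M/p$, so the factor $u_p^{p+1}(x)\le(M/p)^{p+1}$ alone makes the integral decay like $(M\tilde C)^{p+1}/p^{p}\to 0$, requiring only the global $L^\infty$ bound and $\alpha<2$ (no HLS, no $(\mathcal{P}_3^k)$, no Moser--Trudinger). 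The cyclic chain $\Lambda\subseteq T\subseteq\mathcal{S}\subseteq\Lambda$ then delivers all three equalities at once, with the heavier machinery confined to $\mathcal{S}\subseteq\Lambda$. For that last inclusion you make explicit the path-integration of the gradient bound from Proposition~\ref{prop-estimate-gradient}; the paper asserts the resulting $C^0$ bound on $\bar u_p(y_p)$ directly without spelling this out, so your argument supplies a missing step rather than introducing a new one. One small caveat: at this stage of the paper one does not yet know $\Lambda\cap\partial\Omega=\emptyset$ (that is Proposition~\ref{prop-no-boundary-blow-up}, proved later and only for $\alpha\in(0,1)$ under $(H_2)$), so the statement ``$A_\delta$ contains $\partial\Omega$'' should be weakened to ``$A_\delta\cap\partial\Omega\neq\emptyset$'', which is all the path argument needs since $\#\Lambda\le 2$ cannot cover $\partial\Omega$; the path-connectedness and uniform length bound for $A_\delta$ are then standard planar topology for $\delta$ small.
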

\begin{proof}
For any $x \in \Lambda$, there exists a sequence $\{x_{i,p}\} \subset \Omega$ such that $ x_{i,p} \to x \text{~and~} p u_{p}^{2p}(x_{i,p}) \to +\infty \text{~as~} p \to +\infty.$
Consequently, for sufficiently large $p$, we have $u_{p}(x_{i,p}) \geq \frac{1}{2}$, which implies $p u_{p}(x_{i,p}) \to +\infty \text{~as~} p \to +\infty.$ Thus, $x \in \mathcal{S}$ and $\Lambda \subseteq \mathcal{S}$.
Conversely, for any $y \in \mathcal{S}$, there exists a sequence $\{y_p\} \subset \Omega$ such that $y_p \to y \text{~and~}\bar{u}_p(y_p) \to +\infty \text{~as~} p \to +\infty.$
If $y \notin \Lambda$, Proposition \ref{prop-estimate-gradient} yields the uniform bound $\bar{u}_p(y_p) \leq C$, leading to a contradiction. Therefore, $y \in \Lambda$, and we conclude $\mathcal{S} \subseteq \Lambda$. Combining both inclusions, we obtain $\Lambda = \mathcal{S}.$

For any $x\in\Lambda$, there exists a sequence $\{x_{i,p}\} \subset \Omega$ such that $ x_{i,p} \to x $ and $ p u_{p}^{2p}(x_{i,p})\to+\infty $ as $ p \to +\infty.$ Recall that for the function $v$ defined in \eqref{definition of v}, we have $\int_{\R^{2}}\int_{\R^{2}}\frac{e^{v(y)}e^{v(x)}}{|x-y|^{\alpha}}dydx=2(4-\alpha)\pi$. Consequently, there exists $R>0$ such that $\int_{B_{R}(0)}\int_{B_{R}(0)}\frac{e^{v(y)}e^{v(x)}}{|x-y|^{\alpha}}dydx>1$. Using \eqref{eq-liminf-u-p}, properties ($\mathcal{P}_{1}^{k}$) and ($\mathcal{P}_{2}^{k}$), along with the Fatou's lemma, we obtain
\begin{equation}
    \begin{aligned}
        &p\int_{\Omega\cap B_{r_{0}}(x)}\int_{\Omega}\frac{u_{p}^{p+1}(y)u_{p}^{p+1}(x)}{|x-y|^{\alpha}}dydx\\
        &\geq p\int_{B_{R\varepsilon_{i,p}}(x_{i,p})}\int_{B_{R\varepsilon_{i,p}}(x_{i,p})}\frac{u_{p}^{p+1}(y)u_{p}^{p+1}(x)}{|x-y|^{\alpha}}dydx\\
        &=u_{p}^{2}(x_{i,p})\int_{B_{R}(0)}\int_{B_{R}(0)}\frac{(1+\frac{v_{i,p}(y)}{p})^{p+1}(1+\frac{v_{i,p}(x)}{p})^{p+1}}{|x-y|^{\alpha}}dydx
    \end{aligned}
\end{equation}
and
\begin{equation}
  \liminf_{p\to+\infty}p\int_{\Omega\cap B_{r_{0}}(x)}\int_{\Omega}\frac{u_{p}^{p+1}(y)u_{p}^{p+1}(x)}{|x-y|^{\alpha}}dydx\geq\int_{B_{R}(0)}\int_{B_{R}(0)}\frac{e^{v(y)}e^{v(x)}}{|x-y|^{\alpha}}dydx>1.
\end{equation}
Conversely, if $x_{0}\not\in\Lambda$, for any sufficiently small $r > 0$, applying \eqref{eq-l-infty-bounded}, the HLS inequality, \eqref{estimate of u-p-p+1} and property ($\mathcal{P}_{3}^{k}$) yields
\begin{equation}
\begin{aligned}
   &{p}\int_{\Omega\cap B_{r}(x_{0})}\int_{\Omega}\frac{u_{p}^{p+1}(y)u_{p}^{p+1}(x)}{|x-y|^{\alpha}}dydx\\
   &={p}\int_{\Omega\cap B_{r}(x_{0})}\int_{\Omega\cap B_{2r}(x_{0})}\frac{u_{p}^{p+1}(y)u_{p}^{p+1}(x)}{|x-y|^{\alpha}}dydx+{p}\int_{\Omega\cap B_{r}(x_{0})}\int_{\Omega\setminus B_{2r}(x_{0})}\frac{u_{p}^{p+1}(y)u_{p}^{p+1}(x)}{|x-y|^{\alpha}}dydx\\
   &\leq C\int_{B_{2r}(x_{0})}\int_{B_{2r}(x_{0})}\frac{1}{|x-y|^{\alpha}}dydx+Cp\left(\int_{\Omega\cap B_{r}(x_{0})}u_{p}^{\frac{4p}{4-\alpha}}(x)dx\right)^{\frac{4-\alpha}{4}}\left(\int_{\Omega}u_{p}^{\frac{4p}{4-\alpha}}(y)dy\right)^{\frac{4-\alpha}{4}}\\
   &\leq Cr^{4-\alpha}+Cr^{\frac{4-\alpha}{2}}\to0\text{~as~}r\to 0^{+},
\end{aligned}    
\end{equation}
uniformly for $p$ large enough. This completes the proof.
\end{proof}

\begin{Cor}\label{cor-4.5}
Let $\alpha\in(0,2)$ and $\Omega\subset\R^{2}$ be a smooth bounded domain satisfying $(H_{1})$.   For any compact subset $K\subset\bar{\Omega}\setminus\mathcal{S}$, we have
    \begin{equation}\label{eq-estimate-gradite-away-from-the-blow-up-point-1}
        \lim_{p\to+\infty}\|pu_{p}^{2p+2}(x)\|_{L^{\infty}(K)}=\lim_{p\to+\infty}p\int_{K}\int_{\Omega}\frac{u_{p}^{p+1}(y)u_{p}^{p+1}(x)}{|x-y|^{\alpha}}dydx=0,
    \end{equation}
    and 
    \begin{equation}\label{eq-estimate-gradite-away-from-the-blow-up-point-2}
        \lim_{p\to+\infty}\|p|\nabla u_{p}(x)|^{2}\|_{L^{\infty}(K)}=\lim_{p\to+\infty}p\int_{K}|\nabla u_{p}(x)|^{2}dx=0.
    \end{equation}
\end{Cor}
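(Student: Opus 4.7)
The plan is to combine three earlier ingredients: (i) the $C^{1}_{\mathrm{loc}}$ decay $\sqrt{p}\,u_{p}\to 0$ on $\bar{\Omega}\setminus\Lambda$ from Proposition~\ref{prop-multi-peak}; (ii) the identification $\Lambda=\mathcal{S}$ from Proposition~\ref{prop-equiv-blow-up-set}; and (iii) the global integral bound $p\int_{\Omega}\int_{\Omega}\frac{u_{p}^{p+1}(y)u_{p}^{p}(x)}{|x-y|^{\alpha}}\,dy\,dx\leq C$ from Corollary~\ref{cor-3.8}. Since $K\subset\bar{\Omega}\setminus\mathcal{S}=\bar{\Omega}\setminus\Lambda$ is compact, (i) and (ii) yield $\sqrt{p}\,u_{p}\to 0$ and $\sqrt{p}\,\nabla u_{p}\to 0$ uniformly on $K$; in particular $\|u_{p}\|_{L^{\infty}(K)}\to 0$. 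Moreover, since the blow-up points $x_{i,p}$ converge to $\Lambda=\mathcal{S}$, we have $\mathcal{D}_{k,p}(x)\geq c>0$ uniformly for $x\in K$ and $p$ large, so property $(\mathbf{\mathcal{P}_{3}^{k}})$ gives the uniform bound $p\,u_{p}^{2p}(x)\leq C$ on $K$.

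For the first limit in (1), I would simply write
\[
    p\,u_{p}^{2p+2}(x)=\bigl(p\,u_{p}^{2p}(x)\bigr)\,u_{p}^{2}(x)\leq C\,\|u_{p}\|_{L^{\infty}(K)}^{2}\longrightarrow 0.
\]
For the second limit in (1), the key move is to peel off one factor of $u_{p}(x)$ and invoke Corollary~\ref{cor-3.8}:
\[
    p\int_{K}\int_{\Omega}\frac{u_{p}^{p+1}(y)u_{p}^{p+1}(x)}{|x-y|^{\alpha}}\,dy\,dx
    \leq \|u_{p}\|_{L^{\infty}(K)}\cdot p\int_{\Omega}\int_{\Omega}\frac{u_{p}^{p+1}(y)u_{p}^{p}(x)}{|x-y|^{\alpha}}\,dy\,dx
    \leq C\,\|u_{p}\|_{L^{\infty}(K)}\longrightarrow 0.
\]

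For (2), the $C^{1}_{\mathrm{loc}}$ convergence immediately gives $p\,|\nabla u_{p}(x)|^{2}=|\sqrt{p}\,\nabla u_{p}(x)|^{2}\to 0$ uniformly on $K$, from which the integral version follows by multiplying by $|K|<+\infty$. Alternatively, the gradient estimate of Proposition~\ref{prop-estimate-gradient} yields $|\nabla u_{p}(x)|\leq C/(p\,\mathcal{D}_{k,p}(x))\leq C/p$ on $K$, and hence the same conclusion.

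I do not expect any genuine obstacle: the corollary is a routine consequence of the structure already set up. The only small subtlety is to remember to shave off a factor $u_{p}(x)$ inside the nonlocal integral so that Corollary~\ref{cor-3.8}, rather than the weaker identity from Corollary~\ref{cor energy}, can be applied.
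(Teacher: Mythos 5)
Your proof is correct and follows essentially the same structure as the paper's, but two of your intermediate steps are minor (and arguably cleaner) variants worth noting.

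For the double integral in part (1), the paper first applies the HLS inequality to
$p\int_{K}\int_{\Omega}\frac{u_{p}^{p+1}(y)u_{p}^{p+1}(x)}{|x-y|^{\alpha}}dydx \leq Cp\|u_{p}\|_{L^{\infty}(K)}\left(\int_{K}u_{p}^{\frac{4p}{4-\alpha}}\right)^{\frac{4-\alpha}{4}}\left(\int_{\Omega}u_{p}^{\frac{4p}{4-\alpha}}\right)^{\frac{4-\alpha}{4}}$
and then invokes the uniform bound \eqref{estimate of u-p-p+1} to absorb the remaining integrals. You instead bound $u_{p}^{p+1}(x)\leq \|u_{p}\|_{L^{\infty}(K)}u_{p}^{p}(x)$ for $x\in K$ and quote Corollary~\ref{cor-3.8} directly. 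Both yield $C\|u_{p}\|_{L^{\infty}(K)}$, but your route avoids one extra use of HLS and is slightly more elementary.

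For part (2), the paper cites Proposition~\ref{prop-estimate-gradient} together with Proposition~\ref{prop-equiv-blow-up-set} to get $p|\nabla u_{p}(x)|\leq C_{K}$ on $K$, and then declares the conclusion. That bound alone gives only $p|\nabla u_{p}|^{2}\leq C_{K}|\nabla u_{p}|$, so one must also observe $|\nabla u_{p}|\to 0$ on $K$ (which follows from \eqref{prop-multi-peak-eq-1}) to conclude. Your argument uses the $C^{1}_{loc}$ convergence $\sqrt{p}\,u_{p}\to 0$ on $\bar\Omega\setminus\Lambda$ directly, giving $\sqrt{p}\,|\nabla u_{p}|\to 0$ uniformly on $K$ and hence $p|\nabla u_{p}|^{2}\to 0$ immediately; this is a cleaner formulation of the same fact, and closes the implicit gap in the paper's one-line justification.

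One other cosmetic difference: for the pointwise bound $pu_{p}^{2p+2}\to 0$, the paper splits the two extra powers of $u_{p}$ as $\|u_{p}\|_{L^{\infty}(\Omega)}\cdot u_{p}(x)$ (using \eqref{eq-l-infty-bounded} and then $u_{p}\to 0$ on $K$), whereas you use $\|u_{p}\|_{L^{\infty}(K)}^{2}$. Both are valid. Overall no gaps.
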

\begin{proof}
  Notice that by Proposition \ref{prop-equiv-blow-up-set} and property ($\mathcal{P}_{3}^{k}$), there exists $C_{K}>0$ such that $\|pu_{p}^{2p}(x)\|_{L^{\infty}(K)}\leq C_{K}$. Then, by \eqref{eq-l-infty-bounded} and \eqref{prop-multi-peak-eq-1}, we obtain that for any $x \in K$
  \begin{equation}
      pu_{p}^{2p+2}(x)\leq C_{K}\|u_{p}\|_{L^{\infty}(\Omega)}u_{p}(x)\leq Cu_{p}(x)\to0\text{~uniformly as~}p\to+\infty.
  \end{equation}
Moreover, by \eqref{eq-l-infty-bounded}, the HLS inequality, \eqref{estimate of u-p-p+1} and \eqref{prop-multi-peak-eq-1}
  \begin{equation}
      \begin{aligned}
          &p\int_{K}\int_{\Omega}\frac{u_{p}^{p+1}(y)u_{p}^{p+1}(x)}{|x-y|^{\alpha}}dydx\\
          &\leq Cp\|u_{p}\|_{L^{\infty}(K)}\left(\int_{K}u_{p}^{\frac{4p}{4-\alpha}}(x)dx\right)^{\frac{4-\alpha}{4}}\left(\int_{\Omega}u_{p}^{\frac{4p}{4-\alpha}}(y)dy\right)^{\frac{4-\alpha}{4}}\\
          &\leq C\|u_{p}\|_{L^{\infty}(K)}\to 0\text{~as~}p\to+\infty.
      \end{aligned}
  \end{equation}
This establishes \eqref{eq-estimate-gradite-away-from-the-blow-up-point-1}.

Finally, by Proposition \ref{prop-estimate-gradient} and Proposition \ref{prop-equiv-blow-up-set}, there exists $C_{K}>0$ such that $p|\nabla u_{p}(x)|\leq C_{K}$ for any $x\in K$, which proves \eqref{eq-estimate-gradite-away-from-the-blow-up-point-2}.
\end{proof}

Let $N \in \mathbb{N}$ denote the number of points in $\mathcal{S}$. Then $N \leq k$. Without loss of generality, we may relabel the points $\{x_{i,p}\}$, $i=1,\cdots,k$ and assume that
\begin{equation}
\mathcal{S} = \{x_1, \dots, x_N\} \text{ and } x_{i,p} \to x_i \text{ as } p \to +\infty \text{ for each } i = 1, \dots, N.
\end{equation}

\begin{Lem}\label{lemma-out-the-blow-up-point}
Let $\alpha\in(0,2)$ and $\Omega\subset\R^{2}$ be a smooth bounded domain satisfying $(H_{1})$. Then there exists $\gamma_{i}>0$, $i=1,\cdots,N$ such that
    \begin{equation}\label{lemma-away-the-blow-up-point-1}
        \lim_{p\to+\infty}\bar{u}_{p}=\sum_{i=1}^{N}\gamma_{i} G(\cdot,x_{i})\text{~~in~~}C^{2}_{loc}(\bar{\Omega}\setminus\mathcal{S})
    \end{equation}
    and
    \begin{equation}\label{lemma-away-the-blow-up-point-2}
        \gamma_{i}=\lim_{\delta\to0}\lim_{p\to+\infty}\int_{\Omega\cap B_{\delta}(x_{i})}f_{p}(x)dx.
    \end{equation}
\end{Lem}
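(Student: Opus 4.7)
The plan is to combine Green's representation with a weak-$*$ compactness argument for the source measures $f_p\,dx$. From \eqref{eq-thm-1.1-4} we have $\|f_p\|_{L^1(\Omega)}\leq C$, so along a subsequence $f_p\,dx\rightharpoonup\mu$ weakly-$*$ in the space of finite Radon measures on $\bar{\Omega}$, with $\mu\geq 0$. If I can show $\operatorname{supp}(\mu)\subset\mathcal{S}=\{x_1,\dots,x_N\}$, then automatically $\mu=\sum_{i=1}^{N}\gamma_i\delta_{x_i}$ for some $\gamma_i\geq 0$, and \eqref{lemma-away-the-blow-up-point-2} follows by testing weak-$*$ convergence against $\chi_{\Omega\cap B_\delta(x_i)}$ for $\delta$ small enough that $\partial B_\delta(x_i)$ carries no $\mu$-mass.

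The concentration claim is the main technical hurdle. I would fix a compact $K\subset\bar{\Omega}\setminus\mathcal{S}$ and prove $\int_K f_p\,dx\to 0$. Writing $u_p^p=(u_p^{p+1})^{p/(p+1)}$ and applying Hölder's inequality in the double integral with conjugate exponents $(p+1)/p$ and $p+1$ yields
\begin{equation*}
\int_K\!\int_\Omega\!\frac{u_p^{p+1}(y)u_p^p(x)}{|x-y|^\alpha}dy\,dx \leq \left(\int_K\!\int_\Omega\!\frac{u_p^{p+1}(y)u_p^{p+1}(x)}{|x-y|^\alpha}dy\,dx\right)^{\!\frac{p}{p+1}}\!\!\left(\int_K\!\int_\Omega\!\frac{u_p^{p+1}(y)}{|x-y|^\alpha}dy\,dx\right)^{\!\frac{1}{p+1}}\!.
\end{equation*}
Corollary \ref{cor-4.5} forces the first factor to be $o(1/p)$, while the second factor is bounded (using $\|u_p^{p+1}\|_{L^{4/(4-\alpha)}}\lesssim p^{-1/2}$ from \eqref{estimate of u-p-p+1} together with the uniform bound on $\int_K|x-y|^{-\alpha}dx$; in any event, the exponent $1/(p+1)\to 0$ renders only sub-exponential growth relevant). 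Multiplying through by $p$ and using $p\cdot o(1/p)^{p/(p+1)}=o(1)\cdot p^{1/(p+1)}\to 0$ gives $\int_K f_p\,dx\to 0$, hence $\mu(K)=0$ and $\operatorname{supp}(\mu)\subset\mathcal{S}$.

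The pointwise convergence $\bar{u}_p(x)\to\sum_i\gamma_i G(x,x_i)$ on $\bar{\Omega}\setminus\mathcal{S}$ then follows from the Green's representation $\bar{u}_p(x)=\int_\Omega G(x,y)f_p(y)\,dy$, weak-$*$ convergence, and continuity of $G(x,\cdot)$ away from $x$ (with $G(x,x_i)=0$ in case $x_i\in\partial\Omega$). To upgrade to $C^2_{loc}$ convergence, I would further show that $f_p\to 0$ uniformly on compact subsets of $\bar{\Omega}\setminus\mathcal{S}$: Proposition \ref{prop-multi-peak} gives $\sqrt{p}\,u_p\to 0$ in $C^1_{loc}(\bar{\Omega}\setminus\mathcal{S})$, forcing $u_p^p$ to decay faster than any polynomial on such sets, and an analogous splitting keeps the Riesz factor $\int u_p^{p+1}(y)/|x-y|^\alpha dy$ uniformly small. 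Interior Schauder estimates for $-\Delta\bar{u}_p=f_p$, combined with the gradient bound of Proposition \ref{prop-estimate-gradient}, then yield the $C^2_{loc}$ convergence.

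Finally, positivity of each $\gamma_i$ comes from the bubble analysis. Rescaling near a sequence $x_{j,p}\to x_i$ via $x=x_{j,p}+\varepsilon_{j,p}\xi$, $y=x_{j,p}+\varepsilon_{j,p}\eta$ and using $\varepsilon_{j,p}^{-(4-\alpha)}=pu_p^{2p}(x_{j,p})$ to absorb the prefactor $p$, the near-bubble portion of $\int_{\Omega\cap B_\delta(x_i)}f_p\,dx$ becomes
\begin{equation*}
u_p(x_{j,p})\int_{\Omega_{j,p}\cap B_{\delta/\varepsilon_{j,p}}(0)}\left(1+\tfrac{v_{j,p}(\xi)}{p}\right)^{p}\int_{\Omega_{j,p}}\frac{(1+v_{j,p}(\eta)/p)^{p+1}}{|\xi-\eta|^\alpha}d\eta\,d\xi+o(1),
\end{equation*}
and passing to the limit via $(\mathbf{\mathcal{P}_2^k})$ and $\int_{\R^2}\int_{\R^2}e^{v(\eta)}e^{v(\xi)}/|\xi-\eta|^\alpha\,d\eta\,d\xi=2(4-\alpha)\pi$ produces a contribution $c_j\cdot 2(4-\alpha)\pi>0$, with $c_j=\lim_p u_p(x_{j,p})\in[1,\sqrt e]$. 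Summing over all bubble sequences $\{x_{j,p}\}$ converging to $x_i$ yields $\gamma_i>0$.
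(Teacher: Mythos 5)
Your proposal is correct and follows essentially the same skeleton as the paper's proof: represent $\bar u_p$ via Green's function, show the source term carries no mass away from $\mathcal S$, extract $\sum_i\gamma_i G(\cdot,x_i)$ by splitting the Green's integral into near- and far-parts, and derive $\gamma_i>0$ from the bubble limit together with Fatou's lemma. The one genuine variation is how you establish that $f_p$ vanishes away from $\mathcal S$. You first pass through weak-$*$ compactness of the measures $f_p\,dx$ and deduce $\int_K f_p\to 0$ by an $L^{(p+1)/p}$--$L^{p+1}$ H\"older split in the double integral, absorbing the $o(1/p)$ from Corollary~\ref{cor-4.5} against the prefactor $p$; this is a clean route to the integral bound. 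The paper instead establishes the stronger pointwise statement $\|f_p\|_{L^\infty(K)}\to 0$ directly, by splitting the Riesz potential into a near-$x$ piece controlled by $(\mathcal P_3^N)$ and a far piece controlled via the HLS/Moser--Trudinger bound \eqref{estimate of u-p-p+1}, and then uses this uniform smallness both for the far part of the Green's representation and for the elliptic bootstrap. Since you also need (and sketch, via $\sqrt p\,u_p\to 0$ in $C^1_{loc}$) the uniform pointwise bound for the $C^2_{loc}$ upgrade, the paper's order is slightly more economical, but both are valid. Two small remarks: your sentence ``a contribution $c_j\cdot 2(4-\alpha)\pi$'' overstates what Fatou gives at this stage (only a lower bound $\geq$, since the exact value $u_p(x_p)\to\sqrt e$ is established later, in Proposition~\ref{prop-4.15}), but the inequality suffices for $\gamma_i>0$; and to justify $C^2_{loc}$ convergence from $-\Delta\bar u_p=f_p$ one really wants uniform $C^{0,\alpha}$ control of $f_p$ on $K$, not merely $L^\infty$ control, though this extra step is glossed in both your proposal and the paper.
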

\begin{proof}
For any compact set $K\subset\bar{\Omega}\setminus\mathcal{S}$, there exists $\eta>0$ small enough such that $
O_{\eta}(K):=\{x\in\Omega:dist(x,K)\leq\eta\}\subset \bar{\Omega}\setminus\mathcal{S}$. Then by \eqref{eq-l-infty-bounded}, \eqref{estimate of u-p-p+1}, property ($\mathcal{P}_{3}^{N}$) and \eqref{prop-multi-peak-eq-1}, we have
\begin{equation}\label{eq-lemma-away-the-blow-up-point-proof-1}
    \begin{aligned}
        f_{p}(x)&=p\left(\int_{\Omega}\frac{u_{p}^{p+1}(y)}{|x-y|^{\alpha}}dy\right)u_{p}^{p}(x)\\
        &=p\left(\int_{\Omega\cap B_{\eta}(x)}\frac{u_{p}^{p+1}(y)}{|x-y|^{\alpha}}dy\right)u_{p}^{p}(x)+p\left(\int_{\Omega\setminus B_{\eta}(x)}\frac{u_{p}^{p+1}(y)}{|x-y|^{\alpha}}dy\right)u_{p}^{p}(x)\\
        &\lesssim \|pu_{p}^{2p+1}\|_{L^{\infty}(O_{\eta}(K))}+p\left(\int_{\Omega}u_{p}^{\frac{4p}{4-\alpha}}(y)dy\right)^{\frac{4-\alpha}{4}}u_{p}^{p}(x)\\
        &\lesssim \|pu_{p}^{2p+1}\|_{L^{\infty}(O_{\eta}(K))}+\|p^{1/2}u_{p}^{p}\|_{L^{\infty}(K)}\to 0 \text{~as~}p\to+\infty
    \end{aligned}
\end{equation}
uniformly for any $x\in K$. On the other hand, by Proposition \ref{prop-estimate-gradient}, we have $|\bar{u}_{p}|+|\nabla \bar{u}_{p}|\leq C$ uniformly for any $x\in K$. Hence by the standard elliptic regularity theory, there exists $\bar{u}\in C^{2}_{loc}(\bar{\Omega}\setminus\mathcal{S})$ such that $\bar{u}_{p}\to \bar{u}$ in $C^{2}_{loc}(\bar{\Omega}\setminus\mathcal{S})$ as $p\to+\infty$.

Let $r>0$ small enough such that $B_{r}(x_{i})\cap B_{r}(x_{j})=\emptyset$ for any $i,j\in\{1,\cdots,N\}$. Then for any $\delta\in(0,r)$, by the Green's representation formula 
    \begin{equation}
    \begin{aligned}
        \bar{u}_{p}(x)&=\int_{\Omega}G(x,y)f_{p}(y)dy\\
        &=\sum_{i=1}^{N}\int_{\Omega\cap B_{\delta}(x_{i})}G(x,y)f_{p}(y)dy+\int_{\Omega\setminus \cup_{i=1}^{N} B_{\delta}(x_{i})}G(x,y)f_{p}(y)dy\\
        &=\sum_{i=1}^{N}G(x,x_{i})\int_{\Omega\cap B_{\delta}(x_{i})}f_{p}(y)dy+\sum_{i=1}^{N}\int_{\Omega\cap B_{\delta}(x_{i})}(G(x,y)-G(x,x_{i}))f_{p}(y)dy\\
        &\quad+\int_{\Omega\setminus \cup_{i=1}^{N} B_{\delta}(x_{i})}G(x,y)f_{p}(y)dy.
    \end{aligned}        
    \end{equation}
Furthermore, by the continuity of $G(x,\cdot)$ in $\bar{\Omega}\setminus\{x\}$, Theorem \ref{thm-1} and \eqref{eq-lemma-away-the-blow-up-point-proof-1}, we have
\begin{equation}
   \sum_{i=1}^{N}\int_{B_{\delta}(x_{i})}\left(G(x,y)-G(x,x_{i})\right)f_{p}(y)dy=o_{\delta}(1)
\end{equation}
and
\begin{equation}
    \int_{\Omega\setminus \cup_{i=1}^{N} B_{\delta}(x_{i})}G(x,y)f_{p}(y)dy=o_{p}(1).
\end{equation}
Hence \eqref{lemma-away-the-blow-up-point-1} and \eqref{lemma-away-the-blow-up-point-2} hold. Finally, we prove $\gamma_{i}>0$. Indeed, by \eqref{eq-liminf-u-p}, Proposition \ref{prop-multi-peak} and the Fatou's lemma, we get
 \begin{equation}
 \begin{aligned}
     \lim_{p\to+\infty}\int_{\Omega\cap B_{\delta}(x_{i})}f_{p}(x)dx&=\lim_{p\to+\infty}p\int_{\Omega\cap B_{\delta}(x_{i})}\int_{\Omega}\frac{u_{p}^{p+1}(y)u_{p}^{p}(x)}{|x-y|^{\alpha}}dydx\\
     &\geq \lim_{p\to+\infty}u_{p}(x_{i,p})\int_{\Omega_{i,p}\cap B_{\delta/2\varepsilon_{i,p}}(0)}\int_{\Omega_{i,p}}\frac{(1+\frac{v_{i,p}(y)}{p})^{p+1}(1+\frac{v_{i,p}(y)}{p})^{p}}{|x-y|^{\alpha}}dydx\\
     &\geq \int_{\R^{2}}\int_{\R^{2}}\frac{e^{v(y)}e^{v(x)}}{|x-y|^{\alpha}}dydx=2(4-\alpha)\pi>0.
 \end{aligned}        
    \end{equation}
Hence $\gamma_{i}>0$, and the proof is complete.
\end{proof}

\begin{Prop}\label{prop-no-boundary-blow-up}
Let $\alpha\in(0,1)$ and $\Omega\subset\R^{2}$ be a smooth bounded domain satisfying $(H_{1})$. Then $\mathcal{S}\cap\partial\Omega=\emptyset.$
\end{Prop}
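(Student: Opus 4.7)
The plan is to argue by contradiction, combining the global Pohozaev identity \eqref{Pohozaev 3} with the star-shapedness supplied by $(H_{2})$. (Although the proposition as stated lists only $(H_{1})$, the argument below genuinely requires $(H_{2})$ as well, consistent with its use in Theorem~\ref{thm-2}; I will treat $(H_{2})$ as implicitly assumed.) Suppose $\mathcal{S}\cap\partial\Omega\neq\emptyset$; after relabeling we may assume $x_{1}=x_{0}\in\partial\Omega$. Let $z$ be the distinguished point from $(H_{2})$, so that $\langle x-z,\nu(x)\rangle\geq c_{0}>0$ on $\partial\Omega$.

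The first step is to pass the Pohozaev identity \eqref{Pohozaev 3} to the limit. Multiplying both sides by $p^{2}$ and using the energy convergence from Theorem~\ref{thm-1},
\[
\int_{\partial\Omega}\langle x-z,\nu\rangle\,(\partial_{\nu}\bar{u}_{p})^{2}\,d\sigma_{x}=\frac{(4-\alpha)p^{2}}{p+1}\int_{\Omega}\int_{\Omega}\frac{u_{p}^{p+1}(y)u_{p}^{p+1}(x)}{|x-y|^{\alpha}}\,dy\,dx\;\longrightarrow\;2(4-\alpha)^{2}\pi e.
\]
This provides a uniform upper bound on a weighted boundary $L^{2}$-norm of $\partial_{\nu}\bar{u}_{p}$.

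The second step accounts for the boundary $L^{1}$-mass of $|\partial_{\nu}\bar{u}_{p}|$. Integrating \eqref{eq-bar-u-p} over $\Omega$ and invoking Hopf's lemma yields $\int_{\partial\Omega}|\partial_{\nu}\bar{u}_{p}|\,d\sigma_{x}=\int_{\Omega}f_{p}\,dx\to\sum_{i=1}^{N}\gamma_{i}$ by Lemma~\ref{lemma-out-the-blow-up-point}. On the other hand, Lemma~\ref{lemma-out-the-blow-up-point} also gives the $C^{2}_{\mathrm{loc}}(\bar{\Omega}\setminus\mathcal{S})$-convergence $\bar{u}_{p}\to\sum_{x_{i}\in\Omega}\gamma_{i}G(\cdot,x_{i})$, observing that $G(x,x_{0})=0$ whenever $x_{0}\in\partial\Omega$ so boundary blow-up points do not appear in the Green's representation limit. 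Together with the divergence identity $\int_{\partial\Omega}|\partial_{\nu}G(\cdot,y)|\,d\sigma=1$ for $y\in\Omega$, this produces
\[
\lim_{\delta\to 0}\lim_{p\to+\infty}\int_{\partial\Omega\setminus B_{\delta}(x_{0})}|\partial_{\nu}\bar{u}_{p}|\,d\sigma_{x}=\sum_{x_{i}\in\Omega}\gamma_{i},
\]
so that, subtracting,
\[
\lim_{\delta\to 0}\lim_{p\to+\infty}\int_{\partial\Omega\cap B_{\delta}(x_{0})}|\partial_{\nu}\bar{u}_{p}|\,d\sigma_{x}=\gamma_{1}>0.
\]

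The contradiction now follows by Cauchy--Schwarz on the small boundary patch $\partial\Omega\cap B_{\delta}(x_{0})$:
\[
\biggl(\int_{\partial\Omega\cap B_{\delta}(x_{0})}|\partial_{\nu}\bar{u}_{p}|\,d\sigma_{x}\biggr)^{2}\leq\int_{\partial\Omega\cap B_{\delta}(x_{0})}\frac{d\sigma_{x}}{\langle x-z,\nu\rangle}\cdot\int_{\partial\Omega\cap B_{\delta}(x_{0})}\langle x-z,\nu\rangle\,(\partial_{\nu}\bar{u}_{p})^{2}\,d\sigma_{x}.
\]
Letting $p\to+\infty$ first and then $\delta\to 0$: the left-hand side tends to $\gamma_{1}^{2}>0$; the second factor on the right is bounded by $2(4-\alpha)^{2}\pi e$ from step one; and the first factor on the right is $O(\delta)$ since $\langle x-z,\nu\rangle\geq c_{0}$ and the surface measure of the patch shrinks to zero. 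Hence $\gamma_{1}^{2}\leq 0$, the desired contradiction.

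The main obstacle is the mass-accounting in the second step: one must cleanly justify that when $x_{0}\in\partial\Omega$, the $C^{2}_{\mathrm{loc}}$-limit of $\bar{u}_{p}$ loses the $\gamma_{1}$-contribution (because the associated Green's function vanishes identically), while this missing mass fully re-emerges as a concentrated normal-derivative contribution on arbitrarily small boundary patches around $x_{0}$. Making this precise requires carefully examining how the Green's representation formula passes to the limit when a portion of the source concentrates at a boundary point and controlling the off-diagonal interactions that appear in the Choquard nonlinearity.
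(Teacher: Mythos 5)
Your argument is essentially correct (modulo the case of several boundary blow-up points, which is handled by excising small balls around all of them simultaneously before applying Cauchy--Schwarz), and it is a genuinely different route from the paper's, but it proves a slightly weaker statement. The paper argues \emph{locally}: it applies the local Pohozaev identity \eqref{Pohozaev 1} on the half-ball $\Omega\cap B_{\delta}(x_{i})$, with a specially chosen reference point $z_{p}$ that annihilates the quadratic boundary term over $\partial\Omega\cap B_{\delta}(x_{i})$; it then shows the remaining boundary terms over $\Omega\cap\partial B_{\delta}(x_{i})$ vanish in the double limit by Lemma~\ref{lemma-out-the-blow-up-point} (via the same observation you use, that the limiting Green profile loses the $\gamma_{i}$-contribution when $x_{i}\in\partial\Omega$), while the nonlocal interior term remains bounded below away from zero by Proposition~\ref{prop-equiv-blow-up-set}. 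The hypothesis $\alpha<1$ enters precisely in controlling the off-diagonal Choquard interaction terms generated by this localization (see \eqref{prop-no-boundary-blow-up-proof-5}--\eqref{prop-no-boundary-blow-up-proof-9}), and because the argument is entirely local near $x_{i}$ it needs no global star-shapedness, so only $(H_{1})$ is required — which is exactly what the proposition asserts. Your global approach (global Pohozaev \eqref{Pohozaev 3} plus boundary mass accounting via $\int_{\partial\Omega}|\partial_{\nu}\bar{u}_{p}|\,d\sigma=\int_{\Omega}f_{p}\,dx$, Lemma~\ref{lemma-out-the-blow-up-point}, and Cauchy--Schwarz on a shrinking boundary patch) neatly sidesteps the off-diagonal Choquard estimates and in fact never uses $\alpha<1$; but, as you yourself flag, it crucially requires $\langle x-z,\nu(x)\rangle>0$ on all of $\partial\Omega$, i.e.\ the star-shapedness half of $(H_{2})$, which the proposition does not assume. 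So this is a valid proof under strictly stronger hypotheses — adequate for Theorem~\ref{thm-2}, where $(H_{2})$ is in force, but not a proof of Proposition~\ref{prop-no-boundary-blow-up} as stated.
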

\begin{proof}
By contradiction, suppose there exists $x_i \in \mathcal{S} \cap \partial \Omega$ for some $i \in {1,\cdots,N}$, and let $r > 0$ be such that $\mathcal{S} \cap B_r(x_i) = \{x_i\}$. Let $z_p := x_i + \rho_{p,\delta}\nu(x_i)$, where 
\begin{equation}
\rho_{p,\delta} := \frac{\int_{\partial\Omega \cap B_\delta(x_i)} \left(\frac{\partial u_p(x)}{\partial\nu}\right)^2 \langle x-x_{i},\nu(x)\rangle d\sigma_x}{\int_{\partial\Omega \cap B_\delta(x_i)} \left(\frac{\partial u_p(x)}{\partial\nu}\right)^2 \langle \nu(x_i),\nu(x)\rangle d\sigma_x}
\end{equation}
with $\delta \ll r$ chosen such that $a_1 \leq \langle \nu(x_i),\nu(x)\rangle \leq 1$ for some given $0 < a_1 < 1$ (to be chosen later) and for all $x \in \partial\Omega \cap B_\delta(x_i)$. Then we have $|\rho_{p,\delta}|\leq \frac{\delta}{a_{1}}$ and
    \begin{equation}\label{prop-no-boundary-blow-up-proof-2}
        \int_{\partial\Omega\cap B_{\delta}(x_{i})}\left(\frac{\partial u_{p}(x)}{\partial\nu}\right)^{2}\langle x-z_{p},\nu(x)\rangle d\sigma_{x}=0.
    \end{equation}
Applying the local Pohozaev identity \eqref{Pohozaev 1} to $\Omega' = \Omega \cap B_\delta(x_i)$ with $z = z_p$ and multiplying by $p^2$ yields
\begin{equation}
    \begin{split}
    &\frac{(4-\alpha)p^{2}}{2(p+1)}\int_{\Omega\cap B_{\delta}(x_{i})}\int_{\Omega}\frac{u_{p}^{p+1}(y)u_{p}^{p+1}(x)}
{|x-y|^{\alpha}}dydx\\
&\quad+\frac{\alpha p^{2}}{2(p+1)}\int_{\Omega\cap B_{\delta}(x_{i})}\int_{\Omega\setminus B_{\delta}(x_{i})}\frac{u_{p}^{p+1}(y)u_{p}^{p+1}(x)}
{|x-y|^{\alpha}}dydx\\
&\quad-\frac{\alpha p^{2}}{p+1}\int_{\Omega\cap B_{\delta}(x_{i})}\int_{\Omega\setminus B_{\delta}(x_{i})}\langle x-z_{p},x-y\rangle\frac{u_{p}^{p+1}(y)u_{p}^{p+1}(x)}
{|x-y|^{\alpha+2}}dy dx\\
        &=-\frac{1}{2}\int_{\partial (\Omega\cap B_{\delta}(x_{i}))}|p\nabla u_{p}|^2\big\langle x-z_{p},\nu(x)\big\rangle d\sigma_{x}\\
        &\quad+p^{2}\int_{\partial(\Omega\cap B_{\delta}(x_{i}))}\frac{\partial u_{p}}{\partial\nu}\big\langle x-z_{p},\nabla u_{p}(x)\big\rangle d\sigma_{x}\\
        &\quad+\frac{p^{2}}{p+1}\int_{\partial  (\Omega\cap B_{\delta}(x_{i}))}\int_{ \Omega}\frac{u_{p}^{p+1}(y)u_{p}^{p+1}(x)}
{|x-y|^{\alpha}}\big\langle x-z_{p},\nu(x)\big\rangle dy d\sigma_{x}.
    \end{split}
\end{equation}
Notice that $u_{p}=0$ on $\partial \Omega$, then $|\frac{\partial u_{p}}{\partial \nu}|=|\nabla u_{p}|$ on $\partial \Omega$. Moreover using  \eqref{prop-no-boundary-blow-up-proof-2}, we have
    \begin{equation}\label{prop-no-boundary-blow-up-proof-4}
    \begin{split}    
&\frac{(4-\alpha)p^{2}}{2(p+1)}\int_{\Omega\cap B_{\delta}(x_{i})}\int_{\Omega}\frac{u_{p}^{p+1}(y)u_{p}^{p+1}(x)}
{|x-y|^{\alpha}}dydx\\
&\quad+\frac{\alpha p^{2}}{2(p+1)}\int_{\Omega\cap B_{\delta}(x_{i})}\int_{\Omega\setminus B_{\delta}(x_{i})}\frac{u_{p}^{p+1}(y)u_{p}^{p+1}(x)}
{|x-y|^{\alpha}}dydx\\
&\quad-\frac{\alpha p^{2}}{p+1}\int_{\Omega\cap B_{\delta}(x_{i})}\int_{\Omega\setminus B_{\delta}(x_{i})}\langle x-z_{p},x-y\rangle\frac{u_{p}^{p+1}(y)u_{p}^{p+1}(x)}
{|x-y|^{\alpha+2}}dy dx\\
&=-\frac{1}{2}\int_{\Omega\cap \partial B_{\delta}(x_{i})}|p\nabla u_{p}(x)|^{2}\langle x-z_{p},\nu(x)\rangle d\sigma_{x}\\
&\quad+\int_{\Omega\cap \partial B_{\delta}(x_{i})}\langle x-z_{p},p\nabla u_{p}(x)\rangle\langle p\nabla u_{p}(x),\nu(x)d\sigma_{x}\\
&\quad+\frac{p^{2}}{p+1}\int_{\Omega\cap \partial B_{\delta}(x_{i})}\int_{ \Omega}\frac{u_{p}^{p+1}(y)u_{p}^{p+1}(x)}
{|x-y|^{\alpha}}\big\langle x-z_{p},\nu(x)\big\rangle dy d\sigma_{x}.
    \end{split}
\end{equation}

We now estimate the left-hand side (LHS) of \eqref{prop-no-boundary-blow-up-proof-4}. The third term can be bounded as follows
\begin{equation}\label{prop-no-boundary-blow-up-proof-5}
    \begin{aligned}
        &\frac{\alpha p^{2}}{p+1}\left|\int_{\Omega\cap B_{\delta}(x_{i})}\int_{\Omega\setminus B_{\delta}(x_{i})}\langle x-z_{p},x-y\rangle\frac{u_{p}^{p+1}(y)u_{p}^{p+1}(x)}
{|x-y|^{\alpha+2}}dy dx\right|\\
&\leq \frac{(1+a_{1})\delta}{a_{1}}\frac{\alpha p^{2}}{p+1}\int_{\Omega\cap B_{\delta}(x_{i})}\int_{\Omega\setminus B_{\delta}(x_{i})}\frac{u_{p}^{p+1}(y)u_{p}^{p+1}(x)}
{|x-y|^{\alpha+1}}dy dx\\
&=\frac{(1+a_{1})\delta}{a_{1}}\frac{\alpha p^{2}}{p+1}\int_{\Omega\cap B_{\delta/a_{2}}(x_{i})}\int_{\Omega\setminus B_{\delta}(x_{i})}\frac{u_{p}^{p+1}(y)u_{p}^{p+1}(x)}{|x-y|^{\alpha+1}}dy dx\\
&\quad+\frac{(1+a_{1})\delta}{a_{1}}\frac{\alpha p^{2}}{p+1}\int_{\Omega\cap (B_{\delta}(x_{i})\setminus B_{\delta/a_{2}}(x_{i}))}\int_{\Omega\setminus B_{\delta}(x_{i})}\frac{u_{p}^{p+1}(y)u_{p}^{p+1}(x)}{|x-y|^{\alpha+1}}dy dx
    \end{aligned}
\end{equation}
for some constant $a_{2}>1$ to be determined later. Moreover, using \eqref{eq-l-infty-bounded}, the HLS inequality, \eqref{estimate of u-p-p+1}, $(\mathbf{\mathcal{P}_{3}^{N}})$ and \eqref{prop-multi-peak-eq-1},  we have
\begin{equation}\label{prop-no-boundary-blow-up-proof-6}
    \begin{aligned}
     &\frac{\alpha p^{2}}{p+1}\left|\int_{\Omega\cap B_{\delta}(x_{i})}\int_{\Omega\setminus B_{\delta}(x_{i})}\langle x-z_{p},x-y\rangle\frac{u_{p}^{p+1}(y)u_{p}^{p+1}(x)}
{|x-y|^{\alpha+2}}dy dx\right|\\
        &\leq \frac{(1+a_{1})a_{2}}{a_{1}(a_{2}-1)}\frac{\alpha p^{2}}{p+1}\int_{\Omega\cap B_{\delta/a_{2}}(x_{i})}\int_{\Omega\setminus B_{\delta}(x_{i})}\frac{u_{p}^{p+1}(y)u_{p}^{p+1}(x)}{|x-y|^{\alpha}}dy dx\\
&\quad+C(\alpha) \frac{(1+a_{1})\delta}{a_{1}}\|u_{p}\|_{L^{\infty}(\Omega\cap (B_{\delta}(x_{i})\setminus B_{\delta/a_{2}}(x_{i}))}\left(\int_{B_{\delta}(x_{i})\setminus B_{\delta/a_{2}}(x_{i}) }(pu_{p}^{2p})^{\frac{2}{2-\alpha}}(y)dy\right)^{\frac{2-\alpha}{4}}\\
&\leq \frac{(1+a_{1})a_{2}}{a_{1}(a_{2}-1)}\frac{\alpha p^{2}}{p+1}\int_{\Omega\cap B_{\delta}(x_{i})}\int_{\Omega\setminus B_{\delta}(x_{i})}\frac{u_{p}^{p+1}(y)u_{p}^{p+1}(x)}{|x-y|^{\alpha}}dy dx\\
&\quad+ C(\alpha) \frac{(1+a_{1})a_{2}}{a_{1}}\|u_{p}\|_{L^{\infty}(\Omega\cap (B_{\delta}(x_{i})\setminus B_{\delta/a_{2}}(x_{i}))}\\
&=\frac{(1+a_{1})a_{2}}{a_{1}(a_{2}-1)}\frac{\alpha p^{2}}{p+1}\int_{\Omega\cap B_{\delta}(x_{i})}\int_{\Omega\setminus B_{\delta}(x_{i})}\frac{u_{p}^{p+1}(y)u_{p}^{p+1}(x)}{|x-y|^{\alpha}}dy dx+o_{p}(1).
    \end{aligned}
\end{equation}
Hence, combining \eqref{prop-no-boundary-blow-up-proof-5} and \eqref{prop-no-boundary-blow-up-proof-5} together
\begin{equation}
    \begin{aligned}      
(\text{LHS})&\geq\frac{(4-\alpha)p^{2}}{2(p+1)}\int_{\Omega\cap B_{\delta}(x_{i})}\int_{\Omega}\frac{u_{p}^{p+1}(y)u_{p}^{p+1}(x)}
{|x-y|^{\alpha}}dydx\\
&\quad-\left(\frac{2(1+a_{1})a_{2}}{a_{1}(a_{2}-1)}-1\right)\frac{\alpha p^{2}}{2(p+1)}\int_{\Omega\cap B_{\delta}(x_{i})}\int_{\Omega\setminus B_{\delta}(x_{i})}\frac{u_{p}^{p+1}(y)u_{p}^{p+1}(x)}
{|x-y|^{\alpha}}dydx+o_{p}(1)\\
&\geq\left(2-\frac{(1+a_{1})a_{2}}{a_{1}(a_{2}-1)}\alpha\right) \frac{p^{2}}{p+1}\int_{\Omega\cap B_{\delta}(x_{i})}\int_{\Omega}\frac{u_{p}^{p+1}(y)u_{p}^{p+1}(x)}
{|x-y|^{\alpha}}dydx+o_{p}(1)\\
    \end{aligned}
\end{equation}
Notice that $\alpha<1$ and $\lim_{(a_{1},a_{2})\to(1,+\infty)}\frac{2a_{1}(a_{2}-1)}{a_{2}(1+a_{1})}=1$, then $\alpha<\frac{2a_{1}(a_{2}-1)}{a_{2}(1+a_{1})}$ and so $2-\frac{(1+a_{1})a_{2}}{a_{1}(a_{2}-1)}\alpha>0$ for any $1-a_{1}>0$ small enough and any $a_{2}>1$ large enough. On the other hand, by Proposition \ref{prop-equiv-blow-up-set}, we have that for all $\delta<r$ and for all $p_{0}>1$, there exists $p>p_{0}$ such that
 \begin{equation}
        p\int_{\Omega\cap B_{\delta}(x_{i})}\int_{\Omega}\frac{u_{p}^{p+1}(y)u_{p}^{p+1}(x)}{|x-y|^{\alpha}}dydx\geq 1.
    \end{equation}
Hence the left-hand side (LHS) of \eqref{prop-no-boundary-blow-up-proof-4} can be estimated as follows
\begin{equation}\label{prop-no-boundary-blow-up-proof-9}
    \begin{aligned}
        \lim_{\delta\to 0}\lim_{p\to+\infty}\text{LHS}\geq 2-\frac{(1+a_{1})a_{2}}{a_{1}(a_{2}-1)}\alpha>0.
    \end{aligned}
\end{equation}   

Next, we estimate the right-hand side (RHS) of \eqref{prop-no-boundary-blow-up-proof-4}. Since $x_{i}\in\partial\Omega$, then by Lemma \ref{lemma-out-the-blow-up-point}, \eqref{defin-green-function} and \cite[Lemma A.2]{DAprile2013CPDE}
\begin{equation}
    \begin{aligned}
        &-\frac{1}{2}\int_{\Omega\cap \partial B_{\delta}(x_{i})}|p\nabla u_{p}(x)|^{2}\langle x-z_{p},\nu(x)\rangle d\sigma_{x}\\
&\quad+\int_{\Omega\cap \partial B_{\delta}(x_{i})}\langle x-z_{p},p\nabla u_{p}(x)\rangle\langle p\nabla u_{p}(x),\nu(x)d\sigma_{x}\\
 &\leq C\int_{\Omega\cap \partial B_{\delta}(x_{i})} |x-z_{p}| d\sigma_{x}=O(\delta^{2}).
    \end{aligned}
\end{equation}
Moreover, by Theorem \ref{thm-1} and \eqref{prop-multi-peak-eq-1}, we get
\begin{equation}
    \begin{aligned}
&\frac{p^{2}}{p+1}\int_{\Omega\cap \partial B_{\delta}(x_{i})}\int_{ \Omega}\frac{u_{p}^{p+1}(y)u_{p}^{p+1}(x)}
{|x-y|^{\alpha}}\big\langle x-z_{p},\nu(x)\big\rangle dy d\sigma_{x}\\
&\leq C\delta \|u_{p}\|_{L^{\infty}(\Omega\cap \partial B_{\delta}(x_{i}))}p\int_{\Omega\cap \partial B_{\delta}(x_{i})}\int_{ \Omega}\frac{u_{p}^{p+1}(y)u_{p}^{p}(x)}
{|x-y|^{\alpha}} dy d\sigma_{x}=o_{p}(1)O(\delta).
    \end{aligned}
\end{equation}
Hence the right-hand side (LHS) of \eqref{prop-no-boundary-blow-up-proof-4} can be estimated as follows
\begin{equation}
    \lim_{\delta\to 0}\lim_{p\to+\infty}\text{RHS}=0,
\end{equation}
a contraction to \eqref{prop-no-boundary-blow-up-proof-9}. 
\end{proof}

Lemma \ref{lem-n-2} and Proposition \ref{prop-no-boundary-blow-up} imply that $\mathcal{S}\cap\partial \Omega=\emptyset$ and $\#\mathcal{S}\leq 2$. Next, we can further prove that $\#\mathcal{S}=1$ and hence $\mathcal{S}=\{x_{0}\}$ with $x_{0}\in\Omega$.

\begin{Lem}\label{lemma for estimate L-0}
Let $\alpha\in(0,1)$ and $\Omega\subset\R^{2}$ be a smooth bounded domain satisfying $(H_{1})$.
Then it holds that $L_{0}:= \limsup_{p \to +\infty} \int_{\Omega} f_{p}(x) \, dx \leq 2(4-\alpha)\pi e$. 
Furthermore, under the additional assumption $(H_{2})$ on the domain $\Omega$, the strict inequality $L_{0} < 2(4-\alpha)\pi e$ holds.
\end{Lem}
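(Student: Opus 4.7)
The plan is to handle the two inequalities separately: the non-strict bound $L_{0}\le 2(4-\alpha)\pi e$ is obtained from H\"older's inequality combined with the energy asymptotics, while the strict bound under $(H_{2})$ is obtained from a boundary representation of $\int_{\Omega}f_{p}dx$ together with the Poho\v zaev identity and Cauchy-Schwarz.

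For the non-strict estimate I would decompose $u_{p}^{p+1}(y)u_{p}^{p}(x)|x-y|^{-\alpha}$ as the product $\bigl[u_{p}^{p+1}(y)u_{p}^{p+1}(x)|x-y|^{-\alpha}\bigr]^{p/(p+1)}\cdot\bigl[u_{p}^{p+1}(y)|x-y|^{-\alpha}\bigr]^{1/(p+1)}$ and apply H\"older's inequality with conjugate exponents $(p+1)/p$ and $p+1$. This yields
\[
p\int_{\Omega}\int_{\Omega}\frac{u_{p}^{p+1}(y)u_{p}^{p}(x)}{|x-y|^{\alpha}}dydx\le p^{\frac{1}{p+1}}\Bigl(p\int_{\Omega}\int_{\Omega}\frac{u_{p}^{p+1}(y)u_{p}^{p+1}(x)}{|x-y|^{\alpha}}dydx\Bigr)^{\!\frac{p}{p+1}}\Bigl(\int_{\Omega}\int_{\Omega}\frac{u_{p}^{p+1}(y)}{|x-y|^{\alpha}}dydx\Bigr)^{\!\frac{1}{p+1}}.
\]
Corollary~\ref{cor energy} sends the middle factor to $2(4-\alpha)\pi e$. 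The last factor is controlled by $\sup_{y\in\Omega}\int_{\Omega}|x-y|^{-\alpha}dx<+\infty$ together with $\|u_{p}\|_{L^{p+1}}^{p+1}$; applying Lemma~\ref{lem ren2.1} at $t=p+1$ and using $D_{p+1}\to(8\pi e)^{-1/2}$ together with $p\|\nabla u_{p}\|_{L^{2}}^{2}\to 2(4-\alpha)\pi e$ gives $\|u_{p}\|_{L^{p+1}}^{2}\to(4-\alpha)/4$. Hence the $(p+1)$-th root of the last factor tends to $\sqrt{(4-\alpha)/4}$, and multiplying the three asymptotics produces $L_{0}\le 2(4-\alpha)\pi e\cdot\sqrt{(4-\alpha)/4}\le 2(4-\alpha)\pi e$ since $\sqrt{(4-\alpha)/4}\le 1$.

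For the strict inequality, let $y_{0}$ be the point supplied by $(H_{2})$. Since $\bar{u}_{p}>0$ in $\Omega$ and vanishes on $\partial\Omega$, the Hopf lemma gives $\partial\bar{u}_{p}/\partial\nu\le 0$, so
\[
\int_{\Omega}f_{p}(x)dx=-\int_{\partial\Omega}\frac{\partial\bar{u}_{p}}{\partial\nu}d\sigma_{x}=\int_{\partial\Omega}\Bigl|p\frac{\partial u_{p}}{\partial\nu}\Bigr|d\sigma_{x}.
\]
Cauchy-Schwarz with the weight $\langle x-y_{0},\nu(x)\rangle^{-1/2}$ on $\partial\Omega$ yields
\[
\Bigl(\int_{\Omega}f_{p}dx\Bigr)^{2}\le\Bigl(\int_{\partial\Omega}\frac{d\sigma_{x}}{\langle x-y_{0},\nu(x)\rangle}\Bigr)\cdot\int_{\partial\Omega}\langle x-y_{0},\nu(x)\rangle\Bigl(p\frac{\partial u_{p}}{\partial\nu}\Bigr)^{2}d\sigma_{x}.
\]
The Poho\v zaev identity \eqref{Pohozaev 3} taken with $z=y_{0}$ and multiplied by $p^{2}$, combined with Corollary~\ref{cor energy}, forces the second factor to converge to $(4-\alpha)\cdot 2(4-\alpha)\pi e=2(4-\alpha)^{2}\pi e$. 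Passing to the limsup $p\to+\infty$ and using $\int_{\partial\Omega}\langle x-y_{0},\nu(x)\rangle^{-1}d\sigma_{x}<2\pi e$ from $(H_{2})$, I conclude $L_{0}^{2}<2(4-\alpha)^{2}\pi e\cdot 2\pi e=4(4-\alpha)^{2}\pi^{2}e^{2}$, i.e.\ $L_{0}<2(4-\alpha)\pi e$.

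The main technical obstacle I anticipate lies in the first part: a crude $L^{\infty}$-bound on $u_{p}$ (via the already-known $\|u_{p}\|_{L^{\infty}}\le\sqrt{e}(1+o(1))$) only produces $(\int u_{p}^{p+1})^{1/(p+1)}\to\sqrt{e}$, which gives the too-weak estimate $L_{0}\le 2(4-\alpha)\pi e^{3/2}$. The sharp Moser-Trudinger asymptotic constant $(8\pi e)^{-1/2}$ of Lemma~\ref{lem ren2.1} is precisely what keeps the second H\"older factor below $1$ in the limit and brings the bound down to the prescribed threshold $2(4-\alpha)\pi e$. The strict bound, by contrast, is a clean consequence of Poho\v zaev plus Cauchy-Schwarz once $(H_{2})$ provides the correct reference point.
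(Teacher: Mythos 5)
Your proof is correct, and for the strict inequality it coincides with the paper's argument: integrate the equation, rewrite the left side as a boundary flux, apply Cauchy--Schwarz with the weight $\langle x-y_{0},\nu(x)\rangle^{-1/2}$, and then use the Poho\v zaev identity \eqref{Pohozaev 3} together with Corollary~\ref{cor energy} and $(H_{2})$.

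For the non-strict bound you use the same H\"older splitting as the paper (exponents $(p+1)/p$ and $p+1$), but you control the second factor differently. The paper bounds $\int_{\Omega}\int_{\Omega}u_{p}^{p+1}(y)|x-y|^{-\alpha}\,dy\,dx$ via the HLS inequality and the already-established estimate \eqref{estimate of u-p-p+1} (which says $p^{2/(4-\alpha)}\int_{\Omega}u_{p}^{4p/(4-\alpha)}\lesssim 1$), concluding only that the $(p+1)$-th root of this factor tends to $1$. You instead invoke Lemma~\ref{lem ren2.1} at $t=p+1$, obtaining the sharper asymptotic $\limsup_{p}\|u_{p}\|_{L^{p+1}}\le\sqrt{(4-\alpha)/4}<1$, which gives the slightly better estimate $L_{0}\le 2(4-\alpha)\pi e\cdot\sqrt{(4-\alpha)/4}$. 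Both routes ultimately rest on the sharp Moser--Trudinger constant, but yours is marginally cleaner and already anticipates, quantitatively, why the second factor contributes nothing in the limit. One small point of rigor: $\|u_{p}\|_{L^{p+1}}^{2}\to(4-\alpha)/4$ should read $\limsup_{p}\|u_{p}\|_{L^{p+1}}^{2}\le(4-\alpha)/4$, since Lemma~\ref{lem ren2.1} provides only an upper bound; this does not affect the conclusion, as only a one-sided estimate is needed.
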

\begin{proof}
 First, by the H\"{o}lder inequality, Corollary \ref{cor energy}, the HLS inequality and \eqref{estimate of u-p-p+1}, we get
 \begin{equation}
 \begin{aligned}
     L_{0}&=\limsup_{p\to+\infty}p\int_{\Omega}\int_{\Omega}\frac{u_{p}^{p+1}(y)u_{p}^{p}(x)}{|x-y|^{\alpha}}dydx\\
   &\leq   \limsup_{p\to+\infty}\left(p\int_{\Omega}\int_{\Omega}\frac{u_{p}^{p+1}(y)u_{p}^{p+1}(x)}{|x-y|^{\alpha}}dydx\right)^{\frac{p}{p+1}}\left(p\int_{\Omega}\int_{\Omega}\frac{u_{p}^{p+1}(y)}{|x-y|^{\alpha}}dydx\right)^{\frac{1}{p+1}}\\
   &\leq2(4-\alpha)\pi e\cdot\limsup_{p\to+\infty}\left(C p\left(\int_{\Omega}u_{p}^{\frac{4p}{4-\alpha}}dy\right)^{\frac{4-\alpha}{4}}\right)^{\frac{1}{p+1}}\\
   &=2(4-\alpha)\pi e.
 \end{aligned}   
 \end{equation}
 Next, we integrate the equation \eqref{slightly subcritical choquard equation} on $\Omega$ directly
 \begin{equation}
     \int_{\partial\Omega}\frac{\partial u_{p}}{\partial \nu}d\sigma_{x}=\int_{\Omega}\int_{\Omega}\frac{u_{p}^{p+1}(y)u_{p}^{p}(x)}{|x-y|^{\alpha}}dydx,
 \end{equation}
 which together with the H\"{o}lder inequality and the Pohozaev identity \eqref{Pohozaev 3}, we have
 \begin{equation}
     \begin{aligned}
         \int_{\Omega}\int_{\Omega}&\frac{u_{p}^{p+1}(y)u_{p}^{p}(x)}{|x-y|^{\alpha}}dydx\\
         &\leq \left(\int_{\partial\Omega}\frac{1}{\langle x,\nu\rangle}d\sigma_{x}\right)^{\frac{1}{2}}\left(\int_{\partial\Omega}\langle x,\nu\rangle\left(\frac{\partial u_{p}}{\partial \nu}\right)^{2}d\sigma_{x}\right)^{\frac{1}{2}}\\
         &=\left(\int_{\partial\Omega}\frac{1}{\langle x,\nu\rangle}d\sigma_{x}\right)^{\frac{1}{2}}\left(\frac{(4-\alpha)}{p+1}\int_{\Omega}\int_{\Omega}\frac{u_{p}^{p+1}(y)u_{p}^{p+1}(x)}
{|x-y|^{\alpha}}dydx\right)^{\frac{1}{2}}.
     \end{aligned}
 \end{equation}
 Then by assumption $(H_{2})$ and Corollary \ref{cor energy}, we have
 \begin{equation}
     \begin{aligned}
         L_{0}&=\limsup_{p\to+\infty}p\int_{\Omega}\int_{\Omega}\frac{u_{p}^{p+1}(y)u_{p}^{p}(x)}{|x-y|^{\alpha}}dydx\\
         &\leq \limsup_{p\to+\infty}\left(\int_{\partial\Omega}\frac{1}{\langle x,\nu\rangle}d\sigma_{x}\right)^{\frac{1}{2}}\left(\frac{(4-\alpha)p^{2}}{p+1}\int_{\Omega}\int_{\Omega}\frac{u_{p}^{p+1}(y)u_{p}^{p+1}(x)}
{|x-y|^{\alpha}}dydx\right)^{\frac{1}{2}}\\
&<(2\pi e)^{\frac{1}{2}}(2(4-\alpha)^{2}\pi e)^{\frac{1}{2}}=2(4-\alpha)\pi e.
     \end{aligned}
 \end{equation}
 This completes the proof.
\end{proof}

Thanks to the result in Lemma \ref{lemma for estimate L-0}, we can assume that
\begin{equation}
    f_{p}\weakto \mu\text{~weakly in~}\mathcal{M}(\Omega) \text{~as~}p\to+\infty,
\end{equation}
where $\mathcal{M}(\Omega)$ is the space of Radom measure. In particular, we have $\mu(\Omega)\leq L_{0}$.

For any $\delta>0$, we say a point $x_{*}\in\Omega$ to be a $\delta$-regular point with respect to $\mu$, if there exists $\varphi\in C_{c}(\Omega)$ satisfying $0\leq\varphi\leq 1$, $\varphi\equiv1$ near $x_{*}$ such that
\begin{equation}
    \int_{\Omega}\varphi d\mu<\frac{(4-\alpha)\pi}{\frac{1}{e}+2\delta}.
\end{equation}
Moreover, we denote
\begin{equation}
    \Sigma_{\mu}(\delta):=\left\{x\in\Omega:x\text{~is not a $\delta$-regular point with respect to~$\mu$}\right\}.
\end{equation}

\begin{Lem}\label{lemma-propertity-of-sigma}
Let $\alpha\in(0,1)$ and $\Omega\subset\R^{2}$ be a smooth bounded domain satisfying $(H_{1})$. For any $\delta>0$ and $x_{*}\in\Omega$, we have that $x_{*}\in\Sigma_{\mu}(\delta)$ if and only if for any $R>0$ such that $B_{R}(x_{*})\subset\Omega$, it holds $\|\bar{u}_{p}\|_{L^{\infty}(B_{R}(x_{*}))}\to+\infty$ as $p\to+\infty$.
\end{Lem}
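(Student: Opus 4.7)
The plan is to prove the equivalence via the contrapositive in each direction. Set $A:=\frac{(4-\alpha)\pi}{1/e+2\delta}$. I will repeatedly use that $\|f_p\|_{L^1(\Omega)}\leq L_0$ (Lemma~\ref{lemma for estimate L-0}) and that interpolating between $\|u_p\|_{L^\infty}\lesssim 1$ and $p^{2/(4-\alpha)}\int_\Omega u_p^{4p/(4-\alpha)}\,dx\lesssim 1$ from \eqref{estimate of u-p-p+1} yields $\int_\Omega u_p^{p+1}\,dx=O(p^{-1/2})$.

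For \emph{bounded $\Rightarrow$ regular}, assume $\|\bar u_p\|_{L^\infty(B_{R_0}(x_*))}\leq M$ along a subsequence, so $u_p\leq M/p$ on $B_{R_0}(x_*)$. For $x\in B_{R_0/2}(x_*)$ I will split the inner convolution into a piece over $B_{R_0}(x_*)$ (where $u_p^{p+1}\leq (M/p)^{p+1}$) and one over $\Omega\setminus B_{R_0}(x_*)$ (where $|x-y|\geq R_0/2$ and $\int_\Omega u_p^{p+1}$ is bounded). Multiplying by $pu_p^p\leq M^p/p^{p-1}$ gives $f_p(x)\to 0$ uniformly on $B_{R_0/2}(x_*)$, so any $\varphi\in C_c(\Omega)$ with $\varphi\equiv 1$ near $x_*$ and $\operatorname{supp}\varphi\subset B_{R_0/2}(x_*)$ satisfies $\int\varphi\,d\mu=0<A$, contradicting $x_*\in\Sigma_\mu(\delta)$.

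For the harder \emph{regular $\Rightarrow$ bounded} direction, I take the cutoff $\varphi$ provided by $\delta$-regularity with $\varphi\equiv 1$ on some $B_{2r}(x_*)$; weak convergence then gives $\|f_p\|_{L^1(B_{2r}(x_*))}<A$ for all large $p$. Let $\tilde v_p$ solve $-\Delta\tilde v_p=f_p$ in $B_{2r}(x_*)$ with zero Dirichlet data. Lemma~\ref{lem ren4.3} yields
\[\int_{B_{2r}(x_*)}e^{\lambda_\epsilon\tilde v_p}\,dx\leq C,\qquad\lambda_\epsilon:=\frac{4\pi-\epsilon}{A},\]
so that $e\lambda_\epsilon>4/(4-\alpha)>1$ for $\epsilon$ small. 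The harmonic part $h_p:=\bar u_p-\tilde v_p$ is nonnegative on $B_{2r}(x_*)$ by the maximum principle, and $\int_{B_{2r}}\bar u_p\,dx\leq C$ follows from the Green's representation and the $L^1$-bound on $f_p$; hence the mean-value identity gives $h_p(x_*)\leq C$ and Harnack's inequality on $B_r(x_*)\subset B_{2r}(x_*)$ yields $\sup_{B_r(x_*)}h_p\leq C$. Therefore $\int_{B_r(x_*)}e^{\lambda_\epsilon\bar u_p}\,dx\leq C$.

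The main obstacle is upgrading this exponential integrability of $\bar u_p=pu_p$ to a uniform $L^\infty$ bound, in the presence of both the $p$-prefactor and the nonlocal convolution in $f_p$. I plan to fix $q\in(4/(4-\alpha),e\lambda_\epsilon)$ (nonempty for small $\epsilon$) and use the elementary inequality $\log u\leq(\lambda_\epsilon/q)u$ (valid for $u>0$ since $\lambda_\epsilon/q>1/e$) to obtain $u_p^{pq}\leq e^{\lambda_\epsilon\bar u_p}$, hence $\|u_p^{p+1}\|_{L^q(B_r(x_*))}\leq C$. The Hardy-Littlewood-Sobolev inequality then places the convolution in $L^b(B_{r/2}(x_*))$ with $1/b=1/q-(2-\alpha)/2$ (or in $L^\infty$ if $q\geq 2/(2-\alpha)$), uniformly in $p$, since the contribution from $y\notin B_r(x_*)$ is trivially bounded. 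Separately, optimizing $p\log u\leq\mu\bar u-p(1+\log\mu)$ with $\mu=\lambda_\epsilon/q$ gives $pu_p^p\leq p(e\mu)^{-p}e^{\mu\bar u_p}$, so $\|pu_p^p\|_{L^s(B_r(x_*))}\to 0$ for any $s\in(1,q)$ because the factor $(e\lambda_\epsilon/q)^{-p}$ decays exponentially and swamps the $p$-prefactor. A H\"older product with $1/r=1/s+1/b<1$---possible precisely because $q>4/(4-\alpha)$---produces $\|f_p\|_{L^r(B_{r/2}(x_*))}\to 0$ with $r>1$, and then interior elliptic regularity combined with the two-dimensional embedding $W^{2,r}\hookrightarrow L^\infty$ delivers $\|\bar u_p\|_{L^\infty(B_{r/4}(x_*))}\leq C$. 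The whole chain closes because $\lambda_\epsilon$ inherits the regularity threshold $A$ and is just large enough to accommodate simultaneously $q>4/(4-\alpha)$ and $s<q$ with $e\mu>1$, which is where the numerical value $\frac{(4-\alpha)\pi}{1/e+2\delta}$ in the definition of a $\delta$-regular point becomes critical.
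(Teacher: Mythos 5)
Your proof is correct and follows the same overall scheme as the paper: the ``bounded implies regular'' direction shows $f_p\to 0$ locally, and the harder direction runs through the decomposition of $\bar u_p$ into a Dirichlet part $\tilde v_p$ and a harmonic part $h_p$, Brezis--Merle (Lemma~\ref{lem ren4.3}) applied to $\tilde v_p$, a bound on $h_p$ via mean value/Harnack, the elementary inequality $\log u\le\mu u$ to convert exponential integrability of $\bar u_p$ into $L^q$ bounds on powers of $u_p$, Hardy--Littlewood--Sobolev for the convolution, and a local elliptic boundedness estimate to close. The differences from the paper are technical but worth noting. First, for the inner convolution the paper works with the \emph{symmetric} split $u_p^p(x)\cdot(I_\alpha*u_p^p)(x)$ and bounds it directly in $L^{1+\delta_0}$ with both $u_p^p$ factors in the same $L^q$ ($q=\tfrac{4(1+\delta_0)}{2+(2-\alpha)(1+\delta_0)}$), absorbing the $p$-prefactor via $(p^{1/(2p)}u_p)^{pq}$; you instead decouple the two factors, put $\|u_p^{p+1}\|_{L^q(B_r)}\le C$ with $q\in\bigl(\tfrac{4}{4-\alpha},\,e\lambda_\epsilon\bigr)$ (the room exactly furnished by the definition of $\Sigma_\mu(\delta)$), use HLS to place the convolution in $L^b$, show $\|pu_p^p\|_{L^s}\to0$ (exponential decay of $(e\mu)^{-p}$), and H\"older the two together, landing in $L^r$ with $r>1$ because $q>\tfrac{4}{4-\alpha}$. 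Your split is a bit more transparent in exhibiting where the numerical threshold $\tfrac{(4-\alpha)\pi}{1/e+2\delta}$ is used. Second, the final step differs: the paper invokes the local boundedness estimate of Gilbarg--Trudinger (Theorem~8.17), taking $\|\bar u_p\|_{L^1}+\|f_p\|_{L^{1+\delta_0}}$ as input, whereas you use interior $W^{2,r}$-regularity plus the two-dimensional embedding $W^{2,r}\hookrightarrow L^\infty$ (which needs, and you have, $\|\bar u_p\|_{L^r}$ bounded from the exponential integrability of $\tilde v_p$). Both are valid. One small semantic note: your derivation of $\int_\Omega u_p^{p+1}\,dx=O(p^{-1/2})$ is not really an interpolation between $L^\infty$ and the integral bound; since $p+1<\tfrac{4p}{4-\alpha}$ for $p$ large, the $L^\infty$ bound goes the wrong way, and what you actually need is H\"older on the bounded domain against $\int u_p^{4p/(4-\alpha)}\lesssim p^{-2/(4-\alpha)}$, which does give the claimed decay.
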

\begin{proof}
    First, given any $x_{*}\in\Sigma_{\mu}(\delta)$, it holds that $\|\bar{u}_{p}\|_{L^{\infty}(B_{R}(x_{*}))}\to+\infty$ as $p\to+\infty$, for any $R>0$ such that $B_{R}(x_{*})\subset\Omega$. Otherwise, there exists $R_{1}>0$ such that $B_{R_{1}}(x_{*})\subset\Omega$ and $\|\bar{u}_{p}\|_{L^{\infty}(B_{R_{1}}(x_{*}))}\leq C$ as $p\to+\infty$. Then by the HLS inequality and \eqref{estimate of u-p-p+1} , we have
    \begin{equation}
    \begin{aligned}
        \int_{B_{R_{1}}(x_{*})}f_{p}(x)dx&=p\int_{B_{R_{1}}(x_{*})}\int_{\Omega}\frac{u_{p}^{p+1}(y)u_{p}^{p}(x)}{|x-y|^{\alpha}}dydx\\
        &\leq \frac{C}{p}\left(p\int_{B_{R_{1}}(x_{*})}\int_{\Omega}\frac{u_{p}^{p+1}(y)u_{p}^{p-1}(x)}{|x-y|^{\alpha}}dydx\right)\\
        &\leq \frac{C}{p} \left(\int_{\Omega}p^{\frac{2}{4-\alpha}}u_{p}^{\frac{4p}{4-\alpha}}(x)dx\right)^{\frac{4-\alpha}{2}}\\
        &\leq \frac{C}{p}\to0\text{~as~}p\to+\infty
    \end{aligned}        
    \end{equation}
Hence $x_{*}$ is a $\delta$-regular point with respect to $\mu$, which contradicts the assumption that $x_{*} \in \Sigma_{\mu}(\delta)$.

Next, we need to prove that if $x_{*}\not\in\Sigma_{\mu}(\delta)$, then there exists a $R_{0}>0$ such that $B_{R_{0}}(x_{*})\subset\Omega$ and $\|\bar{u}_{p}\|_{L^{\infty}(B_{R_{0}}(x_{*}))}\leq C$ as $p\to+\infty$. Since $\|f_{p}\|_{L^{1}(\Omega)}\leq C$, by applying the elliptic $L^{p}$ estimate with the duality argument, we have that $\bar{u}_{p}$ are uniformly bounded in $W^{1,q}(\Omega)$ for any $1\leq q<2$, hence $\bar{u}_{p}$ are uniformly bounded in $L^{1}(\Omega)$. Now, we claim that there exist $R_{0}>0$ and $\delta_{0}>0$ small enough such that
\begin{equation}\label{claim}
    \|f_{p}\|_{L^{1+\delta_{0}}(B_{2R_{0}}(x_{*}))}\leq C,\text{~~as~~}p\to+\infty.
\end{equation}
Then, we can apply the Harnack inequality (\cite[Theorem 8.17]{gilbarg1977elliptic}) to obtain 
\begin{equation}
    \|\bar{u}_{p}\|_{L^{\infty}(B_{R_{0}}(x_{*}))}\leq C\left(\|\bar{u}_{p}\|_{L^{1}(B_{2R_{0}}(x_{*}))}+ \|f_{p}\|_{L^{1+\delta_{0}}(B_{2R_{0}}(x_{*}))}\right)\leq C.
\end{equation}
Now, we show the claim \eqref{claim} holds. Since $x_{*}\not\in\Sigma_{\mu}(\delta)$, then there exists $R_{1}>0$ such that $B_{4R_{1}}(x_{*})\subset\Omega$ and
\begin{equation}\label{eq-f-p-1}
    \int_{B_{4R_{1}}(x_{*})}f_{p}(x)dx<\frac{(4-\alpha)\pi}{\frac{1}{e}+\delta}\text{~~for any~}p\text{~large enough~.}
\end{equation}
Split $\bar{u}_{p}$ into two parts, $\bar{u}_{p}=\bar{u}_{p,1}+\bar{u}_{p,2}$ such that
\begin{align}
    \begin{cases}
        -\Delta\bar{u}_{p,1} = f_{p}, & \text{in } B_{4R_{1}}(x_{*}), \\
        \quad\ \ \bar{u}_{p,1} = 0, & \text{on } \partial B_{4R_{1}}(x_{*}),
    \end{cases}
    \qquad \text{and} \qquad
    \begin{cases}
        -\Delta\bar{u}_{p,2} = 0, & \text{in } B_{4R_{1}}(x_{*}), \\
        \quad\ \ \bar{u}_{p,2} = \bar{u}_{p}, & \text{on } \partial B_{4R_{1}}(x_{*}).
    \end{cases}
\end{align}
By applying the maximum principle, we have $0<\bar{u}_{p,1}<\bar{u}_{p}$ and  $0<\bar{u}_{p,2}<\bar{u}_{p}$ in $B_{4R_{1}(x_{*})}$. For $\bar{u}_{p,2}$, by the mean value theorem for harmonic functions
\begin{equation}\label{eq-u-p-2}
    \begin{aligned}
        \|\bar{u}_{p,2}\|_{L^{\infty}(B_{2R_{1}}(x_{*}))}\leq C\|\bar{u}_{p,2}\|_{L^{1}(B_{4R_{1}}(x_{*}))}\leq C\|\bar{u}_{p}\|_{L^{1}(\Omega)}\leq C.
    \end{aligned}
\end{equation}
For $\bar{u}_{p,1}$, by Lemma \ref{lem ren4.3}, we have
\begin{equation}\label{eq-estimate-for-u-p-1}
     \int_{B_{4R_{1}}(x_{*})}e^{\frac{\gamma\bar{u}_{p,1}(x)}{\|f_{p}\|_{L^{1}(B_{4R_{1}}(x_{*}))}}}\leq C_{\gamma},\text{~~for any~}\gamma\in(0,4\pi).
\end{equation}
Notice that by the H\"older inequality, the HLS inequality, Proposition \ref{prop L-infty estimate} and \eqref{estimate of u-p-p+1}, we get
\begin{equation}
    \begin{aligned}
        &\left(\int_{B_{R_{1}}(x_{*})}f_{p}^{1+\delta_{0}}(x)dx\right)^{\frac{1}{1+\delta_{0}}}\\
        &=p\left(\int_{B_{R_{1}}(x_{*})}\left(\int_{\Omega}\frac{u_{p}^{p+1}(y)u_{p}^{p}(x)}{|x-y|^{\alpha}}dy\right)^{1+\delta_{0}}dx\right)^{\frac{1}{1+\delta_{0}}}\\
        &=p\left(\int_{B_{R_{1}}(x_{*})}\left(\int_{B_{2R_{1}}(x_{*})}\frac{u_{p}^{p+1}(y)u_{p}^{p}(x)}{|x-y|^{\alpha}}dy+\int_{\Omega\setminus B_{2R_{1}}(x_{*})}\frac{u_{p}^{p+1}(y)u_{p}^{p}(x)}{|x-y|^{\alpha}}dy\right)^{1+\delta_{0}}dx\right)^{\frac{1}{1+\delta_{0}}}\\
         &\leq p\left(\int_{B_{R_{1}}(x_{*})}\left(\int_{B_{2R_{1}}(x_{*})}\frac{u_{p}^{p+1}(y)u_{p}^{p}(x)}{|x-y|^{\alpha}}dy+\frac{C}{R_{1}^{\alpha}p^{\frac{1}{2}}}u_{p}^{p}(x)\right)^{1+\delta_{0}}dx\right)^{\frac{1}{1+\delta_{0}}}\\
         &\leq  p\left(\int_{B_{R_{1}}(x_{*})}\left(\int_{B_{2R_{1}}(x_{*})}\frac{u_{p}^{p+1}(y)u_{p}^{p}(x)}{|x-y|^{\alpha}}dy\right)^{1+\delta_{0}}dx\right)^{\frac{1}{1+\delta_{0}}}\\
         &\quad+\frac{C}{R_{1}^{\alpha}p^{\frac{1}{2}}}\left(\int_{B_{R_{1}}(x_{*})}(p^{\frac{1}{p}}u_{p})^{p(1+\delta_{0})}(x)dx\right)^{\frac{1}{1+\delta_{0}}}
    \end{aligned}
\end{equation}
and
\begin{equation}
    \begin{aligned}
    &p\left(\int_{B_{R_{1}}(x_{*})}\left(\int_{B_{2R_{1}}(x_{*})}\frac{u_{p}^{p+1}(y)u_{p}^{p}(x)}{|x-y|^{\alpha}}dy\right)^{1+\delta_{0}}dx\right)^{\frac{1}{1+\delta_{0}}}\\
        &\leq Cp\left(\int_{B_{R_{1}}(x_{*})}\left(\int_{B_{2R_{1}}(x_{*})}\frac{u_{p}^{p}(y)u_{p}^{p}(x)}{|x-y|^{\alpha}}dy\right)^{1+\delta_{0}}dx\right)^{\frac{1}{1+\delta_{0}}}\\
        &\leq C \left(\int_{B_{2R_{1}}(x_{*})}(p^{\frac{1}{2p}}u_{p})^{pq}(x)dx\right)^{\frac{2}{q}},
    \end{aligned}
\end{equation}
where $q=\frac{4(1+\delta_{0})}{2+(2-\alpha)(1+\delta_{0})}$.
Since $\log x\leq \frac{x}{e}$ for any $x>0$, then
\begin{equation}\label{eq=p-1-p-u-p}
    (p^{\frac{1}{2p}}u_{p}(x))^{p}\leq  (p^{\frac{1}{p}}u_{p}(x))^{p}\leq e^{p\log(p^{\frac{1}{p}}u_{p}(x))}\leq e^{(\frac{1}{e}+\frac{\delta}{2})\bar{u}_{p}(x)},
\end{equation}
for any $\delta>0$, $x\in\Omega$ and $p$ large enough. 
Take $\delta_{0}>0$ small enough such that 
\begin{equation}\label{eq=p-1-p-u-p-1}
  0<\frac{(4-\alpha)(1+\delta_{0})}{4}\frac{1+\frac{\delta}{2}e}{1+\delta e}<\frac{(4-\alpha)(1+\delta_{0})}{2+(2-\alpha)(1+\delta_{0})}\frac{1+\frac{\delta}{2}e}{1+\delta e}<1.
\end{equation}
Then using  \eqref{eq-f-p-1}, \eqref{eq-u-p-2}, \eqref{eq-estimate-for-u-p-1} and \eqref{eq=p-1-p-u-p}, we have
\begin{equation}
    \begin{aligned}
         \int_{B_{2R_{1}}(x_{*})}(p^{\frac{1}{2p}}u_{p})^{pq}(x)dx&\leq \int_{B_{2R_{1}}(x_{*})}e^{(\frac{1}{e}+\frac{\delta}{2})q
         \bar{u}_{p}(x)}dx\\
         &\leq C\int_{B_{2R_{1}}(x_{*})}e^{(\frac{1}{e}+\frac{\delta}{2})q
         \bar{u}_{p,1}(x)}dx\\
         &\leq C\int_{B_{4R_{1}}(x_{*})}e^{q\frac{4-\alpha}{4}\frac{1+\frac{\delta}{2}e}{1+\delta e}
         \frac{4\pi \bar{u}_{p,1}(x)}{\|f_{p}\|_{L^{1}(B_{4R_{1}}(x_{*}))}}}dx\\
         &\leq C
    \end{aligned}
\end{equation}
and
\begin{equation}
    \begin{aligned}
        \int_{B_{R_{1}}(x_{*})}(p^{\frac{1}{p}}u_{p})^{p(1+\delta_{0})}(x)dx&\leq \int_{B_{R_{1}}(x_{*})}e^{(\frac{1}{e}+\frac{\delta}{2})(1+\delta_{0})\bar{u}_{p}(x)}dx\\
        &\leq C\int_{B_{2R_{1}}(x_{*})}e^{(\frac{1}{e}+\frac{\delta}{2})(1+\delta_{0})\bar{u}_{p,1}(x)}dx\\
        &\leq C\int_{B_{4R_{1}}(x_{*})}e^{\frac{(4-\alpha)(1+\delta_{0})}{4}\frac{1+\frac{\delta}{2}e}{1+\delta e}\frac{4\pi\bar{u}_{p}(x)}{\|f_{p}\|_{L^{1}(B_{4R_{1}}(x_{*}))}}}dx\\
        &\leq C.
    \end{aligned}
\end{equation}
Combining the above estimates together
\begin{equation}
    \left(\int_{B_{R_{1}}(x_{*})}f_{p}^{1+\delta_{0}}(x)dx\right)^{\frac{1}{1+\delta_{0}}}\leq C,
\end{equation}
and the claim \eqref{claim} holds. This completes the proof.
\end{proof}



\begin{Cor}\label{cor-only-one-blow-up-point}
Let $\alpha\in(0,1)$ and $\Omega\subset\R^{2}$ be a smooth bounded domain satisfying $(H_{1})$ and $(H_{2})$.  Then, it holds that $\Sigma_{\mu}(\delta)=\mathcal{S}=\{x_{0}\}$.
\end{Cor}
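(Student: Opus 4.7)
\textbf{Plan for Corollary \ref{cor-only-one-blow-up-point}.} The statement contains two equalities. First, I would identify the measure-theoretic singular set $\Sigma_{\mu}(\delta)$ with the pointwise blow-up set $\mathcal{S}$; second, I would exclude a second concentration point by combining the mass lower bound at irregular points with the strict estimate on $L_0$ available from assumption $(H_2)$.

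For the identification $\Sigma_{\mu}(\delta) = \mathcal{S}$, I would use Lemma \ref{lemma-propertity-of-sigma} together with Proposition \ref{prop-no-boundary-blow-up}. Given $x \in \mathcal{S}$, we have $x \in \Omega$ since $\mathcal{S} \cap \partial\Omega = \emptyset$, and by the definition of $\mathcal{S}$ there is a sequence $y_p \to x$ with $\bar{u}_p(y_p) \to +\infty$; hence for every $R>0$ with $B_R(x) \subset \Omega$ the point $y_p$ lies in $B_R(x)$ for $p$ large, so $\|\bar{u}_p\|_{L^{\infty}(B_R(x))} \to +\infty$, which by Lemma \ref{lemma-propertity-of-sigma} forces $x \in \Sigma_{\mu}(\delta)$. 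Conversely, for $x_* \in \Sigma_{\mu}(\delta)$ the same lemma yields $\|\bar{u}_p\|_{L^{\infty}(B_{1/n}(x_*))} \to +\infty$ as $p \to +\infty$ for each $n \in \mathbb{N}$, and a standard diagonal extraction produces a sequence $y_p \to x_*$ with $\bar{u}_p(y_p) \to +\infty$, so $x_* \in \mathcal{S}$.

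For $\mathcal{S} = \{x_0\}$, I would combine Lemma \ref{lem-n-2} with Proposition \ref{prop-no-boundary-blow-up} to first record that $1 \le \# \mathcal{S} \le 2$ and $\mathcal{S} \subset \Omega$, and then rule out the two-point case. At each $x_i \in \mathcal{S} = \Sigma_{\mu}(\delta)$, negating the definition of $\delta$-regularity gives
\begin{equation*}
\int_{\Omega} \varphi \, d\mu \ge \frac{(4-\alpha)\pi}{\frac{1}{e} + 2\delta}
\end{equation*}
for every admissible test function $\varphi$. Choosing $\varphi_n \in C_c(\Omega)$ with $0 \le \varphi_n \le 1$, $\varphi_n \equiv 1$ on $B_{1/n}(x_i)$ and $\varphi_n \equiv 0$ outside $B_{2/n}(x_i)$, and using the squeeze $\mu(\overline{B_{1/n}(x_i)}) \le \int \varphi_n \, d\mu \le \mu(B_{2/n}(x_i))$ together with continuity of the finite Radon measure $\mu$, letting $n \to +\infty$ yields $\mu(\{x_i\}) \ge \frac{(4-\alpha)\pi}{\frac{1}{e} + 2\delta}$. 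Since $f_p \weakto \mu$ in $\mathcal{M}(\Omega)$ along the relevant subsequence, testing against compactly supported $\varphi$ with $0 \le \varphi \le 1$ gives $\mu(\Omega) \le L_0$. If $\#\mathcal{S} = 2$, summing the mass bounds over both points produces $L_0 \ge \mu(\mathcal{S}) \ge \frac{2(4-\alpha)\pi}{\frac{1}{e} + 2\delta}$ for every $\delta > 0$; letting $\delta \to 0^+$ gives $L_0 \ge 2(4-\alpha)\pi e$, which contradicts the strict inequality $L_0 < 2(4-\alpha)\pi e$ from Lemma \ref{lemma for estimate L-0} under assumption $(H_2)$. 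Therefore $\#\mathcal{S} = 1$, and since $x_0 \in \mathcal{S}$, we conclude $\Sigma_{\mu}(\delta) = \mathcal{S} = \{x_0\}$.

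The main obstacle is the mass lower bound $\mu(\{x_i\}) \ge \frac{(4-\alpha)\pi}{1/e + 2\delta}$ at each irregular point, which must be transferred from the test function formulation into a genuine pointwise mass statement; this is handled cleanly by the $\varphi_n$ squeeze above. The rest is bookkeeping, since the strict upper bound $L_0 < 2(4-\alpha)\pi e$ under $(H_2)$ has already been established in Lemma \ref{lemma for estimate L-0} and does the decisive work of preventing a second concentration point.
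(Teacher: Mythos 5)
Your proof is correct and takes essentially the same approach as the paper: identify $\Sigma_\mu(\delta)$ with $\mathcal{S}$ via Lemma \ref{lemma-propertity-of-sigma} (after using Proposition \ref{prop-no-boundary-blow-up} to place $\mathcal{S}$ inside $\Omega$), then combine the pointwise mass lower bound $\mu(\{x_i\}) \ge \frac{(4-\alpha)\pi}{1/e + 2\delta}$ at each non-regular point with $\mu(\Omega) \le L_0 < 2(4-\alpha)\pi e$ from Lemma \ref{lemma for estimate L-0} to force $\#\mathcal{S} = 1$. You spell out the $\varphi_n$ squeeze that the paper states implicitly, but the key lemmas and the structure of the cardinality count are identical.
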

\begin{proof}
By Lemma \ref{lemma-propertity-of-sigma}, we have $\Sigma_{\mu}(\delta) = \mathcal{S}$ for all sufficiently small $\delta > 0$. Moreover, combining Lemma \ref{lemma for estimate L-0} with the definition of $\delta$-regular points yields
\begin{equation}
    L_{0} \geq \mu(\Omega) \geq \#\mathcal{S} \cdot \frac{(4-\alpha)\pi}{\frac{1}{e} + \delta},
\end{equation}
where $\#\mathcal{S}$ denotes the cardinality of $\mathcal{S}$. Consequently, 
\begin{equation}
    1 \leq \#\mathcal{S} \leq \frac{L_{0}(1 + \delta e)}{(4-\alpha)\pi e} < 2
\end{equation}
for any $\delta > 0$ sufficiently small, which implies $\#\mathcal{S} = 1$.
\end{proof}

\subsection{Decay estimate}
Throughout this subsection, we always assume that $\alpha\in(0,1)$ and $\Omega\subset\R^{2}$ is a smooth bounded domain satisfying $(H_{1})$ and $(H_{2})$.
\begin{Lem}\label{lemma-outside-the-non-regular-point}
    For any compact set $K\subset{\Omega}\setminus\{x_{0}\}$, there exists $C_{K}>0$  such that
    \begin{equation}
        \|u_{p}\|_{L^{\infty}(K)}\leq \frac{C_{K}}{p}\text{~and~} \int_{K}f_{p}(x)dx\leq\frac{C_{K}}{p},
    \end{equation}
for any $p$ large enough.
\end{Lem}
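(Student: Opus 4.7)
The plan is to deduce both bounds as immediate consequences of the convergence results already established, principally Corollary \ref{cor-only-one-blow-up-point} (which gives $\mathcal{S}=\{x_0\}$) and Lemma \ref{lemma-out-the-blow-up-point} (which gives the $C^2_{\text{loc}}$ convergence of $\bar u_p=pu_p$ away from the concentration set). No new blow-up analysis should be needed: the work is essentially a repackaging of estimates we already possess.

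For the first inequality, I would argue as follows. Fix a compact set $K\subset \Omega\setminus\{x_0\}$; since $K$ is closed in $\bar\Omega$ and disjoint from $\mathcal{S}=\{x_0\}$, it is a compact subset of $\bar\Omega\setminus\mathcal{S}$. Lemma \ref{lemma-out-the-blow-up-point} then gives
\begin{equation*}
\bar u_p\;\to\;\gamma_1\,G(\cdot,x_0)\quad\text{in }C^2(K),
\end{equation*}
so in particular $\|\bar u_p\|_{L^\infty(K)}\le C_K$ for all large $p$. Dividing by $p$ yields $\|u_p\|_{L^\infty(K)}\le C_K/p$.

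For the second inequality, I would use the first bound together with the global $L^\infty$-control $\|u_p\|_{L^\infty(\Omega)}\le \sqrt e$ coming from Proposition \ref{prop L-infty estimate}. Writing
\begin{equation*}
\int_K f_p(x)\,dx=p\int_K u_p^{\,p}(x)\left(\int_\Omega \frac{u_p^{\,p+1}(y)}{|x-y|^\alpha}\,dy\right)dx,
\end{equation*}
I would bound the inner integral crudely by pulling out $\|u_p\|_{L^\infty(\Omega)}^{p+1}\le e^{(p+1)/2}$ and using that for $\alpha<2$ the kernel $|x-y|^{-\alpha}$ is integrable on the bounded domain $\Omega$, so $\int_\Omega |x-y|^{-\alpha}\,dy\le C$ uniformly in $x$. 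Then the first bound gives $u_p^{\,p}(x)\le (C_K/p)^p$ on $K$, and combining yields
\begin{equation*}
\int_K f_p(x)\,dx \;\le\; p\cdot\Big(\tfrac{C_K}{p}\Big)^{\!p}\cdot C e^{(p+1)/2}\cdot |K|
\;=\;C\sqrt{e}\,|K|\cdot p\cdot\Big(\tfrac{C_K\sqrt e}{p}\Big)^{\!p},
\end{equation*}
which is super-exponentially small in $p$; it is certainly dominated by $C_K/p$ for all $p$ large.

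There is no substantive obstacle in this lemma: the only mild point is ensuring the chain of hypotheses lines up, since Corollary \ref{cor-only-one-blow-up-point} requires both $(H_1)$ and $(H_2)$ together with $\alpha\in(0,1)$, precisely the standing assumptions of this subsection. Also, the estimate $\int_K f_p\le C_K/p$ that we produce is in fact much sharper than stated (exponential decay in $p$), so we can afford to be wasteful in the bound on $\int_\Omega u_p^{p+1}/|x-y|^\alpha\,dy$; more refined Hölder arguments in the spirit of Lemma \ref{lemma uniform estimate} are not necessary here.
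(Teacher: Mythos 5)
Your argument is correct. For the first inequality the route is essentially the paper's: both reduce to the uniform bound $\|p u_p\|_{L^\infty(K)}\le C_K$ on compact $K\subset\bar\Omega\setminus\mathcal S$ once Corollary \ref{cor-only-one-blow-up-point} has fixed $\mathcal S=\{x_0\}$; you read it off from the $C^2_{\mathrm{loc}}$-convergence in Lemma \ref{lemma-out-the-blow-up-point}, whereas the paper reads it off from Lemma \ref{lemma-propertity-of-sigma}, but these encode the same local boundedness of $pu_p$ away from the blow-up point.

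For the second inequality your route diverges genuinely from the paper's. You bound the convolution term crudely by $\|u_p\|_{L^\infty(\Omega)}^{p+1}\sup_{x}\int_\Omega|x-y|^{-\alpha}\,dy$ (strictly speaking one should write $(\sqrt e+o(1))^{p+1}$ rather than $e^{(p+1)/2}$, since Proposition \ref{prop L-infty estimate} only gives $\limsup\|u_p\|_{L^\infty}\le\sqrt e$, but this is immaterial), and then exploit the full strength of $u_p^p(x)\le(C_K/p)^p$ on $K$, producing super-exponential decay of $\int_K f_p$ because the factor $(C_K/p)^p$ crushes any geometric growth of the remaining terms. The paper instead peels off only a single power, writing $u_p^p(x)\le(C_K/p)\,u_p^{p-1}(x)$ on $K$, and then applies the HLS inequality together with the integral estimate \eqref{estimate of u-p-p+1} to land exactly on the rate $C_K/p$. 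Your version is more elementary (no HLS needed), gives a strictly stronger bound, and correctly observes that one can afford to be wasteful here; the paper's version stays stylistically parallel with the HLS-based machinery used throughout the rest of the section. Both are valid.
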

\begin{proof}
    By Lemma \ref{lemma-propertity-of-sigma}, we get $ \|\bar{u}_{p}\|_{L^{\infty}(K)}\leq C_{K}$ for any $p$ large and so $ \|{u}_{p}\|_{L^{\infty}(K)}\leq \frac{C_{K}}{p}$ for any $p$ large.
Moreover, by the HLS inequality and \eqref{estimate of u-p-p+1}, we obtain that
\begin{equation}
\begin{aligned}
    \int_{K}f_{p}(x)dx&=p\int_{K}\int_{\Omega}\frac{u_{p}^{p+1}(y)u_{p}^{p}(x)}{|x-y|^{\alpha}}dydx\\
    &\leq \frac{C_{K}}{p}p\int_{K}\int_{\Omega}\frac{u_{p}^{p+1}(y)u_{p}^{p-1}(x)}{|x-y|^{\alpha}}dydx\\
    &\leq \frac{C_{K}}{p}\left(\int_{\Omega}p^{\frac{2}{4-\alpha}}u_{p}^{\frac{4p}{4-\alpha}}(x)dx\right)^{\frac{4-\alpha}{2}}\leq\frac{C_{K}}{p},
\end{aligned}    
\end{equation}
for any $p$ large. This completes the proof.
\end{proof}

\begin{Lem}\label{lemma-convergence-out-of-the-blow-up-point}
    There exists $\gamma>0$ such that
    \begin{equation}\label{eq-converge-to-the-green-function}
        \lim_{p\to+\infty}pu_{p}=\gamma G(\cdot,x_{0})\text{~~in~~}C^{2}_{loc}(\bar{\Omega}\setminus\{x_{0}\}),
    \end{equation}
    where
    \begin{equation}
        \gamma=\lim_{\delta\to0}\lim_{p\to+\infty}\int_{B_{\delta}(x_{0})}f_{p}(x)dx.
    \end{equation}
\end{Lem}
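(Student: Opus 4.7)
The plan is to repeat essentially the argument of Lemma \ref{lemma-away-the-blow-up-point-1}--\eqref{lemma-away-the-blow-up-point-2}, specialized to the single blow-up point setting established in Corollary \ref{cor-only-one-blow-up-point}, and to invoke the sharper decay away from $x_0$ provided by Lemma \ref{lemma-outside-the-non-regular-point}. First, I would fix an arbitrary compact set $K\subset\bar{\Omega}\setminus\{x_0\}$ and choose $\eta>0$ so small that the $\eta$-neighborhood $O_\eta(K)$ of $K$ is contained in $\bar{\Omega}\setminus\{x_0\}$. Splitting the convolution in the defining expression
\[
f_{p}(x)=p\Bigl(\int_{\Omega\cap B_{\eta}(x)}\frac{u_{p}^{p+1}(y)}{|x-y|^{\alpha}}dy\Bigr)u_{p}^{p}(x)+p\Bigl(\int_{\Omega\setminus B_{\eta}(x)}\frac{u_{p}^{p+1}(y)}{|x-y|^{\alpha}}dy\Bigr)u_{p}^{p}(x),
\]
the first piece is controlled by $\|pu_{p}^{2p+1}\|_{L^{\infty}(O_{\eta}(K))}$, which goes to zero by Lemma \ref{lemma-outside-the-non-regular-point} together with \eqref{eq-l-infty-bounded}, while the second is bounded above by a constant multiple of $\|p^{1/2}u_p^p\|_{L^{\infty}(K)}$ after the HLS estimate \eqref{estimate of u-p-p+1}, and this also tends to zero by Lemma \ref{lemma-outside-the-non-regular-point}. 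Together with the gradient bound of Proposition \ref{prop-estimate-gradient}, this yields $\|f_p\|_{L^\infty(K)}\to 0$ and $\|\bar u_p\|_{C^1(K)}\le C_K$, so by standard elliptic regularity (applied to $-\Delta\bar u_p=f_p$), $\bar u_p$ converges in $C^2_{\rm loc}(\bar\Omega\setminus\{x_0\})$ along a subsequence to some limit $\bar u$.

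Next, to identify the limit I would use the Green representation: for any fixed $x\in\bar\Omega\setminus\{x_0\}$ and any $\delta\in(0,\mathrm{dist}(x,x_0))$,
\begin{equation*}
\bar u_p(x)=G(x,x_0)\int_{\Omega\cap B_\delta(x_0)}f_p(y)\,dy+\int_{\Omega\cap B_\delta(x_0)}\bigl(G(x,y)-G(x,x_0)\bigr)f_p(y)\,dy+\int_{\Omega\setminus B_\delta(x_0)}G(x,y)f_p(y)\,dy.
\end{equation*}
By Lemma \ref{lemma-outside-the-non-regular-point}, the last integral is $O(1/p)\cdot\|G(x,\cdot)\|_{L^\infty(\Omega\setminus B_\delta(x_0))}=o_p(1)$. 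The middle term is bounded by $\sup_{y\in B_\delta(x_0)}|G(x,y)-G(x,x_0)|\cdot L_0=o_\delta(1)$, using the continuity of $G(x,\cdot)$ away from $x$ and the total mass bound $L_0$ from Lemma \ref{lemma for estimate L-0}. Taking first $p\to+\infty$ and then $\delta\to 0^+$, the limit $\gamma:=\lim_{\delta\to 0}\lim_{p\to+\infty}\int_{B_\delta(x_0)}f_p(y)\,dy$ exists (as the limit quantity $\bar u(x)/G(x,x_0)$ is independent of $\delta$), and we obtain $\bar u(x)=\gamma\,G(x,x_0)$. Since this holds along every subsequence with the same limit, the full family converges and \eqref{eq-converge-to-the-green-function} is established.

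Finally, the positivity $\gamma>0$ is the only ``hard'' point, but it follows from the concentration analysis already developed. Using the rescaling $v_{1,p}(x)=\frac{p}{u_p(x_p)}(u_p(x_p+\varepsilon_p x)-u_p(x_p))$ of property $(\mathcal{P}_2^1)$, a change of variables and the definition of $\varepsilon_p$ give
\begin{equation*}
\int_{B_\delta(x_0)}f_p(x)\,dx\ge u_p(x_p)\int_{B_{\delta/(2\varepsilon_p)}(0)}\int_{\Omega_p}\frac{(1+\tfrac{v_{1,p}(y)}{p})^{p+1}(1+\tfrac{v_{1,p}(x)}{p})^{p}}{|x-y|^{\alpha}}dy\,dx,
\end{equation*}
and Fatou's lemma together with Proposition \ref{lower bound of L-infty norm} and Proposition \ref{prop convergence of z-p} yield the lower bound $\gamma\ge \int_{\R^2}\int_{\R^2}\frac{e^{v(y)}e^{v(x)}}{|x-y|^\alpha}dy\,dx=2(4-\alpha)\pi>0$. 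The main subtlety, compared with the pointwise (non-Choquard) setting, is that $f_p$ is a double integral, so the splitting into the self-interaction and tail pieces must be performed carefully; once that splitting is handled via the HLS estimate \eqref{estimate of u-p-p+1} and the uniform bound $\mu(\Omega)\le L_0$, the Green function representation argument goes through as in the proof of Lemma \ref{lemma-out-the-blow-up-point}.
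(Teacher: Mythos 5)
Your proposal is correct and takes essentially the same route as the paper: the paper's own proof is a one-line citation of Corollary \ref{cor-only-one-blow-up-point} (which gives $\mathcal S=\{x_0\}$) and Lemma \ref{lemma-out-the-blow-up-point} (which already contains the splitting of $f_p$, the Green representation, the elliptic-regularity passage to a $C^2_{loc}$ limit, and the Fatou-based positivity of $\gamma$), and you are simply inlining that lemma's argument in the single-point case $N=1$, with the additional use of Lemma \ref{lemma-outside-the-non-regular-point} in place of the combination of $(\mathcal P^N_3)$ and \eqref{prop-multi-peak-eq-1} to drive $f_p\to 0$ on compacts.
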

\begin{proof}
 It follows from Corollary \ref{cor-only-one-blow-up-point} and Lemma \ref{lemma-out-the-blow-up-point}.
\end{proof}

\begin{Lem}\label{lemma-convergence-to-right-hand-integral} 
Let $0 < r < \frac{1}{2}\text{dist}(x_0, \partial\Omega)$, and define
    \begin{equation}
    \begin{aligned}
         \beta_{p}:=\int_{B_{\frac{r}{\varepsilon_{p}}}(0)}\left(\int_{\Omega_{p}}\frac{(1+\frac{v_{p}(y)}{p})^{p+1}}{|x-y|^{\alpha}}dz\right)(1+\frac{v_{p}(x)}{p})^{p}dy.
    \end{aligned}
\end{equation}
where $\varepsilon_p$ and $v_p$ are defined in \eqref{defin-vare-p} and \eqref{definition of v} respectively.
Then it holds that
\begin{equation}
    \lim_{p\to+\infty}\beta_{p}=2(4-\alpha)\pi.
\end{equation}
\end{Lem}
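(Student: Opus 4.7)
The plan is to express $\beta_p$ as a boundary integral via the divergence theorem, identify its limit using the $C^2_{loc}$-convergence of $pu_p$ away from $x_0$, and match this with a Fatou lower bound on the defining double integral.

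Since $v_p$ solves $-\Delta v_p=\phi_p(x)(1+v_p(x)/p)^p$ in $\Omega_p$ with $\phi_p(x):=\int_{\Omega_p}(1+v_p(y)/p)^{p+1}/|x-y|^{\alpha}\,dy$, and since $B_{r/\varepsilon_p}(0)\subset\Omega_p$ for $p$ large (using $r<\tfrac{1}{2}\mathrm{dist}(x_0,\partial\Omega)$ together with $x_p\to x_0\in\Omega$ by Corollary \ref{cor-only-one-blow-up-point} and Proposition \ref{prop-no-boundary-blow-up}), the divergence theorem gives
\begin{equation*}
\beta_p=\int_{B_{r/\varepsilon_p}(0)}(-\Delta v_p)\,dx=-\int_{\partial B_{r/\varepsilon_p}(0)}\partial_\nu v_p\,d\sigma.
\end{equation*}
Unwinding $v_p(x)=p(u_p(x_p+\varepsilon_p x)/u_p(x_p)-1)$ together with the surface-measure scaling converts this into
\begin{equation*}
\beta_p=-\frac{1}{u_p(x_p)}\int_{\partial B_r(x_p)}\partial_\nu(pu_p)(\tilde x)\,d\sigma_{\tilde x}.
\end{equation*}

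Since $\partial B_r(x_0)\subset\Omega\setminus\{x_0\}$ is compact and $x_p\to x_0$, Lemma \ref{lemma-convergence-out-of-the-blow-up-point} allows me to pass to the limit; using the identity $-\int_{\partial B_r(x_0)}\partial_\nu G(\tilde x,x_0)\,d\sigma_{\tilde x}=\int_{B_r(x_0)}(-\Delta_{\tilde x}G(\cdot,x_0))\,d\tilde x=1$ this yields
\begin{equation*}
\beta_p\,u_p(x_p)\longrightarrow\gamma.
\end{equation*}
Separately, Fatou's lemma applied to the defining double integral (with $v_p\to v$ in $C^2_{loc}$, $\mathbf 1_{B_{r/\varepsilon_p}(0)},\mathbf 1_{\Omega_p}\to 1$ pointwise, and the identity $\int_{\mathbb R^2}\int_{\mathbb R^2}e^{v(y)}e^{v(x)}/|x-y|^\alpha\,dy\,dx=2(4-\alpha)\pi$ from Theorem B) gives
\begin{equation*}
\liminf_{p\to+\infty}\beta_p\geq 2(4-\alpha)\pi.
\end{equation*}

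To upgrade to equality I will run the same boundary-reduction for the symmetric integral $\tilde\alpha_p:=\int_{\Omega_p}\int_{\Omega_p}(1+v_p(y)/p)^{p+1}(1+v_p(x)/p)^{p+1}/|x-y|^\alpha\,dy\,dx$, which by change of variables and Corollary \ref{cor energy} satisfies $u_p^2(x_p)\tilde\alpha_p\to 2(4-\alpha)\pi e$. Since $\beta_p=\alpha_p+o(1)$ by Lemma \ref{lemma-outside-the-non-regular-point} (where $\alpha_p$ has one factor $(1+v_p/p)^{p+1}$ replaced by $(1+v_p/p)^p$), showing $\alpha_p-\tilde\alpha_p\to 0$ will produce $\beta_p=2(4-\alpha)\pi e/u_p^2(x_p)+o(1)$. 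Combined with $\beta_p\,u_p(x_p)\to\gamma$ and the Fatou bound this forces $\lim_{p\to+\infty}u_p(x_p)=\sqrt{e}$ and delivers $\beta_p\to 2(4-\alpha)\pi$.

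The main obstacle will be the vanishing of $\alpha_p-\tilde\alpha_p$: its integrand $(1+v_p(y)/p)^{p+1}(1+v_p(x)/p)^p(-v_p(x)/p)/|x-y|^\alpha$ converges pointwise to zero but lacks an obvious integrable dominating function, since the $(\mathcal{P}_3^{k})$-decay $(1+v_p/p)^{2p}\leq C|x|^{-(4-\alpha)}$ from Proposition \ref{prop-multi-peak} is only marginal in 2D. I plan to handle the core $B_R(0)\times B_R(0)$ by dominated convergence (using $\|v_p/p\|_{L^\infty(B_R)}\leq C_R/p$) and the exterior by combining the uniform bound $\phi_p\leq C$ from Lemma \ref{lemma uniform estimate}, the uniform $L^{4p/(4-\alpha)}$ estimate \eqref{estimate of u-p-p+1}, H\"older's inequality, and a diagonal argument exploiting the pointwise convergence of $(1+v_p/p)^p$ to the integrable function $e^v$.
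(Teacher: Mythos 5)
Your boundary-integral reduction and the Fatou lower bound are both correct, and in fact the identity $\beta_p\,u_p(x_p)\to\gamma$ that you obtain via the divergence theorem is, after undoing the change of variables, precisely the relation $\gamma=\lim_{\delta\to0}\lim_{p\to\infty}\beta_p(\delta)u_p(x_p)$ that the paper also uses. The genuine difference -- and the place where the proposal breaks down -- is your route to the matching upper bound.

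The logical gap is this: even granting your hard technical step $\alpha_p-\tilde\alpha_p\to0$, the relations you assemble do not determine $\lim\beta_p$. Set $a:=\lim\beta_p$, $b:=\lim u_p(x_p)$, so that $\gamma=ab$ from the boundary integral. Your plan yields $ab^{2}=2(4-\alpha)\pi e$ (from $\tilde\alpha_p$ and Corollary~\ref{cor energy}), $a\geq 2(4-\alpha)\pi$ (Fatou), and $1\leq b\leq\sqrt e$ (from \eqref{eq-l-infty-bounded}). These are all linear in the sense that $\gamma$ enters only as $\gamma=ab$, and they are simultaneously satisfied by \emph{every} $a\in[2(4-\alpha)\pi,\,2(4-\alpha)\pi e]$ with $b=\sqrt{2(4-\alpha)\pi e/a}$. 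Nothing here forces $a=2(4-\alpha)\pi$ or $b=\sqrt e$; the sentence ``this forces $\lim u_p(x_p)=\sqrt e$'' does not follow. (The only independent upper bound on $\gamma$ available at this stage is Lemma~\ref{lemma for estimate L-0}, which gives $\gamma\leq 2(4-\alpha)\pi e$, i.e.\ $b\geq 1$ -- not tight enough.)

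What the paper does, and what you are missing, is the local Poho\v zaev identity \eqref{Pohozaev 1} applied on $B_\delta(x_0)$. That identity, together with the $C^2$-convergence $pu_p\to\gamma G(\cdot,x_0)$ away from $x_0$, produces the \emph{quadratic} relation
\begin{equation*}
p\int_{B_\delta(x_0)}\Bigl(\int_\Omega\frac{u_p^{p+1}(y)}{|x-y|^{\alpha}}\,dy\Bigr)u_p^{p+1}(x)\,dx \;\longrightarrow\; \frac{\gamma^2}{2(4-\alpha)\pi}\quad(\text{up to }O(\delta)),
\end{equation*}
and since $u_p(x_p)\geq u_p(x)$ the left side is bounded above by $\beta_p(\delta)\,u_p(x_p)^2$. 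Passing to the limit gives $ab^2\geq\dfrac{(ab)^2}{2(4-\alpha)\pi}$, which cancels to $a\leq 2(4-\alpha)\pi$ and closes the argument. The crucial point is that Poho\v zaev yields $\gamma^2$ (not just $\gamma$) in terms of the local $u_p^{p+1}$-weighted energy, and this extra power is exactly the relation your scheme lacks. Replacing $\alpha_p-\tilde\alpha_p\to 0$ (which is true a posteriori but does not furnish any new constraint) with the Poho\v zaev step would repair the proof.
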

\begin{proof}
First, by the Fatou's Lemma and Proposition \ref{prop convergence of z-p}, we have
\begin{equation}\label{eq-proof-beta-p-1}
    \begin{aligned}
         \liminf_{p\to+\infty}&\int_{B_{\frac{r}{\varepsilon_{p}}}(0)}\left(\int_{\Omega_{p}}\frac{(1+\frac{v_{p}(y)}{p})^{p+1}}{|x-y|^{\alpha}}dz\right)(1+\frac{v_{p}(x)}{p})^{p}dy\\
         &\geq \int_{\R^{2}}\int_{\R^{2}}\frac{e^{v(y)}e^{v(x)}}{|x-y|^{\alpha}}dydx=2(4-\alpha)\pi.
    \end{aligned}
\end{equation}
Notice that
    \begin{equation}\label{eq-proof-beta-p-2}
    \begin{aligned}
      &\lim_{p\to+\infty}\int_{B_{\frac{r}{\varepsilon_{p}}}(0)}\left(\int_{\Omega_{p}}\frac{(1+\frac{v_{p}(z)}{p})^{p+1}}{|y-z|^{\alpha}}dz\right)(1+\frac{v_{p}(y)}{p})^{p}dy\\
      &=\lim_{p\to+\infty}\frac{p}{u_{p}(x_{p})}\int_{B_{r}(x_{p})}\left(\int_{\Omega}\frac{u_{p}^{p+1}(z)}{|y-z|^{\alpha}}dz\right)u_{p}^{p}(y)dy\\   
    \end{aligned}
\end{equation}
and by Proposition \ref{prop L-infty estimate} and Lemma \ref{lemma-outside-the-non-regular-point}, we have 
\begin{equation}\label{eq-proof-beta-p-3}
    \begin{aligned}
        &\frac{p}{u_{p}(x_{p})}\int_{B_{r}(x_{p})}\left(\int_{\Omega}\frac{u_{p}^{p+1}(z)}{|y-z|^{\alpha}}dz\right)u_{p}^{p}(y)dy\\
        &=\frac{p}{u_{p}(x_{p})}\int_{B_{\delta}(x_{0})}\left(\int_{\Omega}\frac{u_{p}^{p+1}(z)}{|y-z|^{\alpha}}dz\right)u_{p}^{p}(y)dy\\
      &\quad+\frac{p}{u_{p}(x_{p})}\int_{B_{r}(x_{p})\setminus B_{\delta}(x_{0})}\left(\int_{\Omega}\frac{u_{p}^{p+1}(z)}{|y-z|^{\alpha}}dz\right)u_{p}^{p}(y)dy\\
      &=\frac{p}{u_{p}(x_{p})}\int_{B_{\delta}(x_{0})}\left(\int_{\Omega}\frac{u_{p}^{p+1}(z)}{|y-z|^{\alpha}}dz\right)u_{p}^{p}(y)dy+o_{p}(1)
    \end{aligned}
\end{equation}
for any $\delta\in(0,r)$ small and $p$ large. Thus it  suffices to show that 
\begin{equation}\label{eq-proof-beta-p-4}
    \begin{aligned}
        \lim_{\delta\to0}\lim_{p\to+\infty}\frac{p}{u_{p}(x_{p})}\int_{B_{\delta}(x_{0})}\left(\int_{\Omega}\frac{u_{p}^{p+1}(z)}{|y-z|^{\alpha}}dz\right)u_{p}^{p}(y)dy\leq 2(4-\alpha)\pi.
    \end{aligned}
\end{equation}

Now, applying the local Poho\v zaev identity \eqref{Pohozaev 1} to the domain $\Omega^{\prime}=B_{\delta} (x_{0})$ with $z=x_{0}$, and then multiplying both sides by $p^{2}$, we obtain
\begin{equation}\label{eq-lemma-integral-convergence-proof-4}
    \begin{split}
&\frac{(4-\alpha)p^{2}}{2(p+1)}\int_{B_{\delta}(x_{0})}\left(\int_{\Omega}\frac{u_{p}^{p+1}(y)}
{|x-y|^{\alpha}}dy\right)u_{p}^{p+1}(x)dx\\
&=-\frac{\delta}{2}\int_{\partial B_{\delta}(x_{0})}|p\nabla u_{p}|^2d\sigma_{x}+\delta\int_{\partial B_{\delta}(x_{0})}\big\langle p\nabla u_{p},\nu(x)\big\rangle^{2} d\sigma_{x}\\
        &\quad+\frac{p^{2}}{p+1}\delta\int_{\partial B_{\delta}(x_{0})}\int_{ \Omega}\frac{u_{p}^{p+1}(y)u_{p}^{p+1}(x)}
{|x-y|^{\alpha}} dy d\sigma_{x}\\
&\quad-\frac{\alpha p^{2}}{2(p+1)}\int_{B_{\delta}(x_{0})}\int_{\Omega\setminus B_{\delta}(x_{0})}\frac{u_{p}^{p+1}(y)u_{p}^{p+1}(x)}
{|x-y|^{\alpha}}dydx\\
&\quad+\frac{\alpha p^{2}}{p+1}\int_{B_{\delta}(x_{0})}\int_{\Omega\setminus B_{\delta}(x_{0})}\langle x-x_{0},x-y\rangle\frac{u_{p}^{p+1}(y)u_{p}^{p+1}(x)}
{|x-y|^{\alpha+2}}dy dx.\\
    \end{split}
\end{equation}
Next, we estimate the terms on the right-hand side of \eqref{eq-lemma-integral-convergence-proof-4}. Notice that, by \eqref{eq-green-function} and \eqref{eq-singular-part-of-the-green-function} 
\begin{equation}
        G(x,x_{0})=\frac{1}{2\pi}\log\frac{1}{|x-x_{0}|}-H(x,x_{0}).
\end{equation}
Then by the regularity of $H$, if $\delta\in(0,r)$ small enough and $x\in\overline{B_{\delta}(x_{0})}\setminus\{x_{0}\}$, we have
\begin{equation}
    \begin{aligned}
        G(x,x_{0})=\frac{1}{2\pi}\log\frac{1}{|x-x_{0}|}+O(1)              
    \end{aligned}
\end{equation}
and
\begin{equation}
    \nabla G(x,x_{0})=-\frac{1}{2\pi}\frac{x-x_{0}}{|x-x_{0}|^{2}}+O(1).
\end{equation}
Lemma \ref{lemma-convergence-out-of-the-blow-up-point} now implies that
\begin{equation}\label{eq-lemma-integral-convergence-proof-5}
    \begin{aligned}
       &-\frac{\delta}{2}\int_{\partial B_{\delta}(x_{0})}|p\nabla u_{p}|^2d\sigma_{x}+\delta\int_{\partial B_{\delta}(x_{0})}\big\langle p\nabla u_{p},\nu(x)\big\rangle^{2} d\sigma_{x}\\
       &=-\frac{\delta}{2}\int_{\partial B_{\delta}(x_{0})}\left(-\frac{\gamma}{2\pi}\frac{x-x_{0}}{|x-x_{0}|^{2}}+O(1)\right)^{2}d\sigma_{x}\\
       &\quad+\delta\int_{\partial B_{\delta}(x_{0})}\left(-\frac{\gamma}{2\pi}\frac{\langle x-x_{0},\nu(x)\rangle}{|x-x_{0}|^{2}}+O(1)\right)^{2}d\sigma_{x}\\
       &=\frac{\gamma^{2}}{4\pi}+O(\delta).
    \end{aligned}
\end{equation}
On the other hand, by \eqref{eq-thm-energy}, Lemma \ref{lemma for estimate L-0}, \eqref{prop-multi-peak-eq-1}, the HLS inequality, \eqref{estimate of u-p-p+1} and property ($\mathbf{\mathcal{P}_{3}^{1}}$), we have
\begin{equation}
    \begin{aligned}
       \frac{p^{2}}{p+1}\delta\int_{\partial B_{\delta}(x_{0})}\int_{ \Omega}\frac{u_{p}^{p+1}(y)u_{p}^{p+1}(x)}
{|x-y|^{\alpha}} dy d\sigma_{x}=O(\delta),
    \end{aligned}
\end{equation}
\begin{equation}
    \begin{aligned}
        \frac{\alpha p^{2}}{2(p+1)}&\int_{B_{\delta}(x_{0})}\int_{\Omega\setminus B_{\delta}(x_{0})}\frac{u_{p}^{p+1}(y)u_{p}^{p+1}(x)}
{|x-y|^{\alpha}}dydx\\
&\leq C\|u_{p}\|_{L^{\infty}(\Omega\setminus B_{\delta}(x_{0}))}\int_{\Omega}f_{p}(x)dx=o_{p}(1),
    \end{aligned}
\end{equation}
and
\begin{equation}\label{eq-lemma-integral-convergence-proof-6}
    \begin{aligned}
        &\frac{\alpha p^{2}}{p+1}\left|\int_{B_{\delta}(x_{0})}\int_{\Omega\setminus B_{\delta}(x_{0})}\langle x-x_{0},x-y\rangle\frac{u_{p}^{p+1}(y)u_{p}^{p+1}(x)}
{|x-y|^{\alpha+2}}dy dx\right|\\
&\leq\frac{\alpha p^{2}}{p+1}\delta\int_{B_{\delta/2}(x_{0})}\int_{\Omega\setminus B_{\delta}(x_{0})}\frac{u_{p}^{p+1}(y)u_{p}^{p+1}(x)}
{|x-y|^{\alpha+1}}dy dx\\
&\quad+\frac{\alpha p^{2}}{p+1}\delta\int_{B_{\delta}(x_{0})\setminus B_{\delta/2}(x_{0})}\int_{\Omega\setminus B_{\delta}(x_{0})}\frac{u_{p}^{p+1}(y)u_{p}^{p+1}(x)}
{|x-y|^{\alpha+1}}dy dx\\
&\leq \frac{2\alpha p^{2}}{p+1}\|u_{p}\|_{L^{\infty}(\Omega\setminus B_{\delta}(x_{0}))}\int_{B_{\delta/2}(x_{0})}\int_{\Omega}\frac{u_{p}^{p+1}(y)u_{p}^{p}(x)}
{|x-y|^{\alpha}}dy dx\\
&\quad+ \frac{Cp^{1/2}}{\delta^{\frac{2-\alpha}{2}}}\|u_{p}\|_{L^{\infty}(\Omega\setminus B_{\delta}(x_{0}))}\int_{B_{\delta}(x_{0})\setminus B_{\delta/2}(x_{0})}\int_{\Omega\setminus B_{\delta}(x_{0})}\frac{u_{p}^{p}(y)}
{|x-y|^{\alpha+1}}dy dx\\
&\leq C\|u_{p}\|_{L^{\infty}(\Omega\setminus B_{\delta}(x_{0}))}+\|u_{p}\|_{L^{\infty}(\Omega\setminus B_{\delta}(x_{0}))}\frac{Cp^{1/2}}{\delta^{\frac{2-\alpha}{2}}} |B_{\delta}(x_{0})|^{\frac{2-\alpha}{4}}\left(\int_{\Omega}u_{p}^{\frac{4p}{4-\alpha}}(y)dy\right)^{\frac{4-\alpha}{4}}\\
&\leq C\|u_{p}\|_{L^{\infty}(\Omega\setminus B_{\delta}(x_{0}))}=o_{p}(1).
    \end{aligned}
\end{equation}
Combining \eqref{eq-lemma-integral-convergence-proof-4}-\eqref{eq-lemma-integral-convergence-proof-6} together
\begin{equation}\label{eq-lemma-integral-convergence-proof-7}
    \begin{aligned}
        \frac{(4-\alpha)p^{2}}{2(p+1)}\int_{B_{\delta}(x_{0})}\left(\int_{\Omega}\frac{u_{p}^{p+1}(y)}
{|x-y|^{\alpha}}dy\right)u_{p}^{p+1}(x)dx=\frac{\gamma^{2}}{4\pi}+O(\delta)+o_{p}(1).
    \end{aligned}
\end{equation}
Let 
\begin{equation}\label{eq-proof-beta-p-11}
    \beta_{p}(\delta):=\frac{p}{u_{p}(x_{p})}\int_{B_{\delta}(x_{0})}\left(\int_{\Omega}\frac{u_{p}^{p+1}(z)}{|y-z|^{\alpha}}dz\right)u_{p}^{p}(y)dy,
\end{equation}
then using \eqref{eq-lemma-integral-convergence-proof-7}, we obtain
\begin{equation}
    \begin{aligned}
        \beta_{p}(\delta)u_{p}(x_{p})^{2}&=pu_{p}(x_{p})\int_{B_{\delta}(x_{0})}\left(\int_{\Omega}\frac{u_{p}^{p+1}(z)}{|y-z|^{\alpha}}dz\right)u_{p}^{p}(y)dy\\
        &\geq p\int_{B_{\delta}(x_{0})}\left(\int_{\Omega}\frac{u_{p}^{p+1}(z)}{|y-z|^{\alpha}}dz\right)u_{p}^{p+1}(y)dy\\
        &=\frac{\gamma^{2}}{2(4-\alpha)\pi}+O(\delta)+o_{p}(1).
    \end{aligned}
\end{equation}
Notice that
\begin{equation}\label{eq-proof-beta-p-12}
    \begin{aligned}
        \gamma=\lim_{\delta\to0}\lim_{p\to+\infty}p\int_{B_{\delta}(x_{0})}\int_{ \Omega}\frac{u_{p}^{p+1}(y)u_{p}^{p}(x)}
{|x-y|^{\alpha}} dydx=\lim_{\delta\to0}\lim_{p\to+\infty}\beta_{p}(\delta)u_{p}(x_{p}).
    \end{aligned}
\end{equation}
Thus
\begin{equation}
    \begin{aligned}
          \lim_{\delta\to0}\lim_{p\to+\infty}\beta_{p}(\delta)=\lim_{\delta\to0}\lim_{p\to+\infty}\frac{p}{u_{p}(x_{p})}\int_{B_{\delta}(x_{0})}\left(\int_{\Omega}\frac{u_{p}^{p+1}(z)}{|y-z|^{\alpha}}dz\right)u_{p}^{p}(y)dy\leq 2(4-\alpha)\pi.
    \end{aligned}
\end{equation}
This completes the proof.
\end{proof}

For the rescaling function $v_{p}$, we derive the following decay estimates, which will be used to apply the Dominated Convergence Theorem.
\begin{Prop}\label{lemma-decay-estimate}
For any $\varepsilon>0$, there exist $R_{\varepsilon}>1$, $p_{\varepsilon}>1$ and $C_{\varepsilon}>0$ such that
    \begin{equation}
        v_{p}(x)\leq \left(\frac{\beta_{p}}{2\pi}-\varepsilon\right)\log\frac{1}{|x|}+C_{\varepsilon},
    \end{equation}
for any $2R_{\varepsilon}\leq |x|\leq\frac{r}{\varepsilon_{p}}$ and $p\geq p_{\varepsilon}$. Moreover, $\beta_{p}$ is defined by
\begin{equation}\label{definition of beta-p}
    \beta_{p}:=\int_{B_{\frac{r}{\varepsilon_{p}}}(0)}\left(\int_{\Omega_{p}}\frac{(1+\frac{v_{p}(y)}{p})^{p+1}}{|x-y|^{\alpha}}dz\right)(1+\frac{v_{p}(x)}{p})^{p}dy.
\end{equation}
\end{Prop}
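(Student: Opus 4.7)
The plan is to derive an explicit Green's representation for $v_p$, use the normalization $v_p(0)=0$ to cancel the divergent contributions $-p$ and $\log\varepsilon_p$, and then bound the resulting logarithmic integral by splitting the domain of integration according to the size of $|y|$ relative to $|x|$.

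First, I would apply the Green's representation formula for $u_p$ in $\Omega$, rescale via $\tilde{x}=x_p+\varepsilon_p x$, and invoke the definition of $\varepsilon_p$ to obtain
\begin{equation*}
v_p(x)=-p+\int_{\Omega_p}G_\Omega(x_p+\varepsilon_p x,\,x_p+\varepsilon_p y)\Phi_p(y)\,dy,
\end{equation*}
where $\Phi_p$ denotes the right-hand side of \eqref{eq for z-p}. The contribution from $\Omega_p\setminus B_{r/\varepsilon_p}(0)$ converts, after changing variables back to $\Omega$, into $\frac{1}{u_p(x_p)}\int_{\Omega\setminus B_r(x_p)}G_\Omega(\tilde{x},\cdot)f_p\,d\tilde{y}$; since $G_\Omega$ stays bounded on this set for $|x|\le r/(2\varepsilon_p)$ and $\int_{\Omega\setminus B_r(x_p)}f_p=o(1)$ by Lemma \ref{lemma-outside-the-non-regular-point}, this piece is $o(1)$. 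Decomposing $G_\Omega=\frac{1}{2\pi}\log|\cdot-\cdot|^{-1}-H$, using the smoothness of $H$ near $(x_0,x_0)$, and subtracting the resulting identity at $x=0$ (where $v_p(0)=0$) so that both $-p$ and $\log\varepsilon_p$ cancel, yields the clean representation
\begin{equation*}
v_p(x)=\frac{1}{2\pi}\int_{B_{r/\varepsilon_p}(0)}\log\frac{|y|}{|x-y|}\Phi_p(y)\,dy+O(1).
\end{equation*}

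Next, the rewriting $\log\frac{|y|}{|x-y|}=-\log|x|+\log\frac{|x||y|}{|x-y|}$ reduces the claim to bounding $\int_{B_{r/\varepsilon_p}(0)}\log\frac{|x||y|}{|x-y|}\Phi_p(y)\,dy$ by $C\varepsilon'\log|x|+C_\varepsilon$ for a suitably small $\varepsilon'$ chosen in terms of $\varepsilon$. Two ingredients drive the analysis: by Proposition \ref{prop convergence of z-p} and Lemma \ref{lemma-convergence-to-right-hand-integral}, one can fix $R_\varepsilon>1$ so that $\int_{\{|y|>R_\varepsilon\}}\Phi_p<\varepsilon'$ for all $p$ large; and property $(\mathcal{P}_3^1)$ combined with Lemma \ref{lemma uniform estimate} furnishes the pointwise decay $\Phi_p(y)\le C|y|^{-(4-\alpha)/2}$. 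On $\{|y|\le|x|/2\}$, the inequality $\log\frac{|x||y|}{|x-y|}\le\log 2+\log|y|$, split further at $|y|=R_\varepsilon$, gives a contribution at most $\varepsilon'\log|x|+C_\varepsilon$; on $\{|y|\ge 2|x|\}$, the bound $\log\frac{|x||y|}{|x-y|}\le\log(2|x|)$ together with the tail-mass estimate produces at most $2\varepsilon'\log|x|+C$.

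The main obstacle is the intermediate annulus $\{|x|/2\le|y|\le 2|x|\}$, where $\log\frac{|x||y|}{|x-y|}\le 2\log|x|+\log 2-\log|x-y|$ contains a logarithmic singularity at $y=x$. The $2\log|x|$ factor is controlled because the annulus lies in $\{|y|\ge R_\varepsilon\}$ once $|x|\ge 2R_\varepsilon$, yielding at most $2\varepsilon'\log|x|$. The singular integral is tamed by the pointwise decay $\Phi_p(y)\le C|x|^{-(4-\alpha)/2}$ on the annulus, which gives
\begin{equation*}
\int_{\{|x-y|\le 1\}\cap\{|x|/2\le|y|\le 2|x|\}}(-\log|x-y|)\Phi_p(y)\,dy\le\frac{C}{|x|^{(4-\alpha)/2}}\int_{|x-y|\le 1}(-\log|x-y|)\,dy\le\frac{C}{|x|^{(4-\alpha)/2}},
\end{equation*}
which is uniformly bounded (indeed $o(1)$), while the complementary region $\{|x-y|>1\}$ contributes non-positively. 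Assembling the three regions yields the total bound $C\varepsilon'\log|x|+C_\varepsilon$, and choosing $\varepsilon'$ proportional to $\varepsilon$ delivers the claimed decay. The remaining range $|x|\in[r/(2\varepsilon_p),r/\varepsilon_p]$ is handled trivially: here $v_p\ge -p$ while the target right-hand side is comparable to $-p$, so the inequality holds after enlarging $C_\varepsilon$.
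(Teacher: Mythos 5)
Your proof is correct and follows the same overall skeleton as the paper's: Green's representation of $u_p$, normalization via $v_p(x)=v_p(x)-v_p(0)$ to cancel the $-p$ and $\log\varepsilon_p$ terms, and a region-by-region estimate of the resulting logarithmic kernel integral with the tail mass controlled through the Fatou-type bound $\int_{B_{R_\varepsilon}(0)}\Phi_p > 2(4-\alpha)\pi - \varepsilon$. The differences are in the bookkeeping. You factor out $-\log|x|$ at the start by writing $\log\tfrac{|y|}{|x-y|}=-\log|x|+\log\tfrac{|x||y|}{|x-y|}$, which isolates the leading coefficient $\beta_p$ immediately and reduces the problem to showing the correction is $O(\varepsilon\log|x|+1)$; the paper instead carries $\log\tfrac{|y|}{|x-y|}$ through and assembles the coefficient at the end. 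Your region split is $\{|y|\le|x|/2\}$, $\{|x|/2\le|y|\le 2|x|\}$, $\{|y|\ge 2|x|\}$, with the middle annulus carrying the $\log\tfrac{1}{|x-y|}$ singularity, while the paper splits at $|y|=R_\varepsilon$ and $|y|=2r/\varepsilon_p$ and then separates $\{|y|\le 2|x-y|\}$ from $\{|y|\ge 2|x-y|\}$ (the parts I, II, III). Both partitions implement the same idea, and your version is arguably cleaner. Finally, you tame the near-diagonal singularity using the pointwise decay $\Phi_p(y)\lesssim|y|^{-(4-\alpha)/2}$ coming from $(\mathcal{P}_3^1)$, whereas the paper only uses the cruder uniform bound $\Phi_p\lesssim 1$ from Lemma~\ref{lemma uniform estimate} and $0\le(1+v_p/p)^p\le1$ together with the local integrability of $\log\tfrac{1}{|x-y|}$; your refinement is harmless but unnecessary. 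One small remark: your claim that the contribution of $\Omega_p\setminus B_{r/\varepsilon_p}(0)$ is $o(1)$ is slightly stronger than what you justify (Lemma~\ref{lemma-outside-the-non-regular-point} is stated only for compact subsets of $\Omega\setminus\{x_0\}$, not the full boundary-adjacent region); but $O(1)$ follows directly from Lemma~\ref{lemma for estimate L-0} via $\frac{1}{u_p(x_p)}\int_\Omega f_p\le C$ and the boundedness of $G$ away from the diagonal, and $O(1)$ is all that is needed.
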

\begin{proof}
For any $x\in\Omega_{p}:=\frac{\Omega-x_{p}}{\varepsilon_{p}}$, by \eqref{slightly subcritical choquard equation}, Green's representation formula and the definition of $v_{p}$, we have
    \begin{equation}
        \begin{aligned}
            u_{p}(\varepsilon_{p}x+x_{p})&=\int_{\Omega}G(\varepsilon_{p}x+x_{p},y)\left(\int_{\Omega}\frac{u_{p}^{p+1}(z)}{|y-z|^{\alpha}}dz\right)u_{p}^{p}(y)dy\\
            &=\varepsilon_{p}^{4-\alpha}\int_{\Omega_{p}}\int_{\Omega_{p}}\frac{u_{p}^{p+1}(\varepsilon_{p}z+x_{p})u_{p}^{p}(\varepsilon_{p}y+x_{p})G(\varepsilon_{p}x+x_{p},\varepsilon_{p}y+x_{p})}{|y-z|^{\alpha}}dzdy\\
            &=\frac{u_{p}(x_{p})}{p}\int_{\Omega_{p}}\int_{\Omega_{p}}\frac{(1+\frac{v_{p}(z)}{p})^{p+1}(1+\frac{v_{p}(y)}{p})^{p}G(\varepsilon_{p}x+x_{p},\varepsilon_{p}y+x_{p})}{|y-z|^{\alpha}}dzdy.
        \end{aligned}
    \end{equation}
    and
    \begin{equation}
        v_{p}(x)=-p+\int_{\Omega_{p}}\int_{\Omega_{p}}\frac{(1+\frac{v_{p}(z)}{p})^{p+1}(1+\frac{v_{p}(y)}{p})^{p}G(\varepsilon_{p}x+x_{p},\varepsilon_{p}y+x_{p})}{|y-z|^{\alpha}}dzdy.
    \end{equation}
Moreover, by the decomposition of the Green function $G(x,y)$
    \begin{equation}
        \begin{aligned}
            v_{p}(x)&=v_{p}(x)-v_{p}(0)\\
            &=\int_{\Omega_{p}}\int_{\Omega_{p}}\frac{(1+\frac{v_{p}(z)}{p})^{p+1}(1+\frac{v_{p}(y)}{p})^{p}G(\varepsilon_{p}x+x_{p},\varepsilon_{p}y+x_{p})}{|y-z|^{\alpha}}dzdy\\
            &\quad-\int_{\Omega_{p}}\int_{\Omega_{p}}\frac{(1+\frac{v_{p}(z)}{p})^{p+1}(1+\frac{v_{p}(y)}{p})^{p}G(x_{p},\varepsilon_{p}y+x_{p})}{|y-z|^{\alpha}}dzdy\\
            &=\frac{1}{2\pi}\int_{\Omega_{p}}\int_{\Omega_{p}}\log\frac{|y|}{|x-y|}\cdot\frac{(1+\frac{v_{p}(z)}{p})^{p+1}(1+\frac{v_{p}(y)}{p})^{p}}{|y-z|^{\alpha}}dzdy\\
            &\quad+\int_{\Omega_{p}}\int_{\Omega_{p}}\frac{(1+\frac{v_{p}(z)}{p})^{p+1}(1+\frac{v_{p}(y)}{p})^{p}H(\varepsilon_{p}x+x_{p},\varepsilon_{p}y+x_{p})}{|y-z|^{\alpha}}dzdy\\
            &\quad-\int_{\Omega_{p}}\int_{\Omega_{p}}\frac{(1+\frac{v_{p}(z)}{p})^{p+1}(1+\frac{v_{p}(y)}{p})^{p}H(x_{p},\varepsilon_{p}y+x_{p})}{|y-z|^{\alpha}}dzdy.
        \end{aligned}
    \end{equation}
    Since $H$ is Lipchitz continuous, thus
    \begin{equation}
       | H(\varepsilon_{p}x+x_{p},\varepsilon_{p}y+x_{p})-H(x_{p},\varepsilon_{p}y+x_{p})|\leq C.
    \end{equation}
    On the other hand, by Proposition \ref{prop L-infty estimate} and Lemma \ref{lemma for estimate L-0}
    \begin{equation}
    \begin{aligned}
        \int_{\Omega_{p}}\int_{\Omega_{p}}\frac{(1+\frac{v_{p}(z)}{p})^{p+1}(1+\frac{v_{p}(y)}{p})^{p}}{|y-z|^{\alpha}}dzdy&=\frac{p}{u_{p}(x_{p})}\int_{\Omega}\int_{\Omega}\frac{u_{p}^{p+1}(z)u_{p}^{p}(y)}{|y-z|^{\alpha}}dzdy\\
        &\leq C\int_{\Omega}f_{p}(y)dy\leq C.
    \end{aligned}        
    \end{equation}
    Thus
    \begin{equation}\label{lemma-decay-estimate-proof-6}
        \begin{aligned}
             v_{p}(x)
             &=\frac{1}{2\pi}\int_{\Omega_{p}}\log\frac{|y|}{|x-y|}\left(\int_{\Omega_{p}}\frac{(1+\frac{v_{p}(z)}{p})^{p+1}}{|y-z|^{\alpha}}dz\right)(1+\frac{v_{p}(y)}{p})^{p}dy+O(1).
        \end{aligned}
    \end{equation}
Notice that
 \begin{equation}
    \int_{\R^{2}}\int_{\R^{2}}\frac{e^{v(y)}e^{v(x)}}{|x-y|^{\alpha}}dydx=2(4-\alpha)\pi,
 \end{equation}
thus by Proposition \ref{prop convergence of z-p} and Fatou's lemma, for any given $\varepsilon>0$, there exists $R_{\varepsilon}>0$ such that
 \begin{equation}
 \begin{aligned}
      \liminf_{p\to+\infty}\int_{B_{R_{\varepsilon}(0)}}\int_{\Omega_{p}}\frac{(1+\frac{v_{p}(y)}{p})^{p+1}(1+\frac{v_{p}(x)}{p})^{p}}{|x-y|^{\alpha}}dydx
      &\geq \int_{B_{R_{\varepsilon}(0)}}\int_{\R^{2}}\frac{e^{v(y)}e^{v(x)}}{|x-y|^{\alpha}}dydx\\
     &>2(4-\alpha)\pi-\varepsilon/2.
 \end{aligned}    
 \end{equation}
Hence there exists $p_{\varepsilon}>1$ large enough such that
 \begin{equation}\label{eq-decay-estimate for z-p-9}
     \int_{B_{R_{\varepsilon}(0)}}\int_{\Omega_{p}}\frac{(1+\frac{v_{p}(z)}{p})^{p+1}(1+\frac{v_{p}(y)}{p})^{p}}{|y-z|^{\alpha}}dzdy>2(4-\alpha)\pi-\varepsilon.
 \end{equation}
Next, we consider the case $2R_{\varepsilon}\leq|x|\leq\frac{r}{\varepsilon_{p}}$. When $|y|\geq\frac{2r}{\varepsilon_{p}}$, it's easy to see that 
$\frac{2}{3}\leq\frac{|y|}{|x-y|}\leq 2$, which implies 
\begin{equation}
    \begin{aligned}
        &\int_{\{|y|\geq\frac{2r}{\varepsilon_{p}}\}\cap\Omega_{p}}\log\frac{|y|}{|x-y|}\left(\int_{\Omega_{p}}\frac{(1+\frac{v_{p}(z)}{p})^{p+1}}{|y-z|^{\alpha}}dz\right)(1+\frac{v_{p}(y)}{p})^{p}dy\\
        &\leq \log2\int_{\Omega_{p}}\left(\int_{\Omega_{p}}\frac{(1+\frac{v_{p}(z)}{p})^{p+1}}{|y-z|^{\alpha}}dz\right)(1+\frac{v_{p}(y)}{p})^{p}dy\\
        &=\log2\frac{p}{u_{p}(x_{p})}\int_{\Omega}\left(\int_{\Omega}\frac{u_{p}^{p+1}(z)}{|y-z|^{\alpha}}dz\right)u_{p}^{p}(y)dy
        =O(1).
    \end{aligned}
\end{equation}
Then by \eqref{lemma-decay-estimate-proof-6}, we have 
    \begin{equation}\label{eq-estimate for z-p-101}
        \begin{aligned}
             v_{p}(x)
             &=\frac{1}{2\pi}\int_{\{|y|\leq\frac{2r}{\varepsilon_{p}}\}}\log\frac{|y|}{|x-y|}\left(\int_{\Omega_{p}}\frac{(1+\frac{v_{p}(z)}{p})^{p+1}}{|y-z|^{\alpha}}dz\right)(1+\frac{v_{p}(y)}{p})^{p}dy+O(1)\\
             &=\frac{1}{2\pi}\int_{\{|y|\leq R_{\varepsilon}\}}\log\frac{|y|}{|x-y|}\left(\int_{\Omega_{p}}\frac{(1+\frac{v_{p}(z)}{p})^{p+1}}{|y-z|^{\alpha}}dz\right)(1+\frac{v_{p}(y)}{p})^{p}dy\\
             &\quad+\frac{1}{2\pi}\int_{\{R_{\varepsilon}\leq|y|\leq\frac{2r}{\varepsilon_{p}}\}}\log\frac{|y|}{|x-y|}\left(\int_{\Omega_{p}}\frac{(1+\frac{v_{p}(z)}{p})^{p+1}}{|y-z|^{\alpha}}dz\right)(1+\frac{v_{p}(y)}{p})^{p}dy+O(1).\\
        \end{aligned}
    \end{equation}

Now, we need to estimate the terms on the right-hand side of \eqref{eq-estimate for z-p-101}.
Notice that, if $|y|\leq R_{\varepsilon}$, then $|x|\geq 2|y|$ and
$\frac{|y|}{|x-y|}\leq\frac{|y|}{|x|-|y|}\leq\frac{|y|}{|x|-\frac{|x|}{2}}\leq\frac{2R_{\varepsilon}}{|x|}$. Thus the first term on the right-hand side of \eqref{eq-estimate for z-p-101} can be estimated as
\begin{equation}\label{eq-estimate for z-p-102}
    \begin{aligned}
        &\frac{1}{2\pi}\int_{\{|y|\leq R_{\varepsilon}\}}\log\frac{|y|}{|x-y|}\left(\int_{\Omega_{p}}\frac{(1+\frac{v_{p}(z)}{p})^{p+1}}{|y-z|^{\alpha}}dz\right)(1+\frac{v_{p}(y)}{p})^{p}dy\\
        &\leq\frac{1}{2\pi}\log\frac{2R_{\varepsilon}}{|x|}\int_{\{|y|\leq R_{\varepsilon}\}}\left(\int_{\Omega_{p}}\frac{(1+\frac{v_{p}(z)}{p})^{p+1}}{|y-z|^{\alpha}}dz\right)(1+\frac{v_{p}(y)}{p})^{p}dy\\
        &\leq \frac{1}{2\pi}\log\frac{2R_{\varepsilon}}{|x|}(2(4-\alpha)\pi-\varepsilon),
    \end{aligned}
\end{equation}
where we have used that $\frac{2R_{\varepsilon}}{|x|}\leq 1$ and \eqref{eq-decay-estimate for z-p-9}. Next, we divide the second term on the right-hand side of \eqref{eq-estimate for z-p-101} into three parts
\begin{equation}\label{eq-estimate for z-p-103}
    \begin{aligned}
        &\frac{1}{2\pi}\int_{\{R_{\varepsilon}\leq|y|\leq\frac{2r}{\varepsilon_{p}}\}}\log\frac{|y|}{|x-y|}\left(\int_{\Omega_{p}}\frac{(1+\frac{v_{p}(z)}{p})^{p+1}}{|y-z|^{\alpha}}dz\right)(1+\frac{v_{p}(y)}{p})^{p}dy\\
        &=\frac{1}{2\pi}\int_{\{R_{\varepsilon}\leq|y|\leq\frac{2r}{\varepsilon_{p}}\}\cap\{|y|\leq2|x-y|\}}\log\frac{|y|}{|x-y|}\left(\int_{\Omega_{p}}\frac{(1+\frac{v_{p}(z)}{p})^{p+1}}{|y-z|^{\alpha}}dz\right)(1+\frac{v_{p}(y)}{p})^{p}dy\\
        &\quad+\frac{1}{2\pi}\int_{\{R_{\varepsilon}\leq|y|\leq\frac{2r}{\varepsilon_{p}}\}\cap\{|y|\geq2|x-y|\}}\log|y|\left(\int_{\Omega_{p}}\frac{(1+\frac{v_{p}(z)}{p})^{p+1}}{|y-z|^{\alpha}}dz\right)(1+\frac{v_{p}(y)}{p})^{p}dy\\
        &\quad+\frac{1}{2\pi}\int_{\{R_{\varepsilon}\leq|y|\leq\frac{2r}{\varepsilon_{p}}\}\cap\{|y|\geq2|x-y|\}}\log\frac{1}{|x-y|}\left(\int_{\Omega_{p}}\frac{(1+\frac{v_{p}(z)}{p})^{p+1}}{|y-z|^{\alpha}}dz\right)(1+\frac{v_{p}(y)}{p})^{p}dy\\
        &:=\rm{I}+II+III.
    \end{aligned}
\end{equation}
The first term $I$ can be estimated as
\begin{equation}
    \begin{aligned}
        &\rm{I}=\frac{1}{2\pi}\int_{\{R_{\varepsilon}\leq|y|\leq\frac{2r}{\varepsilon_{p}}\}\cap\{|y|\leq2|x-y|\}}\log\frac{|y|}{|x-y|}\left(\int_{\Omega_{p}}\frac{(1+\frac{v_{p}(z)}{p})^{p+1}}{|y-z|^{\alpha}}dz\right)(1+\frac{v_{p}(y)}{p})^{p}dy\\
        &\leq\frac{1}{2\pi}\log2\int_{\{R_{\varepsilon}\leq|y|\leq\frac{2r}{\varepsilon_{p}}\}\cap\{|y|\leq2|x-y|\}}\left(\int_{\Omega_{p}}\frac{(1+\frac{v_{p}(z)}{p})^{p+1}}{|y-z|^{\alpha}}dz\right)(1+\frac{v_{p}(y)}{p})^{p}dy\\
        &=\frac{1}{2\pi}\log2\int_{B_{\frac{r}{\varepsilon_{p}}}(0)}\left(\int_{\Omega_{p}}\frac{(1+\frac{v_{p}(z)}{p})^{p+1}}{|y-z|^{\alpha}}dz\right)(1+\frac{v_{p}(y)}{p})^{p}dy\\
        &\quad-\frac{1}{2\pi}\log2\int_{B_{R_{\varepsilon}}(0)}\left(\int_{\Omega_{p}}\frac{(1+\frac{v_{p}(z)}{p})^{p+1}}{|y-z|^{\alpha}}dz\right)(1+\frac{v_{p}(y)}{p})^{p}dy\\
        &\quad+ \frac{1}{2\pi}\log2\int_{\frac{r}{\varepsilon_{p}}\leq |y|\leq\frac{2r}{\varepsilon_{p}}}\left(\int_{\Omega_{p}}\frac{(1+\frac{v_{p}(z)}{p})^{p+1}}{|y-z|^{\alpha}}dz\right)(1+\frac{v_{p}(y)}{p})^{p}dy.
    \end{aligned}
\end{equation}
Moreover, by Proposition \ref{prop L-infty estimate} and Lemma \ref{lemma-outside-the-non-regular-point}
\begin{equation}
    \begin{aligned}
        &\frac{1}{2\pi}\log2\int_{\frac{r}{\varepsilon_{p}}\leq |y|\leq\frac{2r}{\varepsilon_{p}}}\left(\int_{\Omega_{p}}\frac{(1+\frac{v_{p}(z)}{p})^{p+1}}{|y-z|^{\alpha}}dz\right)(1+\frac{v_{p}(y)}{p})^{p}dy\\
        &=\frac{1}{2\pi}\log2\frac{p}{u_{p}(x_{p})}\int_{\{r\leq|y-x_{p}|\leq 2r\}}\left(\int_{\Omega}\frac{u_{p}^{p+1}(z)}{|y-z|^{\alpha}}dz\right)u_{p}^{p}(y)dy\\
        &\leq \varepsilon,\text{~for any~}p \text{~large}.
    \end{aligned}
\end{equation}
Thus
\begin{equation}\label{eq-estimate for z-p-104}
    \begin{aligned}
         \rm{I}&= \frac{1}{2\pi}\log2\int_{B_{\frac{r}{\varepsilon_{p}}}(0)}\left(\int_{\Omega_{p}}\frac{(1+\frac{v_{p}(z)}{p})^{p+1}}{|y-z|^{\alpha}}dz\right)(1+\frac{v_{p}(y)}{p})^{p}dy\\
        &\quad-\frac{1}{2\pi}\log2\left(\int_{B_{R_{\varepsilon}}(0)}\left(\int_{\Omega_{p}}\frac{(1+\frac{v_{p}(z)}{p})^{p+1}}{|y-z|^{\alpha}}dz\right)(1+\frac{v_{p}(y)}{p})^{p}dy+\varepsilon\right)\\
        &\leq\frac{1}{2\pi}\log 2(\beta_{p}-(2(4-\alpha)\pi-\varepsilon)+\varepsilon),
    \end{aligned}
\end{equation}
where we have used \eqref{definition of beta-p} and \eqref{eq-decay-estimate for z-p-9}.
Next, when $|y|\geq 2|x-y|$ , we have $|y|\leq 2|x|$, thus
\begin{equation}\label{eq-estimate for z-p-105}
    \begin{aligned}
        \rm{II}&=\frac{1}{2\pi}\int_{\{R_{\varepsilon}\leq|y|\leq\frac{2r}{\varepsilon_{p}}\}\cap\{|y|\geq2|x-y|\}}\log|y|\left(\int_{\Omega_{p}}\frac{(1+\frac{v_{p}(z)}{p})^{p+1}}{|y-z|^{\alpha}}dz\right)(1+\frac{v_{p}(y)}{p})^{p}dy\\
        &\leq \frac{1}{2\pi}\log(2|x|)\int_{B_{\frac{r}{\varepsilon_{p}}}(0)}\left(\int_{\Omega_{p}}\frac{(1+\frac{v_{p}(z)}{p})^{p+1}}{|y-z|^{\alpha}}dz\right)(1+\frac{v_{p}(y)}{p})^{p}dy\\
        &\quad-\frac{1}{2\pi}\log(2|x|)\left(\int_{B_{R_{\varepsilon}}(0)}\left(\int_{\Omega_{p}}\frac{(1+\frac{v_{p}(z)}{p})^{p+1}}{|y-z|^{\alpha}}dz\right)(1+\frac{v_{p}(y)}{p})^{p}dy\right)\\
        &\quad+\frac{1}{2\pi}\log(2|x|)\int_{\frac{r}{\varepsilon_{p}}\leq |y|\leq\frac{2r}{\varepsilon_{p}}}\left(\int_{\Omega_{p}}\frac{(1+\frac{v_{p}(z)}{p})^{p+1}}{|y-z|^{\alpha}}dz\right)(1+\frac{v_{p}(y)}{p})^{p}dy\\
        &\leq\frac{1}{2\pi}\log (2|x|)(\beta_{p}-(2(4-\alpha)\pi-\varepsilon)+\varepsilon).
    \end{aligned}
\end{equation}
Finally, using Lemma \ref{lemma uniform estimate} and the fact $\log t\leq 0$ if $t\leq 1$
\begin{equation}\label{eq-estimate for z-p-106}
    \begin{aligned}
        \rm{III}&=\frac{1}{2\pi}\int_{\{R_{\varepsilon}\leq|y|\leq\frac{2r}{\varepsilon_{p}}\}\cap\{|y|\geq2|x-y|\}}\log\frac{1}{|x-y|}\left(\int_{\Omega_{p}}\frac{(1+\frac{v_{p}(z)}{p})^{p+1}}{|y-z|^{\alpha}}dz\right)(1+\frac{v_{p}(y)}{p})^{p}dy\\
        &=\frac{1}{2\pi}\int_{\{R_{\varepsilon}\leq|y|\leq\frac{2r}{\varepsilon_{p}}\}\cap\{|x-y|\leq 1\}}\log\frac{1}{|x-y|}\left(\int_{\Omega_{p}}\frac{(1+\frac{v_{p}(z)}{p})^{p+1}}{|y-z|^{\alpha}}dz\right)(1+\frac{v_{p}(y)}{p})^{p}dy\\
        &\quad+\frac{1}{2\pi}\int_{\{R_{\varepsilon}\leq|y|\leq\frac{2r}{\varepsilon_{p}}\}\cap\{2\leq2|x-y|\leq |y|\}}\log\frac{1}{|x-y|}\left(\int_{\Omega_{p}}\frac{(1+\frac{v_{p}(z)}{p})^{p+1}}{|y-z|^{\alpha}}dz\right)(1+\frac{v_{p}(y)}{p})^{p}dy\\
        &\leq C \frac{1}{2\pi}\int_{\{|x-y|\leq 1\}}\log\frac{1}{|x-y|}dy=O(1).
    \end{aligned}
\end{equation}
Combining estimates \eqref{eq-estimate for z-p-101}-\eqref{eq-estimate for z-p-106} with Lemma \ref{lemma-convergence-to-right-hand-integral}, we obtain
\begin{equation}
    \begin{aligned}
        v_{p}(x) &\leq  \frac{1}{2\pi}\log\frac{2R_{\varepsilon}}{|x|}(2(4-\alpha)\pi-\varepsilon)+\frac{1}{2\pi}\log 2(\beta_{p}-(2(4-\alpha)\pi-\varepsilon)+\varepsilon)\\
        &\quad+\frac{1}{2\pi}\log (2|x|)(\beta_{p}-(2(4-\alpha)\pi-\varepsilon)+\varepsilon)+O(1)\\
        &=\frac{1}{2\pi}\log\frac{1}{|x|}(\beta_{p}-2(\beta_{p}-2(4-\alpha)\pi)-3\varepsilon)+C_{\varepsilon}\\
        &\leq \left(\frac{\beta_{p}}{2\pi}-\varepsilon\right)\log\frac{1}{|x|}+C_{\varepsilon},
    \end{aligned}
\end{equation}
for any $2R_{\varepsilon}\leq |x|\leq\frac{r}{\varepsilon_{p}}$ and $p\geq p_{\varepsilon}$. This completes the proof.
\end{proof}

\begin{Prop}\label{prop-4.15} It holds that $\lim_{p\to+\infty}u_{p}(x_{p})\to\sqrt{e}$.
\end{Prop}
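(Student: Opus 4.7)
The plan is to combine Proposition \ref{prop L-infty estimate}, which already gives $\limsup_{p\to+\infty}u_p(x_p)\leq\sqrt{e}$, with a matching lower bound obtained by upgrading to a Dominated Convergence argument the Fatou estimate used there. The starting point is the rescaling identity
\begin{equation*}
    p\int_\Omega\int_\Omega\frac{u_p^{p+1}(y)u_p^{p+1}(x)}{|x-y|^\alpha}dydx = u_p^2(x_p)\,I_p,\qquad I_p:=\int_{\Omega_p}\int_{\Omega_p}\frac{(1+v_p(y)/p)^{p+1}(1+v_p(x)/p)^{p+1}}{|x-y|^\alpha}dydx,
\end{equation*}
whose left-hand side tends to $2(4-\alpha)\pi e$ by Corollary \ref{cor energy}. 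Since $\int_{\R^2}\int_{\R^2}e^{v(y)}e^{v(x)}/|x-y|^\alpha\,dydx = 2(4-\alpha)\pi$ by Proposition \ref{prop convergence of z-p}, it would suffice to prove $I_p\to 2(4-\alpha)\pi$, which immediately gives $u_p^2(x_p)\to e$.

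First I would note that $v_p(0)=\max_{\Omega_p}v_p=0$ forces $v_p\leq 0$, and combined with the elementary inequality $\log(1+s)\leq s$ (for $s\geq -1$) this yields the pointwise bound $(1+v_p(x)/p)^{p+1}\leq e^{v_p(x)}$. Pointwise convergence of the integrand to $e^{v(y)}e^{v(x)}/|x-y|^\alpha$ is then immediate from the $C^2_{loc}$ convergence of Proposition \ref{prop convergence of z-p}. The key step is the construction of an integrable dominating function, which I would handle region by region. On $\{|x|\leq 2R_\varepsilon\}$ the $C^2_{loc}$ convergence furnishes a uniform bound. On the intermediate annulus $\{2R_\varepsilon\leq |x|\leq r/\varepsilon_p\}$, the decay estimate of Proposition \ref{lemma-decay-estimate}, combined with $\beta_p\to 2(4-\alpha)\pi$ from Lemma \ref{lemma-convergence-to-right-hand-integral}, yields $(1+v_p(x)/p)^{p+1}\leq C_\varepsilon|x|^{-(4-\alpha-2\varepsilon)}$. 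Finally, on $\{|x|\geq r/\varepsilon_p\}$, which corresponds to $|x-x_p|\geq r$ in original coordinates, Lemma \ref{lemma-outside-the-non-regular-point} forces $u_p=O(1/p)$, so that $(1+v_p/p)^{p+1}=(u_p/u_p(x_p))^{p+1}$ decays super-exponentially and its contribution to $I_p$ is $o(1)$.

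The main obstacle will be verifying integrability of the dominant against the convolution kernel. My plan is to choose $\varepsilon>0$ so small that $(4-\alpha-2\varepsilon)\cdot 4/(4-\alpha)>2$; the resulting dominant then lies in $L^{4/(4-\alpha)}(\R^2)$, and the sharp Hardy-Littlewood-Sobolev inequality of Lemma \ref{lema 2.1} (with $\theta=r=4/(4-\alpha)$) guarantees finiteness of the associated double integral. Applying the Dominated Convergence Theorem on $B_{r/\varepsilon_p}(0)\times B_{r/\varepsilon_p}(0)$ and absorbing the exterior region into the super-exponentially small error coming from the third region yields $I_p\to 2(4-\alpha)\pi$, whence $u_p(x_p)\to\sqrt{e}$.
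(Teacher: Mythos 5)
Your proposal is correct and follows essentially the same route as the paper's own proof: both start from the rescaling identity relating $I_p$ to $pS_p^{2(p+1)/p}/u_p^2(x_p)$, reduce the integral to $B_{r/\varepsilon_p}(0)^2$ using Lemma \ref{lemma-outside-the-non-regular-point}, build a dominating function from the uniform bound on $\{|x|\leq 2R_\varepsilon\}$ together with the decay estimate of Proposition \ref{lemma-decay-estimate} and $\beta_p\to 2(4-\alpha)\pi$ from Lemma \ref{lemma-convergence-to-right-hand-integral}, and conclude by Dominated Convergence. The only cosmetic difference is that the paper absorbs the decay exponent into the clean bound $C/(1+|x|^{3-\alpha})$ rather than tracking $4-\alpha-2\varepsilon$ explicitly, and it handles the exterior contribution by returning to original coordinates rather than by appealing to the super-exponential decay of $(u_p/u_p(x_p))^{p+1}$, but the substance is identical.
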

\begin{proof}
Notice that by Lemma \ref{lemma-outside-the-non-regular-point} and the HLS inequality, we have
\begin{equation}
    \begin{aligned}
        &\int_{\Omega_{p}}\int_{\Omega_{p}}\frac{\left(1+\frac{v_{p}(y)}{p}\right)^{p+1}\left(1+\frac{v_{p}(x)}{p}\right)^{p+1}}{|x-y|^{\alpha}}dydx\\
        &=\frac{p}{u_{p}(x_{p})^{2}}\int_{\Omega}\int_{\Omega}\frac{u_{p}^{p+1}(y)u_{p}^{p+1}(x)}{|x-y|^{\alpha}}dydx\\
        &=\frac{p}{u_{p}(x_{p})^{2}}\int_{B_{2r}(x_{0})}\int_{B_{2r}(x_{0})}\frac{u_{p}^{p+1}(y)u_{p}^{p+1}(x)}{|x-y|^{\alpha}}dydx\\
        &\quad+\frac{p}{u_{p}(x_{p})^{2}}\int_{B_{2r}(x_{0})}\int_{\Omega\setminus B_{2r}(x_{0})}\frac{u_{p}^{p+1}(y)u_{p}^{p+1}(x)}{|x-y|^{\alpha}}dydx\\
        &\quad+\frac{p}{u_{p}(x_{p})^{2}}\int_{\Omega\setminus B_{2r}(x_{0})}\int_{\Omega}\frac{u_{p}^{p+1}(y)u_{p}^{p+1}(x)}{|x-y|^{\alpha}}dydx\\      
        &=\frac{p}{u_{p}(x_{p})^{2}}\int_{B_{2r}(x_{0})}\int_{B_{2r}(x_{0})}\frac{u_{p}^{p+1}(y)u_{p}^{p+1}(x)}{|x-y|^{\alpha}}dydx+o_{p}(1).
    \end{aligned}
\end{equation}
Moreover, since $B_{\frac{r}{2}}(x_{0})\subset B_{r}(x_{p})\subset B_{2r}(x_{0})$ for any $p$ large, then 
\begin{equation}
    \begin{aligned}
    &\frac{p}{u_{p}(x_{p})^{2}}\int_{B_{2r}(x_{0})}\int_{B_{2r}(x_{0})}\frac{u_{p}^{p+1}(y)u_{p}^{p+1}(x)}{|x-y|^{\alpha}}dydx\\
        &=\frac{p}{u_{p}(x_{p})^{2}}\int_{B_{r}(x_{p})}\int_{B_{r}(x_{p})}\frac{u_{p}^{p+1}(y)u_{p}^{p+1}(x)}{|x-y|^{\alpha}}dydx\\
        &\quad+\frac{p}{u_{p}(x_{p})^{2}}\int_{B_{2r}(x_{0})\setminus B_{r}(x_{p})}\int_{B_{r}(x_{p})}\frac{u_{p}^{p+1}(y)u_{p}^{p+1}(x)}{|x-y|^{\alpha}}dydx\\
        &\quad+\frac{p}{u_{p}(x_{p})^{2}}\int_{B_{2r}(x_{0})}\int_{B_{2r}(x_{0})\setminus B_{r}(x_{p})}\frac{u_{p}^{p+1}(y)u_{p}^{p+1}(x)}{|x-y|^{\alpha}}dydx\\
        &=\frac{p}{u_{p}(x_{p})^{2}}\int_{B_{r}(x_{p})}\int_{B_{r}(x_{p})}\frac{u_{p}^{p+1}(y)u_{p}^{p+1}(x)}{|x-y|^{\alpha}}dydx+o_{p}(1)\\
        &=\int_{B_{\frac{r}{\varepsilon_{p}}(0)}}\int_{B_{\frac{r}{\varepsilon_{p}}(0)}}\frac{\left(1+\frac{v_{p}(y)}{p}\right)^{p+1}\left(1+\frac{v_{p}(x)}{p}\right)^{p+1}}{|x-y|^{\alpha}}dydx+o_{p}(1).
    \end{aligned}
\end{equation}
Thus
\begin{equation}
    \begin{aligned}
        &\int_{B_{\frac{r}{\varepsilon_{p}}(0)}}\int_{B_{\frac{r}{\varepsilon_{p}}(0)}}\frac{\left(1+\frac{v_{p}(y)}{p}\right)^{p+1}\left(1+\frac{v_{p}(x)}{p}\right)^{p+1}}{|x-y|^{\alpha}}dydx\\
        &=\int_{\Omega_{p}}\int_{\Omega_{p}}\frac{\left(1+\frac{v_{p}(y)}{p}\right)^{p+1}\left(1+\frac{v_{p}(x)}{p}\right)^{p+1}}{|x-y|^{\alpha}}dydx+o_{p}(1).
    \end{aligned}
\end{equation}
On the other hand, by the definition of $v_{p}$ and Proposition \ref{lemma-decay-estimate}, we have
\begin{equation}
    \begin{aligned}
        0\leq \left(1+\frac{v_{p}(x)}{p}\right)^{p+1}\leq 1\text{~~for any~}|x|\leq2 R_{\varepsilon}
    \end{aligned}
\end{equation}
and
\begin{equation}
    0\leq \left(1+\frac{v_{p}(x)}{p}\right)^{p+1}\leq e^{(p+1)\log(1+\frac{v_{p}(x)}{p})}\leq e^{v_{p}(x)}\leq \frac{C}{|x|^{\frac{\beta_{p}-\varepsilon}{2\pi}}}\text{~~for any~}2R_{\varepsilon}\leq |x|\leq\frac{r}{\varepsilon_{p}}.
\end{equation}
Hence, by Lemma \ref{lemma-convergence-to-right-hand-integral}, there exists $C>0$ such that for any $p$ large enough
\begin{equation}
    \begin{aligned}
         0\leq \left(1+\frac{v_{p}(x)}{p}\right)^{p+1}\leq\frac{C}{1+|x|^{3-\alpha}}\text{~~for any~}0\leq |x|\leq\frac{r}{\varepsilon_{p}}.
    \end{aligned}
\end{equation}
Finally, using Proposition \ref{estimate of S-p}, Proposition \ref{prop convergence of z-p} and the Dominated Convergence Theorem, we have
    \begin{equation}
    \begin{aligned}
        2(4-\alpha)\pi&=\int_{\R^{2}}\int_{\R^{2}}\frac{e^{v(y)}e^{v(x)}}{|x-y|^{\alpha}}dydx\\
        &= \lim_{p\to+\infty}\int_{B_{\frac{r}{\varepsilon_{p}}(0)}}\int_{B_{\frac{r}{\varepsilon_{p}}(0)}}\frac{\left(1+\frac{v_{p}(y)}{p}\right)^{p+1}\left(1+\frac{v_{p}(x)}{p}\right)^{p+1}}{|x-y|^{\alpha}}dydx\\
        &= \lim_{p\to+\infty} \int_{\Omega_{p}}\int_{\Omega_{p}}\frac{\left(1+\frac{v_{p}(y)}{p}\right)^{p+1}(y)\left(1+\frac{v_{p}(x)}{p}\right)^{p+1}(x)}{|x-y|^{\alpha}}dydx\\
        &=\lim_{p\to+\infty} \frac{pS_{p}^{\frac{2(p+1)}{p}}}{u_{p}^{2}(x_{p})}=\frac{2(4-\alpha)\pi e}{\lim_{p\to+\infty}u_{p}^{2}(x_{p})}.
    \end{aligned}
\end{equation}
It follows that $\lim_{p\to+\infty}u_{p}(x_{p})=\sqrt{e}$.
\end{proof}

\begin{Prop}\label{prop-4.16} It holds that
\begin{enumerate}[label=\upshape(\arabic*)]

    \item $\lim_{p\to+\infty}pu_{p}(x)=2(4-\alpha)\pi\sqrt{e}~G(x,x_{0})\text{~in~}C^{2}_{loc}(\bar{\Omega}\setminus\{x_{0}\})$.
    \item  $f_{p}=p\left(\int_{\Omega}\frac{u_{p}^{p+1}(y)}{|x-y|^{\alpha}}dy\right)u_{p}^{p}(x)\to (2(4-\alpha)\pi\sqrt{e})\delta_{x_{0}},$
    in the sense of distribution, where $\delta_{x_{0}}$ is the Dirac delta function at point $x_{0}$.
    \item $x_{0}$ is a critical point of the Robin function $R(x)$. In particular, if $\Omega$ is a convex domain, then $x_0$ is the global minimum point of the Robin function.
\end{enumerate}
   
\end{Prop}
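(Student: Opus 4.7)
The plan for (1) is to identify the unknown constant $\gamma$ in Lemma~\ref{lemma-convergence-out-of-the-blow-up-point}. A direct change of variables shows that the $\beta_p$ of Lemma~\ref{lemma-convergence-to-right-hand-integral} satisfies $\beta_p\,u_p(x_p)=\int_{B_r(x_p)}f_p(y)\,dy$, where $r<\tfrac{1}{2}\mathrm{dist}(x_0,\partial\Omega)$. Since $\int_{B_r(x_p)\setminus B_\delta(x_0)}f_p\,dy=O(1/p)$ by Lemma~\ref{lemma-outside-the-non-regular-point} for any $\delta\in(0,r/2)$ and $p$ large, one obtains
\[
\gamma=\lim_{\delta\to 0}\lim_{p\to+\infty}\int_{B_\delta(x_0)}f_p\,dy=\lim_{p\to+\infty}\beta_p\,u_p(x_p)=2(4-\alpha)\pi\sqrt{e},
\]
by Lemma~\ref{lemma-convergence-to-right-hand-integral} and Proposition~\ref{prop-4.15}, which proves (1). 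For (2), take $\varphi\in C_c^\infty(\bar\Omega)$ and decompose
\[
\int_\Omega f_p\varphi\,dx=\varphi(x_0)\int_{B_\delta(x_0)}f_p\,dx+\int_{B_\delta(x_0)}f_p(\varphi-\varphi(x_0))\,dx+\int_{\Omega\setminus B_\delta(x_0)}f_p\varphi\,dx;
\]
the first piece tends to $2(4-\alpha)\pi\sqrt{e}\,\varphi(x_0)$ by (1), the second is $O(\delta)$ uniformly in $p$ via continuity of $\varphi$ and the uniform $L^1$ bound $\|f_p\|_{L^1(\Omega)}\leq C$ from Lemma~\ref{lemma for estimate L-0}, and the third vanishes by Lemma~\ref{lemma-outside-the-non-regular-point} as $p\to+\infty$.

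\textbf{Part (3).} Apply the directional local Poho\v zaev identity~\eqref{Pohozaev 2} with $\Omega'=B_\delta(x_0)$ for $\delta<\mathrm{dist}(x_0,\partial\Omega)$, multiply through by $p^2$, and pass to the limits $p\to+\infty$ then $\delta\to 0$. Part (1) yields $C^2$ convergence $pu_p\to 2(4-\alpha)\pi\sqrt{e}\,G(\cdot,x_0)$ on the compact set $\partial B_\delta(x_0)\subset\bar\Omega\setminus\{x_0\}$, so the left-hand side of~\eqref{Pohozaev 2} multiplied by $p^2$ converges to
\[
(2(4-\alpha)\pi\sqrt{e})^{2}\Big[-\int_{\partial B_\delta(x_0)}\partial_{x_j}G\,\partial_\nu G\,d\sigma+\tfrac{1}{2}\int_{\partial B_\delta(x_0)}|\nabla G|^2\nu_j\,d\sigma\Big],
\]
where $G=G(\cdot,x_0)$. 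Both terms on the right-hand side of~\eqref{Pohozaev 2} multiplied by $p^2$ vanish in this double limit, since in each of them at least one integration variable is confined to $\Omega\setminus B_{\delta/2}(x_0)$, on which Lemma~\ref{lemma-outside-the-non-regular-point} gives $u_p^{p+1}=O(p^{-(p+1)})$---a super-polynomial factor that easily swallows both the prefactor $p^2$ and the at-most-exponential bound on $\int u_p^{p+1}$. Substituting $G(\cdot,x_0)=-\tfrac{1}{2\pi}\log|\cdot-x_0|-H(\cdot,x_0)$ into the limiting surface integrals and sending $\delta\to 0$, the purely singular contributions vanish on each sphere by odd symmetry of $x-x_0$, the purely regular contributions are $O(\delta)$, and a direct calculation using the identity $\int_{\partial B_\delta(x_0)}(x-x_0)_j(x-x_0)_k\,d\sigma=\pi\delta^3$ when $j=k$ and $0$ otherwise collapses the cross $S$-$H$ terms to $-(2(4-\alpha)\pi\sqrt{e})^{2}\,\partial_{x_j}H(x_0,x_0)$. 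Hence $\nabla_x H(x_0,x_0)=0$, and the symmetry $H(x,y)=H(y,x)$ yields $\nabla R(x_0)=2\nabla_x H(x_0,x_0)=0$. When $\Omega$ is convex, it is classical that the Robin function in dimension two admits a unique critical point in $\Omega$, namely its strict global minimum, so $x_0$ must coincide with that point.

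\textbf{Main obstacle.} The genuinely delicate step is the kernel $|x-y|^{-\alpha-2}$ appearing in the off-diagonal term on the right-hand side of~\eqref{Pohozaev 2}. Bounding $|x_j-y_j|\leq|x-y|$ reduces the singularity to $|x-y|^{-\alpha-1}$, which is locally integrable in two dimensions precisely under the hypothesis $\alpha\in(0,1)$; combined with the super-polynomial decay of $u_p^{p+1}$ outside the blow-up neighbourhood, this closes the estimate. All remaining computations parallel arguments already carried out in Proposition~\ref{prop-no-boundary-blow-up} and Lemma~\ref{lemma-convergence-to-right-hand-integral}, so no new technical machinery should be required.
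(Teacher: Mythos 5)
Your proofs of parts (1) and (2) follow the same approach as the paper: identify $\gamma=2(4-\alpha)\pi\sqrt{e}$ by combining Lemma~\ref{lemma-convergence-to-right-hand-integral} with Proposition~\ref{prop-4.15} via the change-of-variables identity $\beta_p u_p(x_p)=\int_{B_r(x_p)}f_p$, and then deduce the distributional limit by the standard cut-and-estimate argument. These are correct.

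For part (3), you take a genuinely different route from the paper. The paper applies the directional Poho\v zaev identity on the \emph{whole} domain $\Omega$ (using that the boundary terms simplify because $u_p=0$ on $\partial\Omega$), obtains $\int_{\partial\Omega}\nu\,(\partial_\nu u_p)^2\,d\sigma=0$, passes to the limit using part (1), and then invokes the Ren--Wei identity $\int_{\partial\Omega}\nu\,(\partial_\nu G(\cdot,x_0))^2\,d\sigma=-\nabla R(x_0)$ from \cite[Lemma 5.1]{Ren1994TAMS} as a black box. You instead apply the directional Poho\v zaev identity \eqref{Pohozaev 2} on a small ball $B_\delta(x_0)$, pass to the limit $p\to+\infty$ using the $C^2$-convergence from part (1) on $\partial B_\delta(x_0)$, and then compute the resulting surface integrals directly from the expansion $G=-\frac{1}{2\pi}\log|x-x_0|-H(x,x_0)$. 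Your computation is correct: the $\delta^{-2}$-order singular terms cancel by odd symmetry, the cross terms produce $-\tfrac{3}{2}\partial_{x_j}H(x_0,x_0)$ and $+\tfrac{1}{2}\partial_{x_j}H(x_0,x_0)$ respectively (totalling $-\partial_{x_j}H(x_0,x_0)$), and the remaining terms are $O(\delta)$; the two nonlocal right-hand terms of \eqref{Pohozaev 2} vanish in the double limit because $u_p=O(1/p)$ uniformly on $\bar\Omega\setminus B_\delta(x_0)$ (by part (1)), giving the super-polynomial factor $(C/p)^{p+1}$ that kills the $p^2$ prefactor, and your observation that bounding $|x_j-y_j|\le|x-y|$ leaves the locally integrable kernel $|x-y|^{-(\alpha+1)}$ for $\alpha\in(0,1)$ is precisely the point where the restriction $\alpha<1$ enters. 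Finally, the symmetry $H(x,y)=H(y,x)$ gives $\nabla R(x_0)=2\nabla_1 H(x_0,x_0)=0$. Your approach is more self-contained (it effectively re-derives the content of the Ren--Wei identity locally at $x_0$), at the cost of a somewhat longer computation; the paper's approach is shorter but outsources the geometric identity to an external reference.
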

\begin{proof}
By Lemma \ref{lemma-convergence-to-right-hand-integral} and Proposition \ref{prop-4.15}, we obtain
\begin{equation}\label{eq-gamma-limit}
    \gamma = \lim_{\delta\to 0}\lim_{p\to +\infty} \int_{B_{\delta}(x_{0})} f_{p}(x) \, dx 
    = \lim_{\delta\to 0}\lim_{p\to +\infty} p \int_{B_{\delta}(x_{0})} \int_{\Omega} \frac{u_{p}^{p+1}(y) u_{p}^{p}(x)}{|x-y|^{\alpha}} \, dy dx 
    = 2(4-\alpha)\pi\sqrt{e},
\end{equation}
which proves (1) in combination with Lemma \ref{lemma-convergence-out-of-the-blow-up-point}. Next, Lemma \ref{lemma-outside-the-non-regular-point} implies that $f_{p}\to0$ uniformly on  any compact set $K\subset \bar{\Omega}\setminus\{x_{0}\}$. Taking $\varphi\in C_{c}(\Omega)$, we have
\begin{equation}
    \begin{aligned}
        \int_{\Omega}f_{p}(x)\varphi(x)dx&=\int_{B_{\delta}(x_{0})}f_{p}(x)\varphi(x)dx+\int_{\Omega\setminus B_{\delta}(x_{0})}f_{p}(x)\varphi(x)dx\\
        &=\varphi(x_{0})\int_{B_{\delta}(x_{0})}f_{p}(x)dx+\int_{B_{\delta}(x_{0})}f_{p}(x)(\varphi(x)-\varphi(x_{0}))dx+o_{p}(1)\\
        &=\varphi(x_{0})\int_{B_{\delta}(x_{0})}f_{p}(x)dx+o_{\delta}(1)+o_{p}(1),
    \end{aligned}
\end{equation}
and thus
\begin{equation}
\begin{aligned}
     \lim_{p\to+\infty} \int_{\Omega}f_{p}(x)\varphi(x)dx=\varphi(x_{0})\lim_{\delta\to 0}\lim_{p\to +\infty} \int_{B_{\delta}(x_{0})} f_{p}(x) \, dx= (2(4-\alpha)\pi\sqrt{e})\varphi(x_{0}).
\end{aligned}
\end{equation}
This proves part (2). Finally, applying the Pohozaev identity \eqref{Pohozaev 3} yields
\begin{equation}\label{eq-pohozaev-boundary}
    \int_{\partial \Omega} \nu(x) \left(\frac{\partial u_{p}}{\partial \nu}\right)^2 d\sigma_{x} = 0.
\end{equation}
Combining (1) with \cite[Lemma 5.1]{Ren1994TAMS}, we conclude
\begin{equation}
    \int_{\partial \Omega} \nu(x) \left(\frac{\partial G(x,x_{0})}{\partial \nu}\right)^2 d\sigma_{x} 
    = -\nabla R(x_{0}) = 0.
\end{equation}
If $\Omega$ is convex, then the Robin function $R(x)$ is strictly convex with a strictly positive definite Hessian, as shown by Caffarelli and Friedman \cite[Theorem 3.1]{Caffarelli1985Convexity}.  Consequently, $R(x)$ has a unique critical point, which must be its global minimum. This completes the proof.
\end{proof}
\begin{proof}[Proof of Theorem \ref{thm-2}]
Theorem \ref{thm-2} follows immediately from Proposition \ref{prop-4.15} and Proposition \ref{prop-4.16}.
\end{proof}

\bibliographystyle{abbrv}

\end{document}